\newcommand{\Ker}{\operatorname{Ker}}
\newcommand{\GQpD}{G_{\mathbb{Q}_p,\Delta}}
\newcommand{\GQpa}{G_{\mathbb{Q}_p,\alpha}}
\newcommand{\HQpD}{H_{\mathbb{Q}_p,\Delta}}
\newcommand{\HQpa}{H_{\mathbb{Q}_p,\alpha}}
\newcommand{\GQpDa}{G_{\mathbb{Q}_p,\Delta\setminus\{\alpha\}}}
\newcommand{\HQpDa}{H_{\mathbb{Q}_p,\Delta\setminus\{\alpha\}}}
\newcommand{\RepDZp}{\mathrm{Rep}_{\mathbb{Z}_p}(G_{\mathbb{Q}_p,\Delta})}
\newcommand{\RepDQp}{\mathrm{Rep}_{\mathbb{Q}_p}(G_{\mathbb{Q}_p,\Delta})}
\newcommand{\mfp}{\mathfrak{p}}
\newcommand{\Zp}{\mathbb{Z}_p}
\newcommand{\Fp}{\mathbb{F}_p}
\newcommand{\Qp}{\mathbb{Q}_p}
\newcommand{\OK}{\mathcal{O}_K}
\newcommand{\OED}{\mathcal{O}_{\mathcal{E}_\Delta}}
\newcommand{\TD}{\mathbbm{1}_\Delta}
\newcommand{\DmpD}{\mathbb{D}(\mu_{p^\infty,\Delta})}
\newcommand{\Tr}{\operatorname{Tr}}
\newcommand{\Hom}{\operatorname{Hom}}
\newcommand{\Tor}{\operatorname{Tor}}
\newcommand{\Frac}{\operatorname{Frac}}
\newcommand{\Fil}{\operatorname{Fil}}
\newcommand{\GL}{\operatorname{GL}}
\newcommand{\Gal}{\operatorname{Gal}}
\newcommand{\res}{\operatorname{res}}
\newcommand{\id}{\operatorname{id}}
\newcommand{\rk}{\operatorname{rk}}
\newcommand{\Coker}{\operatorname{Coker}}
\newcommand{\bg}{(\hspace{-0.06cm}(}
\newcommand{\jg}{)\hspace{-0.06cm})}
\newcommand{\bs}{[\hspace{-0.04cm}[}
\newcommand{\js}{]\hspace{-0.04cm}]}
\newtheorem{thm}{Theorem}[subsection]
\newtheorem{pro}[thm]{Proposition}
\newtheorem{lem}[thm]{Lemma}
\newtheorem{cor}[thm]{Corollary}
\theoremstyle{definition}
\newtheorem*{rem}{Remark}
\begin{document}
\title{Cohomology and overconvergence for representations of powers of Galois groups}
\author[$\dagger$]{Aprameyo Pal} 
\author[$\ddag$]{Gergely Z\'abr\'adi}
\affil[$\dagger$]{Universit\"at Duisburg-Essen, Fakult\"at f\"ur Mathematik, Thea-Leymann-Stra\ss e 9, D-45127 Essen, Germany, \texttt{aprameyo.pal@uni-due.de}}
\affil[$\ddag$]{E\"otv\"os Lor\'and University, Institute of Mathematics, P\'azm\'any P\'eter s\'et\'any 1/C, H-1117 Budapest, Hungary, \texttt{zger@cs.elte.hu}}
\affil[$\ddag$]{Alfr\'ed R\'enyi Institute of Mathematics, Hungarian Academy of Sciences, POB 127, Budapest H-1364, Hungary, \texttt{zger@cs.elte.hu}}
\affil[$\ddag$]{MTA R\'enyi Int\'ezet Lend\"ulet Automorphic Research Group, \texttt{zger@cs.elte.hu}}

\maketitle

\begin{abstract}
We show that the Galois cohomology groups of $p$-adic representations of a direct power of $\Gal(\overline{\Qp}/\Qp)$ can be computed via the generalization of Herr's complex to multivariable $(\varphi,\Gamma)$-modules. Using Tate duality and a pairing for multivariable $(\varphi,\Gamma)$-modules we extend this to analogues of the Iwasawa cohomology. We show that all $p$-adic representations of a direct power of $\Gal(\overline{\Qp}/\Qp)$ are overconvergent and, moreover, passing to overconvergent multivariable $(\varphi,\Gamma)$-modules is an equivalence of categories. Finally, we prove that the overconvergent Herr complex also computes the Galois cohomology groups.
\end{abstract}

\tableofcontents

\section{Introduction}

In recent work \cite{MultVar,MultVarGal} of the second named author the relevance of $p$-adic representations of a direct power of the absolute Galois group $\Gal(\overline{\Qp}/\Qp)$ to the $p$-adic Langlands programme is pointed out. The main result of \cite{MultVarGal} is that for any finite set $\Delta$ the category of continuous representations of the group $\GQpD:=\prod_{\alpha\in\Delta}\Gal(\overline{\mathbb{Q}_p}/\mathbb{Q}_p)$ over $\Fp$ (resp.\ over $\Zp$, resp.\ over $\Qp$) is equivalent to the category of certain ``multivariable'' \'etale $(\varphi,\Gamma)$-modules: the coefficient ring modulo $p^n$ is the Laurent series ring $\Zp/(p^n)\bs X_\alpha\mid \alpha\in\Delta\js[X_\alpha^{-1}\mid \alpha\in\Delta]$ and there is an operator $\varphi_\alpha$ and group $\Gamma_\alpha\cong\Zp^\times$ for each variable $X_\alpha$ that acts via usual Frobenius lift, resp.\ cyclotomic character on the corresponding variable, and trivially on the other variables $X_\beta$ for all $\beta\in\Delta$ different from $\alpha$. On the other hand, in \cite{MultVar} a functor $D^\vee_\Delta$ is constructed from the category of smooth $p$-power torsion representations of the $\Qp$-points $G$ of a $\Qp$-split connected reductive group with connected centre to the category of multivariable $(\varphi,\Gamma)$-modules (together with a linear action of the centre $Z(G)$) where the index set $\Delta$ is the set of simple roots of $G$ with respect to a choice of Borel subgroup $B$ and maximal $\Qp$-split torus $T\leq B$. The definition of the functor $D^\vee_\Delta$ builds on earlier work of Breuil \cite{B} and has some promising properties: compatibility with tensor products and parabolic induction; right exactness in general and exactness on extensions of principal series; faithfulness on extensions of irreducible principal series. The reason why we strongly believe that representations of $\GQpD$ arise naturally in the $p$-adic Langlands programme for higher rank reductive groups over $\Qp$ is mainly that the representations theory of, say, $\GL_n(\Qp)$ ($n>2$) is much more complicated than that of $G_{\Qp}$ having $p$-cohomological dimension $2$. For instance, the work of Breuil and Herzig \cite{BrH} suggests that a generalized Montr\'eal functor \cite{B}, applied to Hecke isotypical components of the cohomology of certain Shimura varieties, should \emph{not} produce the Galois representation $\rho$ attached to the Hecke eigensystem, but a certain tensor induction
\begin{equation*}
\rho\otimes\wedge^2\rho\otimes\dots\otimes\wedge^{n-1}\rho
\end{equation*}
of it. The idea is to possibly interpret the higher exterior powers $\wedge^i\rho$, too, in this picture as representations of different copies of $G_{\Qp}$. One hint is that even in the case of $\GL_2(\Qp)$ the determinant $\wedge^2\rho$ appears on the automorphic side as a central character. What supports this is that the individual factors $\wedge^i\rho$ indeed appear in the Shimura cohomology of unitary groups of type $U(i,n-i)$ \cite{BHS} even though there is no evidence for the appearance of their tensor product so far. 

The goal of the present paper is to further develop the theory of ``multivariable $(\varphi,\Gamma)$-modules'' with an eye on possible applications to $p$-adic Hodge theory, to the $p$-adic Langlands programme, and to Iwasawa theory. 

\subsection{Outline of the paper}
In section \ref{secherrcomplexmodpn} we define the (Fontaine--)Herr complex $\Phi\Gamma^\bullet(D)$ of a multivariable $(\varphi,\Gamma)$-module $D$, which is, informally speaking, the Koszul-complex of the (commuting) operators $\{(\varphi_\alpha-1),(\gamma_\alpha-1)\mid \alpha\in\Delta\}$ acting on $D$ for some topological generator $\gamma_\alpha\in\Gamma_\alpha$ (with slight modification in case $p=2$). We show that the cochain complex $\Phi\Gamma^\bullet(D)$ computes the $\GQpD$-cohomology of the corresponding representation $V=\mathbb{V}(D)$. Our proof is new even in the classical case $|\Delta|=1$ (due to Herr \cite{H1}) and is more conceptual than the existing proofs: Instead of verifying \cite{H1} rather intrinsically that the $\delta$-functor $D\mapsto h^i\Phi\Gamma^\bullet(D)$ ($i\geq 0$) is coeffaceable or directly computing $h^1\Phi\Gamma^\bullet(D)$ \cite{Tsing} we extend the equivalence of categories $\mathbb{D}$ with \'etale $(\varphi,\Gamma)$-modules to the category of discrete $p$-primary abelian groups with continuous $\GQpD$-action by taking direct limits. The latter category has enough injectives, so by dimension shifting and checking $h^0\Phi\Gamma^\bullet(D)=V^{\GQpD}$ we may assume that $\mathbb{V}(D)$ is injective in which case the statement follows by a simple spectral sequence argument. Our proof is self-contained in the case $p=2$ (see section \ref{p=2}), too, (which is, to our taste, not satisfactorily covered by the existing literature in the classical case $|\Delta|=1$ either---for a rather sketchy proof see Thm.\ 3.3 in \cite{Liu}). 

In order to treat the Iwasawa cohomology groups $$H^i_{Iw}(\GQpD,V):=\varprojlim_{\HQpD\leq H\leq_o\GQpD}H^i(H,V)$$ in this context we develop Tate duality for multivariable $(\varphi,\Gamma)$-modules (section \ref{dualphigamma}). Note that the coefficient ring $\Zp/(p^n)\bs X_\alpha\mid \alpha\in\Delta\js[X_\alpha^{-1}\mid \alpha\in\Delta]$ is \emph{not} locally compact when $|\Delta|>1$, so we cannot hope for a perfect pairing $\{\cdot,\cdot\}$ between $D$ and its Tate dual $D^*(\TD)$. However, the pairing we construct is non-degenerate and $\varphi$- and $\Gamma$-equivariant. This allows us to show that the $\psi$-complex $\Psi^\bullet(D)$ (ie.\ the Koszul-complex of the operators $\psi_\alpha-1$ for $\alpha\in\Delta$) computes these Iwasawa-cohomology groups. Here $\psi_\alpha$ is the distinguished left inverse of the operator $\varphi_\alpha$ ($\alpha\in\Delta$). The technical difficulty towards this is to show that the cohomology groups $h^i\Psi^\bullet(D)$ are compact and hence the pairing $\{\cdot,\cdot\}\colon D\times D^*(\TD)\to \Qp/\Zp$---even though not perfect on the whole $D$---induces a perfect paring between $h^i\Psi^\bullet(D)$ and $h^{|\Delta|-i}\Phi^\bullet(D^*(\TD))$.

In section \ref{secherrpadic} we extend the results on the computation of $\GQpD$-cohomology to representations over $\Zp$ and $\Qp$. Our treatment here is inspired by the recent paper of Schneider and Venjakob \cite{SchVen}. We finish section \ref{secherrcomplex} by proving the analogue of the Euler--Poincar\'e characteristic formula in this context. Even though there is a simple proof using the Hochschild--Serre spectral sequence and the classical $|\Delta|=1$ case we chose to do this via the complex $\Psi^\bullet(D)$, since along the way we also show further finiteness properties of the Iwasawa cohomology groups that we need later on.

Section \ref{secoverconverge} is devoted to overconvergence. The fact that all continuous representations of $\GQpD$ are overconvergent follows rather easily from the one-variable case by induction. However, in order to show that passing to overconvergent $(\varphi,\Gamma)$-modules is an equivalence of categories (ie.\ this functor is essentially surjective) one needs to introduce multivariable analogues of ``extended Robba rings'' in the sense of Kedlaya \cite{K}. In the last section we use the observation (which follows from the above equivalence of categories) that for any fixed choice of $\alpha$ in $\Delta$ each multivariable overconvergent $(\varphi,\Gamma)$-module admits a basis in which the matrices of $\varphi_\alpha$ and $\gamma_\alpha\in\Gamma_\alpha$ contain only the variable $X_\alpha$, and no $X_\beta$ for $\beta\neq \alpha$ in $\Delta$. We combine this with uniform continuity of the operators $\gamma_\alpha-1$ and $\psi_\alpha-1$ \cite{CC,CC2} to verify that the natural map from the overconvergent to the $p$-completed Herr complex is a quasi-isomorphism. In particular, the former also computes the same $\GQpD$-cohomology groups. Here again, we treat first the Iwasawa cohomology and then deduce the statement on $\GQpD$-cohomology using a quasi-isomorphism between the cochain complexes $\Phi\Gamma^\bullet(D)$ and $\Psi\Gamma^\bullet(D)$.

Throughout the paper we decided to work with the coefficient field $\Qp$ (resp.\ $\Zp$, resp.\ $\Fp$) since using a finite extension $K/\Qp$ (resp.\ its ring of integers $\OK$, resp.\ residue field $\kappa$) would lead to the same statements as restricting the coefficients to $\Qp$ (resp.\ $\Zp$, resp.\ $\Fp$) do not change the $\GQpD$-cohomology groups, nor the overconvergence. We also decided not to replace $\Qp$ by a finite extension (or even by $|\Delta|$ distinct extensions) in $\GQpD$. One reason for this is that the paper \cite{MultVarGal} only covers representations of $\GQpD$. Further, group cohomology of finite index subgroups of $\GQpD$ can easily be computed via $\GQpD$-cohomology using Shapiro's Lemma. Regarding the overconvergence there would be two natural ways of passing to finite extensions $F/\Qp$: one could either work with cyclotomic or Lubin--Tate $(\varphi,\Gamma)$-modules. The cyclotomic case is covered in a recent paper \cite{CKZ} of the second named author with A.\ Carter and K.\ Kedlaya. However, there is strong evidence \cite{Col15} that Lubin--Tate $(\varphi,\Gamma)$-modules (or maybe even $(\varphi,\Gamma)$-modules over the character variety \cite{BSX}) are better suited for the extension of the $p$-adic Langlands correspondence to $\GL_2(F)$ where $F/\Qp$ is a finite extension. We expect that multivariable versions (for products of Galois groups) of these Lubin--Tate $(\varphi,\Gamma)$-modules will play a role in a future $p$-adic Langlands correspondence for reductive groups of higher rank over $F$. Note that the question of overconvergence is more subtle in Lubin--Tate theory even in the one variable case. The ``one-variable'' Lubin--Tate $(\varphi,\Gamma)$-module corresponding to a representation of $G_F$ is overconvergent if it is either $F$-analytic (Thm.\ C in \cite{Ber2}) or factors through $\Gamma_K$. Conversely, any overconvergent representation arises as a quotient of the tensor product of an $F$-analytic representation and a representation factoring through $\Gamma_K$ (Thm.\ A in \cite{BF}). It is natural to expect the same results in the product situation, as well. 

\subsection{Relation to Iwasawa theory}
This paper builds up the necessary technical tools to formulate Bloch--Kato exponential maps and $\varepsilon$-isomorphisms in this product situation. The natural next step would be to extend the equivalence between categories of continuous representations of the group $\GQpD$ and \'etale $(\varphi_\Delta,\Gamma_\Delta)$-modules to the multivariable Robba ring $\mathcal{R}_\Delta$ and show that the Herr complex of \'etale $(\varphi_\Delta,\Gamma_\Delta)$-modules over $\mathcal{R}_\Delta$ still computes Galois cohomology. This should follow similarly as in the overconvergent case, but we do not pursue this here to keep the length of the article reasonable. Once we have the multivariable analogue of $\mathrm{D_{dif}}$ extending Berger's work, it would be possible to define a general Bloch-Kato exponential map of $(\varphi_\Delta,\Gamma_\Delta)$-modules over $\mathcal{R}_\Delta$ following Nakamura \cite{Na14}. This would interpolate Perrin-Riou's big logarithm maps in this setting and would be related to the inverse of the isomorphism as in Theorem \ref{iwasawamodp} which is a generalization of classical $\mathrm{Log^{*}}$ (Theorem II.1.3, \cite{CC2}). Using the Bloch-Kato exponential map it would be possible to formulate a conjectural description of $\varepsilon$-isomorphisms in the multivariable setting \cite{Na17a}. We hope to prove many cases of these multivariable $\varepsilon$-conjectures using known one-variable results of Benois \cite{Ben}, Nakamura \cite{Na17b} etc. We speculate also another possible link with (abelian) equivariant epsilon conjecture as in Benois-Berger \cite{BB}, Bley-Cobbe \cite{BC}. We hope to relate the multivariable $\varepsilon$-conjecture with the abelian equivariant $\varepsilon$-conjecture. Using the results on multivariable $\varepsilon$-conjecture, it should provide us with new cases of classical, one-variable equivariant $\varepsilon$-conjectures by restricting to the diagonal embedding of $G_{\Qp}$ to $\GQpD$.

\subsection{Relation to $p$-adic geometry}
Products of local Galois groups show up rather naturally in modern $p$-adic geometry via Drinfeld's Lemma (Lemma 1.1.2 in \cite{SchW}). In particular, $p$-adic representations of $\GQpD$ are in one-to-one correspondence with certain local systems on the product $\operatorname{Spd}\Qp \times\dots\times\operatorname{Spd}\Qp$ of diamonds (see Thm.\ 16.3.1 in \emph{op.\ cit.}). The reason why the equivalence of categories between representations of $\GQpD$ and multivariable $(\varphi,\Gamma)$-modules is not a direct consequence of this general theory is that $\operatorname{Spd}\Qp \times\dots\times\operatorname{Spd}\Qp$ is not the adic spectrum of a fixed ring. However, using an embedding into the adic spectrum of the perfect closure of $\OED/(p)$ it is possible \cite{CKZ} to prove the main result of \cite{MultVarGal} in this fashion (see also Cor.\ 4.3.16 in \cite{KedArizona}) even in a more general form of classifying representations of a product $G_{F_1}\times\dots\times G_{F_d}$ where $F_1,\dots,F_d$ are finite extensions of $\Qp$. 

\subsection{Relation to other notions of multivariable $(\varphi,\Gamma)$-modules}
Our definition of multivariable overconvergent and Robba rings is somewhat different from that considered in \cite{Ber,K2} and in the possibly non-commutative version in \cite{ZSch}. Here the functions are required to converge on a full polyannulus whereas in these previous constructions the modulus of the variables have a fixed relation. The reason for this difference is that we have partial Frobenii to act on our rings $\OED^\dagger$ and $\mathcal{R}_\Delta$ and the relation of the moduli of variables changes under these operators. However, $\mathcal{R}_\Delta$ can naturally be viewed as a subring of the multivariable Robba ring considered in \cite{K2}. This relation is expected to have consequences on the structural properties of Berger's multivariable Lubin--Tate $(\varphi,\Gamma)$-modules.

\subsection{Notations}\label{secnotations}

For a finite set $\Delta$ let $\GQpD:=\prod_{\alpha\in\Delta}\Gal(\overline{\mathbb{Q}_p}/\mathbb{Q}_p)$ denote the direct power of the absolute Galois group of $\Qp$ indexed by $\Delta$. We denote by $\operatorname{Rep}_{\Fp}(\GQpD)$ (resp.\ by $\operatorname{Rep}_{\Zp}(\GQpD)$, resp.\ by $\operatorname{Rep}_{\Qp}(\GQpD)$) the category of continuous representations of the profinite group $\GQpD$ on finite dimensional $\Fp$-vector spaces (resp.\ finitely generated $\Zp$-modules, resp.\ finite dimensional $\Qp$-vector spaces). On the other hand, for independent commuting variables $X_\alpha$ ($\alpha\in\Delta$) we put 
\begin{eqnarray*}
E_{\Delta}&:=&\Fp\bs X_\alpha\mid \alpha\in\Delta\js [X_\alpha^{-1}\mid \alpha\in\Delta]\ , \\
\mathcal{O}_{\mathcal{E}_{\Delta}}&:=&\varprojlim_h \left(\Zp/\varpi^h\bs X_\alpha\mid \alpha\in\Delta\js [X_\alpha^{-1}\mid \alpha\in\Delta]\right)\ ,\\
\mathcal{E}_{\Delta}&:=&\mathcal{O}_{\mathcal{E}_{\Delta}}[p^{-1}]\ .
\end{eqnarray*}
Moreover, for each element $\alpha\in\Delta$ we have the partial Frobenius $\varphi_\alpha$, and group $\Gamma_\alpha\cong \Gal(\Qp(\mu_{p^\infty})/\Qp)\overset{\chi_\alpha,\sim}{\to}\Zp^\times$ acting on the variable $X_\alpha$ in the usual way  
$$\varphi_\alpha(X_\alpha):=(1+X_\alpha)^p-1\ ,\quad \gamma_\alpha(X_\alpha):=(1+X_\alpha)^{\chi_\alpha(\gamma_\alpha)}-1\ ,\ \gamma_\alpha\in\Gamma_\alpha$$
and commuting with the other variables $X_\beta$ ($\beta\in\Delta\setminus\{\alpha\}$) in the above rings. We put $\Gamma_\Delta:=\prod_{\alpha\in\Delta}\Gamma_\alpha$ which is naturally the quotient of the group $\GQpD$ by the normal subgroup $\HQpD:=\prod_{\alpha\in\Delta}\HQpa$ where $\Gal(\overline{\Qp}/\Qp(\mu_{p^\infty})\cong\HQpa\leq \GQpa\cong \Gal(\overline{\Qp}/\Qp)$. A $(\varphi_\Delta,\Gamma_\Delta)$-module over $E_{\Delta}$ (resp.\ over $\mathcal{O}_{\mathcal{E}_{\Delta}}$, resp.\ over $\mathcal{E}_{\Delta}$) is a finitely generated $E_{\Delta}$-module (resp.\ $\mathcal{O}_{\mathcal{E}_{\Delta}}$-module, resp.\ $\mathcal{E}_{\Delta}$-module) $D$ together with commuting semilinear actions of the operators $\varphi_\alpha$ and groups $\Gamma_\alpha$ ($\alpha\in\Delta$). In case the coefficient ring is $E_{\Delta}$ or $\mathcal{O}_{\mathcal{E}_{\Delta}}$, we say that $D$ is \'etale if the map $\id\otimes\varphi_\alpha\colon \varphi_\alpha^*D\to D$ is an isomorphism for all $\alpha\in\Delta$. For the coefficient ring $\mathcal{E}_{\Delta}$ we require the stronger assumption for the \'etale property that $D$ comes from an \'etale $(\varphi_\Delta,\Gamma_\Delta)$-module over $\mathcal{O}_{\mathcal{E}_{\Delta}}$ by inverting $p$. The main result of \cite{MultVarGal} is that $\operatorname{Rep}_{\Fp}(\GQpD)$ (resp.\ $\operatorname{Rep}_{\Zp}(\GQpD)$, resp.\ $\operatorname{Rep}_{\Qp}(\GQpD)$) is equivalent to the category of \'etale $(\varphi_\Delta,\Gamma_\Delta)$-modules over $E_{\Delta}$ (resp.\ over $\mathcal{O}_{\mathcal{E}_{\Delta}}$, resp.\ over $\mathcal{E}_{\Delta}$).

\section{Cohomology of $\GQpD$ via the Herr complex}\label{secherrcomplex}

\subsection{Cohomology of $p$-torsion representations}\label{secherrcomplexmodpn}

In order to prove our main result in this section we first extend the functor $\mathbb{D}$ originally defined in \cite{MultVarGal} for objects in $\operatorname{Rep}_{\Zp}(\GQpD)$ to the category $\operatorname{Rep}_{\Zp-tors}^{discr}(\GQpD)$ of discrete $p$-primary abelian groups with continuous action of $\GQpD$. This will be needed in the sequel as we shall use injective objects in this category which do not exist in the category of finitely generated modulo $p^n$ representations of $\GQpD$. For an object $V$ in $\operatorname{Rep}_{\Zp-tors}^{discr}(\GQpD)$ we put $$\mathbb{D}(V):=\left(\mathcal{O}_{\widehat{\mathcal{E}^{ur}_\Delta}}\otimes_{\Zp}V\right)^{\HQpD}$$ (see §4.2 in \cite{MultVarGal} for the definition of $\mathcal{O}_{\widehat{\mathcal{E}^{ur}_\Delta}}$). Any object in $\operatorname{Rep}_{\Zp-tors}^{discr}(\GQpD)$ is the filtered direct limit of $p$-torsion objects in $\operatorname{Rep}_{\Zp}(\GQpD)$. Moreover, $\mathbb{D}$ commutes with filtered direct limits since both the tensor product and taking $\HQpD$-invariants do so. Therefore $\mathbb{D}$ is an exact functor into the category $\varinjlim \mathcal{D}^{et}_{tors}(\varphi_{\Delta},\Gamma_\Delta,\mathcal{O}_{\mathcal{E}_\Delta})$ of injective limits of $p$-power torsion objects in $\mathcal{D}^{et}(\varphi_{\Delta},\Gamma_\Delta,\mathcal{O}_{\mathcal{E}_\Delta})$ by Cor.\ 4.8 in \cite{MultVarGal}. On the other hand, for an object $D$ in $\varinjlim \mathcal{D}^{et}_{tors}(\varphi_{\Delta},\Gamma_\Delta,\mathcal{O}_{\mathcal{E}_\Delta})$ we define $$\mathbb{V}(D):=\bigcap_{\alpha\in \Delta}(\mathcal{O}_{\widehat{\mathcal{E}^{ur}_\Delta}}\otimes_{\mathcal{O}_{\mathcal{E}_\Delta}}D)^{\varphi_\alpha=\id}\ .$$ Since $\mathbb{V}$ also commutes with direct limits, we deduce
\begin{cor}
The functors $\mathbb{D}$ and $\mathbb{V}$ are quasi-inverse equivalences of categories between $\operatorname{Rep}_{\Zp-tors}^{discr}(\GQpD)$ and $\varinjlim \mathcal{D}^{et}_{tors}(\varphi_{\Delta},\Gamma_\Delta,\mathcal{O}_{\mathcal{E}_\Delta})$.
\end{cor}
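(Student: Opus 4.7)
The plan is to reduce to the known equivalence at the $p^n$-torsion level established in \cite{MultVarGal} and then pass to filtered direct limits on both sides.

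First I would observe that both categories are generated under filtered direct limits by their $p^n$-torsion subcategories: any $V\in\operatorname{Rep}_{\Zp-tors}^{discr}(\GQpD)$ is the union of its $p^n$-torsion parts $V[p^n]$, each of which (being the union of its finitely generated $\GQpD$-stable $\Zp/(p^n)$-submodules by continuity and discreteness) is the filtered direct limit of objects in $\operatorname{Rep}_{\Zp}(\GQpD)$; and by definition $\varinjlim\mathcal{D}^{et}_{tors}$ has exactly this form. For finitely generated $p^n$-torsion modules, the main theorem of \cite{MultVarGal} already asserts that $\mathbb{D}$ and $\mathbb{V}$ are quasi-inverse. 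So it suffices to verify that the natural unit and counit maps $V\to \mathbb{V}(\mathbb{D}(V))$ and $\mathbb{D}(\mathbb{V}(D))\to D$ are compatible with filtered colimits, for then they are isomorphisms on the generators and hence everywhere.

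Next I would construct these natural transformations explicitly: the unit sends $v\in V$ to $1\otimes v$, which lies in $\mathbb{D}(V)$ and is fixed by every $\varphi_\alpha$ inside $\mathcal{O}_{\widehat{\mathcal{E}^{ur}_\Delta}}\otimes V$; the counit comes from the multiplication map $\mathcal{O}_{\widehat{\mathcal{E}^{ur}_\Delta}}\otimes_{\mathcal{O}_{\mathcal{E}_\Delta}}\mathbb{D}(V)\to \mathcal{O}_{\widehat{\mathcal{E}^{ur}_\Delta}}\otimes_{\Zp}V$. Both are visibly functorial in $V$ and $D$, so they commute with filtered colimits. Combining with the already-noted fact that $\mathbb{D}$ and $\mathbb{V}$ commute with filtered direct limits, one reads off the general statement from the $p^n$-torsion case.

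The one point requiring genuine verification is that $\HQpD$-invariants and the $\varphi_\alpha$-fixed-point functor $(-)^{\varphi_\alpha=\id}$ commute with filtered direct limits in the discrete setting. The former is standard for profinite $H$ acting discretely, since every element lies in some $V'^{H}$ with $V'$ a finitely generated subobject. The latter is formal since $\varphi_\alpha-\id$ is an endomorphism and taking kernels commutes with filtered colimits of modules; the intersection over $\alpha\in\Delta$ is a finite intersection and hence also commutes. This is the main (and essentially the only) technical check; once it is in hand, the corollary follows automatically.
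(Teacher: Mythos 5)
Your proof is correct and takes essentially the same approach as the paper: both arguments reduce to the equivalence on finitely generated $p$-power torsion objects (Thm.\ 4.10 of \cite{MultVarGal}) and extend it by noting that $\mathbb{D}$ and $\mathbb{V}$ commute with filtered direct limits, the key verification being that $(-)^{\HQpD}$ on discrete modules and kernel of $\varphi_\alpha-\id$ both commute with filtered colimits. Your additional discussion of the unit and counit natural transformations merely makes explicit what the paper leaves implicit.
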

\begin{proof}
This follows from Thm.\ 4.10 in \cite{MultVarGal} by taking direct limits.
\end{proof}

Put $D^{sep}:=\mathcal{O}_{\widehat{\mathcal{E}^{ur}_\Delta}}\otimes_{\Zp}V=\mathcal{O}_{\widehat{\mathcal{E}^{ur}_\Delta}}\otimes_{\mathcal{O}_{\mathcal{E}_\Delta}}\mathbb{D}(V)$ and consider the cochain complex
\begin{align*}
\Phi^\bullet(D^{sep})\colon 0\to D^{sep}\to \bigoplus_{\alpha\in\Delta}D^{sep}\to \dots\to \bigoplus_{\{\alpha_1,\dots,\alpha_r\}\in \binom{\Delta}{r}}D^{sep}\to\dots \to D^{sep}\to 0
\end{align*}
where for all $0\leq r\leq |\Delta|-1$ the map $d_{\alpha_1,\dots,\alpha_r}^{\beta_1,\dots,\beta_{r+1}}\colon D^{sep}\to D^{sep}$ from the component in the $r$th term corresponding to $\{\alpha_1,\dots,\alpha_r\}\subseteq \Delta$ to the component corresponding to the $(r+1)$-tuple $\{\beta_1,\dots,\beta_{r+1}\}\subseteq\Delta$ is given by
\begin{equation*}
d_{\alpha_1,\dots,\alpha_r}^{\beta_1,\dots,\beta_{r+1}}=\begin{cases}0&\text{if }\{\alpha_1,\dots,\alpha_r\}\not\subseteq\{\beta_1,\dots,\beta_{r+1}\}\\ (-1)^{\varepsilon}(\id-\varphi_\beta)&\text{if }\{\beta_1,\dots,\beta_{r+1}\}=\{\alpha_1,\dots,\alpha_r\}\cup\{\beta\}\ ,\end{cases}
\end{equation*}
where $\varepsilon=\varepsilon(\alpha_1,\dots,\alpha_r,\beta)$ is the number of elements in the set $\{\alpha_1,\dots,\alpha_r\}$ smaller than $\beta$. 

\begin{lem}\label{augrep}
For any object $V$ in $\operatorname{Rep}_{\Zp-tors}^{discr}(\GQpD)$ the augmentation map $V[0]\to \Phi^\bullet(D^{sep})$ is a quasi-isomorphism of cochain complexes where $V[0]$ denotes the complex with $V$ in degree zero and $0$ everywhere else.
\end{lem}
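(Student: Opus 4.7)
My plan is to recognise $\Phi^\bullet(D^{sep})$ as the Koszul complex associated to the pairwise commuting family $\{\id-\varphi_\alpha\}_{\alpha\in\Delta}$ of endomorphisms of $D^{sep}$. First I would identify the zeroth cohomology
\[
h^0\Phi^\bullet(D^{sep})=\bigcap_{\alpha\in\Delta}(D^{sep})^{\varphi_\alpha=\id}=\mathbb{V}(\mathbb{D}(V))=V
\]
via the preceding corollary; this identification is exactly the augmentation map, so the assertion reduces to the vanishing of $h^i\Phi^\bullet(D^{sep})$ for $i\geq 1$.

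For this vanishing the plan is to prove, by induction on $|\Delta|$, the following abstract Koszul statement: whenever $M$ is a $\Zp$-module equipped with pairwise commuting endomorphisms $\varphi_\alpha$ ($\alpha\in\Delta$) such that each $\id-\varphi_\alpha$ is surjective on $M$, the associated Koszul complex $\Phi^\bullet_\Delta(M)$ is acyclic in positive degrees, with $h^0=\bigcap_\alpha\ker(\id-\varphi_\alpha)$. For the inductive step I would fix $\alpha\in\Delta$ and split subsets of $\Delta$ according to whether they contain $\alpha$; this presents $\Phi^\bullet_\Delta(M)$ as the mapping cone of $\pm(\id-\varphi_\alpha)\colon \Phi^\bullet_{\Delta\setminus\{\alpha\}}(M)\to \Phi^\bullet_{\Delta\setminus\{\alpha\}}(M)$. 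The surjectivity hypothesis is clearly stable under restriction to subfamilies, so the inductive hypothesis applies to the subcomplex and gives $h^0\Phi^\bullet_{\Delta\setminus\{\alpha\}}(M)=\bigcap_{\beta\neq\alpha}\ker(\id-\varphi_\beta)$ with $h^i=0$ for $i\geq 1$. The long exact sequence of the cone then reduces the problem to surjectivity of $\id-\varphi_\alpha$ on this iterated fixed subspace.

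To obtain the required surjectivity on iterated kernels I would run a snake-lemma bootstrap: for any $\gamma\neq\alpha$, the short exact sequence $0\to\ker(\id-\varphi_\gamma)\to M\xrightarrow{\id-\varphi_\gamma} M\to 0$ is preserved by the endomorphism $\id-\varphi_\alpha$ thanks to the commutativity $\varphi_\alpha\varphi_\gamma=\varphi_\gamma\varphi_\alpha$, and the snake lemma produces a six-term exact sequence whose last three entries are the cokernels of $\id-\varphi_\alpha$ on the three pieces. Two of these cokernels vanish by hypothesis, forcing the third---the cokernel of $\id-\varphi_\alpha$ on $\ker(\id-\varphi_\gamma)$---to vanish as well. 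Iterating in $\gamma$ yields surjectivity of $\id-\varphi_\alpha$ on every iterated kernel $\bigcap_{\beta\in S}\ker(\id-\varphi_\beta)$ with $\alpha\notin S$, which in particular covers the case $S=\Delta\setminus\{\alpha\}$ needed above.

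Applying the abstract statement to $M=D^{sep}$ reduces the lemma to verifying that $\id-\varphi_\alpha$ is surjective on $D^{sep}=\mathcal{O}_{\widehat{\mathcal{E}^{ur}_\Delta}}\otimes_{\Zp}V$ for each $\alpha\in\Delta$. A standard d\'evissage along $V$ (which is $p$-primary and discrete, hence a filtered colimit of finite-length subquotients filtered further by their $p$-torsion) reduces this to an Artin--Schreier-type surjectivity of $\id-\varphi_\alpha$ on $\mathcal{O}_{\widehat{\mathcal{E}^{ur}_\Delta}}/p\otimes_{\Fp}W$ for a finite-dimensional $\Fp$-representation $W$, which is a structural property of the multivariable Fontaine ring $\mathcal{O}_{\widehat{\mathcal{E}^{ur}_\Delta}}$ built in \cite{MultVarGal}. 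I expect this last step---the multivariable Artin--Schreier surjectivity---to be the only genuinely analytic input and hence the main obstacle; once it is in hand, the rest is formal Koszul theory and snake-lemma bookkeeping.
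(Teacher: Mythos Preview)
Your approach is correct but takes a different route from the paper's. The paper cites Prop.\ 4.2 of \cite{MultVarGal} for the full quasi-isomorphism $\Fp[0]\to\Phi^\bullet(E^{sep}_\Delta)$ in the case $V=\Fp$, lifts this to $\mathbb{Z}/p^n[0]\to\Phi^\bullet(\mathcal{O}_{\widehat{\mathcal{E}^{ur}_\Delta}}/p^n)$ by d\'evissage using that each term is flat over $\mathbb{Z}/p^n$, then tensors with a finite $V$ over $\mathbb{Z}/p^n$ (again invoking flatness) before passing to a direct limit. In particular the paper treats $h^0$ and the higher $h^i$ uniformly via a single flatness argument and never invokes the categorical equivalence. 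You instead separate the two: $h^0$ is computed via $\mathbb{V}\circ\mathbb{D}\cong\id$ from the preceding corollary, while the higher vanishing is reduced to a strictly weaker analytic input---surjectivity of each individual $\id-\varphi_\alpha$ on $E^{sep}_\Delta$---through your abstract Koszul/snake-lemma bootstrap. This is a legitimate trade: you isolate precisely the partial Artin--Schreier surjectivity needed (which is indeed an ingredient in the proof of Prop.\ 4.2 of \cite{MultVarGal}) at the cost of a longer formal argument, effectively re-deriving the higher-vanishing half of that proposition rather than citing it wholesale. Both proofs ultimately rest on the same structural input from \cite{MultVarGal}.
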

\begin{proof}
By Proposition 4.2 in \cite{MultVarGal}, the augmentation map $\Fp[0]\to \Phi^\bullet(E^{sep}_\Delta)$ is a quasi-isomorphism. By devissage, the augmentation map $\mathbb{Z}/p^n[0]\to \Phi^\bullet(\mathcal{O}_{\widehat{\mathcal{E}^{ur}_\Delta}}/p^n)$ is also a quasi-isomorphism as each term is a flat $\mathbb{Z}/p^n$-module in both complexes. Now if $V$ is a finite abelian $p$-group then it is killed by $p^n$ for some $n$ and we have $\Phi^\bullet(\mathbb{D}(V)^{sep})=\Phi^\bullet(\mathcal{O}_{\widehat{\mathcal{E}^{ur}_\Delta}}/p^n)\otimes_{\mathbb{Z}/p^n}V$. Using again the flatness, the statement follows from the quasi-isomorphism $\mathbb{Z}/p^n[0]\to \Phi^\bullet(\mathcal{O}_{\widehat{\mathcal{E}^{ur}_\Delta}}/p^n)$ by tensoring with $V$. The case of general $V$ is deduced by taking the direct limit which is exact.
\end{proof}

\begin{lem}\label{pncohomtriv}
We have $H^i_{cont}(\HQpD, \mathcal{O}_{\widehat{\mathcal{E}^{ur}_\Delta}}/p^n)=0$ for all $n\geq 1$ and $i\geq 1$.
\end{lem}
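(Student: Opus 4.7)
The plan is a two-step reduction. First, a devissage in $n$: from the short exact sequence of discrete $\HQpD$-modules
$$0\to\mathcal{O}_{\widehat{\mathcal{E}^{ur}_\Delta}}/p\xrightarrow{\cdot p^{n-1}}\mathcal{O}_{\widehat{\mathcal{E}^{ur}_\Delta}}/p^n\to\mathcal{O}_{\widehat{\mathcal{E}^{ur}_\Delta}}/p^{n-1}\to 0$$
the associated long exact sequence of continuous cohomology together with an induction on $n$ reduces the statement to the case $n=1$. Hence it suffices to prove $H^i_{cont}(\HQpD,E^{sep}_\Delta)=0$ for all $i\geq 1$, where $E^{sep}_\Delta:=\mathcal{O}_{\widehat{\mathcal{E}^{ur}_\Delta}}/p$.

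I then proceed by induction on $|\Delta|$. The base case $|\Delta|=1$ is classical: under the Fontaine--Wintenberger norm-field correspondence, $\HQpa$ is the absolute Galois group of the local field $\Fp\bg X\jg$ with separable closure $E^{sep}$, and for every finite Galois subextension $E'/E$ the normal basis theorem gives $H^i(\Gal(E'/E),E')=0$ for $i\geq 1$; passing to the filtered colimit over such $E'$ yields the required vanishing. For the inductive step, fix $\alpha\in\Delta$ and consider the Hochschild--Serre spectral sequence
$$E_2^{i,j}=H^i\bigl(\HQpDa,H^j(\HQpa,E^{sep}_\Delta)\bigr)\Rightarrow H^{i+j}(\HQpD,E^{sep}_\Delta)$$
associated to $1\to\HQpa\to\HQpD\to\HQpDa\to 1$. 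By the construction of $\mathcal{O}_{\widehat{\mathcal{E}^{ur}_\Delta}}$ in \S 4.2 of \cite{MultVarGal}, the factor $\HQpa$ acts on $E^{sep}_\Delta$ solely ``in the $\alpha$-variable'' as the one-variable absolute Galois group, so the base case applied in that direction kills $H^j(\HQpa,E^{sep}_\Delta)$ for $j\geq 1$. The invariants $(E^{sep}_\Delta)^{\HQpa}$ decompose topologically as Laurent monomials in $X_\alpha$ with coefficients in $E^{sep}_{\Delta\setminus\{\alpha\}}$ and trivial $\HQpa$-action; the inductive hypothesis for $\HQpDa$ acting on $E^{sep}_{\Delta\setminus\{\alpha\}}$ (applied monomial-wise) therefore kills the row $j=0$ for $i\geq 1$. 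The spectral sequence then gives the claim.

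The main obstacle is the precise identification of $E^{sep}_\Delta$ as an $\HQpa$-module: one must extract from \cite{MultVarGal} the essentially one-variable structure in the $\alpha$-factor, so that Fontaine's classical vanishing applies directly and the resulting $\HQpa$-invariants fit into the inductive framework for $\Delta\setminus\{\alpha\}$. Once this identification is granted, the devissage in $n$ and the Hochschild--Serre degeneration in $|\Delta|$ combine to give the lemma for all $n\geq 1$ and all finite $\Delta$.
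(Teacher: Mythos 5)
Your devissage in $n$ matches the paper's first step exactly. For $n=1$, however, you and the paper diverge: the paper does not run a Hochschild--Serre induction on $|\Delta|$ at all. It writes $E^{sep}_\Delta$ as a filtered colimit of modules $E'_\Delta$ attached to finite subextensions $E'_\alpha/E_\alpha$ and invokes Prop.\ 4.1 of \cite{MultVarGal}, which exhibits each $E'_\Delta$ as an induced $H'$-module, hence $\HQpD$-cohomologically trivial in all positive degrees simultaneously; passing to the colimit gives the lemma in one stroke. Your route through the product decomposition $\HQpD\cong\HQpa\times\HQpDa$ is a genuinely different organization, but as written it has two gaps that are really a single unresolved issue. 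First, the input $H^j(\HQpa,E^{sep}_\Delta)=0$ for $j\geq1$ is \emph{not} the one-variable base case: the coefficient module is $E^{sep}_\Delta$, not $E^{sep}_\alpha$, and one must justify why $\HQpa$-acyclicity of $E^{sep}_\alpha$ propagates to the much larger $E^{sep}_\Delta$. Second, the decomposition of $(E^{sep}_\Delta)^{\HQpa}$ into ``Laurent monomials in $X_\alpha$ with coefficients in $E^{sep}_{\Delta\setminus\{\alpha\}}$'' is a \emph{completed} object, not a direct sum; cohomology of discrete $G$-modules commutes with filtered colimits but not with the $X_\alpha$-adic completions implicit in $\widehat{\mathcal{E}^{ur}_\Delta}$, so ``applying the inductive hypothesis monomial-wise'' is not justified. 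You flag the identification of $E^{sep}_\Delta$ as ``the main obstacle'' but leave it as a promissory note rather than resolving it; once you actually pin down that structure from \cite{MultVarGal}, you will find that the relevant finite-level pieces are already $\HQpD$-induced, at which point the inductive detour becomes unnecessary and you recover the paper's one-line argument. So the proposal is not wrong in spirit, but the step it defers is precisely the content of the proof.
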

\begin{proof}
By the long exact sequence of cohomology (devissage) we are reduced to the case $n=1$. The case $i=1$ is treated in Prop.\ 4.1 in \cite{MultVarGal} and the higher cohomology groups vanish for the same reason: using the notations therein $E'_\Delta$ is cohomologically trivial for all finite extensions $E'_\alpha/E_\alpha$ ($\alpha\in\Delta$) as it is induced as an $H'$-module.
\end{proof}

\begin{pro}\label{HcohomHerr}
The complex $\Phi^\bullet(\mathbb{D}(V))$ computes the $\HQpD$-cohomology of $V$, ie.\ we have $h^i\Phi^\bullet(\mathbb{D}(V))\cong H^i(\HQpD,V)$ as representations of $\Gamma_\Delta$.
\end{pro}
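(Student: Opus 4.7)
The plan is to apply continuous $\HQpD$-cohomology to the quasi-isomorphism $V[0]\to\Phi^\bullet(D^{sep})$ provided by Lemma \ref{augrep}, and then analyse the right-hand side via a hypercohomology spectral sequence. As $\HQpD$ commutes with the partial Frobenii $\varphi_\alpha$ (so taking its invariants commutes with the Koszul differentials) and with finite direct sums, we have the termwise identity $\Phi^\bullet(D^{sep})^{\HQpD}=\Phi^\bullet(\mathbb{D}(V))$. Consequently, provided each term $\Phi^p(D^{sep})$ is $\HQpD$-acyclic, the second hypercohomology spectral sequence
$$E_1^{p,q}=H^q(\HQpD,\Phi^p(D^{sep}))\;\Longrightarrow\;\mathbb{H}^{p+q}(\HQpD,\Phi^\bullet(D^{sep}))$$
degenerates at $E_1$, while the quasi-isomorphism identifies the abutment with $H^{p+q}(\HQpD,V)$. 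This delivers the desired isomorphism $h^n\Phi^\bullet(\mathbb{D}(V))\cong H^n(\HQpD,V)$. All constructions being $\GQpD$-equivariant, the residual action of $\Gamma_\Delta=\GQpD/\HQpD$ agrees on both sides.

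The only remaining step is thus the $\HQpD$-acyclicity of $D^{sep}$. The crucial observation is the alternative description $D^{sep}=\mathcal{O}_{\widehat{\mathcal{E}^{ur}_\Delta}}\otimes_{\mathcal{O}_{\mathcal{E}_\Delta}}\mathbb{D}(V)$ recorded just before Lemma \ref{augrep}: under this identification the $\HQpD$-action is concentrated on the first factor, since by definition $\mathbb{D}(V)=(D^{sep})^{\HQpD}$ is fixed pointwise by $\HQpD$. Passing to filtered colimits I reduce to the case where $V$ is finite and killed by $p^n$, so that $\mathbb{D}(V)$ is finitely presented over the noetherian ring $\mathcal{O}_{\mathcal{E}_\Delta}/p^n$. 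A finite free presentation of $\mathbb{D}(V)$, combined with the flatness of $\mathcal{O}_{\widehat{\mathcal{E}^{ur}_\Delta}}/p^n$ over $\mathcal{O}_{\mathcal{E}_\Delta}/p^n$, reduces the computation of $H^q(\HQpD,D^{sep})$ to finitely many copies of $H^q(\HQpD,\mathcal{O}_{\widehat{\mathcal{E}^{ur}_\Delta}}/p^n)$, which vanish for $q\geq 1$ by Lemma \ref{pncohomtriv}; meanwhile $H^0(\HQpD,D^{sep})=\mathbb{D}(V)$ is recovered on the nose.

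The main obstacle I anticipate is the careful verification that continuous $\HQpD$-cohomology genuinely commutes with the tensor product $\otimes_{\mathcal{O}_{\mathcal{E}_\Delta}/p^n}\mathbb{D}(V)$ in the way needed---that is, that a finite free resolution of $\mathbb{D}(V)$ induces an exact sequence of continuous cochain complexes after tensoring with $\mathcal{O}_{\widehat{\mathcal{E}^{ur}_\Delta}}/p^n$. This should follow from the discreteness of $\mathcal{O}_{\widehat{\mathcal{E}^{ur}_\Delta}}/p^n$ (a union of finite \'etale extensions of $\mathcal{O}_{\mathcal{E}_\Delta}/p^n$) and the finite presentation of $\mathbb{D}(V)$, but it is the one point in the argument requiring actual care.
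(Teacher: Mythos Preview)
Your overall strategy---apply $\HQpD$-cohomology to the resolution $V[0]\to\Phi^\bullet(D^{sep})$ from Lemma~\ref{augrep}, use the hypercohomology spectral sequence, and invoke Lemma~\ref{pncohomtriv} for the acyclicity of the terms---is exactly the paper's approach. The reduction to finite $V$ via filtered colimits is also the same.

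There is, however, a small gap in your acyclicity argument. A finite free \emph{presentation} $F_1\to F_0\to\mathbb{D}(V)\to 0$, even after tensoring exactly via flatness, does not by itself reduce $H^q(\HQpD,D^{sep})$ to copies of $H^q(\HQpD,\mathcal{O}_{\widehat{\mathcal{E}^{ur}_\Delta}}/p^n)$: from the long exact sequence you only get an injection $H^q(\HQpD,D^{sep})\hookrightarrow H^{q+1}(\HQpD,K\otimes\cdot)$ with $K=\Ker(F_0\to\mathbb{D}(V))$, and iterating requires either finite cohomological dimension or an actual finite free \emph{resolution}. The cleaner route, which is what the paper has in mind, is to d\'evissage to the case $pV=0$: then $\mathbb{D}(V)$ is an \'etale $(\varphi_\Delta,\Gamma_\Delta)$-module over $E_\Delta$, hence \emph{free} over $E_\Delta$ by Cor.~3.16 of \cite{MultVarGal}, so $D^{sep}=E_\Delta^{sep}\otimes_{E_\Delta}\mathbb{D}(V)$ is literally a finite direct sum of copies of $E_\Delta^{sep}$ as an $\HQpD$-module, and Lemma~\ref{pncohomtriv} (case $n=1$) applies directly. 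The concern you flag at the end about continuity is not the real issue; the missing ingredient is this freeness.
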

\begin{proof}
At first assume that $V$ is finite. By the definition of $\mathbb{D}(V)$ the complex $\Phi^\bullet(\mathbb{D}(V))$ is the $\HQpD$-invariant part of $\Phi^\bullet(D^{sep})$. However, the terms of $\Phi^\bullet(D^{sep})$ are direct sums of copies of $D^{sep}=E_\Delta^{sep}\otimes_{E_\Delta}\mathbb{D}(V)$ which are acyclic objects for the $\HQpD$-cohomology by Lemma \ref{pncohomtriv}. The statement is deduced from Lemma \ref{augrep}. The general case follows noting that both $h^i\Phi^\bullet(\mathbb{D}(\cdot))$ and $H^i(\HQpD,\cdot)$ commute with filtered direct limits.
\end{proof}

We denote by $C_\Delta$ the torsion subgroup of $\Gamma_\Delta\cong \prod_{\alpha\in\Delta}\Zp^\times$ and put $\HQpD^*$ for the kernel of the composite quotient map $\GQpD\twoheadrightarrow \Gamma_\Delta\twoheadrightarrow \Gamma_\Delta^*:=\Gamma_\Delta/C_\Delta$. Then $C_\Delta$ is isomorphic to $\prod_{\alpha\in\Delta}(\mathbb{Z}/2p\mathbb{Z})^\times$ (which has order prime to $p$ if and only if $p$ is odd). We have
\begin{cor}\label{HQpD*cohom}
The complex $\Phi^\bullet(\mathbb{D}(V)^{C_\Delta})$ computes the $\HQpD^*$-cohomology of $V$.
\end{cor}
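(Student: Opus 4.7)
\textbf{Proof plan for Corollary \ref{HQpD*cohom}.} The strategy is to imitate the proof of Proposition \ref{HcohomHerr} with $\HQpD^*$ in place of $\HQpD$. Since $\HQpD^*/\HQpD\cong C_\Delta$ and $C_\Delta$ acts on $\mathbb{D}(V)$ through its action on $\Gamma_\Delta$, the identification
\begin{equation*}
\mathbb{D}(V)^{C_\Delta}=\bigl((\mathcal{O}_{\widehat{\mathcal{E}^{ur}_\Delta}}\otimes_{\Zp}V)^{\HQpD}\bigr)^{\HQpD^*/\HQpD}=(D^{sep})^{\HQpD^*}
\end{equation*}
is immediate. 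Because each $\gamma\in C_\Delta$ commutes with every partial Frobenius $\varphi_\alpha$, taking $\HQpD^*$-invariants commutes termwise with the differentials of $\Phi^\bullet(D^{sep})$, so $\Phi^\bullet(\mathbb{D}(V)^{C_\Delta})=\Phi^\bullet(D^{sep})^{\HQpD^*}$. By Lemma \ref{augrep}, $\Phi^\bullet(D^{sep})$ is a resolution of $V$ in the category of discrete $p$-primary $\GQpD$-modules. Hence once we establish that each term $D^{sep}$ (and its direct sums, which are the terms of $\Phi^\bullet(D^{sep})$) is $\HQpD^*$-acyclic, the result follows: the complex $\Phi^\bullet(\mathbb{D}(V)^{C_\Delta})=\Phi^\bullet(D^{sep})^{\HQpD^*}$ will compute $H^i(\HQpD^*,V)$ by the standard spectral sequence/dimension-shifting argument already used in Proposition \ref{HcohomHerr}.

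Reducing to $V=\mathbb{Z}/p^n$ via direct limits and filtering, it suffices to verify that $H^i_{cont}(\HQpD^*,\mathcal{O}_{\widehat{\mathcal{E}^{ur}_\Delta}}/p^n)=0$ for all $i\geq 1$ and all $n\geq 1$. Apply the Hochschild--Serre spectral sequence attached to $1\to\HQpD\to\HQpD^*\to C_\Delta\to 1$:
\begin{equation*}
E_2^{a,b}=H^a\bigl(C_\Delta,H^b(\HQpD,\mathcal{O}_{\widehat{\mathcal{E}^{ur}_\Delta}}/p^n)\bigr)\Longrightarrow H^{a+b}(\HQpD^*,\mathcal{O}_{\widehat{\mathcal{E}^{ur}_\Delta}}/p^n).
\end{equation*}
By Lemma \ref{pncohomtriv} the rows with $b\geq 1$ vanish and $H^0(\HQpD,\mathcal{O}_{\widehat{\mathcal{E}^{ur}_\Delta}}/p^n)=\mathcal{O}_{\mathcal{E}_\Delta}/p^n$, so the spectral sequence degenerates into $H^a(\HQpD^*,\mathcal{O}_{\widehat{\mathcal{E}^{ur}_\Delta}}/p^n)=H^a(C_\Delta,\mathcal{O}_{\mathcal{E}_\Delta}/p^n)$. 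For $p$ odd, $|C_\Delta|=(p-1)^{|\Delta|}$ is prime to $p$, so the higher cohomology of a $p$-primary coefficient module vanishes, giving the required acyclicity; for $p=2$ one invokes the $2$-adic treatment of section \ref{p=2}.

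Having verified $\HQpD^*$-acyclicity of the terms, the corollary now follows exactly as in Proposition \ref{HcohomHerr}: apply $(-)^{\HQpD^*}$ to the resolution $\Phi^\bullet(D^{sep})$ of $V$ and use that the terms are $\HQpD^*$-acyclic. The main obstacle is the acyclicity statement above, whose crux is the vanishing of $H^a(C_\Delta,\mathcal{O}_{\mathcal{E}_\Delta}/p^n)$ for $a\geq 1$; this is the only point where the odd/even $p$ dichotomy enters, and it is handled by the order argument or by the auxiliary analysis performed for $p=2$ later in the paper. The compatibility with the $\Gamma_\Delta^*=\Gamma_\Delta/C_\Delta$-action is automatic from functoriality of all constructions involved.
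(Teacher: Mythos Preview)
Your approach is correct in spirit and close to what the paper does, but the two routes differ in where Hochschild--Serre is applied, and your reduction step has a small imprecision worth flagging.

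The paper, for $p$ odd, applies Hochschild--Serre directly to the coefficient module $V$ and uses Proposition~\ref{HcohomHerr} as a black box: since $|C_\Delta|$ is prime to $p$, the functor $(-)^{C_\Delta}$ is exact on $p$-primary groups, so $H^i(\HQpD^*,V)\cong H^i(\HQpD,V)^{C_\Delta}\cong \bigl(h^i\Phi^\bullet(\mathbb{D}(V))\bigr)^{C_\Delta}\cong h^i\Phi^\bullet(\mathbb{D}(V)^{C_\Delta})$. Your route instead re-runs the argument of Proposition~\ref{HcohomHerr} with $\HQpD^*$ in place of $\HQpD$, i.e.\ you aim to show the terms $D^{sep}$ are $\HQpD^*$-acyclic. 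This is exactly what the paper does in the $p=2$ section (Lemma~\ref{pncohomtrivp2} together with Proposition~\ref{inducep2}), so your argument is really a uniform treatment modelled on the harder case.

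The imprecision: your line ``reducing to $V=\mathbb{Z}/p^n$ via direct limits and filtering'' does not quite work as stated, because as an $\HQpD^*$-module (not merely an $\HQpD$-module) $D^{sep}=\mathcal{O}_{\widehat{\mathcal{E}^{ur}_\Delta}}\otimes_{\Zp}V$ is not a sum of copies of $\mathcal{O}_{\widehat{\mathcal{E}^{ur}_\Delta}}/p^n$; the quotient $C_\Delta$ may act nontrivially on $V$. The fix is immediate: apply Hochschild--Serre directly to $D^{sep}$ itself. Lemma~\ref{pncohomtriv} (and the fact that $\HQpD$ acts trivially on $V$) gives $H^b(\HQpD,D^{sep})=0$ for $b\geq 1$, so the spectral sequence collapses to $H^i(\HQpD^*,D^{sep})\cong H^i(C_\Delta,\mathbb{D}(V))$, which vanishes for $i\geq 1$ when $p$ is odd because $\mathbb{D}(V)$ is $p$-primary and $|C_\Delta|$ is prime to $p$. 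With this correction your argument goes through; for $p=2$ you correctly defer to section~\ref{p=2}, where precisely this $\HQpD^*$-acyclicity (plus the analogue of the ``direct sum of copies'' statement, Proposition~\ref{inducep2}) is established by a separate argument.
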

\begin{proof}
In case $p$ is odd this follows from the Hochschild--Serre spectral sequence using Prop.\ \ref{HcohomHerr} since $C_\Delta$ is prime to $p$ and therefore has $p$-cohomological dimension $0$. The proof in case $p=2$ is postponed to section \ref{p=2} below.
\end{proof}

We choose topological generators $\gamma_\alpha\in \Gamma_\alpha^*:=\Gamma_\alpha/(\Gamma_\alpha\cap C_\Delta)$ for each $\alpha\in\Delta$. If $A$ is an arbitrary (for now abstract) representation of the group $\Gamma_\Delta^*\cong \prod_{\alpha\in\Delta}\Zp$ on a $\Zp$-module we denote by $\Gamma_\Delta^\bullet(A)$ the cochain complex
\begin{align*}
\Gamma_\Delta^\bullet(A)\colon 0\to A\to \bigoplus_{\alpha\in\Delta}A\to \dots\to \bigoplus_{\{\alpha_1,\dots,\alpha_r\}\in \binom{\Delta}{r}}A\to\dots \to A\to 0
\end{align*}
analogous to $\Phi^\bullet(\cdot)$ where for all $0\leq r\leq |\Delta|-1$ the map $d_{\alpha_1,\dots,\alpha_r}^{\beta_1,\dots,\beta_{r+1}}\colon A\to A$ from the component in the $r$th term corresponding to $\{\alpha_1,\dots,\alpha_r\}\subseteq \Delta$ to the component corresponding to the $(r+1)$-tuple $\{\beta_1,\dots,\beta_{r+1}\}\subseteq\Delta$ is given by
\begin{equation*}
d_{\alpha_1,\dots,\alpha_r}^{\beta_1,\dots,\beta_{r+1}}=\begin{cases}0&\text{if }\{\alpha_1,\dots,\alpha_r\}\not\subseteq\{\beta_1,\dots,\beta_{r+1}\}\\ (-1)^{\varepsilon}(\id-\gamma_\beta)&\text{if }\{\beta_1,\dots,\beta_{r+1}\}=\{\alpha_1,\dots,\alpha_r\}\cup\{\beta\}\ ,\end{cases}
\end{equation*}
where $\varepsilon=\varepsilon(\alpha_1,\dots,\alpha_r,\beta)$ is the number of elements in the set $\{\alpha_1,\dots,\alpha_r\}$ smaller than $\beta$.

\begin{lem}\label{deltafunctGamma}
The functors $A\mapsto h^n\Gamma_\Delta^\bullet(A)$ ($n\geq 0$) form a cohomological $\delta$-functor. Moreover, if $A$ is a discrete abelian group with continuous $\Gamma_\Delta^*$-action, then we have $h^0\Gamma_\Delta^\bullet(A)=A^{\Gamma_\Delta^*}$.
\end{lem}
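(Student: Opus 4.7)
The proof splits cleanly into two essentially formal parts.

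For the $\delta$-functor statement, I would note that the assignment $A \mapsto \Gamma_\Delta^\bullet(A)$ is manifestly an exact functor from the category of $\Gamma_\Delta^*$-modules to the category of cochain complexes of abelian groups: in each degree $r$, the term is the direct sum $\bigoplus_{\{\alpha_1,\dots,\alpha_r\} \in \binom{\Delta}{r}} A$, which depends on $A$ through an exact functor, and the differentials $d_{\alpha_1,\dots,\alpha_r}^{\beta_1,\dots,\beta_{r+1}}$ are $\mathbb{Z}$-linear combinations of $\id$ and the $\gamma_\beta$ and so are natural in $A$. Hence any short exact sequence $0 \to A' \to A \to A'' \to 0$ of $\Gamma_\Delta^*$-modules induces a short exact sequence of cochain complexes $0 \to \Gamma_\Delta^\bullet(A') \to \Gamma_\Delta^\bullet(A) \to \Gamma_\Delta^\bullet(A'') \to 0$. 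The associated long exact sequence in cohomology supplies the connecting maps, and naturality of the snake lemma gives the compatibility axiom of a $\delta$-functor.

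For the identification $h^0\Gamma_\Delta^\bullet(A) = A^{\Gamma_\Delta^*}$, I would unwind the differential in degree zero: it sends $a \in A$ to $\bigoplus_{\alpha \in \Delta}(\id - \gamma_\alpha)(a)$, so its kernel is $\{a \in A \mid \gamma_\alpha a = a \text{ for all } \alpha \in \Delta\}$. The inclusion $A^{\Gamma_\Delta^*} \subseteq \ker d^0$ is obvious. For the reverse inclusion, suppose $a$ is fixed by every $\gamma_\alpha$. Then $a$ is fixed by the abstract subgroup generated by the $\gamma_\alpha$, which is dense in $\Gamma_\Delta^* \cong \prod_{\alpha\in\Delta}\mathbb{Z}_p$ by the choice of topological generators. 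Since $A$ is discrete and the action is continuous, the stabilizer $\mathrm{Stab}_{\Gamma_\Delta^*}(a)$ is an open (hence closed) subgroup of $\Gamma_\Delta^*$; containing a dense subgroup, it must be all of $\Gamma_\Delta^*$, giving $a \in A^{\Gamma_\Delta^*}$.

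Neither step presents a genuine obstacle: the $\delta$-functor assertion is purely formal since the Koszul-type complex is built by an exact functor of $A$, and the $h^0$ assertion is a routine topological-density argument that exploits discreteness of $A$ together with continuity of the action. The only mild subtlety, worth flagging, is that for the $\delta$-functor part no topology on $A$ is needed (this matches the statement, which says ``arbitrary (for now abstract) representation''), whereas for the computation of $h^0$ the hypotheses of continuity and discreteness are both essential in order to promote invariance under the topological generators $\gamma_\alpha$ to invariance under all of $\Gamma_\Delta^*$.
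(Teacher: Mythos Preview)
Your proof is correct and matches the paper's argument essentially verbatim: the $\delta$-functor part is the same observation that $A\mapsto\Gamma_\Delta^\bullet(A)$ is degreewise exact, and your density/closed-stabilizer argument for $h^0$ is just a rephrasing of the paper's remark that the action locally factors through a finite quotient of $\Gamma_\Delta^*$ in which the $\gamma_\alpha$ already generate.
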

\begin{proof}
Given a short exact sequence $0\to A\to B\to C\to 0$ of representations of $\Gamma_\Delta$, we obtain a short exact sequence of cochain complexes $0\to \Gamma_\Delta^\bullet(A)\to \Gamma_\Delta^\bullet(B)\to \Gamma_\Delta^\bullet(C)\to 0$ whose long exact sequence yields maps $\delta^n\colon h^n\Gamma_\Delta^\bullet(C)\to h^{n+1}\Gamma_\Delta^\bullet(A)$ that are functorial in the short exact sequence $0\to A\to B\to C\to 0$.

For the second statement note that the action of $\Gamma_\Delta^*$ on $A$ locally factors through a finite quotient. Therefore $A^{\Gamma_\Delta^*}=\bigcap_{\alpha\in\Delta}\Ker(\id-\gamma_\alpha)=h^0\Gamma_\Delta^\bullet(A)$ as the classes of the elements $\gamma_\alpha$ ($\alpha\in\Delta$) generate any finite quotient of $\Gamma_\Delta^*$.
\end{proof}

\begin{pro}\label{Gamma*cohom}
Assume that $A$ is a discrete $p$-primary abelian group on which $\Gamma_\Delta^*$ acts continuously. Then the complex $\Gamma_\Delta^\bullet(A)$ computes the $\Gamma_\Delta^*$-cohomology of $A$, ie.\ we have $h^i\Gamma_\Delta^\bullet(A)\cong H^i_{cont}(\Gamma_\Delta^*,A)$ for all $i\geq 0$.
\end{pro}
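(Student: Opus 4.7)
The plan is to compare the cohomological $\delta$-functor $A\mapsto h^n\Gamma_\Delta^\bullet(A)$ with continuous group cohomology $H^n_{\mathrm{cont}}(\Gamma_\Delta^*, A)$ on the category $\mathcal{C}$ of discrete $p$-primary continuous $\Gamma_\Delta^*$-modules. Both are cohomological $\delta$-functors by Lemma \ref{deltafunctGamma} (and the standard theory of continuous group cohomology for $H^n_{\mathrm{cont}}$) and they agree in degree zero, both equalling $A\mapsto A^{\Gamma_\Delta^*}$. Since $\mathcal{C}$ is a Grothendieck abelian category with enough injectives and $H^n_{\mathrm{cont}}(\Gamma_\Delta^*,-)$ is the universal $\delta$-functor extending $(\cdot)^{\Gamma_\Delta^*}$, it suffices to check that $A\mapsto h^n\Gamma_\Delta^\bullet(A)$ is effaceable for $n\geq 1$; the two $\delta$-functors then coincide canonically.

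Rather than invoking abstract injectives I would use an explicit coinduced module. For $A\in\mathcal{C}$, let $A_{\mathrm{triv}}$ denote the underlying discrete $p$-primary abelian group and set
\begin{equation*}
J:=\Hom^{\mathrm{cont}}(\Gamma_\Delta^*,A_{\mathrm{triv}})
\end{equation*}
with $\Gamma_\Delta^*$ acting by right translation. The map $a\mapsto(g\mapsto g\cdot a)$ is an injection $A\hookrightarrow J$ in $\mathcal{C}$. Shapiro's lemma for continuous cohomology of profinite groups gives $H^n_{\mathrm{cont}}(\Gamma_\Delta^*,J)=0$ for $n\geq 1$; moreover, fixing $\alpha\in\Delta$ and writing $\Delta'=\Delta\setminus\{\alpha\}$, the product decomposition $\Gamma_\Delta^*=\Gamma_{\Delta'}^*\times\Gamma_\alpha^*$ combined with exponential adjunction exhibits $J$, viewed as a $\Gamma_{\Delta'}^*$-module, as coinduced from the trivial subgroup of $\Gamma_{\Delta'}^*$ with fibre $\Hom^{\mathrm{cont}}(\Gamma_\alpha^*,A_{\mathrm{triv}})$, whence $H^n_{\mathrm{cont}}(\Gamma_{\Delta'}^*,J)=0$ as well for $n\geq 1$.

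To prove $h^n\Gamma_\Delta^\bullet(J)=0$ for $n\geq 1$ I would induct on $|\Delta|$. The case $|\Delta|=1$ is classical: for $\Gamma_\alpha^*\cong\Zp$ topologically generated by $\gamma_\alpha$, the two-term complex $A\xrightarrow{\id-\gamma_\alpha}A$ computes $H^*_{\mathrm{cont}}(\Zp,A)$ on discrete $p$-primary modules. For the inductive step, inspection of the Koszul differentials identifies $\Gamma_\Delta^\bullet(A)$ with the total complex of the bicomplex whose two rows are both $\Gamma_{\Delta'}^\bullet(A)$ and whose vertical differential is $\id-\gamma_\alpha$, equivalently with the mapping cone of $\id-\gamma_\alpha$ acting on $\Gamma_{\Delta'}^\bullet(A)$. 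Applied to $J$, the induction hypothesis together with the vanishing established above yields $h^q\Gamma_{\Delta'}^\bullet(J)\cong H^q_{\mathrm{cont}}(\Gamma_{\Delta'}^*,J)=0$ for $q\geq 1$, so the bicomplex spectral sequence collapses at $E_2$ and gives
\begin{equation*}
h^n\Gamma_\Delta^\bullet(J)\ \cong\ h^n\bigl(J^{\Gamma_{\Delta'}^*}\xrightarrow{\id-\gamma_\alpha}J^{\Gamma_{\Delta'}^*}\bigr).
\end{equation*}
This vanishes for $n\geq 2$ and, for $n=1$, equals $H^1_{\mathrm{cont}}(\Gamma_\alpha^*,J^{\Gamma_{\Delta'}^*})$ by the base case. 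Since $J^{\Gamma_{\Delta'}^*}\cong\Hom^{\mathrm{cont}}(\Gamma_\alpha^*,A_{\mathrm{triv}})$ is itself coinduced from the trivial subgroup of $\Gamma_\alpha^*$, a final application of Shapiro kills this term, completing the effaceability of $h^n\Gamma_\Delta^\bullet$ for $n\geq 1$.

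The main technical point to be handled carefully is the bicomplex identification: verifying that the Koszul differentials defining $\Gamma_\Delta^\bullet$ really decompose as claimed with respect to the splitting $\Delta=\Delta'\sqcup\{\alpha\}$ (up to the correct signs given by the $\varepsilon$ combinatorics), so that the inductive unwinding produces the two-term complex displayed above. Beyond this mechanical bookkeeping, the argument is a clean instance of the effaceability paradigm and parallels the proof of Proposition \ref{HcohomHerr}.
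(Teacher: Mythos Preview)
Your argument is correct and follows essentially the same strategy as the paper's proof: both establish the base case $|\Delta|=1$ directly, then use the double-complex (mapping-cone) decomposition $\Gamma_\Delta^\bullet=\mathrm{Tot}\,\Gamma_\alpha^\bullet(\Gamma_{\Delta'}^\bullet)$ together with induction on $|\Delta|$ to verify acyclicity on suitable objects, and finally invoke universality of the $\delta$-functor $H^\bullet_{\mathrm{cont}}(\Gamma_\Delta^*,-)$. The only difference is cosmetic---the paper works with abstract injectives in $\mathcal{C}$ and uses that their $\Gamma_{\Delta'}^*$-invariants remain injective as $\Gamma_\alpha^*$-modules, whereas you make this concrete via coinduced modules and Shapiro's lemma; these are two ways of saying the same thing.
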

\begin{proof}
The case $|\Delta|=1$ is well-known, see for example exercise 2.2 in \cite{G}. However, for the convenience of the reader we give a proof even in this case. We proceed in $4$ steps.

\emph{Step 1:} Assume that $A$ is a finite abelian $p$-group and $|\Delta|=1$. Then the complex $\Gamma_\Delta^\bullet(A)$ reads $0\to A\overset{\id-\gamma}{\to} A\to 0$. Since $\Gamma_\Delta^*$ is generated topologically by $\gamma$ and acts on $A$ via a finite quotient, we have $H^0_{cont}(\Gamma_\Delta^*,A)=\Ker(\id-\gamma)$. Now recall that the continuous cohomology $H^1_{cont}(\Gamma_\Delta^*,A)$ is defined as $\varinjlim_n H^1(\Gamma_\Delta^*/\Gamma_{\Delta,n}^*,A^{\Gamma_{\Delta,n}^*})$ where $\Gamma_{\Delta,n}^*$ is the unique subgroup in $\Gamma_\Delta^*$ of index $p^n$. Since $A$ is finite, we have $A^{\Gamma_{\Delta,n}^*}=A$ for $n$ large enough. Now for the cohomology of the cyclic group $\Gamma_\Delta^*/\Gamma_{\Delta,n}^*$ we have $H^1(\Gamma_\Delta^*/\Gamma_{\Delta,n}^*,A)=\Ker(N)/\mathrm{Im}(\id-\gamma)$ where $N=\sum_{i=0}^{p^n-1}\gamma^i\colon A\to A$ is the norm map. Again, if $n$ is large enough, then even $\Gamma_{\Delta,n-k}^*$ acts trivially on $A$ where $|A|=p^k$ whence $N=p^k\sum_{i=0}^{p^{n-k}-1}\gamma^i$ is the zero map on $A$. The statement follows noting that all the other cohomology groups vanish as $\Zp$ has $p$-cohomological dimension $1$.

\emph{Step 2:} Assume that $A$ is any discrete $p$-primary abelian group and $|\Delta|=1$. By the continuity of the action of $\Gamma_\Delta^*$, $A$ is a direct limit of its finite $\Gamma_\Delta^*$-invariant subgroups and the statement follows from Step $1$ noting that both $H^n(\Gamma_\Delta^*,\cdot)$ and $h^n\Gamma^\bullet_\Delta(\cdot)$ commute with filtered direct limits.

\emph{Step 3:} Assume that $A$ is an  injective object in the category of discrete $p$-primary abelian groups with continuous $\Gamma_\Delta^*$-action and $|\Delta|>0$ arbitrary. We proceed by induction on $|\Delta|$. For a fixed element $\alpha\in\Delta$ consider the double complex $\Gamma_\alpha^\bullet(\Gamma_{\Delta\setminus\{\alpha\}}^\bullet(A))$ whose total complex is the cochain complex $\Gamma_\Delta^\bullet(A)$ by definition. There is a spectral sequence
$$E^{pq}_2=h^p\Gamma_\alpha^\bullet(h^q\Gamma_{\Delta\setminus\{\alpha\}}^\bullet(A))\Rightarrow h^{p+q}\Gamma_\Delta^\bullet(A)$$
associated to this double complex. By induction, $\Gamma_{\Delta\setminus\{\alpha\}}^\bullet(A)$ is acyclic in nonzero degrees with zeroth cohomology isomorphic to $H^0_{cont}(\Gamma_{\Delta\setminus\{\alpha\}}^*,A)$ which is an injective object in  the category of discrete $p$-primary abelian groups with continuous $\Gamma_\alpha^*$-action. Hence the spectral sequence degenerates at $E_1$ and $\Gamma_\Delta^\bullet(A)$ is acyclic outside degree zero where its cohomology is $H^0_{cont}(\Gamma^*_\Delta,A)$ by Step 1.

\emph{Step 4:} By Lemma \ref{deltafunctGamma} we have $H^0_{cont}(\Gamma^*_\Delta,\cdot)\cong h^0\Gamma_\Delta^\bullet(\cdot)$, so there is a unique map $H^n_{cont}(\Gamma^*_\Delta,\cdot)\to h^n\Gamma^\bullet_\Delta(\cdot)$ of cohomological $\delta$-functors as $H^n_{cont}(\Gamma^*_\Delta,\cdot)$ is a universal $\delta$-functor. The statement follows from Step 3 by dimension shifting.
\end{proof}

Now let $D$ be any object in $\varinjlim\mathcal{D}^{et}_{tors}(\varphi_{\Delta},\Gamma_\Delta,\mathcal{O}_{\mathcal{E}_\Delta})$. We define the cochain complex $\Phi\Gamma_\Delta^\bullet(D)$ as the total complex of the double complex $\Gamma_\Delta^\bullet(\Phi^\bullet(D^{C_\Delta}))$ and call it the \emph{Herr-complex} of $D$.

\begin{lem}\label{deltafunctPhiGamma}
The functors $(h^n\Phi\Gamma_\Delta^\bullet(\cdot))_{n\geq 0}$ form a cohomological $\delta$-functor from the category $\varinjlim\mathcal{D}^{et}_{tors}(\varphi_{\Delta},\Gamma_\Delta,\mathcal{O}_{\mathcal{E}_\Delta})$ to the category $\mathfrak{Ab}$ of abelian groups. Moreover, if $V$ is an object in $\operatorname{Rep}_{\Zp-tors}^{discr}(\GQpD)$, then we have $h^0\Phi\Gamma_\Delta^\bullet(\mathbb{D}(V))=V^{\GQpD}$.
\end{lem}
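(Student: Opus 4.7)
The plan is to handle the two assertions separately: the $\delta$-functor structure by a short-exact-sequence-of-complexes argument, and the identification of $h^0$ by unwinding the definition of the total complex and then invoking the equivalence $\mathbb{D}\leftrightarrow\mathbb{V}$.

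For the $\delta$-functor assertion, given a short exact sequence $0\to D'\to D\to D''\to 0$ in $\varinjlim\mathcal{D}^{et}_{tors}(\varphi_{\Delta},\Gamma_\Delta,\OED)$, I would first observe that the functor $D\mapsto D^{C_\Delta}$ is exact on the underlying $p$-primary abelian groups: when $p$ is odd, $|C_\Delta|=(p-1)^{|\Delta|}$ is prime to $p$, so averaging over $C_\Delta$ is a well-defined functorial retraction of $D^{C_\Delta}\hookrightarrow D$; the case $p=2$ requires the analysis postponed to Section \ref{p=2}. Both $\Phi^\bullet$ and $\Gamma_\Delta^\bullet$ are, in each cohomological degree, just the functor $M\mapsto\bigoplus_{S\subseteq\Delta,\,|S|=r}M$, so they preserve short exact sequences termwise. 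Thus we obtain a short exact sequence of double complexes $\Gamma_\Delta^\bullet(\Phi^\bullet((\cdot)^{C_\Delta}))$ and hence of their total complexes $\Phi\Gamma_\Delta^\bullet(\cdot)$, and the standard long exact sequence in cohomology supplies the connecting morphisms $\delta^n\colon h^n\Phi\Gamma_\Delta^\bullet(D'')\to h^{n+1}\Phi\Gamma_\Delta^\bullet(D')$, functorial in the short exact sequence.

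For the computation of $h^0$, I would read off from the definition that the degree-zero term of the total complex is $\mathbb{D}(V)^{C_\Delta}$ and that its differential into degree one is the collection $\bigl((\id-\varphi_\alpha),(\id-\gamma_\alpha)\bigr)_{\alpha\in\Delta}$. Hence
\[
h^0\Phi\Gamma_\Delta^\bullet(\mathbb{D}(V))=\bigcap_{\alpha\in\Delta}\ker(\id-\varphi_\alpha)\cap\bigcap_{\alpha\in\Delta}\ker(\id-\gamma_\alpha)
\]
inside $\mathbb{D}(V)^{C_\Delta}$. Since $\mathbb{D}(V)$ is a filtered direct limit of $p$-power torsion étale $(\varphi_\Delta,\Gamma_\Delta)$-modules, the $\Gamma_\Delta^*$-action on $\mathbb{D}(V)^{C_\Delta}$ is discrete and continuous, and the observation of Lemma \ref{deltafunctGamma} identifies the joint kernel of the $\id-\gamma_\alpha$ with $(\mathbb{D}(V)^{C_\Delta})^{\Gamma_\Delta^*}=\mathbb{D}(V)^{\Gamma_\Delta}$. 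Using $\mathbb{D}(V)=(D^{sep})^{\HQpD}$ together with $\GQpD/\HQpD=\Gamma_\Delta$, this equals $(D^{sep})^{\GQpD}$. Finally, since the $\GQpD$-action on $D^{sep}=\mathcal{O}_{\widehat{\mathcal{E}^{ur}_\Delta}}\otimes_{\Zp}V$ commutes with each partial Frobenius $\varphi_\alpha$, fixed-point operations can be interchanged, and the equivalence $\mathbb{V}(\mathbb{D}(V))=(D^{sep})^{\varphi_\alpha=\id\,\forall\alpha}=V$ recalled at the start of the section yields $h^0=V^{\GQpD}$.

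The only point that is not pure bookkeeping is the exactness of $(\cdot)^{C_\Delta}$: for odd $p$ it is immediate from order considerations, while the $p=2$ case inherits its subtlety from the need to modify the Herr complex and is precisely what Section \ref{p=2} is designed to address; everything else rests on general nonsense plus the already-established equivalence $\mathbb{D}\leftrightarrow\mathbb{V}$.
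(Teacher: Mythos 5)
Your treatment of the $\delta$-functor assertion is exactly the paper's: both form the short exact sequence of total complexes and take the long exact sequence. (The extra observation that exactness of $(\cdot)^{C_\Delta}$ requires care for $p=2$ is correct and is the content the paper records as Cor.~\ref{CDeltaexact}, though the paper leaves it implicit at this point.)

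For the identification of $h^0$ you take a genuinely different route, and this introduces a small but real slip. The paper applies Cor.~\ref{HQpD*cohom} first, so that the module $h^0\Phi^\bullet(\mathbb{D}(V)^{C_\Delta})=V^{\HQpD^*}$ on which it then applies Prop.~\ref{Gamma*cohom} (in degree $0$) is \emph{discrete} as a $\Gamma_\Delta^*$-module, and the hypotheses of that proposition are satisfied. You instead intersect the $\gamma$-kernels first, directly on $\mathbb{D}(V)^{C_\Delta}$, and invoke Lemma~\ref{deltafunctGamma} there. But the $\Gamma_\Delta^*$-action on $\mathbb{D}(V)^{C_\Delta}$ is \emph{not} discrete: for instance, the stabiliser of $X_\alpha e$ (with $e$ a $\Gamma$-fixed basis vector) is trivial, hence not open, so the ``locally through a finite quotient'' hypothesis in the proof of Lemma~\ref{deltafunctGamma} fails. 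Your conclusion $\bigcap_\alpha\ker(\id-\gamma_\alpha)=(\mathbb{D}(V)^{C_\Delta})^{\Gamma_\Delta^*}$ is nonetheless correct, but for a different reason: the $\Gamma_\Delta^*$-action on $\mathbb{D}(V)^{C_\Delta}$ (in the weak/direct-limit topology) is continuous, so stabilisers are closed, and a closed subgroup of $\Gamma_\Delta^*\cong\Zp^\Delta$ containing the topological generators $\gamma_\alpha$ is all of $\Gamma_\Delta^*$. With that substitution for the citation of Lemma~\ref{deltafunctGamma}, the rest of your argument — passing to $(D^{sep})^{\GQpD}$ via $\mathbb{D}(V)=(D^{sep})^{\HQpD}$ and then commuting the $\varphi$-fixed-point and $\GQpD$-invariant functors to land on $V^{\GQpD}$ via $\mathbb{V}(\mathbb{D}(V))=V$ — is sound. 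The upshot: your unwinding avoids citing Cor.~\ref{HQpD*cohom} and is more self-contained, at the cost of needing the explicit continuity argument that the paper's order of operations sidesteps.
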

\begin{proof}
Given a short exact sequence $0\to D_1\to D_2\to D_3\to 0$ in $\varinjlim\mathcal{D}^{et}_{tors}(\varphi_{\Delta},\Gamma_\Delta,\mathcal{O}_{\mathcal{E}_\Delta})$, we obtain a short exact sequence of cochain complexes $0\to \Phi\Gamma_\Delta^\bullet(D_1)\to \Phi\Gamma_\Delta^\bullet(D_2)\to \Phi\Gamma_\Delta^\bullet(D_3)\to 0$ whose long exact sequence yields maps $\delta^n\colon h^n\Phi\Gamma_\Delta^\bullet(D_3)\to h^{n+1}\Phi\Gamma_\Delta^\bullet(D_1)$ that are functorial in the short exact sequence $0\to D_1\to D_2\to D_3\to 0$.

The second statement is a combination of Cor.\ \ref{HQpD*cohom} and Prop.\ \ref{Gamma*cohom} (both only used in degree $0$).
\end{proof}

\begin{thm}\label{herrcomplexmodp}
Let $V$ be an object in $\operatorname{Rep}_{\Zp-tors}^{discr}(\GQpD)$. The Herr complex $\Phi\Gamma_\Delta^\bullet(\mathbb{D}(V))$ computes the Galois cohomology of $\GQpD$ with coefficients in $V$, ie.\ we have an isomorphism $H^i(\GQpD,V)\cong h^i\Phi\Gamma_\Delta^\bullet(\mathbb{D}(V))$ natural in $V$ for all $i\geq 0$.
\end{thm}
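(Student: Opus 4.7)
The plan is to realise both $H^n(\GQpD,-)$ and $h^n\Phi\Gamma_\Delta^\bullet(\mathbb{D}(-))$ as cohomological $\delta$-functors on $\operatorname{Rep}_{\Zp-tors}^{discr}(\GQpD)$ that agree in degree zero, and then to upgrade the resulting comparison morphism to an isomorphism by a standard effaceability argument. The category $\operatorname{Rep}_{\Zp-tors}^{discr}(\GQpD)$ is a Grothendieck abelian category, hence has enough injectives; consequently $H^n(\GQpD,-)$, as the right derived functors of $V\mapsto V^{\GQpD}$, form a universal $\delta$-functor. By Lemma \ref{deltafunctPhiGamma} the sequence $(h^n\Phi\Gamma_\Delta^\bullet(\mathbb{D}(-)))_{n\geq 0}$ is also a $\delta$-functor with $h^0\Phi\Gamma_\Delta^\bullet(\mathbb{D}(V))=V^{\GQpD}$, so universality supplies a unique morphism of $\delta$-functors $\eta^n\colon H^n(\GQpD,V)\to h^n\Phi\Gamma_\Delta^\bullet(\mathbb{D}(V))$ which is the identity in degree zero and functorial in $V$.

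It then suffices to check that $\eta^n$ is an isomorphism for every $V$, and for this I would use dimension shifting: it is enough to show that $h^n\Phi\Gamma_\Delta^\bullet(\mathbb{D}(V))=0$ for $n\geq 1$ whenever $V$ is injective in $\operatorname{Rep}_{\Zp-tors}^{discr}(\GQpD)$. To see this, consider the first-quadrant double complex $\Gamma_\Delta^\bullet(\Phi^\bullet(\mathbb{D}(V)^{C_\Delta}))$ whose total complex is $\Phi\Gamma_\Delta^\bullet(\mathbb{D}(V))$. Taking cohomology of the columns first and invoking Cor.\ \ref{HQpD*cohom}, the $q$-th row cohomology is identified with $H^q(\HQpD^*,V)$, which is a discrete $p$-primary abelian group carrying a continuous action of $\Gamma_\Delta^*$; applying Prop.\ \ref{Gamma*cohom} in the remaining direction produces a convergent spectral sequence
\begin{equation*}
E_2^{p,q}=H^p(\Gamma_\Delta^*,H^q(\HQpD^*,V))\ \Longrightarrow\ h^{p+q}\Phi\Gamma_\Delta^\bullet(\mathbb{D}(V)),
\end{equation*}
whose $E_2$-page is formally the Hochschild--Serre page for the extension $1\to \HQpD^*\to \GQpD\to \Gamma_\Delta^*\to 1$.

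When $V$ is injective in $\operatorname{Rep}_{\Zp-tors}^{discr}(\GQpD)$, restriction to the closed normal subgroup $\HQpD^*$ yields an injective (hence acyclic) object in the analogous category, because the continuous coinduction $\operatorname{coInd}_{\HQpD^*}^{\GQpD}$ is exact and its right adjoint, restriction, therefore preserves injectives; this forces $H^q(\HQpD^*,V)=0$ for $q\geq 1$. Similarly, $V^{\HQpD^*}$ is injective as a discrete continuous $\Gamma_\Delta^*$-module, being the image of the injective $V$ under a right adjoint to an exact functor, so $H^p(\Gamma_\Delta^*,V^{\HQpD^*})=0$ for $p\geq 1$. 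Thus $E_2^{p,q}$ is concentrated at $(0,0)$ and the abutment vanishes in positive degrees, which establishes the required effaceability; naturality of the resulting isomorphism in $V$ is automatic from the universal property.

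The main technical obstacle is the careful adjunction argument needed to verify that injectivity survives restriction to the closed (non-open) subgroup $\HQpD^*$ and passage to $\HQpD^*$-fixed points in the discrete continuous setting. Should these standard facts be considered insufficient, an alternative bypass is to construct explicitly a morphism from the double complex spectral sequence above to the Hochschild--Serre spectral sequence of $1\to \HQpD^*\to \GQpD\to \Gamma_\Delta^*\to 1$, using a Cartan--Eilenberg resolution of an injective resolution of $V$, and to verify that it is an isomorphism on $E_2$; this would force $\eta^n$ to be an isomorphism on the abutments without any appeal to injectivity preservation.
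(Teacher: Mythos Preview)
Your approach is essentially the same as the paper's: both obtain the comparison morphism from universality of $H^n(\GQpD,-)$, then use the spectral sequence of the double complex $\Gamma_\Delta^\bullet(\Phi^\bullet(\mathbb{D}(V)^{C_\Delta}))$ together with Cor.~\ref{HQpD*cohom} and Prop.~\ref{Gamma*cohom} to show vanishing on injectives, and conclude by dimension shifting.

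One correction is needed in your justification. For a closed subgroup $H\leq G$ of profinite groups and discrete modules, the Frobenius reciprocity adjunction reads $\Hom_G(X,\operatorname{CoInd}_H^G A)\cong\Hom_H(\operatorname{Res}_H^G X,A)$, so restriction is the \emph{left} adjoint to coinduction, not the right adjoint. Thus exactness of coinduction does not by itself give that restriction preserves injectives. The conclusion you want is nonetheless a standard fact in profinite group cohomology (see e.g.\ \cite{N}, Ch.~I): it is proved using the existence of a continuous section of $G\to H\backslash G$, which lets one identify $\operatorname{CoInd}_1^G(A)|_H$ with a coinduced $H$-module, whence injectives restrict to injectives. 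Your second adjunction argument, that $(-)^{\HQpD^*}$ is right adjoint to the exact inflation functor and therefore preserves injectives, is correct as stated. The paper's proof invokes both of these facts without further comment, so modulo this fix your proposal matches it.
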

\begin{proof}
Since $(H^n(\GQpD,\cdot))_{n\geq 0}$ is a universal $\delta$-functor, and $(h^n\Phi\Gamma_\Delta^\bullet(\mathbb{D}(\cdot)))_{n\geq 0}$ is a $\delta$-functor such that $H^0(\GQpD,\cdot)\cong h^0\Phi\Gamma_\Delta^\bullet(\mathbb{D}(\cdot))$, we obtain a natural transformation $H^n(\GQpD,\cdot)\to h^n\Phi\Gamma_\Delta^\bullet(\mathbb{D}(\cdot))$ of $\delta$-functors. Assume first that $V$ is injective in $\operatorname{Rep}_{\Zp-tors}^{discr}(\GQpD)$. We have a spectral sequence $$E^{pq}_2=h^p\Gamma_\Delta^\bullet(h^q\Phi^\bullet(\mathbb{D}(V)^{C_\Delta}))\Rightarrow h^{p+q}\Phi\Gamma_\Delta^\bullet(\mathbb{D}(V)) $$ associated to the double complex $\Gamma_\Delta^\bullet(\Phi^\bullet(D^{C_\Delta}))$. By Cor.\ \ref{HQpD*cohom} and the injectivity of $V$ the augmentation map $V^{\HQpD^*}[0]\to\Phi^\bullet(\mathbb{D}(V)^{C_\Delta})$ is a quasi-isomorphism. Moreover, $V^{\HQpD^*}$ is injective as a discrete representation of $\Gamma_\Delta^*$ whence $V^{\GQpD}[0]\to\Gamma_\Delta^\bullet(V^{\HQpD^*})$ is a quasi-isomorphism by Prop.\ \ref{Gamma*cohom}. Using the spectral sequence we deduce the statement in this case.

Now the case of general $V$ follows from Lemma \ref{deltafunctPhiGamma} by dimension shifting since the category $\operatorname{Rep}_{\Zp-tors}^{discr}(\GQpD)$ has enough injectives.
\end{proof}

\begin{rem}
If $V$ is a finite abelian $p$-group with a continuous action of $\GQpD$ then the cohomology groups $H^i(\GQpD,V)$ are finite for all $i\geq 0$. Indeed, this follows from the classical $|\Delta|=1$ case by the Hochschild--Serre spectral sequence.
\end{rem}

\subsection{The case $p=2$}\label{p=2}

We treat the case of $p=2$ here separately. We take this opportunity to mention that we find the literature on this a little unsatisfactory even in the classical case as the proof of the (modified) Herr complex computing Galois cohomology in Thm.\ 3.3 in \cite{Liu} is rather sketchy. In any case, our strategy is different from the one in the Tsinghua lecture notes \cite{Tsing} by Colmez.

Note that in this case we have $C_\Delta\cong \prod_{\alpha\in\Delta}C_\alpha$ where $C_\alpha$ is the group of order $2$ for each $\alpha\in\Delta$. Put $E_\Delta^*:=E_\Delta^{C_\Delta}$, $\OED^*:=\OED^{C_\Delta}$, and $E_\alpha^*:=E_\alpha^{C_\alpha}$ ($\alpha\in\Delta$). Now by a classical theorem of E.\ Artin on Galois theory, $E_\alpha/E_\alpha^*$ is a Galois extension of degree $2$ for each $\alpha\in\Delta$.

\begin{lem}\label{pncohomtrivp2}
We have $H^i_{cont}(\HQpD^*, \mathcal{O}_{\widehat{\mathcal{E}^{ur}_\Delta}}/p^n)=0$ for all $n\geq 1$ and $i\geq 1$.
\end{lem}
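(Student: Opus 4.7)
The plan is to mimic the structure of Lemma \ref{pncohomtriv} by first using dévissage and then handling the case $n=1$ via a Hochschild--Serre argument that isolates the contribution of $C_\Delta$. The essential new input is that the coefficients, after restriction to the known $\HQpD$-acyclic case, decompose as a Galois extension on which $C_\Delta$ has no cohomology.

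First I would reduce to $n=1$ by the long exact sequence of continuous cohomology attached to
\begin{equation*}
0\to \mathcal{O}_{\widehat{\mathcal{E}^{ur}_\Delta}}/p\to \mathcal{O}_{\widehat{\mathcal{E}^{ur}_\Delta}}/p^n\to \mathcal{O}_{\widehat{\mathcal{E}^{ur}_\Delta}}/p^{n-1}\to 0\ ,
\end{equation*}
so it suffices to show $H^i_{cont}(\HQpD^*,\mathcal{O}_{\widehat{\mathcal{E}^{ur}_\Delta}}/p)=0$ for $i\geq 1$.

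Next I would invoke the Hochschild--Serre spectral sequence for the short exact sequence $1\to \HQpD\to \HQpD^*\to C_\Delta\to 1$ of profinite groups (with $C_\Delta\cong\prod_{\alpha\in\Delta}C_\alpha$ the finite discrete quotient). Since Lemma \ref{pncohomtriv} gives $H^q_{cont}(\HQpD,\mathcal{O}_{\widehat{\mathcal{E}^{ur}_\Delta}}/p)=0$ for $q\geq 1$ and the $\HQpD$-invariants equal $E_\Delta$, the spectral sequence collapses to an isomorphism
\begin{equation*}
H^i_{cont}(\HQpD^*,\mathcal{O}_{\widehat{\mathcal{E}^{ur}_\Delta}}/p)\ \cong\ H^i(C_\Delta,E_\Delta)\ .
\end{equation*}

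It then remains to show $H^i(C_\Delta,E_\Delta)=0$ for $i\geq 1$. The action of each $C_\alpha$ on $X_\alpha$ (via $X_\alpha\mapsto (1+X_\alpha)^{-1}-1$) is non-trivial and fixes $X_\beta$ for $\beta\neq\alpha$, so $C_\Delta$ acts faithfully on $E_\Delta$. By Artin's theorem, $E_\Delta/E_\Delta^*$ is a finite Galois extension with group $C_\Delta$, and by the normal basis theorem $E_\Delta$ is free of rank one over $E_\Delta^*[C_\Delta]$. Hence $E_\Delta$ is an induced module from the trivial subgroup, and Shapiro's lemma gives the desired vanishing.

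The main technical point to verify carefully is the Hochschild--Serre step: since we are in characteristic $p=2$ and $|C_\Delta|$ is a power of $p$, one cannot argue (as in the odd case of Corollary \ref{HQpD*cohom}) by cohomological dimension, so one must genuinely use that the coefficient module upgrades to a free $C_\Delta$-module after passing to $\HQpD$-invariants. The rest is routine.
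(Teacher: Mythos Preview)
Your Hochschild--Serre reduction to $H^i(C_\Delta,E_\Delta)$ is correct and is a genuinely different route from the paper's, but Step 5 has a real gap. For $|\Delta|>1$ the ring $E_\Delta=\Fp\bs X_\alpha\mid\alpha\in\Delta\js[X_\alpha^{-1}\mid\alpha\in\Delta]$ is \emph{not a field}, so Artin's theorem does not make $E_\Delta/E_\Delta^*$ a Galois extension and the normal basis theorem does not apply as stated. Your conclusion that $E_\Delta$ is free of rank one over $E_\Delta^*[C_\Delta]$ is in fact true, but establishing it requires separate input: one must use the tensor-product decomposition of Lemma~\ref{EDfingenp2} (itself relying on a structural lemma from \cite{MultVarGal}) together with the one-variable normal basis theorem for each field extension $E_\alpha/E_\alpha^*$, and then assemble these into a global normal basis element. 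That is not a triviality, and it is exactly the kind of bad-characteristic subtlety that forces the $p=2$ case to be treated separately.

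The paper avoids this entirely by a transport-of-structure argument: each $E_\alpha^*$ is abstractly a local field of characteristic $2$ with residue field $\mathbb{F}_2$, hence non-canonically isomorphic to $E_\alpha$; since $E_\alpha\subset E_\alpha^{sep}$ is also a separable closure of $E_\alpha^*$, one gets $\HQpD^*\cong\HQpD$ and, via the chosen $\iota_\alpha$, an isomorphism of pairs $(E_\Delta^{sep},\HQpD^*)\cong(E_\Delta^{sep},\HQpD)$, after which Lemma~\ref{pncohomtriv} applies verbatim. This sidesteps any analysis of $E_\Delta$ as a $C_\Delta$-module. Your approach, once the freeness over $E_\Delta^*[C_\Delta]$ is properly justified, would give an independent and more ``internal'' proof, but as written it is incomplete.
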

\begin{proof}
By devissage we are reduced to the case $n=1$ whence we have $\mathcal{O}_{\widehat{\mathcal{E}^{ur}_\Delta}}/p=E_\Delta^{sep}$. As an abstract field $E_\alpha^*$ ($\alpha\in\Delta$) is a local field of characteristic $2$ with residue field $\mathbb{F}_2$. By the classification of local fields, $E_\alpha^*$ is isomorphic to the field of formal Laurent series over $\mathbb{F}_2$, in particular, it is---non-canonically---isomorphic to $E_\alpha$. We fix such an isomorphism $\iota_\alpha\colon E_\alpha^*\overset{\sim}{\to}E_\alpha$ once and for all. Further, the natural inclusion $E_\alpha^*\subset E_\alpha\subset E_\alpha^{sep}$ is a separable closure of $E_\alpha^*$ since the extension $E_\alpha/E_\alpha^*$ is separable. Hence the absolute Galois group of $E_\alpha^*$ is $H_{\Qp,\alpha}^*$ which is therefore isomorphic to $\HQpa$ (being the absolute Galois group of $E_\alpha$) for all $\alpha\in\Delta$. We deduce $\HQpD^*\cong \HQpD$ by taking products. Moreover, the isomorphisms $\iota_\alpha$ ($\alpha\in\Delta$) yield an isomorphism $E_\Delta^*\cong E_\Delta$ as topological rings. Putting these together we obtain an automorphism $\iota\colon E_\Delta^{sep}\overset{\sim}{\to}E_\Delta^{sep}$ that---combined with the isomorphism $\HQpD^*\cong\HQpD$---induces an isomorphism between the pair $(E_\Delta^{sep},\HQpD^*)$ (ie.\ $E_\Delta^{sep}$ together with the action of $\HQpD^*$) and the pair $(E_\Delta^{sep},\HQpD)$. Once we have this isomorphism of pairs, we may apply Lemma \ref{pncohomtriv} in case $n=1$ to deduce the statement.
\end{proof}

Now we need the following
\begin{lem}\label{EDfingenp2}
Put $\Delta=\{\alpha_1,\dots,\alpha_n\}$. We have
\begin{equation*}
E_\Delta\cong E_{\alpha_1}\otimes_{E_{\alpha_1}^*}\left(E_{\alpha_2}\otimes_{E_{\alpha_2}^*}\left(\cdots(E_{\alpha_n}\otimes_{E_{\alpha_n}^*}E_\Delta^*)\right)\right)\ . 
\end{equation*}
In particular, $E_\Delta$ is a free module of rank $2^{|\Delta|}$ over $E_\Delta^*$. Moreover, we have $\Frac(E_\Delta)^{C_\Delta}=\Frac(E_\Delta^*)$.
\end{lem}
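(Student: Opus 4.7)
My plan is to prove the tensor product decomposition by iteratively stripping off one variable at a time using Artin--Schreier theory in characteristic $2$, and then to deduce the fraction field statement by a degree comparison via Artin's theorem. The freeness of rank $2^{|\Delta|}$ follows formally from the decomposition, since each factor $E_{\alpha_i}$ is free of rank $2$ over $E_{\alpha_i}^*$.

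First, for each $\alpha\in\Delta$, since $E_\alpha/E_\alpha^*$ is a separable quadratic extension in characteristic $2$, I would invoke Artin--Schreier theory to choose $\xi_\alpha\in E_\alpha$ with $\xi_\alpha^2+\xi_\alpha\in E_\alpha^*$ and $\sigma_\alpha(\xi_\alpha)=\xi_\alpha+1$, where $\sigma_\alpha$ is the nontrivial element of $C_\alpha$; then $\{1,\xi_\alpha\}$ is an $E_\alpha^*$-basis of $E_\alpha$. The technical core is the following general claim: if $R\subseteq E_\Delta$ is any subring stable under $\sigma_\alpha$ and containing $\xi_\alpha$, then $R$ is free of rank $2$ over $R^{C_\alpha}$ with basis $\{1,\xi_\alpha\}$. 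To verify this, for any $f\in R$ I would set $b:=f+\sigma_\alpha(f)$, which is $\sigma_\alpha$-invariant because $\sigma_\alpha^2=\id$ and $\operatorname{char}=2$; then $a:=f+b\xi_\alpha$ is also $\sigma_\alpha$-invariant by a direct calculation using $\sigma_\alpha(\xi_\alpha)=\xi_\alpha+1$; and $f=a+b\xi_\alpha$ because $2=0$. Uniqueness follows by applying $\sigma_\alpha-\id$ to any relation $a'+b'\xi_\alpha=0$, which forces $b'=0$ and hence $a'=0$. Consequently, the multiplication map $E_\alpha\otimes_{E_\alpha^*}R^{C_\alpha}\to R$ is an isomorphism. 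I would then apply this iteratively, starting with $(\alpha,R)=(\alpha_1,E_\Delta)$, then $(\alpha_2,E_\Delta^{C_{\alpha_1}})$, and so on. At each stage one must check that $\xi_{\alpha_j}$ lies in $R_j:=E_\Delta^{C_{\alpha_1}\times\cdots\times C_{\alpha_{j-1}}}$, but this is automatic since $\sigma_{\alpha_i}$ for $i<j$ fixes $X_{\alpha_j}$, hence fixes $E_{\alpha_j}\ni\xi_{\alpha_j}$.

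For the fraction field claim, set $S:=E_\Delta^*\setminus\{0\}$. By the freeness just proved, $S^{-1}E_\Delta$ is a free $\Frac(E_\Delta^*)$-module of rank $2^{|\Delta|}$; it is also a domain as a localization of $E_\Delta$, and any finite-dimensional commutative domain over a field is a field, so $S^{-1}E_\Delta=\Frac(E_\Delta)$ and $[\Frac(E_\Delta):\Frac(E_\Delta^*)]=2^{|\Delta|}$. Since the $C_\Delta$-action on $E_\Delta$ is faithful (any nontrivial $\sigma_\alpha$ moves $X_\alpha$ to $(1+X_\alpha)^{-1}-1$), it is faithful on $\Frac(E_\Delta)$, and Artin's theorem yields $[\Frac(E_\Delta):\Frac(E_\Delta)^{C_\Delta}]=|C_\Delta|=2^{|\Delta|}$. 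Combined with the inclusion $\Frac(E_\Delta^*)\subseteq\Frac(E_\Delta)^{C_\Delta}$, these matching degrees force equality. The only real subtlety is the recursive bookkeeping, which is handled by commutativity of the $C_\alpha$'s and the locality of each action on its own variable; the rest reduces to one clean application of Artin--Schreier plus a standard degree argument.
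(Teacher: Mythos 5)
Your proof is correct, and for the tensor-product decomposition it takes a genuinely different route from the paper. The paper disposes of the first statement by citing Lemma 3.2 of \cite{MultVarGal} (a structural result about such Laurent-series extensions) with $E_\Delta^*$ as the base and $E_\Delta$ as the extension, whereas you give a self-contained Artin--Schreier argument: choosing $\xi_\alpha\in E_\alpha$ with $\xi_\alpha^2+\xi_\alpha\in E_\alpha^*$ and $\sigma_\alpha(\xi_\alpha)=\xi_\alpha+1$, you verify directly that any $\sigma_\alpha$-stable subring $R\subseteq E_\Delta$ containing $\xi_\alpha$ splits as $R=R^{C_\alpha}\oplus R^{C_\alpha}\xi_\alpha$ (the formulas $b=f+\sigma_\alpha(f)$ and $a=f+b\xi_\alpha$ do exactly what is claimed in characteristic $2$, and uniqueness follows by applying $\sigma_\alpha-\id$), and you peel off one variable at a time, noting that $E_{\alpha_j}\subseteq R_j=E_\Delta^{C_{\alpha_1}\times\cdots\times C_{\alpha_{j-1}}}$ because the earlier $\sigma$'s fix $X_{\alpha_j}$. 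This is more elementary and arguably more transparent than the citation, at the cost of being tailored to this specific situation. For the fraction-field statement your argument coincides in spirit with the paper's degree comparison via Artin's theorem; you also make explicit the step the paper leaves implicit, namely why $[\Frac(E_\Delta):\Frac(E_\Delta^*)]=2^{|\Delta|}$ follows from freeness: localize at $E_\Delta^*\setminus\{0\}$, observe the result is a $2^{|\Delta|}$-dimensional domain over a field and hence itself a field, so it equals $\Frac(E_\Delta)$.
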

\begin{proof}
For the first statement we apply Lemma 3.2 in \cite{MultVarGal} in the situation $E_\Delta^*$ being the base, and $E_\Delta$ the extension. The containment $\Frac(E_\Delta)^{C_\Delta}\supseteq\Frac(E_\Delta^*)$ is clear. The other direction follows noting that the degrees $|E_\Delta:\Frac(E_\Delta)^{C_\Delta}|$ and $|E_\Delta:\Frac(E_\Delta^*)|$ are both equal to $2^{|\Delta|}$---one by E.\ Artin's theorem in Galois theory, the other by the first part.
\end{proof}

\begin{pro}\label{inducep2}
For any object $D$ in $\mathcal{D}^{et}_{tors}(\varphi_\Delta,\Gamma_\Delta,\OED)$ the natural map $f\colon\OED\otimes_{\OED^*}D^{C_\Delta}\to D$ is an isomorphism.
\end{pro}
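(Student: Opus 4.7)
The assertion is an instance of Galois descent for the finite Galois ring extension $\OED^* \subseteq \OED$ with Galois group $C_\Delta$ (of order $2^{|\Delta|}$). The plan is to proceed in three formal steps.

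First, I would upgrade Lemma \ref{EDfingenp2} to the integral level by showing that $\OED$ is a free $\OED^*$-module of rank $2^{|\Delta|}$. Lift a basis of $E_\Delta$ over $E_\Delta^*$ to a family of elements in $\OED$, and prove by induction on $h$ that they form a basis of $\OED/p^h$ over $\OED^*/p^h$; this uses the identification $\OED^*/p^h \cong (\OED/p^h)^{C_\Delta}$, which holds because $\OED^*$ is the $p$-adically closed subring of $C_\Delta$-invariants. Passing to the inverse limit yields the claim, and in particular shows that $\OED$ is faithfully flat over $\OED^*$.

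Next, I would verify the Galois condition: the natural map
\begin{equation*}
\mu \colon \OED \otimes_{\OED^*} \OED \longrightarrow \prod_{c \in C_\Delta} \OED, \qquad a \otimes b \mapsto (c(a)\cdot b)_c,
\end{equation*}
is an isomorphism of $\OED$-algebras. Modulo $p$ this is the classical Galois condition for the finite Galois field extension $E_\Delta / E_\Delta^*$, which holds by Dedekind's independence of automorphisms (both sides are free of rank $|C_\Delta|$ over $E_\Delta$). Both source and target are free $\OED$-modules of rank $|C_\Delta|$ by the first step; surjectivity modulo $p$ lifts by $p$-adic completeness of $\OED$, whence $\mu$ is surjective and then bijective by rank count.

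Given the Galois condition, the isomorphism $\OED \otimes_{\OED^*} D^{C_\Delta} \xrightarrow{\sim} D$ follows from classical Galois descent: after faithfully flat base change via $\OED^* \to \OED$, both sides are identified with $\prod_{c \in C_\Delta} D$ (the source via Step 2, the target via the twisted map $a \otimes d \mapsto (c(a)\cdot d)_c$, which is an isomorphism by the same argument as for $\mu$ with the second copy of $\OED$ replaced by $D$), and the map becomes the identity. The main obstacle is Step 2: since $p = 2$ divides $|C_\Delta|$, the usual Maschke-type averaging projector $|C_\Delta|^{-1}\sum_c c$ is unavailable, so the argument must proceed structurally through the finite \'etale Galois-cover property of the ring extension, carefully lifted from the characteristic-$p$ residue level to the $p$-adic integral level.
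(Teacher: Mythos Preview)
Your strategy---showing $\OED^*\subset\OED$ is $C_\Delta$-Galois and then applying descent---is sound and genuinely different from the paper's argument. The paper instead reduces to $pD=0$ by devissage, observes that $\Ker f$ and $\Coker f$ are \'etale $(\varphi_\Delta,\Gamma_\Delta)$-modules over $E_\Delta$ and hence \emph{free}, so it suffices to check the statement after tensoring with $\Frac(E_\Delta)$; there injectivity comes from a socle argument ($C_\Delta$ is a $2$-group in characteristic $2$) and surjectivity from the dimension bound $\dim\Frac(D)^{C_\Delta}\geq 2^{-|\Delta|}\dim\Frac(D)$ obtained from $(\id+c_\alpha)^2=0$. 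Your route would prove more---it applies to any $\OED$-module with semilinear $C_\Delta$-action---at the cost of building the integral Galois structure; the paper's route is more hands-on but leans on the freeness furnished by the $(\varphi_\Delta,\Gamma_\Delta)$-structure.

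However, Step 3 as written is circular. Base-changing $f$ along $\OED^*\to\OED$ and applying $\mu$ to the first two tensor factors of the source identifies it with $\prod_c(\OED\otimes_{\OED^*}D^{C_\Delta})$, \emph{not} with $\prod_c D$; under your identification of the target, the base-changed map becomes $\prod_c f$, which yields nothing. A correct completion: by flatness of $\OED$ over $\OED^*$ one has $\OED\otimes_{\OED^*}D^{C_\Delta}=(\OED\otimes_{\OED^*}D)^{C_\Delta}$ with $C_\Delta$ acting through the second factor; transporting this action across the twisted isomorphism $\OED\otimes_{\OED^*}D\cong\prod_c D$, the invariants compute to $\{(c(y))_c:y\in D\}\cong D$, and unwinding shows the resulting isomorphism is exactly $f$. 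Two smaller points: the identification $\OED^*/p^h\cong(\OED/p^h)^{C_\Delta}$ in Step 1 is not automatic---it needs $H^1(C_\Delta,E_\Delta)=0$, which follows once $E_\Delta/E_\Delta^*$ is known to be Galois in the ring sense; and for that Galois condition modulo $p$, Dedekind's lemma (stated for fields) only gives injectivity of $\mu\bmod p$ after passing to $\Frac(E_\Delta)$, so you should instead deduce it from the iterated base-change description of Lemma~\ref{EDfingenp2}, each step being a Galois field extension.
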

\begin{proof}
By devissage we may assume without loss of generality that $2D=0$, ie.\ $D$ is an object in $\mathcal{D}^{et}(\varphi_\Delta,\Gamma_\Delta,E_\Delta)$. Note that $f$ is a morphism in $\mathcal{D}^{et}(\varphi_\Delta,\Gamma_\Delta,E_\Delta)$, so $\Ker(f)$ and $\Coker(f)$ are objects in $\mathcal{D}^{et}(\varphi_\Delta,\Gamma_\Delta,E_\Delta)$. In particular, they are free modules over $E_\Delta$ by Cor.\ 3.16 in \cite{MultVarGal}. Therefore it suffices to show that 
\begin{equation}\label{FracEDf}
\Frac(E_\Delta)\otimes f\colon \Frac(E_\Delta)\otimes_{\Frac(E_\Delta^*)}\Frac(D)^{C_\Delta}\to \Frac(D) 
\end{equation}
is an isomorphism where $\Frac(E_\Delta)$ (resp.\ $\Frac(E_\Delta^*)$) is the fraction field of $E_\Delta$ (resp.\ of $E_\Delta^*$) and $\Frac(D):=\Frac(E_\Delta)\otimes_{E_\Delta}D$. Now note that the $C_\Delta$-fixed part of the left hand side of \eqref{FracEDf} is also $\Frac(D)^{C_\Delta}$ which is the socle of the left hand side as a $C_\Delta$-representation since $C_\Delta$ is a $2$-group and $\Frac(E_\Delta^*)$ has characteristic $2$. Therefore $\Frac(E_\Delta)\otimes f$ is injective as a nontrivial kernel would intersect the socle nontrivially. For the surjectivity we show 
\begin{align*}
2^{|\Delta|}\dim_{\Frac(E_\Delta^*)}\Frac(D)=\dim_{\Frac(E_\Delta)}\Frac(D)\leq\\
\leq \dim_{\Frac(E_\Delta)}\Frac(E_\Delta)\otimes_{\Frac(E_\Delta^*)}\Frac(D)^{C_\Delta}= \dim_{\Frac(E_\Delta^*)}\Frac(D)^{C_\Delta}
\end{align*}
by induction on $|\Delta|$. Denote by $c_\alpha\in C_\alpha$ the nontrivial element for all $\alpha\in\Delta$. Then $(\id+c_\alpha)^2=0$ in $\Frac(E_\Delta^*)[C_\alpha]$, so as an operator on $\Frac(D)$ the image of $(\id+c_\alpha)$ is contained in its kernel $\Frac(D)^{C_\alpha}$. Therefore we have $\dim_{\Frac(E_\Delta^*)}\Frac(D)^{C_\alpha}\geq \frac{1}{2}\dim_{\Frac(E_\Delta^*)}\Frac(D)$. Iterating this for all $\alpha\in\Delta$ we deduce the statement.
\end{proof}

\begin{cor}
The complex $\Phi^\bullet(\mathbb{D}(V)^{C_\Delta})$ computes the $\HQpD^*$-cohomology of $V$, ie.\ Cor.\ \ref{HQpD*cohom} holds in case of $p=2$, too.
\end{cor}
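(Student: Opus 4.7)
The plan is to repeat the argument of Prop.\ \ref{HcohomHerr} with $\HQpD^*$ in place of $\HQpD$. By Lemma \ref{augrep} the augmentation $V[0]\to\Phi^\bullet(D^{sep})$ is a quasi-isomorphism of $\HQpD^*$-modules, and since each $\varphi_\alpha$ commutes with the $\HQpD^*$-action, the termwise $\HQpD^*$-invariants of $\Phi^\bullet(D^{sep})$ identify with
$$\Phi^\bullet((D^{sep})^{\HQpD^*})=\Phi^\bullet(\mathbb{D}(V)^{C_\Delta}),$$
using $(D^{sep})^{\HQpD^*}=((D^{sep})^{\HQpD})^{C_\Delta}=\mathbb{D}(V)^{C_\Delta}$ (as $\HQpD^*/\HQpD=C_\Delta$). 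Provided each term of $\Phi^\bullet(D^{sep})$---a finite direct sum of copies of $D^{sep}$---is acyclic for continuous $\HQpD^*$-cohomology, the standard hyper-derived-functor argument yields $h^i\Phi^\bullet(\mathbb{D}(V)^{C_\Delta})\cong H^i(\HQpD^*,V)$ for finite $V$. The general case is then obtained by passing to filtered direct limits, since both sides commute with such limits.

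The only ingredient not already available is therefore the $\HQpD^*$-acyclicity of $D^{sep}$. Combining Prop.\ \ref{inducep2} with the identity $D^{sep}=\mathcal{O}_{\widehat{\mathcal{E}^{ur}_\Delta}}\otimes_{\OED}\mathbb{D}(V)$ furnishes an $\HQpD^*$-equivariant isomorphism
$$D^{sep}\cong \mathcal{O}_{\widehat{\mathcal{E}^{ur}_\Delta}}\otimes_{\OED^*}\mathbb{D}(V)^{C_\Delta},$$
in which $\HQpD^*$ fixes the second tensor factor pointwise and acts on the first through its quotient $\HQpD$. Reducing modulo $p^n$ (for $V$ killed by $p^n$), the higher continuous $\HQpD^*$-cohomology of this tensor product then vanishes by Lemma \ref{pncohomtrivp2}, once we know $\mathbb{D}(V)^{C_\Delta}$ is flat over $\OED^*$. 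This flatness follows from faithfully flat descent applied to Prop.\ \ref{inducep2}: $\mathbb{D}(V)$ is finitely generated projective over $\OED$ by Cor.\ 3.16 in \cite{MultVarGal}, and $\OED$ is free (hence faithfully flat) over $\OED^*$---the integral lift of Lemma \ref{EDfingenp2}, obtained by $p$-adically completing any basis of $E_\Delta$ over $E_\Delta^*$ via the complete-Nakayama lemma.

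The main technical point is the decoupling step: once Prop.\ \ref{inducep2} is used to concentrate the entire $\HQpD^*$-action into the coefficient ring $\mathcal{O}_{\widehat{\mathcal{E}^{ur}_\Delta}}$, Lemma \ref{pncohomtrivp2} combined with flat base change supplies the required acyclicity of $D^{sep}$, and the remainder of the argument is formally identical to that of Prop.\ \ref{HcohomHerr}.
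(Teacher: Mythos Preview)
Your argument is correct and is precisely the expansion of the paper's one-line proof: use Prop.~\ref{inducep2} to rewrite $D^{sep}\cong E_\Delta^{sep}\otimes_{E_\Delta^*}\mathbb{D}(V)^{C_\Delta}$ so that $\HQpD^*$ acts only on the coefficient ring, then invoke Lemma~\ref{pncohomtrivp2} and repeat Prop.~\ref{HcohomHerr} verbatim. Two small comments: (i) your phrase ``acts on the first through its quotient $\HQpD$'' is a slip---$\HQpD$ is a \emph{subgroup} of $\HQpD^*$, not a quotient; what you need (and use) is simply that $\HQpD^*$ acts on $\mathcal{O}_{\widehat{\mathcal{E}^{ur}_\Delta}}$ and Lemma~\ref{pncohomtrivp2} applies directly; (ii) the integral lift of Lemma~\ref{EDfingenp2} is unnecessary, since by d\'evissage one may assume $pV=0$ throughout and work over $E_\Delta^*$, where your faithfully-flat-descent argument for the projectivity of $\mathbb{D}(V)^{C_\Delta}$ goes through cleanly.
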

\begin{proof}
By Lemma \ref{pncohomtrivp2} and Prop.\ \ref{inducep2} the proof of Prop.\ \ref{HcohomHerr} goes through to this statement, too.
\end{proof}

\begin{cor}\label{CDeltaexact}
The functor $V\mapsto \mathbb{D}(V)^{C_\Delta}$ is exact.
\end{cor}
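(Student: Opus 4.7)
The plan is to combine the exactness of $\mathbb{D}$ (which holds as $\mathbb{D}$ is an equivalence of categories by the preceding corollary) with Proposition~\ref{inducep2}. A short exact sequence $0 \to V_1 \to V_2 \to V_3 \to 0$ in $\operatorname{Rep}_{\Zp-tors}^{discr}(\GQpD)$ is sent by $\mathbb{D}$ to an exact sequence $0 \to \mathbb{D}(V_1) \to \mathbb{D}(V_2) \to \mathbb{D}(V_3) \to 0$. By Proposition~\ref{inducep2}, each term $\mathbb{D}(V_i)$ identifies canonically with $\OED \otimes_{\OED^*} \mathbb{D}(V_i)^{C_\Delta}$, so this exact sequence is $\OED \otimes_{\OED^*} (-)$ applied to the candidate three-term complex
\[
0 \to \mathbb{D}(V_1)^{C_\Delta} \to \mathbb{D}(V_2)^{C_\Delta} \to \mathbb{D}(V_3)^{C_\Delta} \to 0 .
\]
It then suffices to argue that $\OED \otimes_{\OED^*}(-)$ reflects exactness when applied to sequences of this form.

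One clean route is via faithful flatness of $\OED$ over $\OED^*$: this would immediately imply that tensoring reflects exactness for all $\OED^*$-modules. Equivalently, if $\OED$ is free of rank $1$ as an $\OED^*[C_\Delta]$-module, i.e.\ admits an integral normal basis, then $\OED \otimes_{\OED^*} M \cong M[C_\Delta]$ is an induced $C_\Delta$-module for every $\OED^*$-module $M$, so its higher $C_\Delta$-cohomology vanishes by Shapiro's lemma. In that case the $C_\Delta$-invariants functor becomes a two-sided quasi-inverse of $\OED \otimes_{\OED^*}(-)$, exhibiting an equivalence of categories between $\OED^*$-modules and objects in $\varinjlim\mathcal{D}^{et}_{tors}(\varphi_\Delta,\Gamma_\Delta,\OED)$, and hence the exactness of both functors.

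The main obstacle is to supply such a normal basis (equivalently, faithful flatness) at the integral level. Modulo $p$ this is Lemma~\ref{EDfingenp2} together with the classical normal basis theorem, since $E_\Delta/E_\Delta^*$ is a separable Galois extension of degree $2^{|\Delta|}$ with group $C_\Delta$. Lifting the mod-$p$ normal basis to $\OED/\OED^*$ requires $p$-adic Nakayama / successive approximation, after verifying that the mod-$p$ reduction of $\OED^*$ coincides with $E_\Delta^*$. This last identification is a genuine subtlety—it is morally equivalent to $H^1(C_\Delta,\OED)=0$, which is close to what we are trying to prove—and is handled by iterating the $\alpha$-wise tensor decomposition from the proof of Lemma~\ref{EDfingenp2} to produce explicit generators of $\OED^*$ whose reductions span $E_\Delta^*$.
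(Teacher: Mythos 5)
Your starting point---Proposition~\ref{inducep2} plus reflecting exactness along $\OED\otimes_{\OED^*}(-)$---is reasonable, but the plan has a genuine gap exactly where you flag the ``genuine subtlety,'' and the proposed patch does not close it. You reduce to showing $\OED$ is free (hence faithfully flat) over $\OED^*$ and try to lift a mod-$p$ basis integrally, which needs $\OED^*/(p)\twoheadrightarrow E_\Delta^*$. Your fix---``iterating the $\alpha$-wise tensor decomposition from the proof of Lemma~\ref{EDfingenp2}''---is not spelled out and does not obviously work: that lemma and its ingredients (Artin's theorem, the cited Lemma~3.2 in \cite{MultVarGal}) are stated and used only modulo~$p$, and the surjectivity you need is morally $H^1(C_\Delta,\OED)=0$, which is of the same nature as the statement you are trying to prove. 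Two smaller slips: you invoke ``the classical normal basis theorem, since $E_\Delta/E_\Delta^*$ is a separable Galois extension,'' but $E_\Delta$ is a multivariable Laurent series ring and not a field, so the theorem applies only to the one-variable \emph{field} extensions $E_\alpha/E_\alpha^*$ and must be propagated through the tensor decomposition; and freeness of rank~$1$ over $\OED^*[C_\Delta]$ is strictly stronger than faithful flatness over $\OED^*$, not ``equivalent'' to it.

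The missing idea that dissolves the integral difficulty entirely is devissage. All objects in sight are $p$-power torsion, and both $\mathbb{D}$ and $(-)^{C_\Delta}$ commute with filtered colimits, so it suffices to treat finite $V$; then, by induction on length using the long exact sequence of $H^\bullet(C_\Delta,-)$ applied to $0\to p\mathbb{D}(V)\to\mathbb{D}(V)\to\mathbb{D}(V/pV)\to 0$, one reduces to showing that $\mathbb{D}(V)$ is $C_\Delta$-acyclic when $pV=0$. At that point one works purely over $E_\Delta$ and $E_\Delta^*$: Proposition~\ref{inducep2} gives $\mathbb{D}(V)\cong E_\Delta\otimes_{E_\Delta^*}\mathbb{D}(V)^{C_\Delta}$ with $C_\Delta$ acting trivially on the second factor, and iterating the $\alpha$-wise decomposition of Lemma~\ref{EDfingenp2} together with the normal basis theorem for each field extension $E_\alpha/E_\alpha^*$ shows that $E_\Delta$ is free of rank~$1$ over $E_\Delta^*[C_\Delta]$. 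Hence $\mathbb{D}(V)$ is an induced $C_\Delta$-module, Shapiro's lemma gives the vanishing, and the general case follows; no integral lifting is needed anywhere.
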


\subsection{Tate duality}

Following III.7 in \cite{N} we make the following definitions for a profinite group $G$ with finite $p$-cohomological dimension $n$. For a (discrete) $G$-module $A$ we put $D_i(A):=\varinjlim_U H^i(U,A)^\vee$ where $U$ runs through the open normal subgroups of $G$ and $(\cdot)^\vee:=\Hom(\cdot,\mathbb{Q}/\mathbb{Z})$ stands for Pontryagin duality. The connecting maps in the inductive limit are the Pontryagin duals of the corestriction maps. Further, we define the \emph{dualizing module} of $G$ at $p$ by $I:=\varinjlim_h D_n(\mathbb{Z}/p^h\mathbb{Z})$. We have the functorial isomorphism 
\begin{equation*}
H^n(G,A)^\vee\cong \Hom_G(A,I)
\end{equation*}
for all $p$-primary discrete $G$-modules $A$. We call $G$ a \emph{duality group of dimension $n$} if $D_i(\mathbb{Z}/p\mathbb{Z})=0$ for all $i<n$. In this case the edge morphism for the Tate spectral sequence
\begin{equation*}
E_2^{p,q}=H^p(G,D_{n-q}(A))\Rightarrow H^{n-p-q}(G,A)^\vee
\end{equation*}
is a functorial isomorphism
\begin{equation*}
H^p(G,\Hom(A,I))\cong H^{n-p}(G,A)^\vee
\end{equation*}
for all $p$-primary discrete $G$-module $A$. This isomorphism is also obtained from the cup product
\begin{equation*}
H^p(G,\Hom(A,I))\times H^{n-p}(G,A)\overset{\cup}{\to} H^n(G,I)\to \Qp/\Zp\ .
\end{equation*}

A duality group of dimension $n$ is called a \emph{Poincar\'e group} at $p$ if the dualizing module $I$ is isomorphic to $\Qp/\Zp$ as an abelian group. The local duality theorem (7.2.6 in \cite{N}) states in particular, that the absolute Galois group $G_{\Qp}$ of $\Qp$ is a Poincar\'e group at $p$ of dimension $2$ with dualizing module $I=\mu_{p^\infty}$. Further, by Thm.\ 3.7.4 in \cite{N} the class of Poincar\'e groups at $p$ is closed under group extension. In particular, $\GQpD$ is also a Poincar\'e group at $p$ of dimension $2d$ where we put $d:=|\Delta|$. The dualizing module is  $I=\mu_{p^\infty,\Delta}$ (see Thm. 3.7.4(ii) in op.\ cit.) which is by definition the $\GQpD$-module isomorphic abstractly to $\mu_{p^\infty}$ (ie.\ to $\Qp/\Zp$) on which each component $\GQpa$ ($\alpha\in\Delta$) acts as on $\mu_{p^\infty}$ (ie.\ via the cyclotomic character).

Let $\Zp(\mathbbm{1}_\Delta):=T_p(\mu_{p^\infty,\Delta})=\varprojlim_n \mu_{p^n,\Delta}$ be the $p$-adic Tate module of $\mu_{p^\infty,\Delta}$ and for a $p$-primary discrete $\GQpD$-module $A$ we define the Tate twist $A(\mathbbm{1}_\Delta):=A\otimes_{\Zp}\Zp(\TD)$ and Tate dual $\Hom(A,\mu_{p^\infty,\Delta})=A^\vee(\TD)$.

\begin{thm}[Tate duality for $\GQpD$]\label{tate}
For any discrete $p$-primary $\GQpD$-module $A$ the cup product pairing induces an isomorphism $H^i(\GQpD,A)\cong H^{2d-i}(\GQpD,A^\vee(\TD))^\vee$.
\end{thm}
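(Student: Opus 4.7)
My plan is to deduce the theorem directly from the abstract Poincar\'e duality formalism recalled in the paragraph preceding the statement. That paragraph already identifies $\GQpD$ as a Poincar\'e group at $p$ of dimension $n=2d$ with dualizing module $I=\mu_{p^\infty,\Delta}$, using the stability of this class under group extensions (Thm.~3.7.4 in \cite{N}) applied inductively to the product $\GQpD=\prod_{\alpha\in\Delta}\GQpa$, starting from the local duality theorem (Thm.~7.2.6 in \cite{N}) that says $G_{\Qp}$ is a Poincar\'e group of dimension $2$ with dualizing module $\mu_{p^\infty}$.

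Given this, the main step is simply to specialize the abstract duality isomorphism $H^p(G,\Hom(A,I))\cong H^{n-p}(G,A)^\vee$ (induced by the cup product displayed just before the theorem) to $G=\GQpD$, $n=2d$, $p=2d-i$. Since $\Hom(A,\mu_{p^\infty,\Delta})=A^\vee(\TD)$ by definition, this reads
\begin{equation*}
H^{2d-i}(\GQpD,A^\vee(\TD))\cong H^i(\GQpD,A)^\vee.
\end{equation*}
Taking Pontryagin duals of both sides and invoking Pontryagin biduality then gives the asserted isomorphism $H^i(\GQpD,A)\cong H^{2d-i}(\GQpD,A^\vee(\TD))^\vee$.

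The main technical point is the bookkeeping required to apply Pontryagin biduality cleanly. For finite $p$-primary $A$ the cohomology $H^i(\GQpD,A)$ is finite (by the remark at the end of section \ref{secherrcomplexmodpn}, combined with the Hochschild--Serre spectral sequence), so biduality is immediate. For general discrete $p$-primary $A$ one writes $A=\varinjlim_n A_n$ as a filtered direct limit of finite $\GQpD$-submodules; each $H^i(\GQpD,A_n)$ is finite, so $H^i(\GQpD,A)=\varinjlim_n H^i(\GQpD,A_n)$ is a discrete torsion abelian group to which Pontryagin biduality applies. A second, related delicate point is that $A^\vee(\TD)=\varprojlim_n A_n^\vee(\TD)$ is naturally a compact (profinite), not discrete, $\GQpD$-module, so its appearance on the right is understood via continuous cohomology. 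Both of these issues are resolved by the standard limit formalism in III.7 of \cite{N}: the cup product pairing and the duality isomorphism are compatible with the passage from finite coefficients $A_n$ to the limit, so the finite case assembles to the general case.
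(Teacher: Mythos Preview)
Your proposal is correct and matches the paper's approach: the theorem is stated without a separate proof environment precisely because it is meant to follow immediately from the abstract Poincar\'e duality formalism recalled in the preceding paragraph (Thm.~3.4.6 and Thm.~3.7.4 in \cite{N}), specialized to $G=\GQpD$, $n=2d$, $I=\mu_{p^\infty,\Delta}$. Your extra care with Pontryagin biduality and the passage to limits for infinite discrete $A$ is a reasonable elaboration of a step the paper leaves implicit; note, however, that the displayed isomorphism $H^p(G,\Hom(A,I))\cong H^{n-p}(G,A)^\vee$ from \cite{N} is already asserted for all discrete $p$-primary $A$, so the reduction to finite coefficients is not strictly required.
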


\subsection{Duality for $(\varphi_\Delta,\Gamma_\Delta)$-modules over $\OED$}\label{dualphigamma}

Let $D$ be an \'etale $(\varphi_\Delta,\Gamma_\Delta)$-module over $\OED$. Recall that the \'etale condition for the action of $\varphi_\alpha$ for an element $\alpha\in\Delta$ means that the map $\id\otimes\varphi_\alpha\colon \OED\otimes_{\OED,\varphi_\alpha}D\to D$ is bijective. Now $\OED$ is a free module over itself via the ring homomorphism $\varphi_\alpha$ with generators $\{(1+X_\alpha)^i\mid 0\leq i\leq p-1\}$. Therefore any $x\in D$ can uniquely be written as a sum $$x=\sum_{i=0}^{p-1}(1+X_\alpha)^i\varphi_\alpha(x_i)\ .$$
The distinguished left-inverse $\psi_\alpha$ of $\varphi_\alpha$ is defined as $\psi_\alpha(x):=x_0$.

Consider the multivariable $(\varphi_\Delta,\Gamma_\Delta)$-module $\DmpD$ corresponding to the $\GQpD$-module $\mu_{p^\infty,\Delta}$. We may identify $\DmpD$ with $\Qp/\Zp\otimes_{\Zp}(\OED e)=\mathcal{E}_\Delta e/\OED e$ where $\varphi_\alpha(e)=e$ and $\gamma_\alpha(e)=\chi_\alpha(\gamma_\alpha)e$ for all $\alpha\in\Delta$ and $\gamma_\alpha\in\Gamma_\alpha$. Here $\chi_\alpha\colon \Gamma_\alpha\overset{\sim}{\to}\Zp^\times$ stands for the cyclotomic character. Further, we define the residue map
\begin{equation*}
\res\colon \DmpD\to \Qp/\Zp
\end{equation*}
by sending an element $F(X_\bullet)e\in \DmpD$ to the coefficient $a_{-1_\bullet}\in\Qp/\Zp$ of $\frac{1}{X_\Delta}=\prod_{\alpha\in\Delta}X_\alpha^{-1}$ in the expansion of $\frac{F(X_\bullet)}{\prod_{\alpha\in\Delta}(1+X_\alpha)}$ as 
\begin{equation*}
\frac{F(X_\bullet)}{\prod_{\alpha\in\Delta}(1+X_\alpha)}=\sum_{i_\alpha\geq -N_F,\alpha\in\Delta}a_{i_\bullet}\prod_{\alpha\in\Delta}X_\alpha^{i_\alpha}
\end{equation*}
with $a_{i_\bullet}\in\Qp/\Zp$ for $i_\bullet=(i_\alpha)_{\alpha\in\Delta}\in\mathbb{Z}^\Delta$ and some integer $N_F\in \mathbb{Z}$ depending on $F$. (See I.2.3 in \cite{Mira} for the classical case $|\Delta|=1$.)

\begin{pro}\label{resphi}
We have $$\res(\gamma(\lambda))=\res(\varphi_\alpha(\lambda))=\res(\psi_\alpha(\lambda))=\res(\lambda)$$ for all $\lambda\in \DmpD$, $\gamma\in\Gamma_\Delta$, and $\alpha\in\Delta$.
\end{pro}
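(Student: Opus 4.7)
The plan is to reduce all three identities to the classical one-variable case ($|\Delta|=1$), which is Proposition I.2.3 of \cite{Mira}. The key observation is that the operators $\gamma_\alpha$, $\varphi_\alpha$, and $\psi_\alpha$ affect only the variable $X_\alpha$: they act trivially on $X_\beta$ for $\beta\neq\alpha$ (by definition for $\gamma_\alpha$ and $\varphi_\alpha$, and for $\psi_\alpha$ this follows from the decomposition $\mathcal{O}_{\mathcal{E}_\Delta}=\bigoplus_{i=0}^{p-1}(1+X_\alpha)^i\varphi_\alpha(\mathcal{O}_{\mathcal{E}_\Delta})$, each summand being stable under multiplication by $\mathcal{O}_{\mathcal{E}_{\Delta\setminus\{\alpha\}}}$).

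To exploit this, I would introduce, for each $\alpha\in\Delta$, a partial residue
\[
\res_\alpha\colon \DmpD\to \mathbb{D}(\mu_{p^\infty,\Delta\setminus\{\alpha\}})
\]
sending $F(X_\bullet)e$ to the coefficient of $X_\alpha^{-1}$ in the expansion of $F(X_\bullet)/(1+X_\alpha)$ as a Laurent series in $X_\alpha$ over the remaining variables, multiplied by the canonical generator of $\mathbb{D}(\mu_{p^\infty,\Delta\setminus\{\alpha\}})$. A direct calculation shows $\res=\res_{\alpha_1}\circ\cdots\circ\res_{\alpha_d}$ for any ordering of $\Delta$. Hence it suffices to prove
\[
\res_\alpha\circ\gamma_\alpha=\res_\alpha\circ\varphi_\alpha=\res_\alpha\circ\psi_\alpha=\res_\alpha
\]
for each single $\alpha$; for arbitrary $\gamma\in\Gamma_\Delta$ one decomposes $\gamma$ as a product of elements $\gamma_\alpha\in\Gamma_\alpha$ and iterates. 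These one-variable identities are exactly the content of I.2.3 of \cite{Mira}: $\gamma_\alpha$-invariance is the change of variable $X_\alpha\mapsto(1+X_\alpha)^{\chi_\alpha(\gamma_\alpha)}-1$ (with the scalar $\chi_\alpha(\gamma_\alpha)$ from $\gamma_\alpha(e)$ absorbing the corresponding derivative); $\varphi_\alpha$-invariance is a direct power-series expansion; and $\psi_\alpha$-invariance follows by writing $\lambda=\sum_{i=0}^{p-1}(1+X_\alpha)^i\varphi_\alpha(\lambda_i)$, using that $\res_\alpha$ annihilates each summand with $1\leq i\leq p-1$, and applying $\varphi_\alpha$-invariance to the $i=0$ term.

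The main obstacle is justifying the factorisation formally. One must check that the inverse limit defining $\mathcal{O}_{\mathcal{E}_\Delta}$ admits a compatible description as a $p$-adic completion of $\mathcal{O}_{\mathcal{E}_{\Delta\setminus\{\alpha\}}}\bs X_\alpha\js[X_\alpha^{-1}]$, so that partial coefficient extraction in $X_\alpha$ is well-defined and continuous on $\DmpD$. At each finite level this is immediate, and partial residues commute with the structure maps of the inverse system; passing to the limit produces the desired $\res_\alpha$. Once this bookkeeping is in place, the argument reduces cleanly to the one-variable statements.
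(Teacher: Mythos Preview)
Your proposal is correct and follows essentially the same strategy as the paper: factor the multivariable residue through one-variable residues and invoke the classical result from \cite{Mira}. The paper streamlines the bookkeeping you flag by first reducing (via $\Zp$-linearity and continuity of $\res$) to the case where $\lambda=\prod_{\alpha\in\Delta}X_\alpha^{r_\alpha}e$ is a monomial, on which $\res$ visibly factors as $\prod_{\alpha}\res_\alpha(X_\alpha^{r_\alpha})$, so the partial-residue machinery becomes unnecessary.
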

\begin{proof}
By $\Zp$-linearity and continuity of $\res$ we may assume without loss of generality that $\lambda=\prod_{\alpha\in\Delta}X_\alpha^{r_\alpha}e$ is a monomial for some $r_\alpha\in\mathbb{Z}$ ($\alpha\in\Delta$). For an element $\lambda_\alpha\in \mathcal{E}_\alpha/\mathcal{O}_{\mathcal{E}_\alpha}$ with some fixed $\alpha\in\Delta$ we denote by $\res_\alpha(\lambda_\alpha)\in \Qp/\Zp$ the coefficient of $X_\alpha^{-1}$ in the expansion of $\lambda_\alpha(1+X_\alpha)^{-1}\in\{\sum_{-\infty\ll i}a_iX_\alpha^i\mid a_i\in\Qp/\Zp\}$. Clearly, we have
$$\res(\prod_{\alpha\in\Delta}X_\alpha^{r_\alpha}e)=\prod_{\alpha\in\Delta}\res_\alpha(X_\alpha^{r_\alpha})\ .$$
So we are reduced to the case $|\Delta|=1$ which is covered e.g.\ by Prop.\ I.2.2 in \cite{Mira}.
\end{proof}

By Lemma 3.8 in \cite{MultVarGal} we have $\mathbb{D}(A^\vee(\TD))\cong \Hom(\mathbb{D}(A),\DmpD)$. For an \'etale $(\varphi_\Delta,\Gamma_\Delta)$-modules $D$ over $\OED$ we regard $D^*=\Hom(D,\mathcal{E}_\Delta/\OED)$ as an \'etale $(\varphi_\Delta,\Gamma_\Delta)$-module over $\OED$ the following way. First of all $\mathcal{E}_\Delta/\OED$ is a left and right module over $\OED$, and we regard $D$ as a left module, so $\Hom(D,\mathcal{E}_\Delta/\OED)$ becomes a right module over $\OED$ by the ``right multiplication on $\mathcal{E}_\Delta/\OED$''. Keeping in mind possible noncommutative generalizations we make $\Hom(D,\mathcal{E}_\Delta/\OED)$ into a left module over $\OED$ via the (anti-)involution $\#\colon \OED\to \OED$ sending the ``group elements'' $(1+X_\alpha)$ (ie.\ topological generators of $N_{\alpha,0}$ in the sense of \cite{MultVar}) to their inverse $(1+X_\alpha)^{-1}$ for all $\alpha\in\Delta$. This extends to an anti-involution to the whole ring $\OED$ by linearity and continuity. Further, for an $\OED$-linear map $f\colon D\to \mathcal{E}_\Delta/\OED$ we define $\varphi_\alpha(f)$ and $\gamma(f)$ ($\alpha\in\Delta$, $\gamma\in\Gamma_\Delta$) by the formulas $\varphi_\alpha(f)(\varphi_\alpha(x)):=\varphi_\alpha(f(x))$ and $\gamma(f)(\gamma(x)):=\gamma(f(x))$. The \'etale $(\varphi_\Delta,\Gamma_\Delta)$-module $D^*(\TD):=\Hom(D,\DmpD)$ has the same underlying $\varphi_\Delta$-module as $D^*$, but the action of $\Gamma_\Delta$ is twisted by the cyclotomic character.

\begin{rem}
Note that since $\OED$ is commutative, we could have omitted the anti-involution $\#$ when defining the left $\OED$-action on $D^*$ as done in \cite{Mira}. However, this way we do not need the modifying factor $\sigma_{-1}$ when defining the pairing $\{x,y\} \colon D\times D^*(\TD)\to \Qp/\Zp$: we can simply put $\{x,y\}:=\res(y(x))$ as we see below. Further, the $(\varphi_\Delta,\Gamma_\Delta)$-module $D^*$ is \emph{isomorphic} to the resulting $(\varphi_\Delta,\Gamma_\Delta)$-module not using the involution via the map defined by the multiplication by $\prod_{\alpha\in\Delta}\chi_\alpha^{-1}(-1)\in\Gamma_\Delta$.
\end{rem}

The following Lemma might be of independent interest.

\begin{lem}\label{decompose}
Let $D$ be a finitely generated $p$-power tosion \'etale $(\varphi_\Delta,\Gamma_\Delta)$-module over $\OED$. Then $D$ admits a decomposition $D\cong \bigoplus_{i=1}^{k}\OED/(p^{n_i})$ as a module over $\OED$.
\end{lem}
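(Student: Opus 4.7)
My plan is to exploit the étale $(\varphi_\Delta,\Gamma_\Delta)$-structure to build a filtration of $D/pD$ whose graded pieces are free over $E_\Delta$, then lift a filtration-compatible basis back to cyclic generators of $D$.

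First I would pick $N$ with $p^ND=0$, and for $0\le n\le N$ set $F_n\subseteq D/pD$ to be the image of $D[p^n]:=\Ker(p^n\colon D\to D)$, so $0=F_0\subseteq F_1\subseteq\dots\subseteq F_N=D/pD$. Since multiplication by $p^n$ is $(\varphi_\Delta,\Gamma_\Delta)$-equivariant, $F_n$ is an étale sub-$(\varphi_\Delta,\Gamma_\Delta)$-module of $D/pD$, as are $F_n/F_{n-1}$ and $(D/pD)/F_n$. By Cor.\ 3.16 of \cite{MultVarGal} all of these are free over $E_\Delta$; write $m_n:=\dim_{E_\Delta}(F_n/F_{n-1})$. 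Multiplication by $p^n$ on $D$ also descends to a $(\varphi_\Delta,\Gamma_\Delta)$-equivariant isomorphism $(D/pD)/F_n\xrightarrow{\sim}p^nD/p^{n+1}D$, since the kernel of the composition $D\twoheadrightarrow p^nD/p^{n+1}D$ is exactly $D[p^n]+pD$.

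Next I would choose a filtration-compatible $E_\Delta$-basis $\{\overline{x}^{(n)}_j\}_{n,j}$ of $D/pD$ with $\overline{x}^{(n)}_j\in F_n$ lifting an $E_\Delta$-basis of $F_n/F_{n-1}$ for $j=1,\dots,m_n$ (possible because each inclusion $F_{n-1}\subseteq F_n$ splits, its cokernel being free), and lift each $\overline{x}^{(n)}_j$ to some $x^{(n)}_j\in D[p^n]$. The central technical step---which I expect to be the main obstacle---is verifying that the annihilator of $x^{(n)}_j$ in $\OED$ is exactly $(p^n)$: it clearly contains $(p^n)$, and conversely, if $p^kbx^{(n)}_j=0$ with $b\notin p\OED$ and $k<n$, then $bx^{(n)}_j\in D[p^k]$ and its image $\overline{b}\cdot\overline{x}^{(n)}_j$ in $D/pD$ lies in $F_k$; since $(D/pD)/F_k$ is free (hence $E_\Delta$-torsion-free) and $\overline{b}\neq 0$, this forces $\overline{x}^{(n)}_j\in F_k\subseteq F_{n-1}$, contradicting the choice of $\overline{x}^{(n)}_j$. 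This ``saturation'' step is precisely where Cor.\ 3.16 of \cite{MultVarGal} is essential.

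Finally I would form $G:=\bigoplus_{n,j}(\OED/p^n)e^{(n)}_j$ and the $\OED$-linear map $\phi\colon G\to D$, $e^{(n)}_j\mapsto x^{(n)}_j$. Modulo $p$, $\phi$ sends the canonical basis of $G/pG$ to our chosen basis of $D/pD$, hence is an isomorphism; iterating $D=\phi(G)+pD=\phi(G)+p^2D=\dots=\phi(G)+p^ND=\phi(G)$ (Nakayama, using $p^ND=0$) shows $\phi$ is surjective. For injectivity, $\OED$-linearity and surjectivity of $\phi$ give $\phi(p^kG)=p^kD$, so each induced map $\phi_k\colon p^kG/p^{k+1}G\twoheadrightarrow p^kD/p^{k+1}D$ is surjective; both sides have $E_\Delta$-dimension $\sum_{n>k}m_n$ (the right via the isomorphism $(D/pD)/F_k\cong p^kD/p^{k+1}D$ above), so each $\phi_k$ is an isomorphism. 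A snake-lemma argument on the $p$-adic filtration of $0\to\Ker\phi\to G\to D\to 0$ then yields $\Ker\phi\cap p^kG=\Ker\phi\cap p^{k+1}G$ for every $k$, and $p^NG=0$ forces $\Ker\phi=0$.
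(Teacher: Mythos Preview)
Your proof is correct and follows essentially the same strategy as the paper: both use Cor.\ 3.16 of \cite{MultVarGal} to obtain freeness of the graded pieces of a $p$-power filtration, then lift a compatible basis to cyclic generators of $D$. The only difference is organizational---you filter $D/pD$ by the images of $D[p^n]$ and lift generators directly from $D[p^n]$, whereas the paper filters $D[p]$ by $p^rD\cap D[p]$ and lifts by choosing $p^{h-r}$-th ``roots''; your injectivity argument via a dimension count on all $p^kG/p^{k+1}G$ is a slightly more elaborate substitute for the paper's one-line observation that a surjection of $p$-power torsion modules which is injective on the $p$-torsion is an isomorphism.
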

\begin{proof}
Since $D$ is finitely generated and torsion, we have $p^hD=0$ for some $h\geq 1$. We have the following filtration on the part $D[p]$ of $D$ killed by $p$: $$0=p^hD\cap D[p]\leq p^{h-1}D\cap D[p]\leq \dots \leq pD\cap D[p]\leq D[p]$$ consisting of \'etale $(\varphi_\Delta,\Gamma_\Delta)$-submodules. By Cor.\ 3.16 in \cite{MultVarGal} all the subquotients $(p^rD\cap D[p])/(p^{r+1}D\cap D[p])$ are free $\OED/(p)$-modules ($0\leq r\leq h-1$). So we may choose a basis $B_1\cup B_2\cup\dots\cup B_h$ of $D[p]$ such that for all $1\leq r$ the set $B_1\cup\dots \cup B_r$ is a $\OED/(p)$-basis of the module $p^{h-r}D\cap D[p]$. Now for each $1\leq r\leq h$ and $b\in B_r$ choose an element $b'\in D$ with $p^{h-r}b'=b$ and put $B'_r:=\{b'\in D\mid b\in B_r\}$. There is a surjective $\OED$-module homomorphism
\begin{equation*}
\bigoplus_{r=1}^{h}\bigoplus_{b'\in B_r}\OED/(p^{h-r+1})\twoheadrightarrow D
\end{equation*}
sending the generator of $\OED/(p^{h-r+1})$ to $b'_r$. This map is injective on the part killed by $p$ by construction therefore it is an isomorphism.
\end{proof}

For an \'etale $(\varphi_\Delta,\Gamma_\Delta)$-module $D$ we define the pairing
\begin{eqnarray*}
\{\cdot,\cdot\}\colon D\times D^*(\TD)&\to&\Qp/\Zp\\
(x,y)&\mapsto&\{x,y\}:=\res(y(x)) \ .
\end{eqnarray*}

\begin{pro}\label{adjoint}
Let $D$ be a finitely generated $p$-power torsion \'etale $(\varphi_\Delta,\Gamma_\Delta)$-module over $\OED$. Then the pairing $\{\cdot,\cdot\}$ is non-degenerate in the sense that the induced maps $D\to \Hom_{\Zp}(D^*(\TD),\Qp/\Zp)$ and $D^*(\TD)\to \Hom_{\Zp}(D,\Qp/\Zp)$ are injective. Moreover, we have 
\begin{eqnarray*}
\{x,\varphi_\alpha(y)\}=\{\psi_\alpha(x),y\}\ ,&&\{\varphi_\alpha(x),y\}=\{x,\psi_\alpha(y)\}\ ,\\ 
\{\gamma(x),\gamma(y)\}=\{x,y\}\text{ and }&&\{ux,uy\}=\{x,y\} 
\end{eqnarray*}
for all $\alpha\in\Delta$, $\gamma\in\Gamma$, $x\in D$, $y\in D^*(\TD)$, $u\in N_{\Delta,0}=\prod_{\alpha\in\Delta}(1+X_\alpha)^{\Zp}\subset \OED$.
\end{pro}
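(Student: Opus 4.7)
The plan is to split the proposition into two parts: the adjointness identities, which follow by unwinding the definitions together with the invariance properties of $\res$ from Proposition \ref{resphi}, and the non-degeneracy statement, which reduces via Lemma \ref{decompose} to the cyclic case $D = \OED/(p^n)$ where one writes down explicit duals. The main bookkeeping obstacle throughout is tracking the $\#$-convention in the left $\OED$-action on $D^*$ and the cyclotomic twist defining $D^*(\TD)$.

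For the adjointness identities I would argue as follows. The $\Gamma_\Delta$-invariance is immediate once one notes that in $\DmpD$ we have $\gamma(y(x)\cdot e) = \gamma(y(x))\cdot\prod_\alpha\chi_\alpha(\gamma_\alpha)\cdot e$, so the cyclotomic factor coming from the twist on $D^*(\TD)$ cancels the one coming from the action of $\gamma$ on $e$, and then Proposition \ref{resphi} finishes. For $u\in N_{\Delta,0}$ one uses $\#(u) = u^{-1}$ together with the commutativity of $\OED$ to get $(uy)(ux) = \#(u)\cdot y(ux) = u^{-1}\cdot u\cdot y(x) = y(x)$. For the $\varphi_\alpha$-$\psi_\alpha$ adjointness in the form $\{x,\varphi_\alpha(y)\} = \{\psi_\alpha(x),y\}$, I would write $x = \sum_{i=0}^{p-1}(1+X_\alpha)^i\varphi_\alpha(x_i)$ with $\psi_\alpha(x) = x_0$, apply $\OED$-linearity of $\varphi_\alpha(y)$ and the defining formula $\varphi_\alpha(y)\circ\varphi_\alpha = \varphi_\alpha\circ y$ to rewrite $\varphi_\alpha(y)(x) = \sum_i(1+X_\alpha)^i\varphi_\alpha(y(x_i))$, and then combine $\res\circ\psi_\alpha = \res$ (Proposition \ref{resphi}), the projection formula $\psi_\alpha(a\varphi_\alpha(b)) = \psi_\alpha(a)\cdot b$ in $\mathcal{E}_\Delta/\OED$, and the vanishing $\psi_\alpha((1+X_\alpha)^i) = \delta_{i,0}$ for $0\leq i<p$. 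The companion identity $\{\varphi_\alpha(x),y\} = \{x,\psi_\alpha(y)\}$ follows from the analogous expansion of $y$ in the $\#$-twisted basis $\{(1+X_\alpha)^i\}_{0\leq i<p}$, using additionally the formula $\psi_\alpha((1+X_\alpha)^{-i}) = \delta_{i,0}$ for $0\leq i<p$ (itself a consequence of $\psi_\alpha((1+X_\alpha)^j) = (1+X_\alpha)^{j/p}$ if $p\mid j$ and $0$ otherwise).

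For non-degeneracy, Lemma \ref{decompose} gives a decomposition $D \cong \bigoplus_{i=1}^{k}\OED/(p^{n_i})$ purely as $\OED$-modules (the $(\varphi_\Delta,\Gamma_\Delta)$-structure need not be preserved, but this is irrelevant since injectivity of a $\Zp$-linear map depends only on the pairing of abelian groups). Applying $\Hom_{\OED}(-,\mathcal{E}_\Delta/\OED)$ identifies the underlying abelian group of $D^*(\TD)$ with $\bigoplus_i p^{-n_i}\OED/\OED$, and the pairing splits as a direct sum of pairings on summands. It therefore suffices to treat $D = \OED/(p^n)$, where every $y\in D^*$ is given by multiplication by some $c\in p^{-n}\OED/\OED$ and the pairing reads $(a,c)\mapsto\res(ca\cdot e)$. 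Given a nonzero class $a\in\OED/(p^n)$, fix a lift with some coefficient $a_{j_\bullet}\notin p^n\Zp$, and take $c := p^{-n}\prod_{\alpha\in\Delta}(1+X_\alpha)X_\alpha^{-1-j_\alpha}$; then $ca/\prod_\alpha(1+X_\alpha) = p^{-n}\prod_\alpha X_\alpha^{-1-j_\alpha}\cdot a$, and reading off the coefficient of $\prod_\alpha X_\alpha^{-1}$ yields $\res(ca\cdot e) = p^{-n}a_{j_\bullet}\neq 0$ in $\Qp/\Zp$. The symmetric argument establishes injectivity of the induced map from $D^*(\TD)$.
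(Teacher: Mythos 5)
Your proof is correct and follows essentially the same route as the paper's: the adjointness identities come from expanding $x$ (resp.\ $y$) via the $\varphi_\alpha$-decomposition of $\OED$, the projection formula for $\psi_\alpha$, and the $\psi_\alpha$-invariance of $\res$ from Prop.\ \ref{resphi}; and non-degeneracy is reduced via Lemma \ref{decompose} to cyclic summands $\OED/(p^n)$, followed by a monomial correction to make the $\res$-coefficient nonzero. The only cosmetic difference is that you write out the explicit dual element $c=p^{-n}\prod_\alpha(1+X_\alpha)X_\alpha^{-1-j_\alpha}$ where the paper merely asserts existence and then multiplies by a monomial, but the content is the same.
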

\begin{proof}
For any nonzero element $x\in D$ there exists an element $y\in D^*(\TD)$ such that $0\neq y(x)\in \mathbb{D}(\mu_{p^\infty,\Delta})$ by Lemma \ref{decompose} (also by noting that $D\cong (D^*(\TD))^*(\TD)$ by Thm.\ \ref{tate} and Thm.\ 3.15 in \cite{MultVarGal}). Further multiplying $y$ by a monomial $\prod_{\alpha\in\Delta}X_\alpha^{r_\alpha}$ ($r_\alpha\in\mathbb{Z}$, $\alpha\in\Delta$) we may ensure that the required coefficient $\{x,y\}$ is nonzero. Therefore the injectivity of the map $D\to \Hom_{\Zp}(D^*(\TD),\Qp/\Zp)$. The other statement follows similarly.

By the \'etale condition we may write $x=\sum_{i=0}^{p-1}(1+X_\alpha)^i\varphi_\alpha\circ\psi_\alpha((1+X_\alpha)^{-i}x)$ for all $\alpha\in\Delta$. So we compute
\begin{align*}
\{x,\varphi_\alpha(y)\}=\res(\varphi_\alpha(y)(x))=\res(\varphi_\alpha(y)(\sum_{i=0}^{p-1}(1+X_\alpha)^i\varphi_\alpha\circ\psi_\alpha((1+X_\alpha)^{-i}x)))=\\
=\sum_{i=0}^{p-1}\res((1+X_\alpha)^i\cdot\varphi_\alpha(y)(\varphi_\alpha\circ\psi_\alpha((1+X_\alpha)^{-i}x)))=\\
=\sum_{i=0}^{p-1}\res((1+X_\alpha)^i\cdot\varphi_\alpha(y(\psi_\alpha((1+X_\alpha)^{-i}x))))=\\
=\sum_{i=0}^{p-1}\res(\psi_\alpha((1+X_\alpha)^i\cdot\varphi_\alpha(y(\psi_\alpha((1+X_\alpha)^{-i}x)))))=\res(y(\psi_\alpha(x)))=\{\psi_\alpha(x),y\}
\end{align*}
using Prop.\ \ref{resphi} and the definition of $\varphi_\alpha$ on the dual $(\varphi_\Delta,\Gamma_\Delta)$-module $D^*(\TD)$. The other formulas follow similarly and more easily.
\end{proof}

Let $D$ be a finitely generated $p$-power torsion \'etale $(\varphi_\Delta,\Gamma_\Delta)$-module over $\OED$. An $\OED^+:=\Zp\bs X_\alpha,\alpha\in\Delta\js$-lattice in $D$ is a finitely generated $\OED^+$-submodule $M\subset D$ such that $D=M[X_\Delta^{-1}]$. We define the \emph{duality topology} on $D$ using the sets $$\sum_{\alpha\in\Delta}X_\alpha^NM[X_{\Delta\setminus\{\alpha\}}^{-1}]$$ for all $N>0$ as a system of neighbourhoods of $0$: a subset $U\subseteq D$ is declared to be open if for all $x\in U$ there is an integer $N>0$ such that $x+\sum_{\alpha\in\Delta}X_\alpha^NM[X_{\Delta\setminus\{\alpha\}}^{-1}]\subseteq U$. If $D$ is a finitely generated \'etale $(\varphi_\Delta,\Gamma_\Delta)$-module over $\OED$ then we define the duality topology on $D$ as the projective limit topology of the duality topologies on $D/p^nD$ ($n>0$). The principal goal of introducing this new topology is to describe the image of the inclusion $D^*(\TD)\hookrightarrow \Hom_{\Zp}(D,\Qp/\Zp)$ (see Prop.\ \ref{dualtopcont}). 

\begin{lem}\label{dualtopindep}
The duality topology does not depend on the choice of the $\OED^+$-lattice $M$.
\end{lem}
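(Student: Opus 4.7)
The plan is to compare any two $\OED^+$-lattices $M_1$ and $M_2$ directly, by producing integers that control how much one has to shrink a basic neighborhood for $M_1$ in order to fit inside a given basic neighborhood for $M_2$. I will handle the $p$-power torsion case first, and then invoke the projective limit description for the general finitely generated case.

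First I observe that since $M_2$ is a finitely generated $\OED^+$-module and its generators lie in $D=M_1[X_\Delta^{-1}]$, there is a common denominator: an integer $k_{12}\geq 0$ such that $X_\Delta^{k_{12}}M_2\subseteq M_1$. Symmetrically, there exists $k_{21}\geq 0$ with $X_\Delta^{k_{21}}M_1\subseteq M_2$. Set $k:=\max(k_{12},k_{21})$.

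Next I convert these containments into the per-variable form needed by the definition of the basic neighborhoods. Fix $\alpha\in\Delta$. In the localization $M_2[X_{\Delta\setminus\{\alpha\}}^{-1}]$ every variable $X_\beta$ with $\beta\neq\alpha$ is already invertible, so the containment $X_\Delta^{k}M_1\subseteq M_2$ yields
\begin{equation*}
X_\alpha^{k}\, M_1[X_{\Delta\setminus\{\alpha\}}^{-1}]\;\subseteq\; M_2[X_{\Delta\setminus\{\alpha\}}^{-1}],
\end{equation*}
and symmetrically with the roles of $M_1,M_2$ swapped. Multiplying the basic neighborhood by the extra factor $X_\alpha^{N}$ gives
\begin{equation*}
\sum_{\alpha\in\Delta} X_\alpha^{N+k}\, M_1[X_{\Delta\setminus\{\alpha\}}^{-1}]\;\subseteq\; \sum_{\alpha\in\Delta} X_\alpha^{N}\, M_2[X_{\Delta\setminus\{\alpha\}}^{-1}]
\end{equation*}
for every $N>0$, which shows that every neighborhood of $0$ in the topology defined using $M_2$ contains a neighborhood of $0$ in the topology defined using $M_1$; by symmetry the two topologies coincide.

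For the case of a general finitely generated (not necessarily torsion) $D$, the duality topology is defined as the inverse limit of the duality topologies on the $D/p^nD$. Given two $\OED^+$-lattices $M_1,M_2\subset D$, their images $\overline{M_1}^{(n)}, \overline{M_2}^{(n)}$ in $D/p^nD$ are $\OED^+$-lattices in each $p$-power torsion quotient, so the torsion case already gives agreement of the two topologies on each $D/p^nD$; taking the projective limit yields the result for $D$ itself. The only non-routine point in the argument is the passage from the global bound $X_\Delta^{k}M_i\subseteq M_j$ to per-$\alpha$ bounds, which I expect to be the key technical step but which follows immediately from invertibility of $X_\beta$ ($\beta\neq\alpha$) in the relevant localization.
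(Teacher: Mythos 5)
Your proof is correct and takes essentially the same approach as the paper: both rely on sandwiching any two lattices between each other up to a power of $X_\Delta$, i.e.\ finding $k>0$ with $X_\Delta^k M'\subseteq M\subseteq X_\Delta^{-k}M'$. You simply spell out the passage to the per-variable containments $X_\alpha^{N+k}M_1[X_{\Delta\setminus\{\alpha\}}^{-1}]\subseteq X_\alpha^N M_2[X_{\Delta\setminus\{\alpha\}}^{-1}]$ and the reduction to the torsion case, which the paper leaves implicit.
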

\begin{proof}
Note that if $M'\subset D$ is another $\OED^+$-lattice in $D$ then there exists an integer $k>0$ such that $X_\Delta^{k}M'\subseteq M\subseteq X_\Delta^{-k}M'$.
\end{proof}

Recall that the Iwasawa algebra $\OED^+$ is a local ring with maximal ideal $\operatorname{Jac}(\OED^+)$ generated by $p$ and $X_\alpha$ ($\alpha\in \Delta$) and residue field $\Fp\cong \OED^+/\operatorname{Jac}(\OED^+)$. Moreover, it is complete with respect to the filtration induced by the powers of $\operatorname{Jac}(\OED^+)$ therefore it is a pseudocompact ring (see chapter $22$ in \cite{SchpadicLie}). In particular, any finitely generated $\OED^+$-module admits a canonical topology which coincides with the $\operatorname{Jac}(\OED^+)$-adic topology. Since the residue field $\Fp$ is finite, any finitely generated $\OED^+$-module is the projective limit of finite modules hence it is compact in the canonical topology.

\begin{pro}\label{dualtophausdorff}
The duality topology on a finitely generated $p$-power torsion \'etale $(\varphi_\Delta,\Gamma_\Delta)$-module $D$ over $\OED$ induces the canonical compact topology on each finitely generated $\OED^+$-submodule of $D$. In particular, the duality topology is Hausdorff.
\end{pro}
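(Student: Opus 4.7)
The plan is to fix a convenient lattice supplied by Lemma \ref{decompose}, compare the duality topology with the canonical $\operatorname{Jac}$-adic topology via the Artin--Rees lemma, and deduce Hausdorffness from Krull's intersection theorem. By Lemma \ref{decompose} write $D \cong \bigoplus_{j=1}^{k}\OED/(p^{n_j})$ as an $\OED$-module and let $M_0 := \bigoplus_j \OED^+/(p^{n_j}) \subset D$ be the obvious ``positive'' lattice. For a given finitely generated $\OED^+$-submodule $N' \subset D$ the enlarged lattice $M := X_\Delta^{-k_0} M_0$ contains $N'$ for $k_0$ large, and by Lemma \ref{dualtopindep} this choice does not affect the duality topology. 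Set $h := \max_j n_j$. The first key step is the direct computation
$$U_N \cap M \subseteq I^N M, \qquad I := (X_\alpha : \alpha \in \Delta) \subset \OED^+.$$
Indeed, if $m = \sum_j f_j e_j \in M$ with $f_j \in \OED^+/(p^{n_j})$ admits a decomposition $m = \sum_\alpha X_\alpha^N h_\alpha$ with $h_\alpha \in M[X_{\Delta\setminus\{\alpha\}}^{-1}]$, inspection of Laurent coefficients forces any non-negative multi-exponent $i_\bullet$ appearing in $f_j$ to satisfy $i_\alpha \geq N$ for some $\alpha$; hence $\sum_\alpha i_\alpha \geq N$ and $f_j \in I^N (\OED^+/(p^{n_j}))$ componentwise.

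Noetherianity of $\OED^+ = \Zp\bs X_\alpha : \alpha \in \Delta\js$ together with the Artin--Rees lemma applied to $N' \subset M$ and the ideal $I$ produces a constant $c \geq 0$ such that $I^N M \cap N' \subseteq I^{N-c}N'$ for all $N \geq c$. Combined with the previous inclusion this gives
$$U_N \cap N' \subseteq I^{N-c}N' \subseteq \operatorname{Jac}(\OED^+)^{N-c} N',$$
which is one half of the topology comparison. For the reverse inclusion I use that $p^h$ annihilates $M$: a monomial $p^a\prod_\alpha X_\alpha^{b_\alpha}$ in $\operatorname{Jac}^n$ acts trivially on $M$ unless $a < h$, in which case $\sum_\alpha b_\alpha \geq n-h$ and the pigeonhole principle forces some $b_\alpha \geq N$ once $n \geq h + |\Delta|N$. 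Hence $\operatorname{Jac}^n M \subseteq U_N$, and \emph{a fortiori} $\operatorname{Jac}^n N' \subseteq U_N \cap N'$, completing the identification of the subspace topology on $N'$ with its canonical $\operatorname{Jac}$-adic topology.

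Hausdorffness now follows formally: for any $0 \neq x \in D$, set $N' := \OED^+\cdot x$ and apply Krull's intersection theorem to the finitely generated module $N'$ over the Noetherian local ring $\OED^+/(p^h)$ to obtain an $n$ with $x \notin \operatorname{Jac}^n N'$; the comparison above then gives $x \notin U_N$ for $N$ sufficiently large, so $\bigcap_N U_N = \{0\}$ and the duality topology, being a topological group topology, is Hausdorff. The main technical obstacle is the inclusion $U_N \cap M \subseteq I^N M$: the summands $X_\alpha^N h_\alpha$ individually live in the \emph{different} overrings $M[X_{\Delta \setminus \{\alpha\}}^{-1}]$ obtained by inverting \emph{different} subsets of variables, so extracting genuinely ``positive'' information about $m$ requires a careful comparison of multi-Laurent expansions relative to the splitting supplied by Lemma \ref{decompose}. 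Once this is secured, the classical Artin--Rees plus Krull machinery handles everything else.
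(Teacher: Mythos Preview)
Your proof is correct. Both you and the paper begin with the decomposition from Lemma~\ref{decompose} and the lattice $M_0$, and both rest on the same coefficient observation: an element of $X_\alpha^N M[X_{\Delta\setminus\{\alpha\}}^{-1}]$ has $X_\alpha$-exponent bounded below by $N$ (up to the shift by $k_0$), so any monomial surviving in $m\in U_N\cap M$ must have large exponent in at least one variable. The paper packages this via an explicit $\Zp$-linear projection $\pi_{M_0}\colon D\to M_0$ that kills every monomial with a negative exponent, obtaining the \emph{exact} equality
\[
M_0\cap U_N \;=\; \sum_{\alpha\in\Delta} X_\alpha^N M_0,
\]
which is immediately recognised as a neighbourhood basis for the canonical compact topology on $M_0$; the passage to an arbitrary finitely generated $N'$ is then the one-line remark that $N'\subset X_\Delta^{-k}M_0$ and that subspace topologies of finitely generated modules over a complete Noetherian local ring agree with their intrinsic canonical topologies. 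You instead extract only the inclusion $U_N\cap M\subseteq I^N M$ and transfer it to $N'$ via Artin--Rees, with Krull's intersection theorem supplying Hausdorffness. Your route is heavier in commutative-algebra machinery but makes the submodule step completely explicit; the paper's route is more elementary and leaves that step to the reader. One minor slip to clean up: once you enlarge to $M=X_\Delta^{-k_0}M_0$, the components $f_j$ lie in $X_\Delta^{-k_0}\OED^+/(p^{n_j})$ rather than $\OED^+/(p^{n_j})$, so ``non-negative multi-exponent'' should read ``multi-exponent $\geq -k_0$''; the arithmetic still yields $U_N\cap M\subseteq I^N M$ after the obvious shift.
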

\begin{proof}
By Lemma \ref{decompose} we may write $D$ as a direct sum $D\cong \bigoplus_{i=1}^{k}\OED/(p^{n_i})e_i$ where $e_i\in D$ ($1\leq i\leq k$) are generators such that $\OED/(p^{n_i})e_i\cong \OED/(p^{n_i})$. Then $M_0:=\bigoplus_{i=1}^{k}\OED^+/(p^{n_i})e_i$ is an $\OED^+$-lattice in $D$ and by Lemma \ref{dualtopindep} we may define the duality topology using $M_0$. Further, the $\Zp$-linear projection map $\pi_i\colon \OED/(p^{n_i})\to \OED^+/(p^{n_i})$ ($1\leq i\leq k$) having all those monomials $\prod_{\alpha\in\Delta}X_\alpha^{j_\alpha}$ with $j_\alpha<0$ for at least one $\alpha\in\Delta$ in the kernel induces a $\Zp$-linear projection map $\pi_{M_0}\colon D\to M_0$ whose restriction to $M_0$ is the identity. Comparing the coefficients we find that $\pi_{M_0}(X_\alpha^NM_0[X_{\Delta\setminus\{\alpha\}}^{-1}])=X_\alpha^NM_0$, so we have
\begin{equation*}
M_0\cap\left(\sum_{\alpha\in\Delta}X_\alpha^NM_0[X_{\Delta\setminus\{\alpha\}}^{-1}]\right)=\sum_{\alpha\in\Delta}X_\alpha^NM_0
\end{equation*}
showing that the duality topology induces the natural compact topology on $M_0$. Similarly, the duality topology on $X_\Delta^{-k}M_0$ is the usual one for all $k>0$. Finally, the statement follows noting that any finitely generated $\OED^+$-submodule $M\subset D$ is contained in $X_\Delta^{-k}M_0$ for some $k>0$.
\end{proof}

\begin{pro}\label{dualtopcont}
Let $D$ be a finitely generated $p$-power torsion \'etale $(\varphi_\Delta,\Gamma_\Delta)$-module over $\OED$ and $f\colon D\to \Qp/\Zp$ be a $\Zp$-linear map. Then there exists an element $y\in D^*(\TD)$ such that $f(x)=\{x,y\}$ for all $x\in D$ if and only if $f$ is continuous in the duality topology. In particular, we have $\Zp$-linear bijections $D\overset{\sim}{\to}\Hom_{\Zp}^{cont}(D^*(\TD),\Qp/\Zp)$ and $D^*(\TD)\overset{\sim}{\to}\Hom_{\Zp}^{cont}(D,\Qp/\Zp)$ in the duality topology.
\end{pro}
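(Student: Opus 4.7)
The claim splits into two directions: showing that $\{\cdot,y\}$ is continuous in the duality topology for each $y \in D^*(\TD)$, and producing a $y \in D^*(\TD)$ realizing a given continuous $\Zp$-linear $f$. Uniqueness of $y$ is Prop.\ \ref{adjoint}.

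\emph{Forward direction.} Using Lemma \ref{decompose}, decompose $D \cong \bigoplus_i \OED/(p^{n_i})e_i$ and set $M_0 := \bigoplus_i \OED^+/(p^{n_i})e_i$. Any $y$ is determined by the values $y(e_i) \in \DmpD[p^{n_i}]$, each representable as $g_i e/p^{n_i}$ with $g_i \in \OED/(p^{n_i})$. Since $\OED/p^{n_i} = (\Zp/p^{n_i})\bs X_\alpha\js[X_\alpha^{-1}]$, there is a uniform $N$ with $g_i \in X_\Delta^{-N}\OED^+/(p^{n_i})$ for all $i$. For $x$ in the basic neighborhood $\sum_\alpha X_\alpha^{N+1} M_0[X_{\Delta\setminus\{\alpha\}}^{-1}]$, every monomial summand of $x g_i$ has $X_\alpha$-adic valuation $\geq 1$ for some $\alpha$; as $\prod_\beta(1+X_\beta)^{-1}$ is a power series, no $X_\Delta^{-1}$-coefficient survives the division, so $\{x,y\}=0$.

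\emph{Backward direction.} Given continuous $f$, define
\begin{equation*}
y(x) := \Bigl(\prod_{\alpha\in\Delta}(1+X_\alpha)\Bigr) \cdot \sum_{i_\bullet \in \mathbb{Z}^\Delta} f\bigl(X_\bullet^{-i_\bullet-1_\bullet}x\bigr)\, X_\bullet^{i_\bullet} \cdot e,
\end{equation*}
where $X_\bullet^{j_\bullet} := \prod_\alpha X_\alpha^{j_\alpha}$. By continuity $f$ vanishes on some neighborhood $\sum_\alpha X_\alpha^M M_0[X_{\Delta\setminus\{\alpha\}}^{-1}]$; for $x \in X_\Delta^{-k}M_0$ the $i_\bullet$-summand then vanishes whenever some $i_\alpha \leq -M-k-1$, so the sum is supported on multi-indices bounded below in every coordinate and represents an element of $\OED/(p^n)$ (where $p^n D=0$), hence of $\DmpD$. $\Zp$-linearity of $y$ is immediate; the substitution $i_\bullet \mapsto i_\bullet - j_\bullet$ yields $y(X_\bullet^{j_\bullet}x) = X_\bullet^{j_\bullet}y(x)$ on monomial translates. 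Continuity of the $\OED^+$-action on $M_0$ in the canonical topology (which coincides with the duality topology by Prop.\ \ref{dualtophausdorff}) upgrades this to $\OED^+$-linearity on $M_0$, and $X_\Delta$-localization promotes it to $\OED$-linearity on $D = M_0[X_\Delta^{-1}]$. Finally, reading off the coefficient of $X_\Delta^{-1}$ in $\sum_{i_\bullet} f(X_\bullet^{-i_\bullet-1_\bullet}x)X_\bullet^{i_\bullet}$ returns precisely the $i_\bullet = -1_\bullet$ term $f(x)$, so $\res(y(x))=f(x)$.

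\emph{Bijections and main obstacle.} The two $\Zp$-linear bijections of the ``in particular'' clause now follow: injectivity from Prop.\ \ref{adjoint}, surjectivity from the backward construction, applied symmetrically to both $D$ and $D^*(\TD)$ (the latter using the isomorphism $(D^*(\TD))^*(\TD)\cong D$ coming from Thm.\ \ref{tate} and Thm.\ 3.15 of \cite{MultVarGal}). The most delicate step is the $\OED$-linearity of the $y$ produced in the backward direction: the defining formula is only manifestly $\Zp$-linear, and upgrading to $\OED$-linearity requires interchanging the $\Zp$-summation with multiplication by a general $a \in \OED$, which relies crucially on the compatibility of the duality topology with the $\OED^+$-action on the compact lattice $M_0$.
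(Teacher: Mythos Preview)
Your proof is correct, and the forward direction matches the paper's argument essentially line for line. The backward direction, however, takes a genuinely different route.

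The paper constructs $y$ by first fixing a dual basis $b_1,\dots,b_k$ of $D^*(\TD)$ determined by $b_i(e_j)=\delta_{ij}\,p^{h-n_i}\prod_\alpha\frac{1+X_\alpha}{X_\alpha}$, and then writes $y$ as an explicit $\OED$-linear combination
\[
y=\sum_{i=1}^k\sum_{r\in\mathbb{Z}^\Delta}f\!\left(\prod_\alpha((1+X_\alpha)^{-1}-1)^{-r_\alpha}e_i\right)\prod_\alpha X_\alpha^{r_\alpha}\,b_i\ .
\]
Because the $b_i$ are already $\OED$-linear maps, $\OED$-linearity of $y$ is free; one only has to check convergence of the coefficient series and the identity $\{x,y\}=f(x)$ on a spanning set modulo $\Ker f$.

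Your construction instead produces $y$ directly as the map $x\mapsto\bigl(\prod_\alpha(1+X_\alpha)\bigr)\sum_{i_\bullet}f(X_\bullet^{-i_\bullet-1_\bullet}x)X_\bullet^{i_\bullet}e$, which is an intrinsic ``residue inversion'' formula that does not refer to any choice of dual basis. The price is exactly the step you flag as most delicate: $\OED$-linearity is no longer automatic and must be extracted from the continuity of $f$ and of the $\OED^+$-action on the compact lattice $M_0$. Your argument for this (monomial-linearity by reindexing, extension to $\OED^+$ via density of Laurent polynomials and Prop.~\ref{dualtophausdorff}, then localization) is sound; to make it airtight one should note explicitly that the coefficient functionals $z\mapsto f(X_\bullet^{-i_\bullet-1_\bullet}z)$ are continuous on each $X_\Delta^{-K}M_0$, so that termwise application of $f$ to the expansion of $\lambda x$ is justified. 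The paper's approach is a bit more economical, while yours is basis-free and makes the role of continuity more transparent.
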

\begin{proof}
Using Lemma \ref{decompose} we write $D$ as a direct sum $\bigoplus_{i=1}^k\OED/(p^{n_i})e_i$ and put $h:=\max_i(n_i)$ so that $D$ is a module over $\OED/(p^h)$. Then the pairing $\{\cdot,\cdot\}$ on $D\times D^*(\TD)$ has values in $\Zp/(p^h)\cong p^{-h}\Zp/\Zp\subset \Qp/\Zp$. Further, $D^*(\TD)=\Hom(D,\mathbb{D}(\mu_{p^h,\Delta}))$ has a dual basis $b_1,\dots,b_k$ defined by the formula
\begin{equation*}
b_i(e_j)=\begin{cases}p^{h-n_i}\prod_{\alpha\in\Delta}\frac{1+X_\alpha}{X_\alpha}\in \mathbb{D}(\mu_{p^h,\Delta})=\OED/(p^h)(\TD)&\text{if }i=j\\ 0&\text{if }i\neq j\ ,\end{cases}
\end{equation*}
so we have $D^*(\TD)\cong \bigoplus_{i=1}^k\OED/(p^{n_i})b_i$. We put $M_0:=\bigoplus_{i=1}^k\OED^+/(p^{n_i})e_i$ and $M_0^*:=\bigoplus_{i=1}^k\OED/(p^{n_i})b_i$. Now if $y\in D^*(\TD)$ is arbitrary, then it is contained in $X_\Delta^{-N+1}M_0^*$ for some integer $N>0$. Let $x$ be in $X_\alpha^NM_0[X_{\Delta\setminus\{\alpha\}}^{-1}]$ for some $\alpha\in\Delta$. Then we have 
\begin{align*}
y(x)=((1+X_\alpha)^{-1}-1)^Ny)(X_\alpha^{-N}x)\in X_\alpha M_0^*(M_0[X_{\Delta\setminus\{\alpha\}}^{-1}])=\\
=X_\alpha\prod_{\alpha\in\Delta}\frac{1+X_\alpha}{X_\alpha}\OED^+/(p^h)[X_{\Delta\setminus\{\alpha\}}^{-1}]\subset \mathbb{D}(\mu_{p^h,\Delta})
\end{align*}
so that the exponent of $X_\alpha$ is nonnegative in all the monomials contained in the expansion of $\frac{y(x)}{\prod_{\alpha\in\Delta}(1+X_\alpha)}$. In particular, $\{x,y\}=0$, ie.\ $\{\cdot,y\}$ vanishes on $\sum_{\alpha\in\Delta}X_\alpha^NM_0[X_{\Delta\setminus\{\alpha\}}^{-1}]$. Therefore the kernel of $\{\cdot,y\}$ is open in the duality topology showing the continuity of $\{\cdot,y\}$.

Conversely, assume that $f\colon D\to \Zp/(p^h)$ is a $\Zp$-linear function that is continuous in the duality topology. Since the topology on $\Zp/(p^h)$ is discrete, this means that $$\sum_{\alpha\in\Delta}X_\alpha^NM_0[X_{\Delta\setminus\{\alpha\}}^{-1}]\subseteq \Ker(f)$$ for some $N>0$. We define
\begin{equation*}
y:=\sum_{i=1}^k\sum_{r=(r_\alpha)_\alpha\in\mathbb{Z}^\Delta}f(\prod_{\alpha\in\Delta}((1+X_\alpha)^{-1}-1)^{-r_\alpha}e_i)\prod_{\alpha\in\Delta}X_\alpha^{r_\alpha}b_i\ .
\end{equation*}
Note that if $-r_\alpha\geq N$ for some $\alpha\in\Delta$ then we have $\prod_{\alpha\in\Delta}((1+X_\alpha)^{-1}-1)^{-r_\alpha}e_i\in X_\alpha^NM_0[X_{\Delta\setminus\{\alpha\}}^{-1}]$ whence $f(\prod_{\alpha\in\Delta}((1+X_\alpha)^{-1}-1)^{-r_\alpha}e_i)=0$ by our assumption on $f$. Therefore the above formal sum indeed defines an element $y\in D^*(\TD)$. Finally, we have $\{x,y\}=f(x)$ by construction: this is true for elements of the form $\prod_{\alpha\in\Delta}((1+X_\alpha)^{-1}-1)^{-r_\alpha}e_i\in D$ for some $(r_\alpha)_\alpha\in \mathbb{Z}^\Delta$ and $1\leq i\leq k$ and any $x\in D$ can be written as a finite sum of elements of this form modulo $\sum_{\alpha\in\Delta}X_\alpha^NM_0[X_{\Delta\setminus\{\alpha\}}^{-1}]$ by Prop.\ \ref{dualtophausdorff}.
\end{proof}

Even though the pairing $\{\cdot,\cdot\}$ is separately continuous in the duality topologies on $D$ and $D^*(\TD)$, it is \emph{not} jointly continuous. However, the situation is better in these terms if we choose the \emph{weak topology} on both $D$ and $D^*(\TD)$: this is the inductive limit topology of the compact spaces $X_\Delta^{-n}M$ for some $\OED^+$-lattice $M\subset D$. Note that the weak topology does not depend on our choice of the lattice $M$ either. If $D$ is a finitely generated \'etale $(\varphi_\Delta,\Gamma_\Delta)$-module over $\OED$ then we define the weak topology on $D$ as the projective limit topology of the weak topologies on $D/p^nD$ ($n>0$).

\begin{pro}
Assume that $D$ is a finitely generated $p$-power torsion \'etale $(\varphi_\Delta,\Gamma_\Delta)$-module over $\OED$. The pairing $\{\cdot,\cdot\}$ is (jointly) continuous in the weak topology.
\end{pro}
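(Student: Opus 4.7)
The plan is to exploit the $\Zp$-bilinearity of $\{\cdot,\cdot\}$ together with the discreteness of the target $\Qp/\Zp$. The expansion $\{x_0+v,\,y_0+w\}=\{x_0,y_0\}+\{x_0,w\}+\{v,y_0\}+\{v,w\}$ combined with the separate weak-continuity of $\{x_0,\cdot\}$ and $\{\cdot,y_0\}$ (immediate from Prop.\ \ref{dualtopcont}, since the weak topology refines the duality topology) reduces the problem to exhibiting weak-open neighborhoods $V\ni 0\subset D$ and $W\ni 0\subset D^*(\TD)$ with $\{V,W\}=0$. Using Lemma \ref{decompose} together with the dual basis from the proof of Prop.\ \ref{dualtopcont}, the pairing decomposes along the summands of $D=\bigoplus_i\OED/(p^{n_i})e_i$, and the weak topology is the corresponding product topology, so we may reduce to $D=\OED/(p^n)$. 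In that case one has the explicit formula $\{fe,\,gb\}=(\text{constant term of }g^{\#}f)\in\Zp/(p^n)\subset\Qp/\Zp$, as computed in the proof of Prop.\ \ref{dualtopcont}.

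Let $\mathfrak{m}:=\operatorname{Jac}(\OED^+)=(p,X_\gamma:\gamma\in\Delta)$ and set
\begin{equation*}
V=W\,:=\,\bigcup_{m\geq 0}X_\Delta^{-m}\,\mathfrak{m}^{m|\Delta|+n}\,\OED^+/(p^n)\;\subset\;\OED/(p^n).
\end{equation*}
Because $X_\Delta=\prod_\gamma X_\gamma\in\mathfrak{m}^{|\Delta|}$, the sets in the union are nested, so $V$ is an additive subgroup; moreover $V\cap X_\Delta^{-m_0}\OED^+/(p^n)\supseteq \mathfrak{m}^{m_0|\Delta|+n}\cdot X_\Delta^{-m_0}\OED^+/(p^n)$, which is Jac-adic-open in the compact $\OED^+$-module $X_\Delta^{-m_0}\OED^+/(p^n)$. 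Hence $V$ is weak-open. Now the involution $\#$ restricts to an automorphism of $\OED^+$ (because $\#(X_\gamma)=-X_\gamma(1+X_\gamma)^{-1}\in X_\gamma\OED^+$) that preserves the ideal $\mathfrak{m}$. For $x=X_\Delta^{-m}f'\,e\in V$ and $y=X_\Delta^{-m'}g'\,b\in W$ with $f'\in\mathfrak{m}^{m|\Delta|+n}$ and $g'\in\mathfrak{m}^{m'|\Delta|+n}$, a direct computation yields
\begin{equation*}
(X_\Delta^{-m'}g')^{\#}\cdot X_\Delta^{-m}f'\,=\,\pm\,X_\Delta^{-(m+m')}\prod_\gamma(1+X_\gamma)^{m'}\,g'^{\#}f'\,\in\,X_\Delta^{-(m+m')}\,\mathfrak{m}^{(m+m')|\Delta|+2n}\OED^+,
\end{equation*}
and its constant term equals the coefficient of $\prod_\gamma X_\gamma^{m+m'}$ in an element of $\mathfrak{m}^{(m+m')|\Delta|+2n}$. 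Expanding in the monomial generators of this ideal, any such coefficient is divisible by $p^{2n}$ and therefore vanishes in $\Zp/(p^n)$; hence $\{V,W\}=0$.

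The main difficulty is that the pairing is \emph{not} uniformly continuous: negative $X_\gamma$-powers in $y$ can cancel positive $X_\gamma$-powers in $x$, so the neighborhoods $V,W$ must shrink deeper into $\mathfrak{m}$ as their elements reach further into $X_\Delta^{-m}$. The exponent $m|\Delta|+n$ is precisely calibrated to balance weak-openness---which, since $X_\Delta$ consumes $|\Delta|$ units of $\mathfrak{m}$-depth, forces the exponent to grow at most linearly in $m$ with coefficient $|\Delta|$---against the vanishing requirement, which needs at least $(m+m')|\Delta|+n$ units of $\mathfrak{m}$-depth in each factor to push the critical coefficient into $p^n\Zp$.
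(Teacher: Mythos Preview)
Your proof is correct but follows a genuinely different route from the paper's.

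The paper argues in two sentences: since the weak topology is the inductive limit of the compact lattices $X_\Delta^{-m}M$, it suffices to check continuity of the restriction to each product $M\times M'$ of lattices; and on such a product the pairing vanishes on the open neighbourhood $\sum_{\alpha}X_\alpha^N M\times \sum_{\alpha}X_\alpha^N M'$ of $(0,0)$ for $N$ large enough, by the computation already done in the proof of Prop.~\ref{dualtopcont}. Bilinearity then gives continuity everywhere on $M\times M'$.

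Your argument avoids this passage to compact pieces and instead produces, for the rank-one case, a \emph{global} weak-open subgroup $V=\bigcup_{m\geq 0}X_\Delta^{-m}\mathfrak{m}^{m|\Delta|+n}$ with $\{V,V\}=0$. The calibration of the exponent $m|\Delta|+n$ is the genuine content here: it is exactly the threshold at which the subgroup remains weak-open (since $X_\Delta$ absorbs $|\Delta|$ units of $\mathfrak{m}$-depth) while still forcing the relevant coefficient into $p^n\Zp$. This is more computational than the paper's argument, but it has two advantages: it does not invoke the (true but not entirely trivial) fact that the product of two countable compact inductive limits carries the inductive limit of the product topologies, and it gives an explicit description of a weak-open null set for the pairing. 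One small remark: your reduction to the rank-one case does not actually need the claim that ``the weak topology is the product topology'' on the direct sum; it only needs that the coordinate projections $D\to\OED/(p^{n_i})$ are weak-continuous, which is immediate since they restrict to continuous maps on each compact lattice.
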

\begin{proof}
By the definition of the inductive limit topology, it suffices to show that the restriction of the pairing to $M\times M'$ is continuous for any pair of $\OED^+$-lattices $M\subset D$ and $M'\subset D^*(\TD)$. However, for any fixed lattice $M\subset D$, the proof of Prop.\ \ref{dualtopcont} shows that there exists an integer $N>0$ such that $\{\cdot,\cdot\}$ is identically $0$ on $M\times \sum_{\alpha\in\Delta}X_\alpha^NM'[X_{\Delta\setminus\{\alpha\}}^{-1}]$ (resp.\ on $\sum_{\alpha\in\Delta}X_\alpha^NM[X_{\Delta\setminus\{\alpha\}}^{-1}]\times M'$), therefore also on the open subset $\sum_{\alpha\in\Delta}X_\alpha^NM\times \sum_{\alpha\in\Delta}X_\alpha^NM'$ of $M\times M'$.
\end{proof}

\begin{rem}
Note that the ring $E_\Delta=\OED/(p)$ is not locally compact for $|\Delta|>1$. Therefore the above pairing is definitely \emph{not} perfect for $|\Delta|>1$ (ie.\ the map $D\to \Hom^{cont,weak}_{\Zp}(D^*(\TD),\Qp/\Zp)$ is not a bijection) by \cite{MP}.  Consequently, the duality topology is strictly weaker than the weak topology.
\end{rem}

\subsection{Iwasawa cohomology}

Let $A$ be a finite $p$-power torsion abelian group with a continuous action of $\GQpD$. The Iwasawa cohomology groups $H^i_{Iw}(\GQpD,A)$ are defined as the projective limits
\begin{equation*}
H^i_{Iw}(\GQpD,A):=\varprojlim_{\HQpD\leq H\leq_o\GQpD}H^i(H,A)
\end{equation*}
where the transition maps are the cohomological corestriction maps and $H$ runs through all open subgroups of $\GQpD$ containing $\HQpD$. The Iwasawa cohomology groups naturally have the structure of modules over the Iwasawa algebra $\Zp\bs \Gamma_\Delta\js$. By Shapiro's Lemma we have the identification $H^i(H,A)\cong H^i(\GQpD,\Zp[\GQpD/H]\otimes_{\Zp}A)$ where $\GQpD$ acts diagonally on the right hand side. 
\begin{lem}\label{measure}
We have $H^i_{Iw}(\GQpD,A)\cong H^i(\GQpD,\Zp\bs \Gamma_\Delta\js\otimes_{\Zp}A)$ where the right hand side refers to continuous cochains via the diagonal action of $\GQpD$ on the coefficients.
\end{lem}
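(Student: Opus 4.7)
The plan is to identify $\Zp\bs\Gamma_\Delta\js$ as a projective limit of finite group rings, apply Shapiro's Lemma at each finite level, and then exchange the projective limit with continuous cohomology.

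First, I would write
\begin{equation*}
\Zp\bs\Gamma_\Delta\js \;=\; \varprojlim_{\HQpD\leq H\leq_o\GQpD} \Zp[\GQpD/H]\,,
\end{equation*}
where the transition maps are the natural surjections coming from $\GQpD/H\twoheadrightarrow \GQpD/H'$ whenever $H\leq H'$. Since $A$ is a finite $p$-group and each $\Zp[\GQpD/H]$ is a finite free $\Zp$-module, each $\Zp[\GQpD/H]\otimes_{\Zp}A$ is a finite discrete $\GQpD$-module (with diagonal action), the $\GQpD$-action factoring through a finite quotient. The completed tensor product equals the usual tensor product in this finite setting, so $\Zp\bs\Gamma_\Delta\js\otimes_{\Zp}A \cong \varprojlim_H \Zp[\GQpD/H]\otimes_{\Zp}A$ as topological $\GQpD$-modules, where the inverse system consists of finite discrete modules with surjective transition maps.

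Next, for each fixed $H$, Shapiro's Lemma gives a natural isomorphism $H^i(H,A)\cong H^i(\GQpD,\Zp[\GQpD/H]\otimes_{\Zp}A)$, and a standard check shows that the cohomological corestriction map $H^i(H,A)\to H^i(H',A)$ (for $H\leq H'$) corresponds under Shapiro's Lemma to the map induced by the projection $\Zp[\GQpD/H]\otimes_{\Zp}A\twoheadrightarrow \Zp[\GQpD/H']\otimes_{\Zp}A$. Taking the projective limit over $H$ yields
\begin{equation*}
H^i_{Iw}(\GQpD,A) \;\cong\; \varprojlim_H H^i(\GQpD,\Zp[\GQpD/H]\otimes_{\Zp}A)\,.
\end{equation*}

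It remains to commute this projective limit with continuous cohomology. Continuous cochains on the profinite group $\GQpD$ with values in a projective limit of finite discrete modules satisfy $C^n_{cont}(\GQpD,\varprojlim_H M_H)=\varprojlim_H C^n_{cont}(\GQpD,M_H)$, since a continuous map from a compact space to a projective limit of finite discrete spaces is the projective limit of continuous maps. Thus the continuous cochain complex for $\Zp\bs\Gamma_\Delta\js\otimes_{\Zp}A$ is the inverse limit of those for the $\Zp[\GQpD/H]\otimes_{\Zp}A$, with surjective transition maps in each degree. Passing to cohomology introduces no $\varprojlim^1$ obstruction because the inverse system of cohomology groups satisfies the Mittag--Leffler condition; indeed, by the remark preceding the lemma each $H^i(\GQpD,\Zp[\GQpD/H]\otimes_{\Zp}A)=H^i(H,A)$ is finite, so any inverse system of such groups is automatically Mittag--Leffler. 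Hence
\begin{equation*}
H^i(\GQpD,\Zp\bs\Gamma_\Delta\js\otimes_{\Zp}A) \;\cong\; \varprojlim_H H^i(\GQpD,\Zp[\GQpD/H]\otimes_{\Zp}A) \;\cong\; H^i_{Iw}(\GQpD,A)\,,
\end{equation*}
which is the claim.

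The main technical point is the identification of corestriction with the Shapiro transition morphism (this is classical and just needs to be spelled out on explicit cocycles, or deduced from functoriality of the Shapiro isomorphism), and the verification that the continuous cochain functor commutes with the inverse limit; the Mittag--Leffler step itself is immediate from the finiteness remark. No step is substantially difficult, but the compatibility of Shapiro's isomorphism with corestriction is the place where one has to be most careful.
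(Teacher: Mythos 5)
Your proof is correct. The paper itself does not spell out a proof but simply cites Lemma 5.8 of Schneider--Venjakob \cite{SchVen}; your argument is the standard one that underlies that reference: write $\Zp\bs\Gamma_\Delta\js=\varprojlim_H\Zp[\GQpD/H]$, invoke Shapiro's Lemma for each open $H\supseteq\HQpD$ (using that $H$ has finite index so induction and coinduction coincide and $\operatorname{Ind}_H^{\GQpD}A\cong\Zp[\GQpD/H]\otimes_{\Zp}A$ with diagonal action), identify corestriction with the Shapiro transition morphisms, and then commute $\varprojlim$ past continuous cohomology using surjectivity on cochains (which kills $\varprojlim^1$ on cochain level) together with Mittag--Leffler for the finite cohomology groups (which kills $\varprojlim^1$ on cohomology level). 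All the ingredients you flag as requiring care --- compatibility of the Shapiro isomorphism with corestriction, and $C^n_{cont}(\GQpD,\varprojlim M_H)=\varprojlim C^n_{cont}(\GQpD,M_H)$ for a surjective system of finite discrete modules --- are genuine and are handled correctly; the finiteness of $A$ does exactly the work you say it does. This is exactly the content the paper delegates to the cited reference.
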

\begin{proof}
This is entirely analogous to the proof of Lemma 5.8 in \cite{SchVen}.
\end{proof}

By Theorem \ref{tate} we may further identify these cohomology groups using the Tate dual $A^\vee(\TD)$ as follows: 
\begin{align*}
H^i(\GQpD,\Zp[\GQpD/H]\otimes_{\Zp}A)\cong H^{2d-i}(\GQpD,(\Zp[\GQpD/H]\otimes_{\Zp}A)^\vee(\TD))^\vee\cong\\
\cong H^{2d-i}(\GQpD,\Zp[\GQpD/H]\otimes_{\Zp}(A^\vee(\TD)))^\vee\cong H^{2d-i}(H,A^\vee(\TD))^\vee
\end{align*}
since the index $|\GQpD:H|$ is finite. The Tate duals of the corestriction maps are the restriction maps, so we deduce
\begin{equation*}
H^i_{Iw}(\GQpD,A)\cong (\varinjlim_H H^{2d-i}(H,A^\vee(\TD)))^\vee=H^{2d-i}(\HQpD,A^\vee(\TD))^\vee\ .
\end{equation*}

Moreover, the complex $\Phi^\bullet\mathbb{D}(A^\vee(\TD))$ computes the $\HQpD$-cohomology of $A^\vee(\TD)$ by Proposition \ref{HcohomHerr} showing
\begin{equation*}
H^i_{Iw}(\GQpD,A)\cong (h^{2d-i}\Phi^\bullet\mathbb{D}(A^\vee(\TD)))^\vee\ .
\end{equation*}

In particular, $H^i_{Iw}(\GQpD,A)=0$ unless $d\leq i\leq 2d$ since $\Phi^\bullet\mathbb{D}(A^\vee(\TD))$ is concentrated into degrees $0$ to $d$. Our goal is to identify the above Iwasawa cohomology groups in terms of the $(\varphi_\Delta,\Gamma_\Delta)$-module $\mathbb{D}(A)$ using the pairing $\{\cdot,\cdot\}$ defined in section \ref{dualphigamma}. For a $(\varphi_\Delta,\Gamma_\Delta)$-module $D$ over $\OED$ we define the cochain complex
\begin{align}\label{psicomplexdef}
\Psi^\bullet(D)\colon 0\to D\to \bigoplus_{\alpha\in\Delta}D\to \dots\to \bigoplus_{\{\alpha_1,\dots,\alpha_r\}\in \binom{\Delta}{r}}D\to\dots \to D\to 0
\end{align}
where for all $0\leq r\leq |\Delta|-1$ the map $d_{\alpha_1,\dots,\alpha_r}^{\beta_1,\dots,\beta_{r+1}}\colon D\to D$ from the component in the $r$th term corresponding to $\{\alpha_1,\dots,\alpha_r\}\subseteq \Delta$ to the component corresponding to the $(r+1)$-tuple $\{\beta_1,\dots,\beta_{r+1}\}\subseteq\Delta$ is given by
\begin{equation*}
d_{\alpha_1,\dots,\alpha_r}^{\beta_1,\dots,\beta_{r+1}}=\begin{cases}0&\text{if }\{\alpha_1,\dots,\alpha_r\}\not\subseteq\{\beta_1,\dots,\beta_{r+1}\}\\ (-1)^{\eta}(\id-\psi_\beta)&\text{if }\{\beta_1,\dots,\beta_{r+1}\}=\{\alpha_1,\dots,\alpha_r\}\cup\{\beta\}\ ,\end{cases}
\end{equation*}
where $\eta=\eta(\alpha_1,\dots,\alpha_r,\beta)$ is the number of elements in the set $\Delta\setminus\{\alpha_1,\dots,\alpha_r\}$ smaller than $\beta$. Note that the sign convention here is different from the one defining the complex $\Phi^\bullet(D)$. The reason for this is that this way the differentials are adjoint to each other under the pairing $\{\cdot,\cdot\}$ as we shall see later on. Anyway, the complex defined this way is quasi-isomorphic to the complex defined with the usual sign convention $\varepsilon$ instead of $\eta$.

For a subset $S\subset \Delta$ with $r:=|S|$ we consider the pairing $\{\cdot,\cdot\}$ between the copy of $D^*(\TD)$ in degree $r$ in the complex $\Phi^\bullet(D^*(\TD))$ corresponding to the subset $S$ and the copy of $D$ in degree $d-r$ in the complex $\Psi^\bullet(D)$ corresponding to the subset $\Delta\setminus S$. We extend this to a pairing
\begin{equation}
\{\cdot,\cdot\}\colon \Phi^r(D^*(\TD))\times \Psi^{d-r}(D)\to \Qp/\Zp\label{pairingcomplex}
\end{equation}
bilinearly for all $0\leq r\leq d$ with the convention that the copy of $D^*(\TD)$ corresponding to $S$ in $\Phi^r(D^*(\TD))=\bigoplus_{S\in\binom{\Delta}{r}}D^*(\TD)$ is orthogonal to all copies of $D$ in $\Psi^{d-r}(D)=\bigoplus_{S'\in\binom{\Delta}{d-r}}D$ corresponding to some $S'$ different from $\Delta\setminus S$. Even though $\{\cdot,\cdot\}$ is not perfect if $|\Delta|>1$, we have the following main result whose proof will occupy the rest of the section.

\begin{thm}\label{iwasawamodp}
The above pairing $\{\cdot,\cdot\}$ between the cochain complexes $\Phi^\bullet(D^*(\TD))$ and $\Psi^\bullet(D)$ induces a perfect pairing
\begin{equation}
\{\cdot,\cdot\}\colon h^r\Phi^\bullet(D^*(\TD))\times h^{d-r}\Psi^\bullet(D)\to \Qp/\Zp\label{pairingcohom}
\end{equation}
on the cohomology groups for all $0\leq r\leq d$. In particular, we have an isomorphism $$H^i_{Iw}(\GQpD,\cdot)\cong (h^{2d-i}\Phi^\bullet(\mathbb{D}((\cdot)^\vee(\TD))))^\vee\cong h^{i-d}\Psi^\bullet(\mathbb{D}(\cdot))$$
of cohomological $\delta$-functors.
\end{thm}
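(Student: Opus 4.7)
The proof would break into three interlocking parts.

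First, I would verify that the cochain-level pairing \eqref{pairingcomplex} respects the differentials and so descends to cohomology. By Prop \ref{adjoint}, $\{\psi_\alpha(x), y\} = \{x, \varphi_\alpha(y)\}$, i.e.\ $\id - \psi_\alpha$ on $D$ is adjoint to $\id - \varphi_\alpha$ on $D^*(\TD)$ under $\{\cdot, \cdot\}$. The two sign conventions $\varepsilon$ (in $\Phi^\bullet$) and $\eta$ (in $\Psi^\bullet$) are designed so that for each pair of subsets $S \subseteq \Delta$ of size $r$ and $S' \subseteq \Delta$ of size $d-r-1$ with $S \cap S' = \emptyset$, the contributions to $\{d_\Phi \omega, \tau\}$ and $\{\omega, d_\Psi \tau\}$ from the corresponding components agree up to a uniform sign; this (a direct combinatorial check) shows that $\{\cdot, \cdot\}$ descends to a pairing $h^r\Phi^\bullet(D^*(\TD)) \times h^{d-r}\Psi^\bullet(D) \to \Qp/\Zp$.

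The main obstacle is then to show that $h^{d-r}\Psi^\bullet(D)$ is a compact Hausdorff abelian group in an appropriate topology, enabling the use of Pontryagin duality between compact and discrete $p$-primary groups. I would equip each term $\Psi^r(D)$ with the weak topology introduced before the statement, in which every $\psi_\alpha$ is continuous, so that the kernels of the differentials are automatically closed. To handle images, I would proceed by induction on $|\Delta|$: write $\Psi^\bullet_\Delta(D)$ as the total complex of the double complex $\Psi^\bullet_{\Delta \setminus \{\alpha\}}(\Psi^\bullet_{\{\alpha\}}(D))$ and use the associated spectral sequence. The classical one-variable base case (Herr's thesis, revisited in \cite{SchVen}) gives that $D^{\psi_\alpha = \id}$ is a finitely generated, hence compact, $\Zp\bs\Gamma_\alpha\js$-module and that $D/(\psi_\alpha - \id)D$ is likewise a compact Iwasawa module; the delicate point is to propagate this compactness (and finite generation over the full Iwasawa algebra $\Zp\bs \Gamma_\Delta\js$) through each page of the spectral sequence, so that the abutment $h^{d-r}\Psi^\bullet_\Delta(D)$ inherits a compact Hausdorff $\Zp\bs \Gamma_\Delta\js$-module structure.

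Finally, with $h^{d-r}\Psi^\bullet(D)$ compact and $h^{r}\Phi^\bullet(D^*(\TD))$ discrete (the latter being a subquotient of copies of the $p$-primary torsion object $D^*(\TD)$), Pontryagin duality between compact and discrete $p$-primary abelian groups reduces the perfectness of \eqref{pairingcohom} to non-degeneracy. The latter I would deduce from the module-level non-degeneracy of Prop \ref{adjoint} together with Prop \ref{dualtopcont}: an arbitrary nonzero class $[x] \in h^{d-r}\Psi^\bullet(D)$ can be detected by pairing with a suitably chosen element of $\Phi^r(D^*(\TD))$ that is already a cocycle because $\{\cdot, \cdot\}$ vanishes on coboundaries by Step~1, and the symmetry $D \cong (D^*(\TD))^*(\TD)$ furnishes the reverse non-degeneracy. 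Composing the resulting perfect pairing \eqref{pairingcohom} with the pre-statement identification $H^i_{Iw}(\GQpD, A) \cong (h^{2d-i}\Phi^\bullet(\mathbb{D}(A^\vee(\TD))))^\vee$ yields $H^i_{Iw}(\GQpD, A) \cong h^{i-d}\Psi^\bullet(\mathbb{D}(A))$, and naturality of $\{\cdot, \cdot\}$ in short exact sequences $0 \to A_1 \to A_2 \to A_3 \to 0$ promotes this into an isomorphism of cohomological $\delta$-functors.
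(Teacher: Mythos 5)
Your Step~1 (verifying that $\{\cdot,\cdot\}$ descends to cohomology) matches the paper's Step~1 and is fine; the adjointness of $\id-\psi_\alpha$ and $\id-\varphi_\alpha$ plus the sign bookkeeping is exactly what is used. But the core of the theorem is Steps 2 and 3, and there your argument has two genuine gaps.

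The first gap is in your compactness claim. Spectral sequences of topological double complexes do not automatically propagate compactness, because the crucial issue is not that the \emph{terms} are compact but that the differentials are \emph{strict} (so that images are closed and the subquotients inherit Hausdorff compact topologies). This is exactly the delicate point you flag, but you do not resolve it. The paper resolves it by a different mechanism: after reducing to an absolutely irreducible $V$ over a sufficiently large $\kappa$ (Lemma \ref{productoftwofinitegroups}), $D$ decomposes as a completed tensor product of one-variable modules $D_\alpha$, and one forms the explicit compact $\psi$-stable $\OED^+$-lattice $D^\#:=\widehat{\bigotimes}_\alpha D_\alpha^\#$. Proposition \ref{Dhashquasiiso} shows $\Psi^\bullet(D^\#)\hookrightarrow\Psi^\bullet(D)$ is a quasi-isomorphism, and on $\Psi^\bullet(D^\#)$ all differentials are automatically strict by compactness of $D^\#$, which is what gives Corollary \ref{compactcohom}. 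Your induction on $|\Delta|$ by decomposing $\Psi^\bullet$ as a double complex does not supply a compact model for the total complex.

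The second gap is in the duality argument. You want to appeal to Pontryagin duality between compact and discrete groups and say $h^r\Phi^\bullet(D^*(\TD))$ is ``discrete because it is a subquotient of the $p$-primary torsion object $D^*(\TD)$.'' But $D^*(\TD)$ with the duality (or weak) topology is neither compact nor discrete when $|\Delta|>1$; indeed the Remark at the end of \S\ref{dualphigamma} emphasizes that $E_\Delta$ is not locally compact for $|\Delta|>1$ and that the pairing $\{\cdot,\cdot\}$ on $D\times D^*(\TD)$ is \emph{not} perfect. This means that ``non-degeneracy plus one side compact/discrete implies perfect'' cannot be invoked at the module level, and passing to cohomology does not make this automatic. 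In particular your reverse non-degeneracy via $D\cong(D^*(\TD))^*(\TD)$ only gives a pairing between $h^r\Psi^\bullet(D^*(\TD))$ and $h^{d-r}\Phi^\bullet(D)$, which is a different pairing, not the one in \eqref{pairingcohom}. The paper instead computes both sides explicitly: after reduction to the irreducible external-tensor-product case, the K\"unneth formula (Lemma \ref{tensorcohomsplit}) identifies both $h^r\Psi^\bullet(D_0^\#)$ and $h^{d-r}\Phi^\bullet(D^*(\TD))$ as (completed) tensor products of one-variable cohomology groups, where perfectness is known, and then the density result Prop.\ \ref{tensordense} transfers this to $h^r\Psi^\bullet(D^\#)\cong h^r\Psi^\bullet(D)$. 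This explicit K\"unneth computation, plus the devissage Step~3 (five lemma on the $\delta$-functor) to pass from the irreducible case to the general torsion case, is the real content of the proof and is absent from your outline.
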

\begin{proof}
The proof is long and will occupy the rest of this section. We proceed in $3$ steps.

\emph{Step 1. We show that the pairing \eqref{pairingcohom} is well-defined.} Let $(y_S)_{S\in\binom{\Delta}{r}}\in \Phi^r(D^*(\TD))$ and $(x_U)_{U\in \binom{\Delta}{d-r-1}}\in \Psi^{d-r-1}(D)$ be arbitrary. We compute
\begin{align*}
\{d^r_\Phi((y_S)_S),(x_U)_U\}=\{\sum_{S\in\binom{\Delta}{r}}\sum_{\beta\in\Delta\setminus S}(-1)^{\varepsilon(\beta,S)}(y_S-\varphi_\beta(y_S))_{S\cup\{\beta\}},(x_U)_U\}=\\
=\sum_{S\in\binom{\Delta}{r}}\sum_{\beta\in\Delta\setminus S}(-1)^{\varepsilon(\beta,S)}\{y_S-\varphi_\beta(y_S),x_{\Delta\setminus (S\cup\{\beta\})}\}=\\ =\sum_{S\in\binom{\Delta}{r}}\sum_{\beta\in\Delta\setminus S}(-1)^{\varepsilon(\beta,S)}\{y_S,x_{\Delta\setminus (S\cup\{\beta\})}-\psi_\beta(x_{\Delta\setminus (S\cup\{\beta\})})\}=\\
=\sum_{U\in\binom{\Delta}{d-r-1}}\sum_{\beta\in\Delta\setminus U}(-1)^{\varepsilon(\beta,\Delta\setminus (U\cup\{\beta\}))}\{y_{\Delta\setminus (U\cup\{\beta\})},x_U-\psi_\beta(x_{U})\}=\\
=\{y_{\Delta\setminus (U\cup\{\beta\})},\sum_{U\in\binom{\Delta}{d-r-1}}\sum_{\beta\in\Delta\setminus U}(-1)^{\eta(\beta,U)}(x_U-\psi_\beta(x_{U}))\}=\{(y_S)_S,d^{d-r-1}_\Psi((x_U)_U)\}\\
\end{align*}
using Prop.\ \ref{adjoint}. We deduce $\Ker(d^r_\Phi)^\perp\supseteq \mathrm{Im}(d^{d-r-1}_\Psi)$ and similarly $\Ker(d^r_\Psi)^\perp\supseteq \mathrm{Im}(d^{d-r-1}_\Phi)$ (applying the above formula with $r$ replaced by $d-r-1$). A simple diagram chasing on the short exact sequences of cochain complexes 
\begin{align*}
0\to \Phi^\bullet D_3^*(\TD)\to \Phi^\bullet D_2^*(\TD)\to \Phi^\bullet D_1^*(\TD)\to 0\ ,\text{ and}\\
0\to \Psi^\bullet D_1 \to \Psi^\bullet D_2 \to \Psi^\bullet D_3 \to 0
\end{align*}
attached to a short exact sequence $0\to D_1\to D_2\to D_3\to 0$ shows that \eqref{pairingcohom} induces a morphism 
\begin{equation*}
h^{d-r}\Psi^\bullet(\cdot)\to  (h^r\Phi^\bullet((\cdot)^*(\TD)))^\vee
\end{equation*}
of cohomological $\delta$-functors.

\emph{Step 2. We show the statement in case $\mathbb{V}(D)$ is an absolutely irreducible representation of $\GQpD$ over some sufficiently large finite field $\kappa$ of characteristic $p$. This main Step will require several Lemmas and Propositions which might be of independent interest.} 

The following general group-theoretic lemma is quite important in the proof. Even though not all representations of $\GQpD$ over $\Fp$ are the tensor products of representations of $G_{\Qp}$ (in which case the proofs would be much simpler), every representation becomes a successive extension of such representations after passing to a sufficiently large finite field of characteristic $p$.

\begin{lem}\label{productoftwofinitegroups}
Let $G_1$ and $G_2$ be finite groups and $W$ be an absolutely irreducible finite dimensional representation of $G_1\times G_2$ over a field $F$ of arbitrary characteristic. Then there exists a finite extension $K/F$ such that $K\otimes_F W$ is isomorphic to the tensor product $W_1\otimes_K W_2$ where $W_i$ is an irreducible representation of $G_i$ over $K$ ($i=1,2$).
\end{lem}
\begin{proof}
At first note that since the actions of $G_1$ and $G_2$ on $W_2$ commute, the $G_1$-socle of $W$ is $G_2$-stable. By the irreducibility of $W$ as a representation of $G_1\times G_2$ we deduce that $W$ is semisimple as a representation of $G_1$. Further, the $G_1$-isotypical components of $W$ are also $G_2$-stable therefore there is only one such component. By passing to the splitting field $K$ of the restriction of $W$ to $G_1$ we obtain that $K\otimes_F W_{\mid G_1}\cong \bigoplus_{j=1}^k W_1$ is the direct sum of copies of an absolutely irreducible representation $W_1$ of $G_1$. By Schur's Lemma the ring of endomorphisms of $K\otimes_F W_{\mid G_1}$ is the full matrix ring $K^{k\times k}$. Since $G_2$ acts by $G_1$-automorphisms on $K\otimes_F W$ we obtain a representation $G_2\to \GL_k(K)$ (denoted by $W_2$) and an isomorphism $K\otimes_F W\cong W_1\otimes_K W_2$.
\end{proof}
Since $\mathbb{V}(D)$ factors through a finite quotient of $\GQpD$, we can apply Lemma \ref{productoftwofinitegroups} to our situation. By possibly extending $\kappa$ and using induction on $|\Delta|$ we may assume there exists a finite dimensional representation $V_\alpha$ of $G_{\Qp,\alpha}$ for all $\alpha\in\Delta$ such that $\mathbb{V}(D)\cong \bigotimes_{\alpha\in\Delta,\kappa}V_\alpha$. We denote by $D_\alpha:=\mathbb{D}(V_\alpha)$ the $(\varphi_\alpha,\Gamma_\alpha)$-module over the $1$-variable ring $E_\alpha\cong \kappa\bg X_\alpha\jg$ corresponding to the Galois representation $V_\alpha$. Recall (Prop.\ II.4.2 in \cite{Mira}) that there exists a unique $\psi_\alpha$- and $\Gamma_\alpha$-stable $E_\alpha^+\cong\kappa\bs X_\alpha\js$-lattice $D_\alpha^\#\subset D_\alpha$ characterized by the following properties:
\begin{enumerate}[$(i)$]
\item For all $x\in D_\alpha$ there exists an integer $n=n(x,D)$ such that $\psi_\alpha^n(x)$ lies in $D_\alpha^\#$.
\item $\psi_\alpha\colon D_\alpha^\#\to D_\alpha^\#$ is surjective.
\end{enumerate}
We define $D^\#$ as the completed tensor product 
\begin{align*}
D^\#:=\widehat{\bigotimes_{\alpha\in\Delta,\kappa}}D_\alpha^\#:=\varprojlim_n\left(\bigotimes_{\alpha\in\Delta,\kappa}D_\alpha^\#/X_\alpha^n\right) 
\end{align*}
of the $D_\alpha^\#$ over $\kappa$. We regard $D^\#$ as a finitely generated $E_\Delta^+:=\kappa\bs X_\alpha,\alpha\in\Delta\js$-submodule in $D$ so that we have $D=D^\#[X_\Delta^{-1}]$.
\begin{pro}\label{Dhashquasiiso}
The natural map $\Psi^\bullet(D^\#)\to \Psi^\bullet(D)$ induced by the inclusion $D^\#\hookrightarrow D$ is a quasi-isomorphism of cochain complexes.
\end{pro}
\begin{proof}
By Prop.\ II.5.5 and II.5.6 in \cite{Mira} the map $\psi_\alpha-1\colon D_\alpha/D_\alpha^\#\to D_\alpha/D_\alpha^\#$ is bijective for all $\alpha\in\Delta$ since $X-1\in \kappa[X]$ is a polynomial with invertible constant term. In particular, the case $|\Delta|=1$ follows. For a fixed ordering of the finite set $\Delta$ there is an induced filtration on $D$ (indexed by the ordered set $\Delta\cup \{0\}$ with $0<\alpha$ for all $\alpha\in\Delta$) by putting $\Fil^\alpha:=D^\#[X_{S_\alpha}^{-1}]$ and $\Fil^0:=D^\#$ where $S_\alpha:=\{\beta\in\Delta\mid \beta\leq\alpha\}$ and $X_{S_\alpha}=\prod_{\beta\in S_\alpha}X_\beta$.
\begin{lem}
The graded pieces of the above filtration split as
\begin{equation*}
\mathrm{gr}^\alpha D=D^\#[X_{S_\alpha}^{-1}]/D^\#[X_{S_\alpha\setminus\{\alpha\}}^{-1}]\cong D_{\Delta\setminus\{\alpha\}}^\#[X_{S_\alpha\setminus\{\alpha\}}^{-1}]\otimes_\kappa (D_\alpha/D^\#_\alpha)
\end{equation*}
for any $\alpha\in\Delta$. Here $D_{\Delta\setminus\{\alpha\}}$ denotes the $(\varphi_{\Delta\setminus\{\alpha\}},\Gamma_{\Delta\setminus\{\alpha\}})$-module $\mathbb{D}(\bigotimes_{\beta\in\Delta\setminus\{\alpha\},\kappa}V_\beta)$ over $E_{\Delta\setminus\{\alpha\}}$.
\end{lem}
\begin{proof}
Since the direct limit is exact, we may write $$D^\#[X_{S_\alpha}^{-1}]/D^\#[X_{S_\alpha\setminus\{\alpha\}}^{-1}]=\varinjlim_i\varinjlim_j (X_\alpha^{-i}X_{S_\alpha\setminus\{\alpha\}}^{-j}D^\#)/(X_{S_\alpha\setminus\{\alpha\}}^{-j}D^\#)\ .$$ Further, by construction we compute 
\begin{align*}
X_\alpha^{-i}D^\#/D^\#\cong D^\#/X_\alpha^iD^\#\cong \varprojlim_n \left(\left(\bigotimes_{\beta\in\Delta\setminus\{\alpha\},\kappa}D^\#_\beta/X_\beta^n D^\#_\beta\right) \otimes_\kappa D_\alpha^\#/X_\alpha^iD^\#_\alpha \right)\cong\\
\cong D_{\Delta\setminus\{\alpha\}}^\#\otimes_\kappa(X_\alpha^{-i}D^\#_\alpha/D^\#_\alpha)
\end{align*}
since $X_\alpha^{-i}D^\#_\alpha/D^\#_\alpha$ is a finite dimensional $\kappa$-vector space whence $\cdot\otimes_\kappa X_\alpha^{-i}D^\#_\alpha/D^\#_\alpha$ commutes with inverse limits. Similarly, multiplication by $X_{S_\alpha\setminus\{\alpha\}}^{-j}$ yields the identification $$X_\alpha^{-i}X_{S_\alpha\setminus\{\alpha\}}^{-j}D^\#/X_{S_\alpha\setminus\{\alpha\}}^{-j}D^\#\cong X_{S_\alpha\setminus\{\alpha\}}^{-j}D_{\Delta\setminus\{\alpha\}}^\#\otimes_\kappa(X_\alpha^{-i}D^\#_\alpha/D^\#_\alpha)\ .$$ The statement follows taking the direct limit which commutes with tensor products.
\end{proof}
In view of the above Lemma the cochain complex $\Psi^\bullet(\mathrm{gr}^\alpha D)$ splits as the tensor product $$\Psi^\bullet(D_{\Delta\setminus\{\alpha\}}^\#[X_{S_\alpha\setminus\{\alpha\}}^{-1}])\otimes_\kappa \Psi^\bullet(D_\alpha/D^\#_\alpha)\ .$$ In particular, it is acyclic for any $\alpha\in\Delta$ since so is the complex $\Psi^\bullet(D_\alpha/D_\alpha^\#)$. By (finite) induction we deduce that the cochain complex $\Psi^\bullet(D/D^\#)$ is acyclic, too, whence the inclusion $\Psi^\bullet(D^\#)\to \Psi^\bullet(D)$ of complexes is a quasi-isomorphism.
\end{proof}

\begin{cor}\label{compactcohom}
The cohomology groups $h^i\Psi^\bullet(D)$ ($0\leq i\leq d$) are compact in the topology induced by the weak (resp.\ by the duality) topology on $D$.
\end{cor}
\begin{proof}
For each $\alpha\in\Delta$ the map $\psi_\alpha-1\colon D^\#\to D^\#$ is continuous by construction since it is continuous on $D_\alpha^\#$. Therefore the differentials in the complex $\Psi^\bullet(D^\#)$ are continuous and even strict by the compactness of $D^\#$. The result follows from Prop.\ \ref{Dhashquasiiso} noting that both the duality and the weak topologies induce the natural compact topology on $D^\#$.
\end{proof}

We denote by $D^\#_0$ the (uncompleted) tensor product $D^\#_0:=\bigotimes_{\alpha\in\Delta,\kappa}D^\#_\alpha$ which is a module over the ring $\bigotimes_{\alpha\in\Delta,\kappa}\kappa\bs X_\alpha\js$. It admits operators $\psi_\alpha$ for all $\alpha\in\Delta$ acting on the respective terms. The complex $\Psi^\bullet(D^\#_0)$ is by construction the tensor product of the complexes $\Psi^\bullet(D^\#_\alpha)$ over $\kappa$. Note that the natural map $D^\#_0\to D^\#$ is injective with dense image since $D^\#_\alpha$ is a finite free $\kappa\bs X_\alpha\js$-module for all $\alpha\in\Delta$. This inclusion induces a morphism $\Psi^\bullet(D^\#_0)\to \Psi^\bullet(D^\#)$ of cochain complexes.

\begin{pro}\label{tensordense}
The image of $\Ker(d^r_\Psi\colon \Psi^r(D^\#_0)\to \Psi^{r+1}(D^\#_0))$ is dense in $\Ker(d^r_\Psi\colon  \Psi^r(D^\#)\to \Psi^{r+1}(D^\#))$ for all $0\leq r\leq d$. Consequently, the induced map $h^r\Psi^\bullet(D^\#_0)\to h^r\Psi^\bullet(D^\#)$ also has dense image.
\end{pro}
\begin{proof}
In order to simplify notation we identify $D^\#_0$ with its image in $D^\#$. Let $(x_S)_{S\in \binom{\Delta}{r}}\in \Ker(d^r_\Psi\colon  \Psi^r(D^\#)\to \Psi^{r+1}(D^\#))$ and $N\in\mathbb{N}$ be arbitrary. By the density of $D^\#_0$ in $D^\#$ there exists an element $(y_S)_{S\in \binom{\Delta}{r}}\in \Psi^r(D^\#_0)$ such that $y_S-x_S$ lies in $\sum_{\alpha\in\Delta}X_\alpha^{pN}D^\#$ for all $S\in \binom{\Delta}{r}$. In particular, we have $$d^r_\Psi((y_S)_S)=d^r_\Psi((y_S)_S-(x_S)_S)\in d^r_\Psi(\bigoplus_{S\in\binom{\Delta}{r}}\sum_{\alpha\in\Delta}X_\alpha^{pN}D^\#)\subseteq \bigoplus_{S'\in\binom{\Delta}{r+1}}\sum_{\alpha\in\Delta}X_\alpha^{N}D^\#\ .$$
We claim that there exists an element $(y'_S)_{S\in \binom{\Delta}{r}}\in \bigoplus_{S\in\binom{\Delta}{r}}\sum_{\alpha\in\Delta}X_\alpha^{N-n_0}D^\#$ for some fixed integer $n_0=n_0(D)$ depending only on $D$ with $d^r_\Psi((y_S)_S)=d^r_\Psi((y'_S)_S)$ so that $x_S-(y_S-y'_S)\in \sum_{\alpha\in\Delta}X_\alpha^{N-n_0}D^\#$ such that $(y_S-y'_S)_S$ lies in $\Ker(d^r_\Psi\colon \Psi^r(D^\#_0)\to \Psi^{r+1}(D^\#_0))$. Equivalently, we state
\begin{lem}
We have
\begin{equation*}
\mathrm{Im}\left(d^r_\Psi\colon \Psi^r(D^\#_0)\to \Psi^{r+1}(D^\#_0)\right)\bigcap \left(\bigoplus_{U\in\binom{\Delta}{r+1}}\sum_{\alpha\in\Delta}X_\alpha^{p^{2^d}N}D_0^\#\right)\subseteq d^r_\Psi\left(\bigoplus_{S\in\binom{\Delta}{r}}\sum_{\alpha\in\Delta}X_\alpha^{N}D^\#_0\right)
\end{equation*}
for all $0\leq r\leq d$ and $N\geq n_0$ with some integer $n_0$ depending only on $D$.
\end{lem}
\begin{rem}
The above Lemma states in a quantitative way that the map $d^r_\Psi$ from  $\Psi^r(D^\#_0)$ onto its image is open, ie.\ that the differentials in the complex $\Psi^\bullet(D^\#_0)$ are strict. The analogous statement for $\Psi^\bullet(D^\#)$ is clear from the compactness.
\end{rem}
\begin{proof}
We proceed by induction on $d=|\Delta|$. For $d=1$ there exists an integer $n_0$ such that $X^{n_0}D^\#\subseteq D^{++}$. So if $x\in X^{p^2N}D^\#$ is arbitrary for some $p^2N\geq (N\geq) n_0$ then $z:=\sum_{i=1}^\infty\varphi^i(x)$ converges in $D$ so that $\psi(z)-z=x$. Further, we have $z\in \varphi(X^{p^2N-n_0}D^{++})\subset X^{p^3N-pn_0}D^\#\subseteq X^{N}D^\#$.

Now let $d>1$ and pick an $\alpha$ in $\Delta$. In order to use induction, we separate those subsets of $\Delta$ containing $\alpha$ from those not containing $\alpha$. We write $\Psi^\bullet(D^\#_0)$ as the tensor product of the complexes $\Psi^\bullet(D^\#_\alpha)=(D^\#_\alpha\overset{\psi_\alpha-1}{\to}D^\#_\alpha)$ and $(\Psi^\bullet(D^\#_{\Delta\setminus\{\alpha\},0}),d^\bullet_{\Delta\setminus\{\alpha\}})$ where $D^\#_{\Delta\setminus\{\alpha\},0}:=\bigotimes_{\beta\in\Delta\setminus\{\alpha\},\kappa}D^\#_\beta$. So the differential $d^r_\Psi\colon \Psi^r(D^\#_0)\to \Psi^{r+1}(D^\#_0)$ is split into $3$ maps (upto sign): 
\begin{equation}\label{keydiagramalpha}
\xymatrixcolsep{5pc}\xymatrix{
\bigoplus\limits_{\alpha\notin S\in\binom{\Delta}{r}}D^\#_0 \ar[r]^{\id_{D_\alpha^\#}\otimes d^r_{\Delta\setminus\{\alpha\}}}\ar[rd]|-{(\psi_\alpha-1)\otimes\id_{D^\#_{\Delta\setminus\{\alpha\},0}}} & \bigoplus\limits_{\alpha\notin U\in\binom{\Delta}{r+1}}D^\#_0\\
\bigoplus\limits_{\alpha\in S\in\binom{\Delta}{r}}D^\#_0\ar[r]_{\id_{D_\alpha^\#}\otimes d^{r-1}_{\Delta\setminus\{\alpha\}}} & \bigoplus\limits_{\alpha\in U\in\binom{\Delta}{r+1}}D^\#_0
}
\end{equation}
Let $\pi^r_\alpha\colon \Psi^r(D^\#_0)\to \Psi^r(D^\#_0)$ (resp.\ $\pi^{r+1}_\alpha\colon \Psi^{r+1}(D^\#_0)\to \Psi^{r+1}(D^\#_0)$) be the projection onto the direct summands corresponding to those $S\in\binom{\Delta}{r}$ (resp.\ those $U\in\binom{\Delta}{r+1}$) not containing $\alpha$. The plan is to use induction on both the horizontal arrows \eqref{keydiagramalpha}. However, we can only do so for elements in $\sum_{\beta\in\Delta\setminus\{\alpha\}}X_\beta^{p^{2^d}N}D^\#_0$, but not for those in $X_\alpha^{p^{2^d}N}D^\#_0$. So for any integer $j\geq 0$ we write $X_\alpha^{p^{j}N}D^\#_\alpha\oplus V_{\alpha,p^{j}N}=D^\#_\alpha$ as a $\kappa$-vector space for some finite dimensional subspace $V_{\alpha,p^{j}N}\subset D^\#_\alpha$ that we fix once and for all. Further, we denote by $\pi_{\leq p^jN}$ (resp.\ by $\pi_{\geq p^j N}$) the projection onto the direct summand $V_{\alpha,p^j N}$ (resp.\ onto $X_\alpha^{p^{j}N}D^\#_\alpha$). Using these we find
\begin{equation}\label{decomposebetapibig}
\sum_{\beta\in\Delta}X_\beta^{p^jN}D^\#_0=X_\alpha^{p^jN}D^\#_0\oplus V_{\alpha,p^jN}\otimes_\kappa\left(\sum_{\beta\in\Delta\setminus\{\alpha\}}X_\beta^{p^j N}D^\#_{\Delta\setminus\{\alpha\},0}\right)
\end{equation}
for all $j\geq 0$. Moreover, the projections $\pi_{\leq p^jN}$ and $\pi_{\geq p^j N}$ all commute with $\id_{D_\alpha^\#}\otimes d^r_{\Delta\setminus\{\alpha\}}$ for all $r\geq 0$. Now pick an element $(x_U)_U=d^r_\Psi((y_S)_S)$ in $\bigoplus_{U\in\binom{\Delta}{r+1}}\sum_{\alpha\in\Delta}X_\alpha^{p^{2^d}N}D_0^\#$. 

\emph{Step 1. We reduce to the case $\pi^{r+1}_\alpha((x_U)_U)$.} Note that both components $\pi_{\geq p^{2^d}N}(\pi^{r+1}_\alpha((x_U)_U))$ and $\pi_{\leq p^{2^d}N}(\pi^{r+1}_\alpha((x_U)_U))$ lie in the image of the differential $\id_{D^\#_\alpha}\otimes d^r_{\Delta\setminus\{\alpha\}}$, so first of all we find $(y^{(1)}_S)_S\in \bigoplus\limits_{\alpha\notin S\in\binom{\Delta}{r}}X_\alpha^{p^{2^d}N}D^\#_0$ with $(\id_{D^\#_\alpha}\otimes d^r_{\Delta\setminus\{\alpha\}})((y^{(1)}_S)_S)=\pi_{\geq p^{2^d}N}(\pi^{r+1}_\alpha((x_U)_U))$. On the other hand, by \eqref{decomposebetapibig} $\pi_{\leq p^{2^d}N}(\pi^{r+1}_\alpha((x_U)_U))$ must be in 
\begin{equation*}
V_{\alpha,p^{2^d}N}\otimes \bigoplus_{U\in\binom{\Delta\setminus\{\alpha\}}{r+1}}\sum_{\beta\in\Delta\setminus\{\alpha\}}X_\beta^{p^{2^d}N}D_{\Delta\setminus\{\alpha\},0}^\#\ .
\end{equation*}
By induction, there exists an element  
\begin{equation*}
(y^{(2)}_S)_S\in V_{\alpha,p^{2^d}N}\otimes \bigoplus_{S\in\binom{\Delta\setminus\{\alpha\}}{r}}\sum_{\beta\in\Delta\setminus\{\alpha\}}X_\beta^{p^{2^{d-1}}N}D_{\Delta\setminus\{\alpha\},0}^\#\subset V_{\alpha,p^{2^d}N}\otimes \Psi^r(D_{\Delta\setminus\{\alpha\},0}^\#)
\end{equation*}
with $(\id_{V_{\alpha,p^{2^d}N}}\otimes d^r_{\Delta\setminus\{\alpha\}})(y^{(2)}_S)_S=\pi_{\leq p^{2^d}N}(\pi^{r+1}_\alpha((x_U)_U))$. Now put $(y'_S)_S:=(y_S)_S-(y^{(1)}_S)_S-(y^{(2)}_S)_S$ and $(x'_U)_U:=d^r_\Psi((y'_S)_S)$. So we have 
\begin{align*}
\pi^{r+1}_\alpha((x_U)_U-(x'_U)_U)=\id_{D^\#_\alpha}\otimes d^r_{\Delta\setminus\{\alpha\}}((y_S)_S-(y'_S)_S)=(\id_{V_{\alpha,p^{2^d}N}}\otimes d^r_{\Delta\setminus\{\alpha\}})((y^{(1)}_S)_S+(y^{(2)}_S)_S)=\\
=\pi_{\geq p^{2^d}N}(\pi^{r+1}_\alpha((x_U)_U))+\pi_{\leq p^{2^d}N}(\pi^{r+1}_\alpha((x_U)_U))=\pi^{r+1}_\alpha((x_U)_U)
\end{align*}
whence $\pi^{r+1}_\alpha((x'_U)_U)=0$. Therefore we compute
\begin{align}
(x'_U)_U=(x'_U)_U-\pi^{r+1}_\alpha((x'_U)_U)=(x_U)_U-\pi^{r+1}_\alpha((x_U)_U)-(\psi_\alpha-1)\otimes\id_{D^\#_{\Delta\setminus\{\alpha\}}}((y^{(1)}_S)_S+(y^{(2)}_S)_S)\notag\\
\in \bigoplus_{\alpha\in U\in\binom{\Delta}{r+1}}{\Bigg(}\sum_{\beta\in\Delta}X_\beta^{p^{2^{d}}N}D_0^\#+(\psi_\alpha-1)(V_{\alpha,p^{2^d}N})\otimes \sum_{\beta\in\Delta\setminus\{\alpha\}}X_\beta^{p^{2^{d-1}}N}D_{\Delta\setminus\{\alpha\},0}^\# + \notag\\
+ (\psi_\alpha-1)(X_\alpha^{p^{2^d}N}D^\#_\alpha)\otimes D^\#_{\Delta\setminus\{\alpha\},0}{\Bigg)}
\subset\bigoplus_{\alpha\in U\in\binom{\Delta}{r+1}}\sum_{\beta\in\Delta}X_\beta^{p^{2^{d-1}}N}D_0^\#\ .\label{x'Usmall}
\end{align}
Moreover, $(x'_U)_U=d^r_\Psi((y'_S)_S)$ still lies in the image of $d^r_\Psi\colon \Psi^r(D^\#_0)\to \Psi^{r+1}(D^\#_0)$.

\emph{Step 2. We reduce to the case $\pi_{\geq p^{2^{d-1}}N}((x'_U)_U)=0$.} By the identity $\pi^{r+1}_\alpha\circ d^r_\Psi\circ \pi^r_\alpha=\pi^{r+1}_\alpha\circ d^r_\Psi$ we obtain $\pi^r_\alpha((y'_S)_S)$ lies in $D^\#_\alpha\otimes \Ker(d^r_{\Delta\setminus\{\alpha\}}\colon \Psi^r(D^\#_{\Delta\setminus\{\alpha\},0})\to \Psi^{r+1}(D^\#_{\Delta\setminus\{\alpha\},0}))$. Hence we may write 
\begin{align*}
\pi^r_\alpha((y'_S)_S)=\sum_i a^{(i)}_\alpha\otimes b^{(i)}_{\overline\alpha}\quad ,a^{(i)}_\alpha\in D^\#_\alpha\ ,\  b^{(i)}_{\overline\alpha}\in  \Ker(d^r_{\Delta\setminus\{\alpha\}}\colon \Psi^r(D^\#_{\Delta\setminus\{\alpha\},0})\to \Psi^{r+1}(D^\#_{\Delta\setminus\{\alpha\},0}))\ ,\\
(y'_S)_S-\pi^r_\alpha((y'_S)_S)= \sum_j c^{(j)}_\alpha\otimes e^{(j)}_{\overline\alpha}\quad ,c^{(j)}_\alpha\in D^\#_\alpha\ ,\  e^{(j)}_{\overline\alpha}\in \bigoplus_{\alpha\in S\in\binom{\Delta}{r}}D^\#_{\Delta\setminus\{\alpha\},0}\cong \Psi^{r-1}(D^\#_{\Delta\setminus\{\alpha\},0})\ ,
\end{align*}
so we compute
\begin{align*}
(x'_U)_U=d^r_\Psi((y'_S)_S)=\pm \sum_i (\psi_\alpha-1)(a^{(i)}_\alpha)\otimes b^{(i)}_{\overline\alpha}\pm \sum_j c^{(j)}_\alpha\otimes d^{r-1}_{\Delta\setminus\{\alpha\}}(e^{(j)}_{\overline\alpha})\ .
\end{align*}
Recall we have $D^\#_\alpha=X_\alpha^{p^{2^{d-1}}N}D^\#_\alpha\oplus V_{\alpha,p^{2^{d-1}}N}$ as a $\kappa$-vector space. Since $p^{2^{d-1}}N\geq N\geq n_0:=\max_\beta(n_0(D_\beta))$, we have 
\begin{align}
X_\alpha^{p^{2^{d-1}}N}D^\#_\alpha\subseteq X_\alpha^{p^{2^{d-1}}N-n_0}D_\alpha^{++}\subseteq (\psi_\alpha-1)(X_\alpha^{p^2N-pn_0}D^{++}_\alpha)\subset\notag\\
\subset (\psi_\alpha-1)(X_\alpha^{p^2N-pn_0}D^\#_\alpha)\subset (\psi_\alpha-1)(X_\alpha^{N}D^\#_\alpha)\ .\label{psicontained}
\end{align}
In particular, we have
\begin{align*}
\pi_{\geq p^{2^{d-1}}N}\left(\sum_i (\psi_\alpha-1)(a^{(i)}_\alpha)\otimes b^{(i)}_{\overline\alpha}\right)=\sum_i\pi_{\geq p^{2^{d-1}}N}((\psi_\alpha-1)(a^{(i)}_\alpha))\otimes b^{(i)}_{\overline\alpha}\\
\in (\psi_\alpha-1)\otimes\id_{D^\#_{\Delta\setminus\{\alpha\}}}\left(X_\alpha^ND_\alpha\otimes \Ker(d^r_{\Delta\setminus\{\alpha\}}\colon \Psi^r(D^\#_{\Delta\setminus\{\alpha\},0})\to \Psi^{r+1}(D^\#_{\Delta\setminus\{\alpha\},0}))\right)\ .
\end{align*}
On the other hand
\begin{align*}
\pi_{\geq p^{2^{d-1}}N}\left(\sum_j c^{(j)}_\alpha\otimes d^{r-1}_{\Delta\setminus\{\alpha\}}(e^{(j)}_{\overline\alpha})\right)=\sum_j\pi_{\geq p^{2^{d-1}}N}(c^{(j)}_\alpha)\otimes d^{r-1}_{\Delta\setminus\{\alpha\}}(e^{(j)}_{\overline\alpha})\\
\in \id_{D^\#_\alpha}\otimes d^{r-1}_{\Delta\setminus\{\alpha\}}\left(X_\alpha^{p^{2^{d-1}}N}D_\alpha\otimes \Psi^{r-1}(D^\#_{\Delta\setminus\{\alpha\},0})\right)\ ,
\end{align*}
so $\pi_{\geq p^{2^{d-1}}N}((x'_U)_U)$ lies in $d^r_\Psi\left(\bigoplus_{S\in\binom{\Delta}{r}}\sum_{\alpha\in\Delta}X_\alpha^{N}D^\#_0\right)$.

\emph{Step 3.} Finally, $\pi_{\leq p^{2^{d-1}}N}((x'_U)_U)=:(x''_U)_U$ lies in $$ V_{\alpha,p^{2^{d-1}}N}\otimes \bigoplus_{\alpha\in U\in\binom{\Delta}{r+1}}\sum_{\beta\in\Delta\setminus\{\alpha\}}X_\beta^{p^{d-1}N}D^\#_{\Delta\setminus\{\alpha\},0}$$
by \eqref{x'Usmall}. We choose a $\kappa$-basis $v_1,\dots,v_l$ in the finite dimensional vectorspace $\mathrm{Im}(\psi_\alpha-1)\cap V_{\alpha,p^{2^{d-1}}N}\subset D^\#_\alpha$ and extend it to a basis $v_1,\dots,v_l,v_{l+1},\dots,v_{l+l'}$ of $V_{\alpha,p^{2^{d-1}}N}$. Writing $\pi_{\leq p^{2^{d-1}}N}((\psi_\alpha-1)(a_\alpha^{(i)}))$ and $\pi_{\leq p^{2^{d-1}}N}(c_\alpha^{(j)})$ in the basis $v_1,\dots,v_{l+l'}$ we find using \eqref{psicontained} that the $v_{l+1},\dots,v_{l+l'}$-components of $\pi_{\leq p^{2^{d-1}}N}((\psi_\alpha-1)(a_\alpha^{(i)}))$ vanish for all $i$ whence the $v_{l+1},\dots,v_{l+l'}$-components of $\pi_{\leq p^{2^{d-1}}N}(\sum_j c_\alpha^{(j)}\otimes d^{r-1}_{\Delta\setminus\{\alpha\}}(e^{(j)}_{\overline\alpha}))$ must lie in $$D^\#_\alpha\otimes \left(\mathrm{Im}(d^{r-1}_{\Delta\setminus\{\alpha\},0})\cap \bigoplus_{S'\in \binom{\Delta\setminus\{\alpha\}}{r}} \sum_{\beta\in\Delta\setminus\{\alpha\}}X_\beta^{p^{2^{d-1}}N}D_{\Delta\setminus\{\alpha\},0}^\#\right)$$ showing that they are in $d^r_\Psi(\bigoplus_{\alpha\in S\in\binom{\Delta}{r}}\sum_{\beta\in\Delta}X_\beta^{N}D_{0}^\#)$ using induction again. Finally, the $v_1,\dots,v_l$-components altogether lie in 
$$(\psi_\alpha-1)(D^\#_\alpha)\otimes \Ker\left(d^{r}_{\Delta\setminus\{\alpha\}}\colon  \bigoplus_{S'\in \binom{\Delta\setminus\{\alpha\}}{r}} \sum_{\beta\in\Delta\setminus\{\alpha\}}X_\beta^{p^{2^{d-1}}N}D_{\Delta\setminus\{\alpha\},0}^\#\to \Psi^{r+1}(D^\#_{\Delta\setminus\{\alpha\},0})\right)$$ since $d^{r-1}_{\Delta\setminus\{\alpha\}}(e^{(j)}_{\overline\alpha})\in\mathrm{Im}(d^{r-1}_{\Delta\setminus\{\alpha\},0})\subseteq \Ker(d^{r}_{\Delta\setminus\{\alpha\}})$. We deduce that this part is even in the image $d^r_\Psi(\bigoplus_{\alpha\notin S\in\binom{\Delta}{r}}\sum_{\beta\in\Delta}X_\beta^{p^{2^{d-1}}N}D_{0}^\#)$.
\end{proof}
The second statement follows using Cor.\ \ref{compactcohom} noting that the image of a dense subset under a quotient map is dense.
\end{proof}

Now the inclusion $D^\#_0\hookrightarrow D^\#\hookrightarrow D$ induces a composite morphism $\Psi^\bullet(D^\#_0)\to \Psi^\bullet(D^\#)\to \Psi^\bullet(D)$ of cochain complexes and therefore a composite morphism
\begin{equation*}
h^r\Psi^\bullet(D^\#_0)\to h^r\Psi^\bullet(D^\#)\overset{\sim}{\to} h^r\Psi^\bullet(D)\to (h^{d-r}\Phi^\bullet(D^*(\TD)))^\vee
\end{equation*}
on the cohomologies for all $0\leq r\leq d$.
\begin{lem}\label{tensorcohomsplit}
We have
\begin{align*}
h^r\Psi^\bullet(D^\#_0)\cong \bigoplus_{S\in\binom{\Delta}{r}}\bigotimes_{\alpha\in\Delta}h^{i_\alpha(S)}\Psi^\bullet(D^\#_\alpha)\text{ , and}\\
h^{d-r}\Phi^\bullet(D^*(\TD))\cong \bigoplus_{S\in\binom{\Delta}{r}}\bigotimes_{\alpha\in\Delta}h^{1-i_\alpha(S)}\Phi^\bullet(D_\alpha^*(1))
\end{align*}
as representations of the group $\Gamma_\Delta$ where $i_\alpha(S)=\begin{cases}1&\text{if }\alpha\in S\\ 0&\text{if }\alpha\notin S\end{cases}$.
\end{lem}
\begin{proof}
Since $\Psi^\bullet(D^\#_0)$ is the tensor product of the cochain complexes $\Psi^\bullet(D^\#_\alpha)$ for $\alpha\in\Delta$, the first statement is simply the K\"unneth formula (Thm.\ 3.6.3 in \cite{W}) for cochain complexes (note that the tensor product is taken over the field $\kappa$). On the other hand, $\Phi^\bullet(D^*(\TD))$ computes the $\HQpD$-cohomology of $V^*(\TD)=\mathbb{V}(D^*(\TD))$ by Prop.\ \ref{HcohomHerr}. By construction, $V^*(\TD)$ is the external tensor product of the Galois representations $V_\alpha^*(1)=\mathbb{V}(D_\alpha^*(1))$ for $\alpha\in\Delta$. Taking injective resolutions $V_\alpha^*(1)\overset{\sim}{\to}I^\bullet(V_\alpha^*(1))$ as discrete $\HQpa$-modules over $\kappa$ for all $\alpha\in\Delta$, each tensor product $I^{n_\Delta}:=\bigotimes_{\alpha\in\Delta,\kappa}I^{n_\alpha}(V_\alpha^*(1))$ ($0\leq n_\alpha\in\mathbb{Z}$ for $\alpha\in\Delta$) is a cohomologically trivial $\HQpD$-module by the Hochschild--Serre spectral sequence 
\begin{equation*}
E_2^{pq}:=H^p(\HQpDa,H^q(\HQpa,I^{n_\Delta}))\Rightarrow H^{p+q}(\HQpD,I^{n_\Delta})
\end{equation*}
Indeed, $I^{n_\Delta}$ is isomorphic to the direct sum of copies of $I^{n_\alpha}(V_\alpha^*(1))$ indexed by a $\kappa$-basis of $\bigotimes_{\beta\in\Delta\setminus\{\alpha\},\kappa}I^{n_\beta}(V_\beta^*(1))$ as a representation of $\HQpa$ for fixed $\alpha\in\Delta$ therefore $H^q(\HQpa,I^{n_\Delta})$ vanishes for $q>0$. On the other hand, we have $H^0(\HQpa,I^{n_\Delta})=H^0(\HQpa,I^{n_\alpha}(V^*_\alpha(1)))\otimes_\kappa \bigotimes_{\beta\in\Delta\setminus\{\alpha\},\kappa}I^{n_\beta}(V_\beta^*(1))$ which is $\HQpDa$-cohomologically trivial by induction on $|\Delta|$. Therefore the complex $\bigotimes_{\alpha\in\Delta,\kappa}I^\bullet(V_\alpha^*(1))$ is a resolution of $V^*(\TD)$ by $\HQpD$-cohomologically trivial terms whence $$\left(\bigotimes_{\alpha\in\Delta,\kappa}I^\bullet(V_\alpha^*(1))\right)^{\HQpD}=\bigotimes_{\alpha\in\Delta,\kappa}I^\bullet(V_\alpha^*(1))^{\HQpa}$$ computes the $\HQpD$-cohomology of $V^*(\TD)$. The result follows again from the K\"unneth formula for cochain complexes.
\end{proof}
Now note that statement of Theorem \ref{iwasawamodp} in the classical case $|\Delta|=1$ (whence the pairing $\{\cdot,\cdot\}$ between $D$ and $D^*(1)$ is perfect) implies the isomorphism $h^i\Psi^\bullet(D^\#_\alpha)\cong (h^{1-i}\Phi^\bullet(D_\alpha^*(1)))^\vee$ for $i=0,1$ of $\kappa\bs \Gamma_\alpha\js$-modules. Further, the Pontryagin dual of the tensor product of discrete $\kappa\bs \Gamma_\alpha\js$-modules for $\alpha\in\Delta$ is the completed tensor product of the Pontryagin duals of the terms. We deduce the isomorphism
\begin{equation*}
\widehat{h^r\Psi^\bullet(D^\#_0)}\overset{\sim}{\to}  (h^{d-r}\Phi^\bullet(D^*(\TD)))^\vee
\end{equation*}
for all $0\leq r\leq d$. Here $\widehat{\cdot}$ stands for the completion $\varprojlim_n(\cdot)/(\sum_{\alpha\in\Delta}T_\alpha^n(\cdot))$ where $T_\alpha$ is the variable in $\kappa\bs\Gamma_\alpha\js\cong\kappa\bs \Zp^\times\js$ under the identification with $\kappa[\Gamma_{\alpha,tors}]\bs T_\alpha\js$ where $\kappa[\Gamma_{\alpha,tors}]$ is the group ring of the finite group of torsion elements in $\Gamma_\alpha$. By the compactness of $h^r\Psi^\bullet(D^\#)$ (Cor.\ \ref{compactcohom}), the map $h^r\Psi^\bullet(D^\#_0)\to h^r\Psi^\bullet(D^\#)$ factors through the completion $\widehat{h^r\Psi^\bullet(D^\#_0)}$. By the discussion above the composite map
\begin{equation*}
\widehat{h^r\Psi^\bullet(D^\#_0)}\to h^r\Psi^\bullet(D^\#)\to (h^{d-r}\Phi^\bullet(D^*(\TD)))^\vee
\end{equation*}
is an isomorphism and the first arrow is onto by Prop.\ \ref{tensordense}. We deduce that the second arrow is also an isomorphism.

\emph{Step 3. The general case.} By Step 1 we obtain a morphism of cohomological $\delta$-functors $h^{i-d}\Psi^\bullet(\cdot)\to h^{2d-i}\Phi^\bullet((\cdot)^*(\TD))^\vee$ ($d\leq i\leq 2d$). Now if $0\to D_1\to D_2\to D_3\to 0$ is a short exact sequence of $p$-power torsion \'etale $(\varphi_\Delta,\Gamma_\Delta)$-modules then we have a morphism 
\begin{equation*}
\xymatrix{
\cdots\ar[r]& (h^{2d-i}\Phi^\bullet(D_1^*(\TD)))^\vee\ar[r] & (h^{2d-i}\Phi^\bullet(D_2^*(\TD)))^\vee\ar[r] & (h^{2d-i}\Phi^\bullet(D_3^*(\TD)))^\vee\ar[r]^-{-1} & \cdots\\
\cdots\ar[r]& h^{i-d}\Psi^\bullet(D_1)\ar[r]\ar[u] & h^{i-d}\Psi^\bullet(D_2)\ar[r]\ar[u] & h^{i-d}\Psi^\bullet(D_3)\ar[r]^-{+1}\ar[u] & \cdots
}
\end{equation*}
between the long exact sequences. In particular, if the statement is true for $D_1$ and $D_3$ then it also follows for $D_2$ by the $5$-lemma. Therefore we are reduced to the case when $\mathbb{V}(D)$ is irreducible as a representation of $\GQpD$. By possibly enlarging the coefficient field $\kappa$ we are done by Step 2.
\end{proof}

\subsection{Cohomology for $p$-adic representations}\label{secherrpadic}

The goal in this section is to show that the Herr complex computes the continuous group cohomology of objects in $\operatorname{Rep}_{\Zp}(\GQpD)$ and $\operatorname{Rep}_{\Qp}(\GQpD)$. In the classical $|\Delta|=1$ case the proof of this fact is not properly explained in \cite{H1}. There is an intrinsic proof in Colmez's Tsinghua notes \cite{Tsing}. Our proof is more inspired by the more conceptual proof of Schneider and Venjakob \cite{SchVen} for the Iwasawa cohomology in the Lubin--Tate case.

Let $T$ be an object in $\operatorname{Rep}_{\Zp}(\GQpD)$. As usual we define the cohomology groups $H^i(\GQpD,T)$ using continuous cochains. 

\begin{lem}\label{projlim10}
We have $H^i(\GQpD,T)\cong \varprojlim_n H^i(\GQpD,T/p^nT)$ and $\varprojlim_n^1 H^i(\GQpD,T/p^nT)=0$.
\end{lem}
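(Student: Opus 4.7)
The plan is to reduce the statement to the finiteness of $H^i(\GQpD, T/p^n T)$ (given by the Remark following Theorem \ref{herrcomplexmodp}) via the standard inverse-limit formalism for continuous cochains.

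First, I would observe that since $T$ is a finitely generated $\Zp$-module, it is complete and Hausdorff in its $p$-adic topology, so $T = \varprojlim_n T/p^n T$ as topological $\Zp$-modules. Passing to continuous cochains, this identification promotes to an isomorphism of cochain complexes
\begin{equation*}
C^\bullet_{cont}(\GQpD, T) \cong \varprojlim_n C^\bullet_{cont}(\GQpD, T/p^n T).
\end{equation*}
Moreover, the transition maps in this inverse system of cochain complexes are term-wise surjective: a continuous cochain $f\colon \GQpD^i \to T/p^n T$ lands in a finite discrete module, hence factors through a finite quotient of $\GQpD^i$, so composing with any set-theoretic section of $T/p^{n+1}T \twoheadrightarrow T/p^n T$ produces a continuous lift to $T/p^{n+1}T$. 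In particular, the inverse system $\{C^i_{cont}(\GQpD, T/p^n T)\}_n$ satisfies the Mittag--Leffler condition.

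Given this Mittag--Leffler property at the level of cochains, the standard short exact sequence for the cohomology of an inverse limit of chain complexes (e.g.\ Thm.\ 3.5.8 in \cite{W}) yields, for each $i \geq 0$, a short exact sequence
\begin{equation*}
0 \to {\varprojlim}^1_n H^{i-1}(\GQpD, T/p^n T) \to H^i(\GQpD, T) \to \varprojlim_n H^i(\GQpD, T/p^n T) \to 0.
\end{equation*}
It then suffices to verify that the $\varprojlim^1$ term vanishes.

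For this final step, which is the only non-formal input, I would invoke the Remark just after Theorem \ref{herrcomplexmodp}: since $T/p^n T$ is a finite abelian $p$-group carrying a continuous action of $\GQpD$, each cohomology group $H^{i-1}(\GQpD, T/p^n T)$ is finite. Any inverse system of finite abelian groups trivially satisfies the Mittag--Leffler condition (the images stabilise termwise), so the derived inverse limit $\varprojlim^1_n H^{i-1}(\GQpD, T/p^n T)$ vanishes. Both assertions of the lemma follow at once; the rest of the argument is routine inverse-limit formalism.
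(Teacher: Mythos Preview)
Your proof is correct and follows essentially the same approach as the paper: both identify $C^\bullet_{cont}(\GQpD,T)$ with the inverse limit of the $C^\bullet_{cont}(\GQpD,T/p^nT)$, use surjectivity of the transition maps on cochains, and then invoke finiteness of $H^i(\GQpD,T/p^nT)$ to kill the derived limit. The only cosmetic difference is that the paper phrases the passage from cochains to cohomology via the two hypercohomology spectral sequences for $R\varprojlim$, whereas you use the equivalent Milnor short exact sequence; since $\varprojlim$ over $\mathbb{N}$ has cohomological dimension one, these amount to the same argument.
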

\begin{proof}
By definition, we have an isomorphism $C^\bullet(\GQpD,T)\cong \varprojlim_n C^\bullet(\GQpD,T/p^nT)$ on the level of continuous cochains. Since the transition maps are surjective, the first hypercohomology spectral sequence $E_2^{pq}:=h^p\varprojlim_n^qC^\bullet(\GQpD,T/p^nT)\Rightarrow R^{p+q}(\varprojlim_n(\cdot)^{\GQpD})((T/p^nT)_n)$ degenerates at $E_2$. Therefore the hypercohomology groups are simply $H^i(\GQpD,T)$. On the other hand,  the cohomology groups $H^i(\GQpD,T/p^nT)$ are finite, so the projective system $(H^i(\GQpD,T/p^nT))_n$ satisfies the Mittag--Leffler property for any fixed $i\geq 0$ yielding the second statement of the Lemma. This shows that the second hypercohomology spectral sequence $E_2^{pq}:=\varprojlim^p_n H^q(\GQpD,T/p^nT)\Rightarrow H^{p+q}(\GQpD,T)$ also degenerates at $E_2$ showing the first statement.
\end{proof}

In particular, we have 
\begin{align*}
H^i_{Iw}(\GQpD,T)\cong \varprojlim_{\HQpD\leq H\leq_o \GQpD}H^i(H,T)\cong \varprojlim_{\HQpD\leq H\leq_o \GQpD}\varprojlim_n H^i(H,T/p^nT)\cong\\
\cong \varprojlim_n\varprojlim_{\HQpD\leq H\leq_o \GQpD} H^i(H,T/p^nT)\cong \varprojlim_n H^i_{Iw}(\GQpD,T/p^nT)\ .
\end{align*}

Moreover, using again the finiteness of $H^i(H,T/p^nT)$ and noting that there are countably many open subgroups of $\Gamma_\Delta=\GQpD/\HQpD$, we deduce $ \varprojlim_n^1 H^i_{Iw}(\GQpD,T/p^nT)=0$.

\begin{thm}\label{herrcomplex+iwasawa}
Let $T$ (resp.\ $V$) be an object in $\operatorname{Rep}_{\Zp}(\GQpD)$ (resp.\ in $\operatorname{Rep}_{\Qp}(\GQpD)$). We have isomorphisms 
\begin{eqnarray*}
H^i(\GQpD,T)\cong h^i\Phi\Gamma_\Delta^\bullet(\mathbb{D}(T))\ ;\qquad H^i(\GQpD,V)\cong h^i\Phi\Gamma_\Delta^\bullet(\mathbb{D}(V))\ ;\\
H^i_{Iw}(\GQpD,T)\cong h^{i-d}\Psi^\bullet(\mathbb{D}(T))\ ;\qquad H^i_{Iw}(\GQpD,V)\cong h^{i-d}\Psi^\bullet(\mathbb{D}(V))
\end{eqnarray*}
natural in $T$ (resp.\ in $V$) for all $i\geq 0$.
\end{thm}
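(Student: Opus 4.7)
The plan is to derive all four isomorphisms from the torsion analogues (Theorems \ref{herrcomplexmodp} and \ref{iwasawamodp}) by passing to the $p$-adic limit and then inverting $p$. The essential ingredients will be the compatibility of $\mathbb{D}$ with reduction mod $p^n$, finiteness of the mod-$p^n$ cohomology groups (which supplies the Mittag--Leffler condition), and the same hypercohomology spectral sequence technique used in Lemma \ref{projlim10}.

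First I would handle the Herr complex computation for $T\in\operatorname{Rep}_{\Zp}(\GQpD)$. Since $\mathbb{D}$ is an equivalence of categories compatible with quotients, we have $\mathbb{D}(T)/p^n\mathbb{D}(T)\cong\mathbb{D}(T/p^nT)$, and the complex $\Phi\Gamma_\Delta^\bullet(\mathbb{D}(T))$ is the termwise projective limit $\varprojlim_n \Phi\Gamma_\Delta^\bullet(\mathbb{D}(T/p^nT))$ (each term is a finite direct sum of copies of $\mathbb{D}(T)$, which is itself a countable projective limit of finite modules). Theorem \ref{herrcomplexmodp} identifies $h^i$ of each level of this tower with $H^i(\GQpD,T/p^nT)$, which is finite by the remark after Theorem \ref{herrcomplexmodp}. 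Hence the Mittag--Leffler condition is satisfied, both hypercohomology spectral sequences for $\varprojlim_n$ applied to the tower $(\Phi\Gamma_\Delta^\bullet(\mathbb{D}(T/p^nT)))_n$ degenerate at $E_2$, and combining this with Lemma \ref{projlim10} gives
\begin{equation*}
h^i\Phi\Gamma_\Delta^\bullet(\mathbb{D}(T))\cong \varprojlim_n h^i\Phi\Gamma_\Delta^\bullet(\mathbb{D}(T/p^nT))\cong \varprojlim_n H^i(\GQpD,T/p^nT)\cong H^i(\GQpD,T).
\end{equation*}

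The $\Zp$-Iwasawa statement is entirely analogous with $\Psi^\bullet$ in place of $\Phi\Gamma_\Delta^\bullet$ and Theorem \ref{iwasawamodp} in place of Theorem \ref{herrcomplexmodp}: finiteness of $H^i_{Iw}(\GQpD,T/p^nT)$ follows from Theorem \ref{iwasawamodp} by Pontryagin duality with the finite $\HQpD$-cohomology of the (finite) Tate-twisted dual, so Mittag--Leffler again holds, and the identification $H^i_{Iw}(\GQpD,T)\cong\varprojlim_n H^i_{Iw}(\GQpD,T/p^nT)$ noted immediately after Lemma \ref{projlim10} finishes the argument. The $\Qp$-case for both cohomologies is then deduced from the $\Zp$-case by choosing a $\GQpD$-stable $\Zp$-lattice $T\subset V$: one has $\mathbb{D}(V)\cong\mathbb{D}(T)\otimes_{\Zp}\Qp$, and by flatness of $\Qp$ over $\Zp$ the tensor product passes through both the cohomology of the Herr complex (resp.\ $\Psi^\bullet$-complex) and the continuous Galois (resp.\ Iwasawa) cohomology; for the latter this uses that $H^i(\GQpD,T)$ and $H^i_{Iw}(\GQpD,T)$ are finitely generated $\Zp$-modules, which is automatic \emph{a posteriori} from the identification with the Herr (resp.\ $\Psi^\bullet$) complex cohomology of the finitely generated $(\varphi_\Delta,\Gamma_\Delta)$-module $\mathbb{D}(T)$.

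The main obstacle is the bookkeeping for the Iwasawa case: verifying that the levelwise finiteness needed for the Mittag--Leffler step is genuinely consequence of Theorem \ref{iwasawamodp}, and confirming that the $\Psi^\bullet$-complex really does commute with the $p$-adic projective limit (which is immediate on the level of modules but needs the $R^1\varprojlim$ vanishing to commute with cohomology). Once these are in place the spectral sequences collapse cleanly and the rest of the argument is formal.
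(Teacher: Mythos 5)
Your proposal is correct and follows essentially the same approach as the paper: pass to the tower $\bigl(\Phi\Gamma_\Delta^\bullet(\mathbb{D}(T/p^nT))\bigr)_n$ (resp.\ the $\Psi^\bullet$-tower), use the surjectivity of the transition maps to collapse the first hypercohomology spectral sequence and the finiteness of the mod-$p^n$ cohomology (Mittag--Leffler) to collapse the second, then invoke Theorem \ref{herrcomplexmodp} (resp.\ Theorem \ref{iwasawamodp}) together with Lemma \ref{projlim10}, and finally invert $p$ for the $\Qp$-case. One small imprecision worth noting: the degeneration of the first spectral sequence comes from surjectivity of the transition maps at the level of cochain complexes, not from Mittag--Leffler on cohomology, and the detour through finite generation of $H^i(\GQpD,T)$ for the $\Qp$-step is unnecessary once you observe (by compactness of $\GQpD$) that $C^\bullet_{cont}(\GQpD,V)\cong C^\bullet_{cont}(\GQpD,T)\otimes_{\Zp}\Qp$ and $\Qp$ is flat.
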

\begin{proof}
Consider the projective system $(\Phi\Gamma_\Delta^\bullet(\mathbb{D}(T/p^nT)))_n$ of cochain complexes of abelian groups. Since the transition maps are surjective, the first hypercohomology spectral sequence degenerates at $E_2$. Therefore the second hypercohomology spectral sequence becomes
\begin{equation*}
{\varprojlim_n}^i h^j\Phi\Gamma_\Delta^\bullet(\mathbb{D}(T/p^nT))\Rightarrow h^{i+j} \Phi\Gamma_\Delta^\bullet(\mathbb{D}(T))\ .
\end{equation*}
By Thm.\ \ref{herrcomplexmodp} and Lemma \ref{projlim10} this spectral sequence degenerates, too, and computes the continuous cohomology $H^{i+j}(\GQpD,T)$.

The proof of the statement on the Iwasawa cohomology groups is entirely analogous using Thm.\ \ref{iwasawamodp} instead. The result on $V$ follows by inverting $p$.
\end{proof}

\begin{cor}
The functors $H^i_{Iw}(\GQpD,\cdot)$ ($d\leq i\leq 2d$) form a cohomological $\delta$-functor on $\operatorname{Rep}_{\Zp}(\GQpD)$.
\end{cor}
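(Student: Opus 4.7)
The plan is to transport the cohomological $\delta$-functor structure through the chain of natural isomorphisms established in Theorem \ref{herrcomplex+iwasawa}. The essential point is that each step in the identification $H^i_{Iw}(\GQpD,T)\cong h^{i-d}\Psi^\bullet(\mathbb{D}(T))$ is suitably exact.

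First I would start from a short exact sequence $0\to T_1\to T_2\to T_3\to 0$ in $\operatorname{Rep}_{\Zp}(\GQpD)$. Since $\mathbb{D}$ is part of an equivalence of categories between $\operatorname{Rep}_{\Zp}(\GQpD)$ and the category of \'etale $(\varphi_\Delta,\Gamma_\Delta)$-modules over $\OED$, it is exact, and we obtain a short exact sequence $0\to \mathbb{D}(T_1)\to \mathbb{D}(T_2)\to \mathbb{D}(T_3)\to 0$. Next, because each term of $\Psi^\bullet(D)$ (as defined in \eqref{psicomplexdef}) is simply a finite direct sum of copies of $D$, applying $\Psi^\bullet$ componentwise yields a short exact sequence of cochain complexes $0\to \Psi^\bullet(\mathbb{D}(T_1))\to \Psi^\bullet(\mathbb{D}(T_2))\to \Psi^\bullet(\mathbb{D}(T_3))\to 0$. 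The associated long exact sequence in cohomology then equips the collection $(h^{i-d}\Psi^\bullet(\mathbb{D}(\cdot)))_{d\le i\le 2d}$ with the structure of a cohomological $\delta$-functor on $\operatorname{Rep}_{\Zp}(\GQpD)$, where the connecting maps $\delta^i\colon h^{i-d}\Psi^\bullet(\mathbb{D}(T_3))\to h^{i+1-d}\Psi^\bullet(\mathbb{D}(T_1))$ are functorial in the given short exact sequence. Vanishing outside the stated range of $i$ is automatic, as $\Psi^\bullet$ is concentrated in degrees $0$ through $d$.

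Finally, I would invoke Theorem \ref{herrcomplex+iwasawa} to transport this structure: the natural (in $T$) isomorphism $H^i_{Iw}(\GQpD,T)\cong h^{i-d}\Psi^\bullet(\mathbb{D}(T))$ allows us to define the connecting maps $H^i_{Iw}(\GQpD,T_3)\to H^{i+1}_{Iw}(\GQpD,T_1)$ as the transport of the $\delta^i$ above. Naturality of the isomorphism in $T$ guarantees functoriality in short exact sequences, and the long exact sequence of cohomology coming from $\Psi^\bullet$ translates into the desired long exact sequence of Iwasawa cohomology groups.

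The only conceptual subtlety — and what I would view as the main (though mild) obstacle — is to confirm that this transported $\delta$-functor structure agrees with whatever $\delta$-functor structure $H^i_{Iw}$ might intrinsically carry coming from the inverse limit definition $H^i_{Iw}(\GQpD,T)=\varprojlim_H H^i(H,T)$. This is not strictly required for the statement of the corollary, which only asserts the existence of \emph{some} $\delta$-functor structure, but if desired one can verify compatibility via Lemma \ref{projlim10} (and its Iwasawa analogue established just before Theorem \ref{herrcomplex+iwasawa}), since the relevant $\varprojlim^1$-terms vanish by finiteness, so that the long exact sequence obtained by taking $\varprojlim$ of the long exact sequences at finite level coincides with the one transported from $\Psi^\bullet$ under the naturality statement in Theorem \ref{herrcomplex+iwasawa}.
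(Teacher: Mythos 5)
Your proposal is correct and matches the paper's proof: both use exactness of $\mathbb{D}$ together with the termwise short exact sequence of $\Psi^\bullet$-complexes, then transport the resulting long exact sequence to $H^i_{Iw}$ via Theorem \ref{herrcomplex+iwasawa}. Your additional remark on compatibility with the intrinsic $\varprojlim$-structure is a sensible extra check but not needed for the statement, which the paper's one-line proof also implicitly acknowledges.
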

\begin{proof}
For a short exact sequence $0\to T_1\to T_2\to T_3\to 0$ we have a short exact sequence $0\to \Psi^\bullet(\mathbb{D}(T_1))\to \Psi^\bullet(\mathbb{D}(T_2))\to \Psi^\bullet(\mathbb{D}(T_3))\to 0$ of cochain complexes yielding a long exact sequence of cohomology groups.
\end{proof}

Note that $\Gamma_\Delta$ acts $\Zp$-linearly on $H^i_{Iw}(\GQpD,T)$ by construction. The action is continuous and the Iwasawa cohomology groups are compact (Cor.\ \ref{compactcohom}) therefore this extends to an action of the Iwasawa algebra $\Zp\bs \Gamma_\Delta\js$.

\begin{cor}\label{iwasawafingen}
Let $T$ be an object in $\operatorname{Rep}_{\Zp}(\GQpD)$. The Iwasawa cohomology groups $H^i_{Iw}(\GQpD,T)$ are finitely generated $\Zp\bs \Gamma_\Delta\js $-modules.
\end{cor}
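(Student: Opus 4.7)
The plan is to pass from Theorem~\ref{herrcomplex+iwasawa}, which identifies $M:=H^i_{Iw}(\GQpD,T)$ with $h^{i-d}\Psi^\bullet(\mathbb{D}(T))$, to the finite generation statement via a topological Nakayama argument that reduces everything to the mod~$p$ case; there I would then exploit the tensor product structure already developed in the proof of Theorem~\ref{iwasawamodp}.

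First I would argue that $M$ is naturally a compact topological module over $\Lambda:=\Zp\bs\Gamma_\Delta\js$. Writing $\mathbb{D}(T)=\varprojlim_n\mathbb{D}(T/p^nT)$, applying Corollary~\ref{compactcohom} to each quotient, and checking that the resulting projective system of compact modules is Mittag--Leffler, one obtains $M=\varprojlim_n h^{i-d}\Psi^\bullet(\mathbb{D}(T/p^nT))$ as an inverse limit of compact $\Lambda$-modules, hence itself compact. Since $\Lambda$ is Noetherian and semi-local (local when $p$ is odd, a finite product of local rings when $p=2$), topological Nakayama reduces the corollary to showing that $M/\mathfrak{J}M$ is finite, where $\mathfrak{J}$ denotes the Jacobson radical. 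The short exact sequence $0\to T\stackrel{p}{\to}T\to T/pT\to 0$ together with the long exact sequence of Iwasawa cohomology (its $\delta$-functor property is the preceding corollary) yields an injection $M/pM\hookrightarrow H^i_{Iw}(\GQpD,T/pT)$; as $p\in\mathfrak{J}$, it then suffices to prove the corollary in the mod~$p$ case.

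For the mod~$p$ statement I would repeat the devissage of Step~3 of the proof of Theorem~\ref{iwasawamodp}: using a composition series of $T/pT$, the long exact sequences attached to $\Psi^\bullet$, and the five lemma, we reduce to the situation where $\mathbb{V}(T/pT)$ becomes absolutely irreducible after extending scalars to a sufficiently large finite field $\kappa/\Fp$. Lemma~\ref{productoftwofinitegroups} then provides a decomposition $\kappa\otimes_{\Fp}\mathbb{V}(T/pT)\cong\bigotimes_{\alpha\in\Delta,\kappa}V_\alpha$ with each $V_\alpha$ irreducible as a representation of $\GQpa$. Replacing $\Psi^\bullet(\mathbb{D}(T/pT)_\kappa)$ by $\Psi^\bullet(D^\#)$ via the quasi-isomorphism of Proposition~\ref{Dhashquasiiso} and then combining Proposition~\ref{tensordense} with Lemma~\ref{tensorcohomsplit}, one identifies $h^{i-d}\Psi^\bullet(D^\#)$ with the completed tensor product of the one-variable cohomology modules $h^{j}\Psi^\bullet(D_\alpha^\#)$.

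Each factor $h^{j}\Psi^\bullet(D_\alpha^\#)$, i.e.\ either $(D_\alpha^\#)^{\psi_\alpha=1}$ or $D_\alpha^\#/(\psi_\alpha-1)D_\alpha^\#$, is finitely generated over the one-variable Iwasawa algebra $\kappa\bs\Gamma_\alpha\js$ by the classical theory of Herr and Colmez. Consequently the completed tensor product is finitely generated over $\widehat{\bigotimes}_\alpha\kappa\bs\Gamma_\alpha\js=\kappa\bs\Gamma_\Delta\js$ and a fortiori over $\Fp\bs\Gamma_\Delta\js$; faithful flatness of $\kappa/\Fp$ then transfers finite generation back to $H^i_{Iw}(\GQpD,T/pT)$ itself, which closes the argument. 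The main obstacle is verifying that the compact topology on $M$ coming from the inverse limit is indeed the one with respect to which topological Nakayama applies over $\Lambda$, and carefully tracking topologies through the devissage and through the identification of $h^{i-d}\Psi^\bullet(D^\#)$ as a completed tensor product; the rest is essentially a recombination of machinery already assembled in the proof of Theorem~\ref{iwasawamodp}.
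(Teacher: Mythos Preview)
Your argument is correct in outline, but the paper takes a genuinely different and shorter route. Both proofs use topological Nakayama and the reduction from general $T$ to the mod~$p$ case via $0\to T\xrightarrow{p}T\to T/pT\to 0$ (note that you implicitly assume $T$ is $p$-torsion-free here; the paper first splits off $T_{tors}$ via $0\to T_{tors}\to T\to T/T_{tors}\to 0$, which you should do as well). The divergence is in how the torsion case is treated.

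The paper never invokes composition series, field extensions, or the tensor product machinery from the proof of Theorem~\ref{iwasawamodp}. Instead, for torsion $T$ it uses Lemma~\ref{measure} to rewrite $H^i_{Iw}(\GQpD,T)\cong H^i(\GQpD,\Zp\bs\Gamma_\Delta\js\otimes_{\Zp}T)$, and then peels off one $\Gamma_\alpha$ at a time: the short exact sequence
\[
0\to\Zp\bs\Gamma_\Delta\js\otimes_{\Zp}T\xrightarrow{(\gamma_\alpha-1)\cdot}\Zp\bs\Gamma_\Delta\js\otimes_{\Zp}T\to\Zp\bs\Gamma_{\Delta\setminus\{\alpha\}}\js\otimes_{\Zp}T\to 0
\]
together with topological Nakayama reduces finite generation over $\Zp\bs\Gamma_\Delta\js$ to finite generation of $H^i(\GQpD,\Zp\bs\Gamma_{\Delta\setminus\{\alpha\}}\js\otimes_{\Zp}T)$ over $\Zp\bs\Gamma_{\Delta\setminus\{\alpha\}}\js$. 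Induction on $|\Delta|$ then bottoms out at the finiteness of the ordinary Galois cohomology $H^i(\GQpD,T)$ over $\Zp$.

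The paper's inductive descent is cleaner: it avoids passing to a splitting field, avoids the delicate topology-tracking you rightly flag as an obstacle, and does not require the extra verification that a completed tensor product of finitely generated $\kappa\bs\Gamma_\alpha\js$-modules is finitely generated over $\kappa\bs\Gamma_\Delta\js$. Your approach has the virtue of reusing machinery already built for Theorem~\ref{iwasawamodp}, but it is heavier in execution.
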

\begin{proof}
At first assume that $T$ is an object in $\operatorname{Rep}_{\Zp-tors}(\GQpD)$. By Lemma \ref{measure} we have an identification $H^i_{Iw}(\GQpD,T)\cong H^i(\GQpD,\Zp\bs \Gamma_\Delta\js\otimes_{\Zp}T)$. There is an action of the group $\GQpD\times \Gamma_\Delta$ on $\Zp\bs \Gamma_\Delta\js\otimes_{\Zp}T$ given by the formula $(g,\gamma)(\lambda\otimes x):=(\overline{g}\lambda\gamma^{-1}\otimes (gx)$ on elementary tensors ($g\in\GQpD$ with image $\overline{g}\in\Gamma_\Delta$; $\gamma\in\Gamma_\Delta$; $\lambda\in\Zp\bs \Gamma_\Delta\js$; and $x\in T$). The action of $\Gamma_\Delta$ extends to the Iwasawa algebra $\Zp\bs \Gamma_\Delta\js$ making $\Zp\bs \Gamma_\Delta\js\otimes_{\Zp}T$ into a left module over $\Zp\bs \Gamma_\Delta\js$ equipped with a linear action of $\GQpD$. Thus the cohomology groups $ H^i(\GQpD,\Zp\bs \Gamma_\Delta\js\otimes_{\Zp}T)$ are left modules over $\Zp\bs \Gamma_\Delta\js$. Pick a topological generator $\gamma_\alpha$ of $\Gamma_\alpha$ for some fixed $\alpha\in\Delta$. Since $\Zp\bs \Gamma_{\Delta\setminus\{\alpha\}}\js$ is $\Zp$-flat, we have a short exact sequence
\begin{equation*}
0\to \Zp\bs \Gamma_\Delta\js\otimes_{\Zp}T\overset{(\gamma_\alpha-1)\cdot}{\to} \Zp\bs \Gamma_\Delta\js\otimes_{\Zp}T\to \Zp\bs \Gamma_{\Delta\setminus\{\alpha\}}\js\otimes_{\Zp}T\to 0
\end{equation*}
of $\GQpD$-representations. Therefore we obtain a long exact sequence
\begin{align*}
\cdots\to H^i(\GQpD, \Zp\bs \Gamma_\Delta\js\otimes_{\Zp}T) \overset{(\gamma_\alpha-1)\cdot}{\to} H^i(\GQpD,\Zp\bs \Gamma_\Delta\js\otimes_{\Zp}T)\to\\
\to H^i(\GQpD,\Zp\bs \Gamma_{\Delta\setminus\{\alpha\}}\js\otimes_{\Zp}T)\to H^{i+1}(\GQpD, \Zp\bs \Gamma_\Delta\js\otimes_{\Zp}T)\to\cdots
\end{align*}
of compact $\Zp\bs \Gamma_\Delta\js$-modules. By the topological Nakayama Lemma \cite{BH} $H^i(\GQpD, \Zp\bs \Gamma_\Delta\js\otimes_{\Zp}T)$ is finitely generated over $\Zp\bs \Gamma_\Delta\js$ if and only if the cokernel of the multiplication by $(\gamma_\alpha-1)$ is finitely generated over $\Zp\bs \Gamma_{\Delta\setminus\{\alpha\}}\js$ which is true assuming that $H^i(\GQpD,\Zp\bs \Gamma_{\Delta\setminus\{\alpha\}}\js\otimes_{\Zp}T)$ is finitely generated over $\Zp\bs \Gamma_{\Delta\setminus\{\alpha\}}\js$. The statement follows by induction on $|\Delta|$ noting that  $H^i(\GQpD,\Zp\bs \Gamma_{\emptyset}\js\otimes_{\Zp}T)=H^i(\GQpD,T)$ is finitely generated over $\Zp\bs \Gamma_{\emptyset}\js=\Zp$.

Using the above and the long exact sequence of Iwasawa cohomology applied to the short exact sequence $0\to T_{tors}\to T\to T/T_{tors}\to 0$ we may assume without loss of generality that $T$ is free over $\Zp$. Then we use the long exact sequence associated to $0\to T\overset{p\cdot}{\to} T\to T/pT\to 0$ and the topological Nakayama Lemma as above to deduce the statement for a general object $T$ in $\operatorname{Rep}_{\Zp}(\GQpD)$.
\end{proof}

\subsection{The Euler--Poincar\'e characteristic formula}

Recall that whenever $A$ (resp.\ $T$, resp.\ $V$) is a finite $p$-power torsion abelian group (resp.\ finitely generated $\Zp$-module, resp.\ finite dimensional $\Qp$-vectorspace)  with a continuous action of $G_{\Qp}$ then the Euler-Poincar\'e characteristic of $A$ (resp.\ of $T$,  resp.\ of $V$) is defined as $\chi_{G_{\Qp}}(A):=\prod_{i=0}^{2}|H^i(G_{\Qp},A)|^{(-1)^i}$ (resp.\ as $\chi_{G_{\Qp}}(T):=\sum_{i=0}^2(-1)^i\rk_{\Zp}H^i(G_{\Qp},T)$, resp.\ as $\chi_{G_{\Qp}}(V):=\sum_{i=0}^2(-1)^i\dim_{\Qp}H^i(G_{\Qp},V)$). The classical Euler-Poincar\'e characteristic formula states that $$\chi_{G_{\Qp}}(A)=|A|\ ,\quad \chi_{G_{\Qp}}(T)=\rk_{\Zp}T\ ,\ \text{and}\quad\chi_{G_{\Qp}}(V)=\dim_{\Qp}V\ .$$

We define the Euler--Poincar\'e characteristic of representations of $\GQpD$ similarly:  whenever $A$ (resp.\ $T$, resp.\ $V$) is a finite $p$-power torsion abelian group (resp.\ finitely generated $\Zp$-module, resp.\ finite dimensional $\Qp$-vectorspace)  with a continuous action of $\GQpD$ then the Euler-Poincar\'e characteristic of $A$ (resp.\ of $T$,  resp.\ of $V$) is defined as \begin{eqnarray*}
\chi_{\GQpD}(A)&:=&\prod_{i=0}^{2|\Delta|}|H^i(\GQpD,A)|^{(-1)^i}\ ;\\
\chi_{\GQpD}(T)&:=&\sum_{i=0}^{2|\Delta|}(-1)^i\rk_{\Zp}H^i(\GQpD,T)\ ;\\
\chi_{\GQpD}(V)&:=&\sum_{i=0}^{2|\Delta|}(-1)^i\dim_{\Qp}H^i(\GQpD,V)\ .
\end{eqnarray*}
The analogous Euler--Poincar\'e characteristic formula follows from the classical $|\Delta|=1$ case by induction on $|\Delta|$ using the Hochschild--Serre spectral sequence. However, we present here a different proof using multivariable $(\varphi_\Delta,\Gamma_\Delta)$-modules as the statements proven along these lines might be of independent interest.

For an object $D$ in $\mathcal{D}^{et}(\varphi_\Delta,\Gamma_\Delta,\OED)$ or in $\mathcal{D}^{et}(\varphi_\Delta,\Gamma_\Delta,\mathcal{E}_{\Delta})$ and a subset $S\subseteq \Delta$ we define the cochain complex (slightly different from the one introduced in \eqref{psicomplexdef})
\begin{align*}
\Psi^\bullet_{0,S}(D)\colon 0\to D\to \bigoplus_{\alpha\in S}D\to \dots\to \bigoplus_{\{\alpha_1,\dots,\alpha_r\}\in \binom{ S}{r}}D\to\dots \to D\to 0
\end{align*}
where for all $0\leq r\leq | S|-1$ the map $d_{\alpha_1,\dots,\alpha_r}^{\beta_1,\dots,\beta_{r+1}}\colon D\to D$ from the component in the $r$th term corresponding to $\{\alpha_1,\dots,\alpha_r\}\subseteq  S$ to the component corresponding to the $(r+1)$-tuple $\{\beta_1,\dots,\beta_{r+1}\}\subseteq S$ is given by
\begin{equation*}
d_{\alpha_1,\dots,\alpha_r}^{\beta_1,\dots,\beta_{r+1}}=\begin{cases}0&\text{if }\{\alpha_1,\dots,\alpha_r\}\not\subseteq\{\beta_1,\dots,\beta_{r+1}\}\\ (-1)^{\eta}\psi_\beta&\text{if }\{\beta_1,\dots,\beta_{r+1}\}=\{\alpha_1,\dots,\alpha_r\}\cup\{\beta\}\ ,\end{cases}
\end{equation*}
where $\eta=\eta(\alpha_1,\dots,\alpha_r,\beta)$ is the number of elements in the set $ S\setminus\{\alpha_1,\dots,\alpha_r\}$ smaller than $\beta$. We put $\Psi^\bullet_0(D):=\Psi^\bullet_{0,\Delta}(D)$.

\begin{lem}\label{h0psiexact}
The complex $\Psi^\bullet_{0,S}(D)$ is acyclic in nonzero degrees. In particular, the functor $D\mapsto h^0\Psi^\bullet_{0,S}(D)=\bigcap_{\beta\in S}\Ker(\psi_\beta)$ is exact.
\end{lem}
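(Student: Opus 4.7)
The plan is to induct on $|S|$. For the base case $|S|=1$, $S=\{\alpha\}$, the complex reads $0\to D\overset{\psi_\alpha}{\to} D\to 0$ and surjectivity of $\psi_\alpha$ is immediate from the \'etale identity $\psi_\alpha\circ\varphi_\alpha=\id_D$.

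For the inductive step, fix $\alpha\in S$ and put $S':=S\setminus\{\alpha\}$. Splitting subsets of $S$ of cardinality $r$ into those containing $\alpha$ and those not gives a decomposition $\Psi^r_{0,S}(D)\cong \Psi^r_{0,S'}(D)\oplus \Psi^{r-1}_{0,S'}(D)$, and tracking differentials shows that the ``$\alpha\in T$'' summand forms a subcomplex. Concretely, $\Psi^\bullet_{0,S}(D)$ sits in a short exact sequence
\begin{equation*}
0\to \Psi^\bullet_{0,S'}(D)[-1]\to \Psi^\bullet_{0,S}(D)\to \Psi^\bullet_{0,S'}(D)\to 0
\end{equation*}
of cochain complexes whose connecting map in cohomology is induced, up to sign, by $\psi_\alpha$. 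By the inductive hypothesis the outer complexes are acyclic outside degree $0$ with $h^0=\bigcap_{\beta\in S'}\Ker(\psi_\beta)$, so the long exact sequence collapses to $h^i\Psi^\bullet_{0,S}(D)=0$ for $i\geq 2$ together with
\begin{equation*}
0\to h^0\Psi^\bullet_{0,S}(D)\to \bigcap_{\beta\in S'}\Ker(\psi_\beta)\overset{\pm\psi_\alpha}{\to}\bigcap_{\beta\in S'}\Ker(\psi_\beta)\to h^1\Psi^\bullet_{0,S}(D)\to 0.
\end{equation*}

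Since the operators $\psi_\beta$ mutually commute (which one sees by comparing the two iterated expansions of $x\in D$ using the bases $(1+X_\alpha)^i$ and $(1+X_\beta)^j$ over $\varphi_\alpha\varphi_\beta(D)$), the middle map is well-defined and its kernel is precisely $\bigcap_{\beta\in S}\Ker(\psi_\beta)$. The one nontrivial point, and the step I expect to be the main obstacle (chiefly in terms of bookkeeping), is surjectivity of this middle map. For this I plan to use the commutation $\psi_\beta\circ\varphi_\alpha=\varphi_\alpha\circ\psi_\beta$ for distinct $\alpha,\beta\in\Delta$, a quick check from the definitions exploiting that $\varphi_\alpha$ fixes $X_\beta$ and commutes with $\varphi_\beta$. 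It guarantees that $\varphi_\alpha$ stabilises $\bigcap_{\beta\in S'}\Ker(\psi_\beta)$, and combined with $\psi_\alpha\circ\varphi_\alpha=\id$ it provides a section of $\psi_\alpha$ on this submodule, yielding $h^1\Psi^\bullet_{0,S}(D)=0$.

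Finally, exactness of $D\mapsto h^0\Psi^\bullet_{0,S}(D)$ is a formal consequence: a short exact sequence of \'etale $(\varphi_\Delta,\Gamma_\Delta)$-modules induces a short exact sequence of complexes $\Psi^\bullet_{0,S}$, and by the acyclicity just proved the associated long exact sequence in cohomology collapses to the desired short exact sequence of $h^0$'s.
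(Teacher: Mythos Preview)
Your proof is correct and follows essentially the same approach as the paper: both induct on $|S|$, both recognise $\Psi^\bullet_{0,S}(D)$ as built from two copies of $\Psi^\bullet_{0,S'}(D)$ linked by $\psi_\alpha$ (the paper phrases this as a two-row double complex, you as a short exact sequence of complexes---these are equivalent), and both finish by observing that $\varphi_\alpha$ provides a section of $\psi_\alpha$ on $\bigcap_{\beta\in S'}\Ker(\psi_\beta)$ since $\varphi_\alpha$ commutes with each $\psi_\beta$ for $\beta\neq\alpha$.
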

\begin{proof}
We proceed by induction on $|S|$. If $|S|=1$ then this is the surjectivity of $\psi_\beta$ ($S=\{\beta\}$). Moreover, if $|S|>1$ then for any fixed $\beta\in S$ and $S':=S\setminus\{\beta\}$ the complex $\Psi^\bullet_{0,S}(D)$ is the total complex of the double complex
\begin{align*}
\xymatrix{
 0\ar[r] & D\ar[r] & \bigoplus\limits_{\alpha\in S'}D\ar[r] & \dots\ar[r] & \bigoplus\limits_{\{\alpha_1,\dots,\alpha_r\}\in \binom{ S'}{r}}D\ar[r] &\dots \ar[r] & D\ar[r] & 0\\
 0\ar[r] & D\ar[r]\ar[u]_{\psi_\beta} & \bigoplus\limits_{\alpha\in S'}D\ar[r]\ar[u]_{\bigoplus\psi_\beta} & \dots\ar[r] & \bigoplus\limits_{\{\alpha_1,\dots,\alpha_r\}\in \binom{ S'}{r}}D\ar[r]\ar[u]_{\bigoplus\psi_\beta} &\dots \ar[r] & D\ar[r]\ar[u]_{\psi_\beta} & 0
}
\end{align*}
where the rows of the above complex are isomorphic to $\Psi^\bullet_{0,S'}(D)$. By induction $\Psi^\bullet_{0,S'}(D)$ is acyclic in nonzero degrees, so the above total complex is quasi-isomorphic to 
\begin{align*}
0\to h^0\Psi^\bullet_{0,S'}(D)\overset{\psi_\beta}{\to}  h^0\Psi^\bullet_{0,S'}(D)\to 0\ .
\end{align*}
Finally, the map $\psi_\beta$ is surjective on $ h^0\Psi^\bullet_{0,S'}(D)$ as it has a right inverse $\varphi_\beta$. Indeed, $\varphi_\beta$ commutes with $\psi_\alpha$ for any $\alpha\in S'$ therefore maps $h^0\Psi^\bullet_{0,S'}(D)=\bigcap_{\alpha\in S'}\Ker(\psi_\alpha)$ into itself.
\end{proof}

The group $\Gamma_\Delta$ is isomorphic to the direct product $C_\Delta\times \Gamma_\Delta^*$ where $C_\Delta$ is a finite group and $\Gamma_\Delta^*\cong \prod_{\alpha\in\Delta}\Gamma_\alpha^*\cong\Zp^\Delta$. In particular, the Iwasawa algebra $E_\Delta^+(\Gamma_\Delta^*):=\Fp\bs \Gamma_\Delta^*\js$ of $\Gamma_\Delta^*$ over $\Fp$ is isomorphic to the power series ring $\Fp\bs Y_\alpha\mid \alpha\in\Delta\js$ where $1+Y_\alpha$ corresponds to a topological generator of the group $\Gamma_\alpha^*\cong \Zp$ for all $\alpha\in\Delta$. So we may form the ring $E_\Delta(\Gamma_\Delta^*):=E_\Delta^+(\Gamma_\Delta^*)[Y_\alpha^{-1}\mid \alpha\in\Delta]$. Finally, we put $E_\Delta(\Gamma_\Delta):=E_\Delta(\Gamma_\Delta^*)\otimes_{E_\Delta^+(\Gamma_\Delta^*)}\Fp\bs \Gamma_\Delta\js$.

\begin{pro}\label{freegammamod}
Let $D$ be an object in $\mathcal{D}^{et}(\varphi_\Delta,\Gamma_\Delta,E_\Delta)$. Then $h^0\Psi^\bullet_0(D)$ is a free $E_\Delta(\Gamma_\Delta)$-module of rank $\rk_{E_\Delta}D$.
\end{pro}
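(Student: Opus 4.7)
The plan is induction on $d=|\Delta|$, with the inductive step reducing via Lemma \ref{productoftwofinitegroups} to an external tensor-product situation on which the classical one-variable theorem applies factor-by-factor. The base case $d=1$ is the classical structure theorem (e.g.\ Théorème II.1.3(i) of \cite{CC2}): for an étale $(\varphi_\alpha,\Gamma_\alpha)$-module $D_\alpha$ over $E_\alpha$, the kernel $D_\alpha^{\psi_\alpha=0}$ is a free $E_\alpha(\Gamma_\alpha)$-module of rank $\rk_{E_\alpha}D_\alpha$, an explicit basis being $\{(1+X_\alpha)\varphi_\alpha(e_i)\}_i$ for any $E_\alpha$-basis $(e_i)_i$ of $D_\alpha$; in particular $\gamma_\alpha^*-1$ acts invertibly on $D_\alpha^{\psi_\alpha=0}$, so the $\Gamma_\alpha$-action really does extend to an $E_\alpha(\Gamma_\alpha)$-action.

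For the inductive step, Lemma \ref{h0psiexact} gives exactness of $D\mapsto h^0\Psi^\bullet_0(D)$ on short exact sequences of objects of $\mathcal{D}^{et}(\varphi_\Delta,\Gamma_\Delta,E_\Delta)$. Since free $E_\Delta(\Gamma_\Delta)$-modules are projective, any short exact sequence with free outer terms splits, forcing the middle term to be free of the sum of the outer ranks --- which matches $\rk_{E_\Delta}$ by additivity. Running through a composition series of $\mathbb{V}(D)$ as a $\GQpD$-representation therefore reduces us to the case that $\mathbb{V}(D)$ is irreducible. Enlarging scalars to a finite Galois extension $\kappa/\Fp$ if needed --- and ensuring at the end a $\Gal(\kappa/\Fp)$-invariant basis that descends to $\Fp$ --- Lemma \ref{productoftwofinitegroups} applied to $\GQpD=G_{\Qp,\alpha}\times\GQpDa$ for a chosen $\alpha\in\Delta$ writes $\mathbb{V}(D)\cong V_\alpha\otimes_\kappa V_{\Delta\setminus\{\alpha\}}$ for irreducible representations $V_\alpha$ and $V_{\Delta\setminus\{\alpha\}}$ of the two factor groups.

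For this tensor-decomposable $V$, I would exhibit a completed tensor product description
\[
D\;\cong\;\mathbb{D}_\alpha(V_\alpha)\,\widehat\otimes_\kappa\,\mathbb{D}_{\Delta\setminus\{\alpha\}}(V_{\Delta\setminus\{\alpha\}})
\]
extending the $D^\#$-construction from the proof of Theorem \ref{iwasawamodp}, together with the matching decomposition
\[
D^{\psi_\Delta=0}\;\cong\;\mathbb{D}_\alpha(V_\alpha)^{\psi_\alpha=0}\,\widehat\otimes_\kappa\,\mathbb{D}_{\Delta\setminus\{\alpha\}}(V_{\Delta\setminus\{\alpha\}})^{\psi_{\Delta\setminus\{\alpha\}}=0}
\]
and the parallel identification $E_\Delta(\Gamma_\Delta)\cong E_\alpha(\Gamma_\alpha)\,\widehat\otimes_{\Fp}\,E_{\Delta\setminus\{\alpha\}}(\Gamma_{\Delta\setminus\{\alpha\}})$ on the Iwasawa-algebra side. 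The base case and the induction hypothesis then give freeness of the two factors over the two factor rings, and a completed tensor product of free modules is free of the product rank, which equals $\rk V_\alpha\cdot\rk V_{\Delta\setminus\{\alpha\}}=\rk_{E_\Delta}D$ by Cor.\ 3.16 of \cite{MultVarGal}.

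The main obstacle is making the completed-tensor-product decomposition of $D^{\psi_\Delta=0}$ rigorous: one must fix compatible topologies on both sides (natural candidates being restrictions of the weak or duality topologies from section \ref{dualphigamma}) and show, via a density argument in the spirit of Proposition \ref{tensordense}, that formation of $\bigcap_\beta\ker\psi_\beta$ commutes with completion in each variable. A more elementary alternative that sidesteps completed tensor products altogether is a direct verification: using the étale decomposition $D=\bigoplus_{(i_\beta)\in\{0,\ldots,p-1\}^\Delta}\prod_\beta(1+X_\beta)^{i_\beta}\varphi_\Delta(D)$, one proposes $\{\prod_\beta(1+X_\beta)\varphi_\Delta(e_i)\}_i$ as the expected $E_\Delta(\Gamma_\Delta)$-basis of $D^{\psi_\Delta=0}$ and reduces the spanning assertion --- via the $C_\Delta$-isotypic decomposition and the $\Gamma_\Delta^*$-action on exponents --- to the rank-one identity $E_\Delta^{\psi_\Delta=0}=E_\Delta(\Gamma_\Delta)\cdot \prod_\beta(1+X_\beta)$, which can be established by a convolution computation paralleling the one-variable case.
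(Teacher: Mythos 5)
Your proposal is correct and follows essentially the same strategy as the paper: reduce to the absolutely irreducible case via the exactness of $h^0\Psi^\bullet_0(\cdot)$ from Lemma~\ref{h0psiexact}, enlarge the residue field so that $\mathbb{V}(D)$ factors as an external tensor product (Lemma~\ref{productoftwofinitegroups}), identify $h^0\Psi^\bullet_0(D)$ as a completed tensor product of the one-variable kernels, and invoke the classical one-variable freeness theorem (the paper cites Cor.\ VI.1.3 of \cite{Mira} rather than \cite{CC2}, but this is the same circle of results). Your framing as an induction on $|\Delta|$ and the explicit projective-splitting argument are just slightly more spelled-out versions of the paper's devissage.
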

\begin{proof}
By passing to a finite extension of $\Fp$ and using Lemma \ref{h0psiexact} we are reduced to the case when $V:=\mathbb{V}_\Delta(D)$ is absolutely irreducible as a representation of $\GQpD$. In this case $V$ is an outer tensor product of representations $V_\alpha$ for $\alpha\in \Delta$. Therefore $D$ is the completed tensor product of $D_\alpha:=\mathbb{D}(V_\alpha)$ ($\alpha\in\Delta$). In particular, $h^0\Psi^\bullet_0(D)$ is the completed tensor product of $h^0\Psi^\bullet_0(D_\alpha)$ ($\alpha\in\Delta$). The result follows from the case $|\Delta|=1$ which is proven in Cor.\ VI.1.3 in \cite{Mira}.
\end{proof}

\begin{pro}\label{higherIwasawatorsion}
Let $V$ be a continuous $\Fp$-representation of $\GQpD$. The Iwasawa cohomology groups $H^i_{Iw}(\GQpD,V)$ are torsion $E_\Delta^+ (\Gamma_\Delta^*)$-modules for all $i>d=|\Delta|$.
\end{pro}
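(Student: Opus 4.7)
\emph{Plan.} The first move is to apply Theorem \ref{herrcomplex+iwasawa}, which identifies $H^i_{Iw}(\GQpD,V)$ with $h^{i-d}\Psi^\bullet(\mathbb{D}(V))$. Writing $D=\mathbb{D}(V)$ and $j=i-d>0$, the task becomes showing that $h^j\Psi^\bullet(D)$ is a torsion $E_\Delta^+(\Gamma_\Delta^*)$-module. A short exact sequence of representations gives rise to a long exact sequence in $h^\bullet\Psi^\bullet$, and being torsion over the domain $E_\Delta^+(\Gamma_\Delta^*)$ is stable under extensions, so I may assume $V$ is irreducible. After passing to a sufficiently large finite extension $\kappa/\Fp$ and invoking Lemma \ref{productoftwofinitegroups}, I may further assume $V$ is absolutely irreducible, hence an outer tensor product $V\cong \bigotimes_{\alpha,\kappa}V_\alpha$. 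Descent from $\kappa$ back to $\Fp$ at the end will use the norm of an annihilator over $\Gal(\kappa/\Fp)$ combined with faithful flatness of $\kappa/\Fp$.

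In the absolutely irreducible case, $D$ is the completed tensor product $\widehat{\bigotimes}_{\alpha,\kappa}D_\alpha$ with $D_\alpha=\mathbb{D}(V_\alpha)$, and Proposition \ref{Dhashquasiiso} identifies $h^j\Psi^\bullet(D)$ with $h^j\Psi^\bullet(D^\#)$ where $D^\#=\widehat{\bigotimes}_\alpha D_\alpha^\#$. As established in the proof of Theorem \ref{iwasawamodp}, the canonical map $\widehat{h^j\Psi^\bullet(D^\#_0)}\to h^j\Psi^\bullet(D^\#)$ is an isomorphism, and by Lemma \ref{tensorcohomsplit} (the K\"unneth formula over $\kappa$)
\[ h^j\Psi^\bullet(D^\#_0)\cong \bigoplus_{S\in\binom{\Delta}{j}}\bigotimes_{\alpha\in\Delta,\kappa}h^{i_\alpha(S)}\Psi^\bullet(D_\alpha^\#). \]
For any $S$ with $|S|=j\geq 1$ and any $\alpha_0\in S$, the relevant factor is $h^1\Psi^\bullet(D_{\alpha_0}^\#)\cong h^1\Psi^\bullet(D_{\alpha_0})=D_{\alpha_0}/(\psi_{\alpha_0}-1)D_{\alpha_0}$. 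Via the classical one-variable case of Theorem \ref{iwasawamodp} combined with Tate duality, this identifies with $H^0(\HQpa,V_{\alpha_0}^\vee(1))^\vee$ and is therefore a finite $\kappa$-vector space; being finite it is annihilated by some power of $Y_{\alpha_0}$, hence torsion as a $\kappa\bs Y_{\alpha_0}\js$-module.

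Since a nonzero annihilator of one factor annihilates the whole tensor product, each summand of $h^j\Psi^\bullet(D^\#_0)$ is killed by some $a_S\neq 0$ in $\kappa\bs Y_{\alpha_0(S)}\js\subset \kappa\bs Y_\alpha\mid\alpha\in\Delta\js$, and the product $\prod_Sa_S$, nonzero in the domain $\kappa\bs Y_\alpha\mid\alpha\in\Delta\js$, kills the finite direct sum. This annihilator persists through the inverse limit defining $\widehat{(\cdot)}$ (multiplication by $a$ is the zero map on every quotient $h^j\Psi^\bullet(D^\#_0)/\mathfrak{m}^nh^j\Psi^\bullet(D^\#_0)$), so $h^j\Psi^\bullet(D^\#)=h^j\Psi^\bullet(D)$ is torsion over $\kappa\otimes_{\Fp}E_\Delta^+(\Gamma_\Delta^*)$. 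Taking the product of $\Gal(\kappa/\Fp)$-conjugates of such an annihilator yields a nonzero element of $E_\Delta^+(\Gamma_\Delta^*)$ killing $h^j\Psi^\bullet(D)\otimes_{\Fp}\kappa$, and faithful flatness of $\kappa/\Fp$ forces the same element to annihilate $h^j\Psi^\bullet(D)$, concluding the proof. The main obstacle is the central K\"unneth-type identification $\widehat{h^j\Psi^\bullet(D^\#_0)}\cong h^j\Psi^\bullet(D^\#)$ and the control of annihilators under completion, but both are available from the machinery already developed for Theorem \ref{iwasawamodp}.
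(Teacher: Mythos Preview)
Your proof is correct and follows essentially the same route as the paper's: reduce to the absolutely irreducible case via the long exact sequence and a finite coefficient extension, use the K\"unneth decomposition of $h^j\Psi^\bullet(D^\#_0)$ from Lemma~\ref{tensorcohomsplit}, and exploit that for $j>0$ every summand contains a factor $h^1\Psi^\bullet(D^\#_{\alpha_0})$ which is finite over $\kappa$ and hence killed by a nonzero element of $\kappa\bs Y_{\alpha_0}\js$.

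There are two minor differences in execution worth noting. First, the paper justifies finiteness of $h^1\Psi^\bullet(D^\#_\alpha)$ by citing Cor.~I.7.4 of \cite{CC2} directly, whereas you deduce it from the one-variable case of Theorem~\ref{iwasawamodp} together with Proposition~\ref{HcohomHerr} (identifying it with $H^0(\HQpa,V_\alpha^\vee(1))^\vee$); your route is internal to the paper and equally valid. Second, to transport the annihilator from $h^j\Psi^\bullet(D^\#_0)$ to $h^j\Psi^\bullet(D^\#)$, the paper uses only the density statement of Proposition~\ref{tensordense} combined with continuity of multiplication, while you invoke the stronger completion isomorphism $\widehat{h^j\Psi^\bullet(D^\#_0)}\cong h^j\Psi^\bullet(D^\#)$ established in the proof of Theorem~\ref{iwasawamodp}. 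Both arguments work; the density-plus-continuity version is marginally more economical. Your explicit descent step from $\kappa$ back to $\Fp$ via the norm of an annihilator is a point the paper leaves implicit.
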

\begin{proof}
By passing to a finite extension of $\Fp$ and using the long exact sequence of the $\Psi^\bullet$ complex of $D:=\mathbb{D}(V)$ together with Thm.\ \ref{iwasawamodp} we are reduced to the case when $V$ is absolutely irreducible as a representation of $\GQpD$. In this case $V$ is an outer tensor product of representations $V_\alpha$ for $\alpha\in \Delta$. By Prop.\ \ref{Dhashquasiiso} and \ref{tensordense} the cohomology groups $h^{i-d}\Psi^\bullet(D^\#_0)$ are dense in $h^{i-d}\Psi^\bullet(D^\#)\cong H^i_{Iw}(\GQpD,V)$. The result follows from the description (Lemma \ref{tensorcohomsplit}) of $h^{i-d}\Psi^\bullet(D^\#_0)$ noting that $h^1\Psi^\bullet(D^\#_\alpha)$ is finite dimensional over $\Fp$ (Cor.\ I.7.4 in \cite{CC2}) hence killed by a nonzero element in $\Fp\bs \Gamma_\alpha^*\js$ for all $\alpha\in\Delta$. Indeed, we find a nonzero element in $\bigotimes_{\alpha\in\Delta}\Fp\bs \Gamma_\alpha^*\js\subset \Fp\bs \Gamma_\Delta^*\js$ annihilating $h^{i-d}\Psi^\bullet(D^\#_0)$ for any $i>d$ and by continuity this element also kills $h^{i-d}\Psi^\bullet(D^\#)$.
\end{proof}

\begin{pro}\label{higherIwasawatorsion2}
Let $V$ be a continuous $\Fp$-representation of $\GQpD$. The cohomology group $h^0\Psi^\bullet(\mathbb{D}(V)^{C_\Delta})$ has rank $\dim_{\Fp}V$ over $E_\Delta^+(\Gamma_\Delta^*)=\Fp\bs \Gamma_\Delta^*\js$.
\end{pro}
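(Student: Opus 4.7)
The plan is to mimic the proof of Proposition \ref{higherIwasawatorsion}, replacing the torsion computation by a computation of generic rank over $S := \Fp\bs\Gamma_\Delta^*\js$.

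\emph{Reduction to the absolutely irreducible case.} By Corollary \ref{CDeltaexact}, the functor $V \mapsto \mathbb{D}(V)^{C_\Delta}$ is exact, so any short exact sequence $0 \to V_1 \to V_2 \to V_3 \to 0$ yields a long exact sequence on $h^*\Psi^\bullet(\mathbb{D}(\cdot)^{C_\Delta})$. The argument of Proposition \ref{higherIwasawatorsion}, adapted to $\mathbb{D}(V)^{C_\Delta}$ in place of $\mathbb{D}(V)$, shows that $h^i\Psi^\bullet(\mathbb{D}(V)^{C_\Delta})$ is $S$-torsion for all $i>0$, while the argument of Corollary \ref{iwasawafingen} applied with $\HQpD^*$ in place of $\HQpD$ shows that $h^0\Psi^\bullet(\mathbb{D}(V)^{C_\Delta})$ is finitely generated as an $S$-module. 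Hence, after tensoring with $\Frac(S)$, the rank of $h^0$ becomes additive on short exact sequences, and, by replacing $\Fp$ with a sufficiently large finite extension $\kappa$, we may assume $V$ is absolutely irreducible over $\kappa$.

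\emph{Tensor decomposition and the three key technical inputs.} For such $V$, Lemma \ref{productoftwofinitegroups} together with induction on $|\Delta|$ produces an outer tensor decomposition $V \cong \bigotimes_{\alpha,\kappa} V_\alpha$, giving $D := \mathbb{D}(V) \cong \widehat{\bigotimes_\alpha} D_\alpha$ with $D_\alpha := \mathbb{D}(V_\alpha)$, and $D^{C_\Delta} \cong \widehat{\bigotimes_\alpha} D_\alpha^{C_\alpha}$ where $C_\alpha := C_\Delta \cap \Gamma_\alpha$. Setting $D_\alpha^{\#,C_\alpha} := (D_\alpha^\#)^{C_\alpha}$, $(D^{C_\Delta})^\# := \widehat{\bigotimes_\alpha} D_\alpha^{\#,C_\alpha}$, and $(D^{C_\Delta})_0^\# := \bigotimes_\alpha D_\alpha^{\#,C_\alpha}$, the proofs of Propositions \ref{Dhashquasiiso}, \ref{tensordense} and Lemma \ref{tensorcohomsplit} go through to yield: (i) a quasi-isomorphism $\Psi^\bullet((D^{C_\Delta})^\#) \to \Psi^\bullet(D^{C_\Delta})$; (ii) a dense inclusion of $h^0\Psi^\bullet((D^{C_\Delta})_0^\#)$ into the compact module $h^0\Psi^\bullet((D^{C_\Delta})^\#)$; and (iii) a K\"unneth decomposition $h^0\Psi^\bullet((D^{C_\Delta})_0^\#) \cong \bigotimes_\alpha h^0\Psi^\bullet(D_\alpha^{\#,C_\alpha})$ over $\bigotimes_\alpha \kappa\bs\Gamma_\alpha^*\js$.

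\emph{Conclusion via the one-variable input.} The classical one-variable result (Th\'eor\`eme II.1.3 of \cite{CC2}, i.e., the $\mathrm{Log}^*$-isomorphism, combined with the freeness of $D_\alpha^{\psi_\alpha=0}$ over the Iwasawa algebra of $\Gamma_\alpha$) shows that $h^0\Psi^\bullet(D_\alpha^{\#,C_\alpha}) = (D_\alpha^{C_\alpha})^{\psi_\alpha=1}$ has rank $\dim_\kappa V_\alpha$ over $\kappa\bs\Gamma_\alpha^*\js$. Since $\widehat{\bigotimes_\alpha}\kappa\bs\Gamma_\alpha^*\js = \kappa\bs\Gamma_\Delta^*\js$, (iii) together with completion produces a dense submodule of generic rank $\prod_\alpha\dim_\kappa V_\alpha = \dim_\kappa V$ inside the finitely generated compact $\kappa\bs\Gamma_\Delta^*\js$-module $h^0\Psi^\bullet((D^{C_\Delta})^\#)$; hence the latter has the same rank, as does $h^0\Psi^\bullet(D^{C_\Delta})$ by (i). Restricting scalars from $\kappa$ back to $\Fp$ yields the claimed rank $\dim_\Fp V$. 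The principal obstacle is step (ii): to conclude that the completion of the uncompleted tensor product has the same generic rank as the full module, one must argue via compactness and the topological Nakayama lemma that a dense submodule of a finitely generated compact $S$-module cannot fail to capture its generic rank, a fact that in turn relies on the $S$-torsion vanishing of the higher $\Psi^\bullet$-cohomologies established above.
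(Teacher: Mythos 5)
You follow the route alluded to in the paper's opening sentence of its proof (``This follows by a similar argument as in the proof of Prop.\ \ref{higherIwasawatorsion}''), whereas the paper deliberately presents a different argument: it considers the map $\prod_{\alpha\in\Delta}(\varphi_\alpha-\id)\colon h^0\Psi^\bullet(D^{C_\Delta})\to h^0\Psi_0^\bullet(D^{C_\Delta})$, observes that the target is free of rank $\dim_{\Fp}V$ over $E_\Delta(\Gamma_\Delta^*)$ by Prop.\ \ref{freegammamod} (where the completed tensor product is manageable because $\psi_\alpha$ has the splitting $\varphi_\alpha$), and then uses Lemma \ref{gammamultiplyintoD++} together with formal invertibility of $\prod(\varphi_\alpha-\id)$ on $D^{++}$ to show kernel and cokernel are $E_\Delta^+(\Gamma_\Delta^*)$-torsion. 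This sidesteps entirely the delicate comparison between the uncompleted tensor product $h^0\Psi^\bullet$-module and its completion that your argument depends on. So the two approaches are genuinely different.

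There is a real gap in the final step of your proposal, and you half-acknowledge it. Density plus compactness plus topological Nakayama does give you one inequality: the image of $M_0:=h^0\Psi^\bullet((D^{C_\Delta})_0^\#)$ in $M:=h^0\Psi^\bullet((D^{C_\Delta})^\#)$ generates $M$ over $S=\Fp\bs\Gamma_\Delta^*\js$, so $M$ is a quotient of $S\otimes_{\bigotimes_\alpha\kappa\bs\Gamma_\alpha^*\js}M_0$ and hence $\rk_S M\leq\dim_\kappa V$. But the reverse inequality is not a formal consequence of $M_0\hookrightarrow M$ being an injective map with dense image: because $R:=\bigotimes_\alpha\kappa\bs\Gamma_\alpha^*\js$ is not Noetherian, you cannot conclude that $S$ is flat over $R$, and the free summand of $M_0$ over $R$ could a priori acquire $S$-linear relations in $M$. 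What the paper actually uses at the analogous point (Step 2 of the proof of Thm.\ \ref{iwasawamodp}) is Pontryagin duality: it identifies $\widehat{h^r\Psi^\bullet(D_0^\#)}$ with $(h^{d-r}\Phi^\bullet(D^*(\TD)))^\vee$ via the perfect one-variable pairing and the behaviour of Pontryagin duals under completed tensor products, and only then deduces $\widehat{M_0}\overset{\sim}{\to}M$. Your appeal to ``the $S$-torsion vanishing of the higher $\Psi^\bullet$-cohomologies'' does not supply this injectivity; it is a statement about $h^{>0}$, not about $h^0$, and I do not see how it enters. To make your route airtight you would need to re-run the duality argument from Thm.\ \ref{iwasawamodp} in the $C_\Delta$-invariant setting (or appeal to it after reducing to $p$ odd where $(\cdot)^{C_\Delta}$ is exact and commutes with the cohomology). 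Separately, your step (i)--(iii) silently assume $(\cdot)^{C_\alpha}$ interacts well with $D_\alpha^\#$ and the completed tensor product; this is clean for $p$ odd but for $p=2$ requires the machinery of section \ref{p=2} (Prop.\ \ref{inducep2}).
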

\begin{proof}
This follows by a similar argument as in the proof of Prop.\ \ref{higherIwasawatorsion}. However, for future applications we include a different proof here resembling the classical $|\Delta|=1$ case. Put $D:=\mathbb{D}(V)$ and consider the map
\begin{equation*}
\prod_{\alpha\in\Delta}(\varphi_\alpha-\id)\colon h^0\Psi^\bullet(D^{C_\Delta})\to  h^0\Psi_0^\bullet(D^{C_\Delta})\ .
\end{equation*}
By Prop.\ \ref{freegammamod} the right hand side is a free module over $E_\Delta(\Gamma_\Delta^*)\cong E_\Delta(\Gamma_\Delta)^{C_\Delta}$ of rank $\dim_{\Fp}V$. We show that both the kernel and cokernel of  $\prod_{\alpha\in\Delta}(\varphi_\alpha-\id)$ are torsion modules over $E_\Delta^+(\Gamma_\Delta^*)$ so it becomes an isomorphism after tensoring with the field of fractions $\operatorname{Frac}(E_\Delta^+(\Gamma_\Delta^*))$ of $E_\Delta^+(\Gamma_\Delta^*)$. However, we have $\operatorname{Frac}(E_\Delta^+(\Gamma_\Delta^*))\otimes_{E_\Delta^+(\Gamma_\Delta^*)}E_\Delta(\Gamma_\Delta^*)\cong \operatorname{Frac}(E_\Delta^+(\Gamma_\Delta^*))$ so this implies the statement. 

As in the classical case \cite{Mira} we define $D^{++}=\{y\in D\mid \lim_{k\to\infty}(\prod_{\alpha\in\Delta}\varphi_\alpha)^k(y)=0\} $ where the limit is considered in the $X_\Delta$-adic topology.

\begin{lem}\label{gammamultiplyintoD++}
For any element $x\in D$ and any choice of topological generators $\gamma_\alpha\in\Gamma_\alpha^*$ there exists an integer $k>0$ such that $\prod_{\alpha\in\Delta}(\gamma_\alpha-1)^k\cdot x$ lies in $D^{++}$.
\end{lem}
\begin{proof}
This is similar to the proof of the classical case $|\Delta|=1$ (see section III.4 in \cite{Mira}): At first note that $x$ lies in $X_\Delta^{-n}D^{++}$ for some $n\geq 0$ by Prop.\ 2.5 in \cite{MultVarGal}. Moreover, the subspace $M:=X_\Delta^{-n-1}D^{++}$ is invariant under the action of $\Gamma_\Delta$. Moreover, for $k\geq p^r\geq n+1$ the element $(\gamma_\alpha-1)^kX_\alpha$ is divisible by $X_\alpha^{p^r}$ in $E_\Delta^+$ by Lemme III.4.1 in \cite{Mira}. Finally, we compute
\begin{align*}
\prod_{\alpha\in\Delta}(\gamma_\alpha-1)^k\cdot x\in \prod_{\alpha\in\Delta}(\gamma_\alpha-1)^k\cdot X_\Delta M= \prod_{\alpha\in\Delta}\left((\gamma_\alpha-1)^kX_\alpha\right)\cdot M\subseteq X_\Delta^{n+1}M\subseteq D^{++}\ .
\end{align*}
\end{proof}
Since the map $\prod_{\alpha\in\Delta}(\varphi_\alpha-\id)$ is formally invertible on $D^{++}$ and $ h^0\Psi^\bullet(D^{C_\Delta})$ is finitely generated over $E_\Delta^+(\Gamma_\Delta^*)$ (Cor.\ \ref{iwasawafingen}), the statement follows from Lemma \ref{gammamultiplyintoD++}.
\end{proof}
\begin{rem}
The above proof also shows that $E_\Delta(\Gamma_\Delta^*)\otimes_{E_\Delta^+(\Gamma_\Delta^*)}h^0\Psi^\bullet(D^{C_\Delta})$ (resp.\ $E_\Delta(\Gamma_\Delta)\otimes_{E_\Delta^+(\Gamma_\Delta)}h^0\Psi^\bullet(D)$) is a free module of rank $\dim_{\Fp}V$ over $E_\Delta(\Gamma_\Delta^*)$ (resp.\ over $E_\Delta(\Gamma_\Delta)$).
\end{rem}

\begin{lem}\label{kerpsigammaacyclic}
Let $D$ be an object in $\mathcal{D}^{et}(\varphi_\Delta,\Gamma_\Delta,E_\Delta)$ and $S\subseteq \Delta$ be any non-empty subset. Then the complex $\Gamma^\bullet(\Ker(\prod_{\alpha\in S}\psi_\alpha\colon D^{C_\Delta}\to D^{C_\Delta}))$ is acyclic.
\end{lem}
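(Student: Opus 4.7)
The plan is to reduce in two structural steps to a classical one-variable input.

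\emph{Reduction to $|S|=1$.} For any enumeration $\alpha_1,\ldots,\alpha_k$ of $S$, I would exploit the \'etale decomposition $D=\bigoplus_{i=0}^{p-1}(1+X_{\alpha_1})^i\varphi_{\alpha_1}(D)$ together with the commutativity of the $\psi$-operators to obtain the $\Gamma_\Delta$-equivariant splitting
\begin{equation*}
\Ker\Bigl(\prod_{j=1}^{k}\psi_{\alpha_j}\Bigr)=\Ker(\psi_{\alpha_1})\oplus\varphi_{\alpha_1}\Bigl(\Ker\Bigl(\prod_{j=2}^{k}\psi_{\alpha_j}\Bigr)\Bigr).
\end{equation*}
Since each $\varphi_{\alpha_j}$ is $\Gamma_\Delta$-equivariant and injective, iterating yields an isomorphism of $\Gamma_\Delta$-modules $\Ker(\prod_{\alpha\in S}\psi_\alpha)\cong\bigoplus_{i=1}^{k}\Ker(\psi_{\alpha_i})$. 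Both $(\cdot)^{C_\Delta}$ (exact by Corollary~\ref{CDeltaexact}) and the Koszul functor $\Gamma_\Delta^\bullet(\cdot)$ commute with finite direct sums, reducing the problem to showing that $\Gamma_\Delta^\bullet(\Ker(\psi_\alpha)^{C_\Delta})$ is acyclic for each individual $\alpha\in\Delta$.

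\emph{D\'evissage to the tensor-product case.} Next I would observe that $D\mapsto\Ker(\psi_\alpha)^{C_\Delta}$ is an exact functor on $\mathcal{D}^{et}(\varphi_\Delta,\Gamma_\Delta,E_\Delta)$: kernelness gives left exactness, the surjectivity of $\psi_\alpha$ (via its splitting $\varphi_\alpha$) gives right exactness, and $(\cdot)^{C_\Delta}$ is exact. The long exact sequence of Koszul cohomology then shows that acyclicity is preserved under extensions, so by d\'evissage we may assume $V:=\mathbb{V}(D)$ is absolutely irreducible. After passing to a sufficiently large finite extension $\kappa/\Fp$ and applying Lemma~\ref{productoftwofinitegroups} inductively on $|\Delta|$ (as in Step~2 of the proof of Theorem~\ref{iwasawamodp}), $V\cong\bigotimes_{\beta\in\Delta,\kappa}V_\beta$ is an outer tensor product, and correspondingly $D$ becomes (after the localization procedure of Proposition~\ref{Dhashquasiiso}) a completed tensor product $D_\alpha\,\widehat{\otimes}\,\widehat{\bigotimes}_{\beta\neq\alpha}D_\beta$ with $D_\beta:=\mathbb{D}(V_\beta)$ classical one-variable, and $\psi_\alpha$ acting only on the $D_\alpha$-factor.

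\emph{Classical input and K\"unneth.} Under this decomposition
\begin{equation*}
\Ker(\psi_\alpha)^{C_\Delta}\;\cong\;\Ker(\psi_\alpha\colon D_\alpha\to D_\alpha)^{C_\alpha}\,\widehat{\otimes}\,\widehat{\bigotimes}_{\beta\neq\alpha}D_\beta^{C_\beta},
\end{equation*}
and the multivariable Koszul complex $\Gamma_\Delta^\bullet$ is the total complex of the tensor product of the individual $\Gamma_\beta^\bullet$. Proposition~\ref{freegammamod} applied in the case $|\Delta|=1$ tells us that $\Ker(\psi_\alpha\colon D_\alpha\to D_\alpha)$ is a free module over $E_\alpha(\Gamma_\alpha)=\Fp\bg Y_\alpha\jg[C_\alpha]$, a ring in which $Y_\alpha$ (corresponding to $\gamma_\alpha-1$) is invertible. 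Taking $C_\alpha$-invariants, $\gamma_\alpha-1$ then acts invertibly on $\Ker(\psi_\alpha)^{C_\alpha}$, so $\Gamma_\alpha^\bullet(\Ker(\psi_\alpha)^{C_\alpha})$ is acyclic, and by K\"unneth the full complex $\Gamma_\Delta^\bullet(\Ker(\psi_\alpha)^{C_\Delta})$ is acyclic as well.

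The main obstacle will be the last step: one must verify that the inverse of $\gamma_\alpha-1$ on the one-variable $\Ker(\psi_\alpha)^{C_\alpha}$ extends continuously through the completed tensor product so that $(\gamma_\alpha-1)\,\widehat{\otimes}\,\id$ is an honest topological inverse on the multivariable $\Ker(\psi_\alpha)^{C_\Delta}$. This should be ensured by the compactness statements for $D^\#$ (Proposition~\ref{Dhashquasiiso}) together with the density arguments of Proposition~\ref{tensordense}.
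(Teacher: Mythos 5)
Your first reduction---from general $S$ to $|S|=1$ via the \'etale splitting along $\varphi_{\alpha_1}$---is correct and coincides with the paper's argument (the paper packages it as a split short exact sequence
\begin{equation*}
0\longrightarrow D^{C_\Delta,\psi_\alpha=0} \longrightarrow D^{C_\Delta,\psi_S=0} \overset{\psi_\alpha}{\longrightarrow} D^{C_\Delta,\psi_{S\setminus\{\alpha\}}=0}\longrightarrow 0
\end{equation*}
with section $\varphi_\alpha$, which is the same observation). Everything after that diverges sharply from the paper, and the route you choose has a genuine gap.

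The gap is the one you flag yourself: after d\'evissage to $V\cong\bigotimes_{\beta,\kappa}V_\beta$ you need the identification
\begin{equation*}
\Ker(\psi_\alpha\colon D\to D)^{C_\Delta}\;\cong\;\Ker(\psi_\alpha\colon D_\alpha\to D_\alpha)^{C_\alpha}\,\widehat{\otimes}\,\widehat{\bigotimes}_{\beta\neq\alpha}D_\beta^{C_\beta}
\end{equation*}
and then a K\"unneth isomorphism for $\Gamma_\Delta^\bullet$ compatible with the inverse of $\gamma_\alpha-1$. Neither is established. In fact the identification is not literally true as written: $D=D^\#[X_\Delta^{-1}]$ where $D^\#$ is the completed tensor product of the one-variable lattices $D_\beta^\#$, so $\Ker(\psi_\alpha\colon D\to D)$ contains elements with arbitrary negative powers of every $X_\beta$ and is not itself a completed tensor product in any obvious way. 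Proposition~\ref{Dhashquasiiso} gives a quasi-isomorphism of $\Psi^\bullet$-complexes, not a description of the kernel of a single $\psi_\alpha$ as a tensor product, and Proposition~\ref{tensordense} gives density, not a topological isomorphism that would carry the one-variable inverse of $\gamma_\alpha-1$ across the completed tensor product. So the ``main obstacle'' you name is not just a technicality to be smoothed over; it is where your plan actually stops.

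The key observation you are missing is that Proposition~\ref{freegammamod} is already a \emph{multivariable} statement: applied directly to $D$ over $E_\Delta$ (not merely in the $|\Delta|=1$ case), it tells you that $h^0\Psi_0^\bullet(D)=\bigcap_{\beta\in\Delta}\Ker(\psi_\beta)$ is a free $E_\Delta(\Gamma_\Delta)$-module, and in $E_\Delta(\Gamma_\Delta)$ the element $\gamma_\alpha-1$ is invertible. So $\gamma_\alpha-1$ is bijective on $\bigcap_\beta\Ker(\psi_\beta)^{C_\Delta}$ with no d\'evissage, no tensor decomposition and no K\"unneth. What remains is to realize $D^{C_\Delta,\psi_\alpha=0}$ as a direct summand of $\bigcap_\beta\Ker(\psi_\beta)^{C_\Delta}$ \emph{as a $\Zp\bs\gamma_\alpha-1\js$-module}, which the paper does explicitly with the map $\prod_{\beta\ne\alpha}\bigl(\sum_{\gamma_\beta\in C_\beta}\gamma_\beta(1+X_\beta)\varphi_\beta\bigr)$ and its left inverse $\prod_{\beta\ne\alpha}\psi_\beta\circ(1+X_\beta)^{-1}$; these commute with $\gamma_\alpha$, so bijectivity of $\gamma_\alpha-1$ passes to the summand, and the Koszul complex $\Gamma_\Delta^\bullet$ is then acyclic because one of the Koszul operators is invertible. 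This is substantially shorter and avoids all the analytic delicacies your last step would have required.
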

\begin{proof}
Put $D^{C_\Delta,\psi_S=0}:=\Ker(\prod_{\alpha\in S}\psi_\alpha\colon D^{C_\Delta}\to D^{C_\Delta})$ for simplicity. For any $\alpha\in S$ we have a short exact sequence
\begin{equation*}
0\longrightarrow D^{C_\Delta,\psi_\alpha=0} \longrightarrow D^{C_\Delta,\psi_S=0} \overset{\psi_\alpha}{\longrightarrow} D^{C_\Delta,\psi_{S\setminus\{\alpha\}}=0}\longrightarrow 0
\end{equation*}
having a splitting $\varphi_\alpha\colon D^{C_\Delta,\psi_{S\setminus\{\alpha\}}=0}\hookrightarrow D^{C_\Delta,\psi_S=0}$. So we are reduced to the case $S=\{\alpha\}$ by induction. However, the map $\gamma_\alpha-1$ is bijective on $D^{C_\Delta,\psi_\alpha=0}$ by \ref{freegammamod} since we have an embedding
\begin{equation*}
\prod_{\beta\in\Delta\setminus\{\alpha\}}\left(\sum_{\gamma_\beta\in C_\beta}\gamma_\beta(1+X_\beta)\varphi_\beta\right)\colon D^{C_\Delta,\psi_\alpha=0}\hookrightarrow \bigcap_{\beta\in \Delta}\Ker(\psi_\beta)^{C_\Delta}
\end{equation*}
with left-inverse $\prod_{\beta\in\Delta\setminus\{\alpha\}}\psi_\beta\circ(1+X_\beta)^{-1}$ making $D^{C_\Delta,\psi_\alpha=0}$ a direct summand in $\bigcap\limits_{\beta\in \Delta}\Ker(\psi_\beta)^{C_\Delta}$ as a $\Zp\bs\gamma_\alpha-1\js$-module.
\end{proof}

For an \'etale $(\varphi_\Delta,\Gamma_\Delta)$-module over any of the rings $E_\Delta$, $\OED$, or $\mathcal{E}_\Delta$, we denote by $\Psi\Gamma^\bullet(D)$ the total complex of the double complex $\Gamma^\bullet(\Psi^\bullet(D)^{C_\Delta})$.

\begin{thm}\label{psigamma=phigamma}
Let $D$ be an object in $\mathcal{D}^{et}(\varphi_\Delta,\Gamma_\Delta,E_\Delta)$. Then the complex $\Psi\Gamma^\bullet(D)$ is quasi-isomorphic to $\Phi\Gamma^\bullet(D)$. In particular, both compute the Galois cohomology groups $H^\bullet(\GQpD,\mathbb{V}(D))$.
\end{thm}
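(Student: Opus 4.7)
The plan is to construct a natural chain map $f\colon \Phi\Gamma^\bullet(D) \to \Psi\Gamma^\bullet(D)$ and show that its mapping cone is acyclic via Lemma~\ref{kerpsigammaacyclic}; the final assertion on Galois cohomology then follows immediately from Theorem~\ref{herrcomplexmodp}.

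First I would define $f$ componentwise on the Koszul parts: on the direct summand of $\Phi^{|S|}(D^{C_\Delta})$ indexed by $S\subseteq \Delta$, set $f$ equal to $\pm\prod_{\alpha\in S}(-\psi_\alpha)$, landing in the $S$-summand of $\Psi^{|S|}(D^{C_\Delta})$, and extend by the identity along the $\Gamma$-direction. The one-variable identity $(-\psi_\alpha)\circ(1-\varphi_\alpha)=1-\psi_\alpha$ (which is merely $\psi_\alpha\varphi_\alpha=\id$), together with the commutation of $\psi_\alpha$ with $\varphi_\beta$, $\psi_\beta$, and $\gamma_\beta$ for $\beta\neq\alpha$, makes $f$ a chain map once the sign conventions $\varepsilon$ (appearing in the $\Phi$-differentials) and $\eta$ (in the $\Psi$-differentials) are reconciled by a suitable choice of signs in the factors $\pm$ above.

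Next I would analyse $\mathrm{Cone}(f)$. Since $\{1-\varphi_\alpha\}$ and $\{1-\psi_\alpha\}$ are each commuting families, both $\Phi^\bullet(D^{C_\Delta})$ and $\Psi^\bullet(D^{C_\Delta})$ are Koszul complexes whose underlying structure is a tensor product of the two-term complexes $(D^{C_\Delta}\xrightarrow{1-\varphi_\alpha} D^{C_\Delta})$, respectively $(D^{C_\Delta}\xrightarrow{1-\psi_\alpha} D^{C_\Delta})$, and $f$ is the tensor product of the corresponding one-variable chain maps $(\id,-\psi_\alpha)$. A direct calculation, using the \'etale decomposition $D=\varphi_\alpha(D)\oplus\Ker(\psi_\alpha)$, shows that each one-variable mapping cone is quasi-isomorphic to $\Ker(\psi_\alpha)^{C_\Delta}$ placed in a single degree. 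Applying the octahedral axiom iteratively, \emph{``swapping one variable at a time,''} one endows $\mathrm{Cone}(f)$ with a finite filtration whose associated graded pieces are, up to shift, of the form
\begin{equation*}
\Ker(\psi_{\alpha_0})^{C_\Delta}\otimes\bigl(\text{Koszul in }(1-\varphi_\beta)_{\beta\in T_1}\bigr)\otimes\bigl(\text{Koszul in }(1-\psi_\beta)_{\beta\in T_2}\bigr)
\end{equation*}
for some $\alpha_0\in\Delta$ and a partition $\Delta\setminus\{\alpha_0\}=T_1\sqcup T_2$.

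Finally, applying $\Gamma^\bullet$ to such a graded piece produces a multi-complex whose every column in the $\varphi$- and $\psi$-directions is a copy of $\Gamma^\bullet(\Ker(\psi_{\alpha_0})^{C_\Delta})$, which is acyclic by Lemma~\ref{kerpsigammaacyclic} applied to $S=\{\alpha_0\}$. A standard spectral sequence argument then shows that $\Gamma^\bullet(\mathrm{Cone}(f))$---which is, by construction, the mapping cone of the induced morphism $\Phi\Gamma^\bullet(D)\to \Psi\Gamma^\bullet(D)$---is acyclic. Hence $f$ is a quasi-isomorphism. The principal obstacle is the sign bookkeeping in the definition of $f$ and the careful set-up of the iterated-cone filtration on $\mathrm{Cone}(f)$; conceptually everything reduces to the one-variable cone computation together with Lemma~\ref{kerpsigammaacyclic}, but the combinatorial details require attention.
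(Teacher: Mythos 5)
Your proposal is correct, and the chain map you define is the same one the paper uses (denoted $\psi^\bullet$), so the final invocation of Lemma~\ref{kerpsigammaacyclic} is shared; where you differ is in how the ``discrepancy'' between $\Phi\Gamma^\bullet(D)$ and $\Psi\Gamma^\bullet(D)$ is organised. The paper observes directly that $\psi^\bullet$ is \emph{surjective} in each degree (each $\psi_\alpha$ has the right inverse $\varphi_\alpha$), so there is an honest short exact sequence $0\to\Ker(\psi^\bullet)\to\Phi^\bullet(D)^{C_\Delta}\to\Psi^\bullet(D)^{C_\Delta}\to 0$ with $\Ker(\psi^\bullet)^r=\bigoplus_{\emptyset\neq S,\,|S|=r}\Ker(\prod_{\alpha\in S}\psi_\alpha)^{C_\Delta}$; one then applies $\Gamma^\bullet$, uses the (second) spectral sequence of the double complex $\Gamma^\bullet(\Ker(\psi^\bullet))$, and invokes Lemma~\ref{kerpsigammaacyclic} for \emph{every} non-empty $S$ to kill the columns. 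You instead analyse $\mathrm{Cone}(f)$ by the octahedral axiom, swapping $\varphi_\alpha\to\psi_\alpha$ one variable at a time; this produces a Postnikov-type decomposition whose graded pieces are Koszul complexes over $\Ker(\psi_{\alpha_0})^{C_\Delta}$, and you only need Lemma~\ref{kerpsigammaacyclic} in the singleton case $S=\{\alpha_0\}$. The two are mathematically equivalent: the paper's proof of Lemma~\ref{kerpsigammaacyclic} itself reduces general $S$ to singletons by exactly the $\varphi_\alpha$-splitting of the étale decomposition $D=\varphi_\alpha(D)\oplus\Ker(\psi_\alpha)$ that you use for the one-variable cone. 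The paper's route is cleaner because the kernel of a surjection is a genuine subcomplex (no need to pass to quasi-isomorphisms or worry that the filtration only lives in the derived category), and because the set-indexed direct sum decomposition of $\Ker(\psi^\bullet)$ is immediate; your route is more technical (octahedral bookkeeping, sign reconciliation between $\varepsilon$ and $\eta$, verifying that the ``tensor-product of two-term complexes'' picture of the Koszul multicomplex interacts correctly with the cone) but has the mild advantage of isolating the singleton case of Lemma~\ref{kerpsigammaacyclic} as the sole input. One point you should make explicit in a full write-up: the graded pieces you describe are only quasi-isomorphic, not literally equal, to $\Ker(\psi_{\alpha_0})^{C_\Delta}\otimes(\text{Koszul})$; this is harmless because $\Gamma^\bullet$ is a termwise-additive exact functor and therefore preserves quasi-isomorphisms, but the argument should say so.
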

\begin{proof}
Consider the morphism
\begin{align*}
\psi^\bullet\colon \Phi^\bullet(D)^{C_\Delta}\to \Psi^\bullet(D)^{C_\Delta}
\end{align*}
of cochain complexes that is given by $(-1)^{\varepsilon(S)}\prod_{\alpha\in S}\psi_\alpha$ on the copy of $D$ corresponding to a subset $S\subseteq \Delta$ with $|S|=r$ in $\Phi^r(D)^{C_\Delta}$ mapping onto the copy of $D$ corresponding to $S$ in $\Psi^r(D)^{C_\Delta}$. Here $\varepsilon(S)$ is the sum of the indices of elements of $S$ when the set $\Delta$ is index by the numbers $1,\dots,|\Delta|$ (ie.\ after choosing a total ordering of $\Delta$ on which both cochain complexes depend). This is surjective in each degree therefore by the long exact sequence corresponding to the short exact sequence $0\to \Ker(\psi^\bullet)\to \Phi^\bullet(D)^{C_\Delta}\to \Psi^\bullet(D)^{C_\Delta}\to 0$ we are reduced to showing that the total complex of the double complex $\Gamma^\bullet(\Ker(\psi^\bullet))$ is acyclic. This follows using the (second) spectral sequence of the double complex since the columns of the double complex are acyclic by Lemma \ref{kerpsigammaacyclic}.
\end{proof}

\begin{lem}\label{gammahomol}
For any finitely generated $E_\Delta^+(\Gamma_\Delta^*)$-module $M$ we have $\sum_{j=0}^d(-1)^j\dim_{\Fp} h^j\Gamma^\bullet(M)=\rk_{E_\Delta^+(\Gamma_\Delta^*)}M$.
\end{lem}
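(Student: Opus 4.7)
I would identify $R := E_\Delta^+(\Gamma_\Delta^*)$ with the power series ring $\Fp\bs Y_\alpha \mid \alpha\in\Delta\js$ via the topological generators $\gamma_\alpha \leftrightarrow 1+Y_\alpha$, under which each operator $\id - \gamma_\alpha$ acts on any $R$-module as multiplication by $-Y_\alpha$. Then $\Gamma^\bullet(M)$ is, up to signs which do not affect cohomology, the Koszul cochain complex of $M$ with respect to the regular system of parameters $(Y_\alpha)_{\alpha\in\Delta}$ of $R$. Since the Koszul complex $K^\bullet(\underline Y;R)$ is a finite free resolution of $\Fp = R/\mathfrak m$ over $R$, this yields isomorphisms $h^j\Gamma^\bullet(M)\cong \Tor^R_{d-j}(M,\Fp)$, each finite-dimensional over $\Fp$. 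In particular the alternating sum on the left-hand side is well-defined.

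I would then argue by induction on $d=|\Delta|$. The base case $d=0$ is immediate: $R=\Fp$, $\Gamma^\bullet(M)=M$ sits in degree zero, and both sides equal $\dim_{\Fp}M$. For the inductive step, fix $\alpha\in\Delta$, put $\Delta':=\Delta\setminus\{\alpha\}$ and $R':=R/(Y_\alpha)\cong \Fp\bs Y_\beta\mid \beta\in\Delta'\js$. The complex $\Gamma^\bullet_\Delta(M)$ is the total complex of the double complex obtained by applying the two-term complex $[\,\cdot\,\xrightarrow{-Y_\alpha}\,\cdot\,]$ to each term of $\Gamma^\bullet_{\Delta'}(M)$. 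The first spectral sequence of this double complex has
\[
E_2^{p,0}=h^p\Gamma^\bullet_{\Delta'}(M[Y_\alpha]),\qquad E_2^{p,1}=h^p\Gamma^\bullet_{\Delta'}(M/Y_\alpha M),
\]
with $M[Y_\alpha]$ and $M/Y_\alpha M$ finitely generated $R'$-modules. Since every term in sight is finite-dimensional over $\Fp$, the Euler characteristic is preserved along the spectral sequence, and the inductive hypothesis gives
\[
\sum_{j}(-1)^j\dim_{\Fp}h^j\Gamma^\bullet_\Delta(M)\;=\;\rk_{R'}(M[Y_\alpha])-\rk_{R'}(M/Y_\alpha M),
\]
up to the overall sign bookkeeping of the convention.

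The main obstacle will be the ``rank balance'' identity
\[
\rk_{R'}(M/Y_\alpha M)-\rk_{R'}(M[Y_\alpha])\;=\;\rk_R(M),
\]
which reduces the right-hand side above to $\rk_R M$ and closes the induction. To establish it I would localize $M$ at the height one prime $\mathfrak p := (Y_\alpha) \subset R$: since $R_{\mathfrak p}$ is a discrete valuation ring with uniformizer $Y_\alpha$ and residue field $\Frac(R')$, the structure theorem for finitely generated modules over a DVR decomposes $M_{\mathfrak p}\cong R_{\mathfrak p}^{\rk_R M}\oplus\bigoplus_i R_{\mathfrak p}/(Y_\alpha^{n_i})$. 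Each cyclic torsion summand contributes one dimension to both $\rk_{R'}(M[Y_\alpha])$ and $\rk_{R'}(M/Y_\alpha M)$ and therefore cancels, while the free part contributes exactly $\rk_R M$ to $\rk_{R'}(M/Y_\alpha M)$ and nothing to $\rk_{R'}(M[Y_\alpha])$, giving the identity.
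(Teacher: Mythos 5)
Your argument is correct, modulo the overall $(-1)^d$ that you yourself flag: for $M$ free the Koszul cochain complex $\Gamma^\bullet(M)$ has cohomology concentrated in the top degree $d$, so the alternating sum is $(-1)^d\rk M$; this sign slip is in the lemma as stated, not in your reasoning. Where you differ from the paper is in strategy, and the difference is genuine. The paper's proof is two lines: for free $M$ the complex $\Gamma^\bullet(M)$ is exact away from a single degree and the count is immediate, and both $M\mapsto\sum_j(-1)^j\dim_{\Fp}h^j\Gamma^\bullet(M)$ and $M\mapsto\rk M$ are additive on short exact sequences, so one passes to general $M$ by taking a finite free resolution over the regular local ring $R=E_\Delta^+(\Gamma_\Delta^*)$. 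Your opening identification $h^j\Gamma^\bullet(M)\cong\Tor^R_{d-j}(M,\Fp)$ already recovers that proof in one stroke, since $\sum_i(-1)^i\dim_{\Fp}\Tor^R_i(M,\Fp)=\rk_R M$ by exactly the same free-resolution argument; you only use the identification for finiteness, however, and instead run an induction on $|\Delta|$, peeling off one Koszul differential $\gamma_\alpha-1$ via the double-complex spectral sequence, and close with the ``rank balance'' identity $\rk_{R'}(M/Y_\alpha M)-\rk_{R'}(M[Y_\alpha])=\rk_R M$, which you prove by localizing at the height-one prime $(Y_\alpha)$ and invoking the structure theorem over the DVR $R_{(Y_\alpha)}$. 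The paper's route is shorter and leans (implicitly) on the finite global dimension of $R$; yours is more elementary in that it ultimately reduces everything to the DVR case, at the cost of more bookkeeping and the sign ambiguity you note. Both are valid, and the ``peel off one variable'' double-complex reduction you use is very much in the spirit of several other inductive arguments in the paper, so the proof reads naturally even though it is not the one the authors give.
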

\begin{proof}
Whenever $M$ is a free module, the statement is clear as $\Gamma^\bullet(M)$ is acyclic in nonzero degrees in this case. The general case follows from the long exact sequence of $\Gamma^\bullet$ using a free resolution of $M$.
\end{proof}

\begin{thm}\label{torsionEulerPoincare}
Let $A$ be a finite $p$-primary abelian group together with a continuous action of $\GQpD$. Then we have $\chi_{G_{\Qp}}(A)=|A|$.
\end{thm}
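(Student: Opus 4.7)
The plan is to first reduce by devissage to the case where $A=V$ is a finite-dimensional $\Fp$-vector space, and then to compute $\chi_{\GQpD}(V)$ (the statement is stated with $\chi_{G_{\Qp}}$ but context makes clear that $\chi_{\GQpD}$ is intended) by exploiting the quasi-isomorphism $\Psi\Gamma^\bullet(\mathbb{D}(V))\simeq\Phi\Gamma^\bullet(\mathbb{D}(V))$ of Theorem \ref{psigamma=phigamma} combined with the two finiteness results Proposition \ref{higherIwasawatorsion} and Proposition \ref{higherIwasawatorsion2} on the cohomology of $\Psi^\bullet$. For the devissage step, given any short exact sequence $0\to A'\to A\to A''\to 0$ of finite $p$-primary $\GQpD$-modules, the long exact cohomology sequence yields $\chi_{\GQpD}(A)=\chi_{\GQpD}(A')\cdot\chi_{\GQpD}(A'')$, while $|A|=|A'|\cdot|A''|$ is trivial. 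Induction on the $p$-exponent of $A$ therefore reduces to $pA=0$.

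So suppose $A=V$ is an $\Fp$-representation of $\GQpD$ and put $D:=\mathbb{D}(V)$. Theorem \ref{psigamma=phigamma} identifies $\Psi\Gamma^\bullet(D)$, the total complex of the double complex $\Gamma^\bullet(\Psi^\bullet(D^{C_\Delta}))$, with a complex computing $H^\bullet(\GQpD,V)$. Consider the ``column-first'' hypercohomology spectral sequence
\[
E_2^{p,q}=h^p\Gamma^\bullet\bigl(h^q\Psi^\bullet(D^{C_\Delta})\bigr)\Rightarrow H^{p+q}(\GQpD,V).
\]
Each module $h^q\Psi^\bullet(D^{C_\Delta})=h^q\Psi^\bullet(D)^{C_\Delta}$ (using exactness of $C_\Delta$-invariants from Corollary \ref{CDeltaexact}) is finitely generated over $E_\Delta^+(\Gamma_\Delta^*)$ by Theorem \ref{herrcomplex+iwasawa} and Corollary \ref{iwasawafingen}. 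Hence Lemma \ref{gammahomol} applies and gives for each fixed $q$ both the finiteness of the $E_2^{p,q}$ and the row-wise identity
\[
\sum_p(-1)^p\dim_{\Fp}E_2^{p,q}=\rk_{E_\Delta^+(\Gamma_\Delta^*)}h^q\Psi^\bullet(D^{C_\Delta}).
\]

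Since $E_2^{p,q}$ vanishes outside the finite range $0\le p,q\le d$ and is finite-dimensional everywhere, the alternating sum of dimensions is preserved from $E_2$ to $E_\infty$, so
\[
\sum_n(-1)^n\dim_{\Fp}H^n(\GQpD,V)=\sum_q(-1)^q\rk_{E_\Delta^+(\Gamma_\Delta^*)}h^q\Psi^\bullet(D^{C_\Delta}).
\]
Only the term $q=0$ contributes: by Proposition \ref{higherIwasawatorsion2} the $q=0$ rank equals $\dim_{\Fp}V$, while for $q>0$ Proposition \ref{higherIwasawatorsion} combined with Theorem \ref{iwasawamodp} shows $h^q\Psi^\bullet(D)$ is $E_\Delta^+(\Gamma_\Delta^*)$-torsion, hence so is its $C_\Delta$-invariant submodule. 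We conclude $\sum_n(-1)^n\dim_{\Fp}H^n(\GQpD,V)=\dim_{\Fp}V$, and exponentiating gives $\chi_{\GQpD}(V)=p^{\dim_{\Fp}V}=|V|$.

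The main obstacle is the legitimacy of collapsing the spectral sequence argument to an $E_2$ Euler characteristic identity: this requires finite-dimensional $E_2$-entries, which is not automatic since $h^q\Psi^\bullet(D^{C_\Delta})$ is typically infinite over $\Fp$. Lemma \ref{gammahomol} supplies exactly what is needed, by asserting that applying $\Gamma^\bullet$ to a finitely generated $E_\Delta^+(\Gamma_\Delta^*)$-module produces finite-dimensional cohomology groups with Euler characteristic equal to the $E_\Delta^+(\Gamma_\Delta^*)$-rank. The treatment of $p=2$ is built into Corollary \ref{CDeltaexact} and the statements of Propositions \ref{higherIwasawatorsion} and \ref{higherIwasawatorsion2}, so no additional work is required there.
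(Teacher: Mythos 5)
Your proof takes essentially the same route as the paper: reduce by d\'evissage to $pA=0$, pass to $\Psi\Gamma^\bullet(\mathbb{D}(A))$ via Theorem \ref{psigamma=phigamma}, compute the Euler characteristic through the first spectral sequence of the double complex $\Gamma^\bullet(\Psi^\bullet(D)^{C_\Delta})$, and invoke Lemma \ref{gammahomol} together with Propositions \ref{higherIwasawatorsion} and \ref{higherIwasawatorsion2} to collapse the sum to $\dim_{\Fp}A$. Your explicit observation that Lemma \ref{gammahomol} is what guarantees finite-dimensional $E_2$-entries (so that the alternating sum is stable from $E_2$ to $E_\infty$) correctly surfaces a point the paper treats tersely. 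One small imprecision: the identification $h^q\Psi^\bullet(D^{C_\Delta})\cong (h^q\Psi^\bullet(D))^{C_\Delta}$ --- needed so that Propositions \ref{higherIwasawatorsion} and \ref{higherIwasawatorsion2}, which concern $h^q\Psi^\bullet(D)$, can be applied --- is not literally Corollary \ref{CDeltaexact}; the correct justification is that $D$ is free as an $\Fp[C_\Delta]$-module (for $p$ odd this is because $|C_\Delta|$ is prime to $p$; for $p=2$ it follows from Proposition \ref{inducep2}), so the terms of $\Psi^\bullet(D)$ are $C_\Delta$-cohomologically trivial and taking $C_\Delta$-invariants commutes with passage to cohomology. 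The paper implicitly relies on the same fact.
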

\begin{proof}
By the long exact sequence of group cohomology we may assume without loss of generality that $pA=0$. Put $D:=\mathbb{D}(A)$. By Thm.\ \ref{psigamma=phigamma} we have $$\chi_{G_{\Qp}}(A)=p^{\sum_{i=0}^{2|\Delta|}(-1)^i\dim_{\Fp}h^i\Psi\Gamma^\bullet(D)}\ .$$ By the first $E_1$ spectral sequence of the double complex defining $\Psi\Gamma^\bullet(D)$ we compute
\begin{align*}
\sum_{i=0}^{2|\Delta|}(-1)^i\dim_{\Fp}h^i\Psi\Gamma^\bullet(D)=\sum_{i=0}^d\sum_{j=0}^d(-1)^{i+j}\dim_{\Fp}h^j\Gamma^\bullet(h^i\Psi^\bullet(D)^{C_\Delta})=\\
=\sum_{i=0}^d(-1)^i\rk_{E_\Delta^+(\Gamma_\Delta^*)}h^i\Psi^\bullet(D)^{C_\Delta}=\rk_{E_\Delta^+(\Gamma_\Delta^*)}h^0\Psi^\bullet(D)^{C_\Delta}=\dim_{\Fp}A
\end{align*}
using Prop.\ \ref{higherIwasawatorsion} and \ref{higherIwasawatorsion2}, and Lemma \ref{gammahomol}.
\end{proof}

\begin{lem}\label{pntor}
Let $M$ be a finitely generated $\Zp$-module of (generic) rank $r$. Then for any $n\geq 1$ we have $p^{nr}=\frac{|M\otimes_{\Zp}(\Zp/p^n)|}{|\Tor_1^{\Zp}(M,\Zp/p^n)|}$.
\end{lem}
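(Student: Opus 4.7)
The plan is to compute both $|M\otimes_{\Zp}\Zp/p^n|$ and $|\Tor_1^{\Zp}(M,\Zp/p^n)|$ directly by tensoring $M$ with the short exact sequence $0\to \Zp\overset{p^n}{\to}\Zp\to \Zp/p^n\to 0$. This yields the four-term exact sequence
\begin{equation*}
0\to \Tor_1^{\Zp}(M,\Zp/p^n)\to M\overset{p^n}{\to} M\to M\otimes_{\Zp}\Zp/p^n\to 0\ ,
\end{equation*}
which identifies $\Tor_1^{\Zp}(M,\Zp/p^n)\cong M[p^n]$ and $M\otimes_{\Zp}\Zp/p^n\cong M/p^nM$.

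Next I would invoke the structure theorem for finitely generated modules over the PID $\Zp$ to write $M\cong \Zp^r\oplus T$ where $T$ is a finite $p$-primary torsion group. Since $\Zp^r$ is torsion-free, $M[p^n]=T[p^n]$; and since tensor product commutes with direct sums, $M/p^nM\cong (\Zp/p^n)^r\oplus T/p^nT$, which has cardinality $p^{nr}\cdot |T/p^nT|$.

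The remaining point is the equality $|T[p^n]|=|T/p^nT|$ for the finite abelian $p$-group $T$. This follows at once from the multiplication-by-$p^n$ endomorphism of $T$: its kernel is $T[p^n]$, its cokernel is $T/p^nT$, and both are related to the image $p^nT$ by $|T|=|T[p^n]|\cdot|p^nT|=|T/p^nT|\cdot|p^nT|$. Dividing $|M/p^nM|=p^{nr}\cdot|T/p^nT|$ by $|M[p^n]|=|T[p^n]|=|T/p^nT|$ yields the desired $p^{nr}$.

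No step here is really an obstacle; the only mild subtlety is the identification $\Tor_1^{\Zp}(M,\Zp/p^n)\cong M[p^n]$, but this is immediate from the flat resolution $0\to \Zp\overset{p^n}{\to}\Zp\to \Zp/p^n\to 0$. The lemma is thus essentially a bookkeeping exercise using the structure theorem.
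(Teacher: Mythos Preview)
Your proof is correct and follows essentially the same approach as the paper: both use the structure theorem for finitely generated $\Zp$-modules together with the identifications $M\otimes_{\Zp}\Zp/p^n\cong M/p^nM$ and $\Tor_1^{\Zp}(M,\Zp/p^n)\cong M[p^n]$ coming from the resolution $0\to\Zp\overset{p^n}{\to}\Zp\to\Zp/p^n\to 0$. The only organizational difference is that the paper reduces to cyclic modules $\Zp$ and $\Zp/p^k$ and checks each case, whereas you split $M=\Zp^r\oplus T$ and dispatch the torsion part in one stroke via the kernel--cokernel count for multiplication by $p^n$ on the finite group $T$; this is a cosmetic variation of the same argument.
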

\begin{proof}
This is classical but for the convenience of the reader we include a proof. By the theorem of elementary divisors we may assume $M\cong \Zp/p^k$ or $M\cong\Zp$ is cyclic. The lemma follows directly using a projective resolution of $M$: we have $M\otimes_{\Zp}(\Zp/p^n)\cong M/p^nM$ and $\Tor_1^{\Zp}(M,\Zp/p^n)\cong M[p^n]$.
\end{proof}

\begin{cor}\label{padiceulerpoincare}
Let $T$ (resp.\ $V$) be a finitely generated $\Zp$-module (resp.\ finite dimensional $\Qp$-vectorspace)  with a continuous action of $\GQpD$. Then we have
\begin{align*}
\chi_{\GQpD}(T)=\rk_{\Zp}T\ ;\quad \chi_{\GQpD}(V)=\dim_{\Qp}V
\end{align*}
for the Euler-Poincar\'e characteristic of $T$ (resp.\ of $V$).
\end{cor}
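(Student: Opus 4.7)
The plan is to deduce Corollary~\ref{padiceulerpoincare} from the torsion case Theorem~\ref{torsionEulerPoincare} by applying it to the successive quotients $T/p^n T$ and then translating the result into information about $T$ via the short exact sequences
\begin{equation*}
0 \to H^i(\GQpD, T)/p^n H^i(\GQpD, T) \to H^i(\GQpD, T/p^n T) \to H^{i+1}(\GQpD, T)[p^n] \to 0
\end{equation*}
extracted from the long exact sequence attached to $0 \to T \xrightarrow{p^n} T \to T/p^n T \to 0$. Before invoking these, I would first check that each $h^i := H^i(\GQpD, T)$ is a finitely generated $\Zp$-module, a prerequisite for $\rk_{\Zp} h^i$ to make sense. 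By Lemma~\ref{projlim10}, $h^i \cong \varprojlim_n H^i(\GQpD, T/p^n T)$ is a compact profinite $\Zp$-module, and the case $n=1$ of the above short exact sequence exhibits $h^i/p h^i$ as a subquotient of the finite group $H^i(\GQpD, T/pT)$ (finiteness coming from the remark after Theorem~\ref{herrcomplexmodp}), so topological Nakayama for compact $\Zp$-modules gives finite generation.

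The heart of the argument is a telescoping of alternating orders. Taking orders in the short exact sequences above and forming the alternating product, together with Theorem~\ref{torsionEulerPoincare} applied to $T/p^n T$, yields
\begin{equation*}
|T/p^n T| = \prod_{i \geq 0} |H^i(\GQpD, T/p^n T)|^{(-1)^i} = \prod_{i \geq 0} \bigl(|h^i/p^n h^i| \cdot |h^{i+1}[p^n]|\bigr)^{(-1)^i}.
\end{equation*}
Reindexing $j = i+1$ in the second factor and using $h^j = 0$ for $j > 2d$ (cohomological dimension), the right-hand side collapses to $|h^0/p^n h^0| \cdot \prod_{i \geq 1} \bigl(|h^i/p^n h^i|/|h^i[p^n]|\bigr)^{(-1)^i}$. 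Lemma~\ref{pntor} turns each factor on the right into $p^{n \rk_{\Zp} h^i}$, and writing $|h^0/p^n h^0| = p^{n r_0} |h^0[p^n]|$ (again by Lemma~\ref{pntor}, with $r_0 := \rk_{\Zp} h^0$) gives
\begin{equation*}
|T/p^n T| = p^{n \chi_{\GQpD}(T)} \cdot |h^0[p^n]|.
\end{equation*}
On the other hand Lemma~\ref{pntor} applied directly to $T$ gives $|T/p^n T| = p^{n r} |T[p^n]|$ with $r = \rk_{\Zp} T$. Comparing the two expressions yields $p^{n(\chi_{\GQpD}(T) - r)} = |T[p^n]|/|h^0[p^n]|$; since both torsion subgroups stabilize for $n$ large (being bounded by the respective torsion parts of the finitely generated $\Zp$-modules), the right-hand side is eventually constant while the left-hand side is a $p$-power depending linearly on $n$. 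This forces $\chi_{\GQpD}(T) = r$.

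The rational case is then formal. For $V$ and a $\GQpD$-stable $\Zp$-lattice $T \subset V$ one has $\mathbb{D}(V) = \mathbb{D}(T)[p^{-1}]$, hence $\Phi\Gamma_\Delta^\bullet(\mathbb{D}(V)) = \Phi\Gamma_\Delta^\bullet(\mathbb{D}(T)) \otimes_{\Zp} \Qp$; by Theorem~\ref{herrcomplex+iwasawa} and flatness of $\Qp$ over $\Zp$ this gives $\dim_{\Qp} H^i(\GQpD, V) = \rk_{\Zp} H^i(\GQpD, T)$, so $\chi_{\GQpD}(V) = \chi_{\GQpD}(T) = r = \dim_{\Qp} V$. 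The main obstacle, and where some care is required, is the telescoping bookkeeping at the boundary indices $i = 0$ and $i = 2d$: one must track the uncancelled factor $|h^0[p^n]|$ and verify that, together with $|T[p^n]|$, it remains bounded as $n \to \infty$, so that it cannot contribute to the linear growth in $n$ that isolates the rank identity.
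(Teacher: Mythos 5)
Your proposal is correct and follows the paper's overall strategy: reduce to the mod--$p^n$ case via the long exact sequence for multiplication by $p^n$ (the Tor spectral sequence in the paper is the same data, since $\Zp$ has global dimension one), invoke Theorem~\ref{torsionEulerPoincare} on the finite quotients, and convert orders into ranks via Lemma~\ref{pntor}. The one genuine divergence is in how the uncancelled boundary term is dispatched. The paper first reduces WLOG to $T$ free over $\Zp$ using the long exact sequence for $0\to T_{tors}\to T\to T/T_{tors}\to 0$; with $T$ free, $h^0=T^{\GQpD}\subseteq T$ is $p$-torsion-free, so the term $|h^0[p]|$ vanishes already at $n=1$ and a single application at $n=1$ finishes the proof. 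You instead keep $T$ arbitrary, run the computation for all $n$, and observe that $|T[p^n]|$ and $|h^0[p^n]|$ stabilize to the finite orders of the respective torsion submodules, which forces the exponent $n(\chi_{\GQpD}(T)-\rk_{\Zp}T)$ to be bounded and hence zero. This costs you an asymptotic bookkeeping argument but spares you the reduction step. Two further remarks: your preliminary verification that each $H^i(\GQpD,T)$ is finitely generated over $\Zp$ (via Lemma~\ref{projlim10} and topological Nakayama) is a hypothesis the paper uses but does not spell out, so it is a welcome piece of diligence; and note that $h^0/ph^0$ is in fact a \emph{submodule}, not merely a subquotient, of $H^i(\GQpD,T/pT)$, though this does not affect your argument. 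The deduction of the $\Qp$-statement by inverting $p$ matches the paper exactly.
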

\begin{proof}
The second statement follows from the first one inverting $p$ (which is exact). For the first statement we may assume without loss of generality that $T$ is free over $\Zp$ using the long exact sequence of $\GQpD$-cohomology. In this case $\Phi\Gamma^\bullet(\mathbb{D}(T/pT))$ is the derived tensor product of $\Phi\Gamma^\bullet(\mathbb{D}(T))$ with $\Fp$ over $\Zp$. The statement is a combination of Thm.\ \ref{herrcomplexmodp}, Thm.\ \ref{torsionEulerPoincare}, and Lemma \ref{pntor} with the spectral sequence
\begin{equation*}
E_2^{ij}=\Tor^{\Zp}_{-i}(h^j\Phi\Gamma^\bullet(\mathbb{D}(T)),\Fp)\Rightarrow h^{i+j}\Phi\Gamma^\bullet(\mathbb{D}(T/pT))\ .
\end{equation*}
\end{proof}

\section{Overconvergence}\label{secoverconverge}

The goal in this section is to construct a multivariable analogue $\OED^\dagger$ of the ring $\mathcal{O}_{\mathcal{E}}^\dagger$ and prove the overconvergence of multivariable $(\varphi,\Gamma)$-modules (see \cite{CC} for the classical, $1$-variable case). We show, moreover, that the category of \'etale $(\varphi_\Delta,\Gamma_\Delta)$-modules over $\OED^\dagger$ is equivalent to the category of \'etale $(\varphi_\Delta,\Gamma_\Delta)$-modules over $\OED$ (hence also to the category of continuous representations of $\GQpD$ over $\Zp$). Finally, we verify that the overconvergent Herr complex also computes Galois cohomology.

Our definition of multivariable overconvergent and Robba rings is somewhat different from that considered in \cite{Ber,K2} and in the possibly non-commutative version in \cite{ZSch}. Here the functions are required to converge on a full polyannulus whereas in these previous constructions the modulus of the variables have a fixed relation. The reason for this difference is that we have partial Frobenii to act on our rings $\OED^\dagger$ and $\mathcal{R}_\Delta$ and the relation of the moduli of variables changes under these operators. However, $\mathcal{R}_\Delta$ can naturally be viewed as a subring of the multivariable Robba ring considered in \cite{K2}.

\subsection{Multivariate overconvergent and Robba rings}

Recall (Thm.\ 4.11 in \cite{MultVarGal}) that the functors
\begin{eqnarray*}
V&\mapsto&\mathbb{D}(V):=\left(\widehat{\mathcal{E}^{ur}_\Delta}\otimes_{\Qp}V\right)^{\HQpD}\\
D&\mapsto&\mathbb{V}(D):=\bigcap_{\alpha\in\Delta}\left(\widehat{\mathcal{E}^{ur}_\Delta}\otimes_{\mathcal{E}_\Delta}D\right)^{\varphi_\alpha=\id}=h^0\Phi^\bullet(D^{ur})
\end{eqnarray*}
are quasi-inverse equivalences of categories between the Tannakian categories $\RepDQp$ and $\mathcal{D}^{et}(\varphi_{\Delta},\Gamma_\Delta,\mathcal{E}_\Delta)$. Further, this also has an integral version: for $\Zp$-representations $T$ of $\GQpD$ we define
\begin{equation*}
\mathbb{D}(T):=\left(\mathcal{O}_{\widehat{\mathcal{E}^{ur}_\Delta}}\otimes_{\Zp}T\right)^{\HQpD}
\end{equation*}
which is an equivalence of categories from $\RepDZp$ to $\mathcal{D}^{et}(\varphi_{\Delta},\Gamma_\Delta,\OED)$ with quasi-inverse $\mathbb{T}$ mapping an object $D$ to
\begin{equation*}
\mathbb{T}(D):=\bigcap_{\alpha\in\Delta}\left(\mathcal{O}_{\widehat{\mathcal{E}^{ur}_\Delta}}\otimes_{\OED}D\right)^{\varphi_\alpha=\id}=h^0\Phi^\bullet(D^{ur})\ .
\end{equation*}
Here we put $D^{ur}:=\widehat{\mathcal{E}^{ur}_\Delta}\otimes_{\mathcal{E}_\Delta}D$ (resp.\ $D^{ur}:=\mathcal{O}_{\widehat{\mathcal{E}^{ur}_\Delta}}\otimes_{\OED}D$) for an object $D$ in $\mathcal{D}^{et}(\varphi_{\Delta},\Gamma_\Delta,\mathcal{E}_\Delta)$ (resp.\ in $\mathcal{D}^{et}(\varphi_{\Delta},\Gamma_\Delta,\OED)$). 

Recall, moreover, that the ring $\mathcal{O}_{\mathcal{E}}^\dagger$ of integral \emph{overconvergent} Laurent series is defined as
\begin{equation*}
\mathcal{O}_{\mathcal{E}}^\dagger:=\left\{\sum_{i\in\mathbb{Z}}a_iX^i\mid a_i\in\Zp\ ,\text{ convergent on }\rho<|X|_p<1\text{ for some }0<\rho<1\right\}
\end{equation*}
and we put $\mathcal{E}^\dagger:=\mathcal{O}_{\mathcal{E}}^\dagger[p^{-1}]$. Both these rings are subrings of the Robba ring $\mathcal{R}:=\bigcup_{0<\rho<1}\mathcal{R}^{(\rho,1)}$ where we put
\begin{equation*}
\mathcal{R}^{(\rho,1)}:=\left\{\sum_{i\in\mathbb{Z}}a_iX^i\mid a_i\in\Qp\ ,\text{ convergent on }\rho<|X|_p<1\right\}\ .
\end{equation*}
Further, for each real number $\rho<r<1$ we have the $r$-norm on $\mathcal{R}^{(\rho,1)}$ given by the formula $|\sum_{i\in\mathbb{Z}}a_iX^i|_r:=\sup_i |a_i|_pr^i\in\mathbb{R}^{\geq 0}$ (which we extend to the whole Robba ring $\mathcal{R}$ by the same formula possibly taking $+\infty$ as a value). Recall that $\mathcal{E}^\dagger$ (resp.\ $\mathcal{O}_{\mathcal{E}}^\dagger$) consists of those elements $f\in\mathcal{R}$ such that $\lim_{r\to1}|f|_r<\infty$ (resp.\ $\lim_{r\to 1}|f|_r\leq 1$). 

Consider a copy $\mathcal{O}_{\mathcal{E}_\alpha}^\dagger$, $\mathcal{E}_\alpha^\dagger$, and $\mathcal{R}_\alpha$ of these rings for each $\alpha\in\Delta$ using the variable $X_\alpha$. We define $\mathcal{R}_\Delta:=\bigcup_{\rho\in (0,1)^\Delta} \mathcal{R}_\Delta^{({\bf \rho, 1})}$ as the ascending union of the ring of multivariable power series
\begin{align*}
\mathcal{R}_\Delta^{({\bf \rho, 1})}:= \left\{\sum_{{\bf i}=(i_\alpha)_{\alpha\in\Delta}\in\mathbb{Z}^\Delta}a_{\bf i}{\bf X}^{\bf i}\mid a_{\bf i}\in\Qp\ ,\text{ convergent on }B^{({\bf \rho, 1})}\right\}
\end{align*}
where ${\bf X}^{\bf i}:=\prod_{\alpha\in\Delta}X_\alpha^{i_\alpha}$ and the polyannulus $B^{({\bf \rho, 1})}$ for the tuple ${\bf \rho}=(\rho_\alpha)_{\alpha\in\Delta}\in(0,1)^\Delta$ is defined as
\begin{equation*}
B^{({\bf \rho, 1})}:=\left\{{\bf X}=(X_\alpha)_{\alpha\in\Delta}\in\mathbb{C}_p^\Delta\mid \rho_\alpha<|X_\alpha|_p<1\text{ for all }\alpha\in\Delta\right\}\ .
\end{equation*}
For each tuple ${\bf r}=(r_\alpha)_{\alpha\in\Delta}\in(0,1)^\Delta$ of real numbers with $\rho_\alpha<r_\alpha<1$ for all $\alpha\in\Delta$ the ${\bf r}$-norm on $\mathcal{R}_\Delta^{({\bf \rho, 1})}$ is defined as $$|\sum_{{\bf i}\in\mathbb{Z}^\Delta}a_{\bf i}{\bf X}^{\bf i}|_{\bf r}:=\sup_{\bf i}|a_{\bf i}|_p\prod_{\alpha\in\Delta}r_\alpha^{i_\alpha}\ .$$ 
Note that the ${\bf r}$-norm is multiplicative on $\mathcal{R}_\Delta^{({\bf \rho, 1})}$ for any tuple ${\bf \rho}$ with $\rho_\alpha<r_\alpha$ for all $\alpha\in\Delta$. We extend the ${\bf r}$-norm as a function to the whole Robba ring defined by the same formula taking possibly $+\infty$ as a value. We define the rings $\mathcal{E}_\Delta^\dagger$ (resp.\ $\OED^\dagger$) as subrings of $\mathcal{R}_\Delta$ consisting of functions that are bounded (resp.\ bounded by $1$) on the boundary. More precisely, we set
\begin{eqnarray*}
\mathcal{E}_\Delta^\dagger&:=&\{f\in\mathcal{R}_\Delta\mid \limsup_{{\bf r}\to (1)_{\alpha\in\Delta}}|f|_{\bf r}<\infty\}\\
\OED^\dagger&:=&\{f\in\mathcal{R}_\Delta\mid \limsup_{{\bf r}\to (1)_{\alpha\in\Delta}}|f|_{\bf r}\leq1\}\ .
\end{eqnarray*}
In other words $f\in\mathcal{R}_\Delta$ lies in $\OED^\dagger$ if and only if for each $\delta\in\mathbb{R}^{>0}$ there exists an $\varepsilon\in (0,1)$ such that for each tuple ${\bf r}\in (1-\varepsilon,1)^\Delta$ we have $|f|_{\bf r}<1+\delta$. Further, we have $\mathcal{E}_\Delta^\dagger=\OED^\dagger[p^{-1}]$. For each $\alpha\in\Delta$ we identify $\mathcal{O}_{\mathcal{E}_\alpha}^\dagger$ (resp.\ $\mathcal{E}_\alpha^\dagger$, resp.\ $\mathcal{R}_\alpha$) with a subring of $\OED^\dagger$ (resp.\ of $\mathcal{E}_\Delta^\dagger$, resp.\ of $\mathcal{R}_\Delta$).
\begin{lem}\label{coeffOEDdag}
If $f=\sum_{{\bf i}\in\mathbb{Z}^\Delta}a_{\bf i}{\bf X}^{\bf i}$ lies in $\OED^\dagger$ then we have $a_{\bf i}\in\Zp$ for all ${\bf i}\in\mathbb{Z}^\Delta$.
\end{lem}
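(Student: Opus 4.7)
The plan is to extract the desired bound on each coefficient directly from the definition of the $\mathbf{r}$-norm combined with the boundedness assumption on $f$. Fix an index $\mathbf{i}=(i_\alpha)_{\alpha\in\Delta}\in\mathbb{Z}^\Delta$. By the very definition
\[
|f|_{\mathbf{r}}=\sup_{\mathbf{j}\in\mathbb{Z}^\Delta}|a_{\mathbf{j}}|_p\prod_{\alpha\in\Delta}r_\alpha^{j_\alpha},
\]
the single summand corresponding to $\mathbf{j}=\mathbf{i}$ satisfies
\[
|a_{\mathbf{i}}|_p\prod_{\alpha\in\Delta}r_\alpha^{i_\alpha}\leq |f|_{\mathbf{r}}
\]
for every $\mathbf{r}$ in the region where the norm is defined.

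Now I exploit the hypothesis $f\in\OED^\dagger$ in the explicit form given in the text: for each $\delta>0$ there exists $\varepsilon\in(0,1)$ with $|f|_{\mathbf{r}}<1+\delta$ for all $\mathbf{r}\in(1-\varepsilon,1)^\Delta$. Pick any sequence $\mathbf{r}^{(n)}\in(1-\varepsilon,1)^\Delta$ with $r_\alpha^{(n)}\to 1$ for each $\alpha\in\Delta$. Then
\[
|a_{\mathbf{i}}|_p\prod_{\alpha\in\Delta}\bigl(r_\alpha^{(n)}\bigr)^{i_\alpha}\leq |f|_{\mathbf{r}^{(n)}}<1+\delta,
\]
and since $i_\alpha\in\mathbb{Z}$ is fixed the product tends to $1$ as $n\to\infty$, yielding $|a_{\mathbf{i}}|_p\leq 1+\delta$. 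Letting $\delta\to 0$ gives $|a_{\mathbf{i}}|_p\leq 1$, i.e.\ $a_{\mathbf{i}}\in\Zp$. There is no real obstacle here: the argument is purely a matter of unwinding the definition of the $\mathbf{r}$-norm and the limsup condition, and no finer analytic input (e.g.\ multiplicativity of the norm, the structure of $\mathcal{R}_\Delta$, or properties of the partial Frobenii) is required.
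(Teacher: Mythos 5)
Your proof is correct and follows essentially the same route as the paper: both arguments rest on the single observation that for a fixed $\mathbf{i}\in\mathbb{Z}^\Delta$ the quantity $|a_{\mathbf{i}}|_p\prod_\alpha r_\alpha^{i_\alpha}$ is dominated by $|f|_{\mathbf{r}}$ and tends to $|a_{\mathbf{i}}|_p$ as $\mathbf{r}\to(1)_\alpha$, so the limsup bound forces $|a_{\mathbf{i}}|_p\leq 1$. The only cosmetic difference is that you argue directly while the paper phrases the same computation as a proof by contradiction.
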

\begin{proof}
Assume that $|a_{\bf j}|_p>1$ for some fixed ${\bf j}\in\mathbb{Z}^\Delta$ and choose the real number $0<\delta<|a_{\bf j}|_p-1$. Then for ${\bf r}$ close enough to $(1)_{\alpha\in\Delta}$ (depending on ${\bf j}$ and $|a_{\bf j}|_p-1-\delta$) we have $1+\delta<|a_{\bf j}|_p\prod_\alpha r_\alpha^{j_\alpha}\leq |f|_{\bf r}$ showing that $ \limsup_{{\bf r}\to (1)_{\alpha\in\Delta}}|f|_{\bf r}>1+\delta$ whence $f\notin \OED^\dagger$.
\end{proof}
\begin{rem}
The converse of the above Lemma is not true whenever $|\Delta|>1$. For example the Laurent series $\sum_{i=1}^{\infty}X_\alpha^{2^i}X_\beta^{-i}$ has coefficients in $\Zp$ and belongs to $\mathcal{R}_\Delta$, but not to $\OED^\dagger$ if $\alpha\neq\beta\in\Delta$.
\end{rem}
\begin{pro}\label{OEDdagpcomp}
The ring $\OED^\dagger$ is $p$-adically separated and its $p$-adic completion $\varprojlim_n\OED^\dagger/(p^n)$ is isomorphic to $\OED$. In particular, we have an injective ring homomorphism $\OED^\dagger\hookrightarrow \OED$.
\end{pro}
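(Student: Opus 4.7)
The plan is to construct a natural coefficient-wise ring homomorphism $\OED^\dagger \to \OED$, verify its injectivity together with the $p$-adic separation of $\OED^\dagger$, and then show that the induced map $\OED^\dagger/(p^n) \to \OED/(p^n)$ is an isomorphism for every $n \geq 1$; passing to inverse limits then identifies $\widehat{\OED^\dagger}$ with $\OED$.

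The existence of the map requires a quantitative strengthening of Lemma \ref{coeffOEDdag}: for every $f = \sum_{\mathbf{i}} a_{\mathbf{i}} \mathbf{X}^{\mathbf{i}} \in \OED^\dagger$ there exists a constant $C = C(f) > 0$ such that $|a_{\mathbf{i}}|_p \geq p^{-h+1}$ forces $i_\alpha \geq -Ch$ for every $\alpha \in \Delta$ and every $h \geq 1$. Applying the defining condition of $\OED^\dagger$ with $\delta = 1$ produces some $\varepsilon \in (0,1)$ with $|f|_{\mathbf{r}} \leq 2$ on $(1-\varepsilon,1)^\Delta$, so $|a_{\mathbf{i}}|_p \prod_\alpha r_\alpha^{i_\alpha} \leq 2$ for all such $\mathbf{r}$ and all $\mathbf{i}$. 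For any fixed $\alpha_0 \in \Delta$, specializing at $r_{\alpha_0} = 1-\varepsilon/2$ and letting $r_\beta \to 1$ for $\beta \neq \alpha_0$ (using continuity in $\mathbf{r}$ of each monomial $\prod r_\alpha^{i_\alpha}$ for fixed $\mathbf{i}$) yields $|a_{\mathbf{i}}|_p (1-\varepsilon/2)^{i_{\alpha_0}} \leq 2$, whence $i_{\alpha_0} \geq -\log(2p^{h-1})/|\log(1-\varepsilon/2)|$, a linear function of $h$. It follows that the reduction of $f$ modulo $p^n$ lies in $\Zp/p^n\bs X_\alpha\mid \alpha\in\Delta\js[X_\alpha^{-1}\mid \alpha\in\Delta]$, so $f$ determines an element of $\OED = \varprojlim_n \OED/(p^n)$. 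Injectivity of this map is clear (a nonzero $f$ has some nonzero $a_{\mathbf{i}} \in \Zp$), and $p$-adic separation of $\OED^\dagger$ follows from its injection into the $p$-adically separated ring $\OED$.

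To identify $\widehat{\OED^\dagger}$ with $\OED$, I show that the surjective map $\OED^\dagger \to \OED/(p^n)$ has kernel exactly $p^n \OED^\dagger$. Surjectivity follows by an elementary lifting argument: any $\bar f \in \OED/(p^n)$ can be written $\mathbf{X}^{-\mathbf{N}} \bar P$ with $\bar P$ a formal power series over $\Zp/p^n$, which is lifted coefficient-wise to $P \in \Zp\bs X_\alpha \mid \alpha \in \Delta\js$; since $|P|_{\mathbf{r}} \leq 1$ on $(0,1)^\Delta$ one has $P \in \OED^\dagger$, and $\mathbf{X}^{-\mathbf{N}} P$ reduces to $\bar f$. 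The substantive point is the $p$-saturation $\OED^\dagger \cap p^n \OED \subseteq p^n \OED^\dagger$: given $f \in \OED^\dagger$ with every $a_{\mathbf{i}} \in p^n \Zp$, one must verify that $g := f/p^n$ (having $\Zp$-coefficients and converging on the same polyannulus as $f$) still lies in $\OED^\dagger$, equivalently that $\limsup_{\mathbf{r}\to\mathbf{1}} |f|_{\mathbf{r}} \leq p^{-n}$. For $\mathbf{i}$ with $|a_{\mathbf{i}}|_p = p^{-m}$ (necessarily $m \geq n$), the linear bound $N_m \leq Cm$ gives $|a_{\mathbf{i}}|_p \prod_\alpha r_\alpha^{i_\alpha} \leq p^{-m}(1-\varepsilon')^{-dCm}$ on $(1-\varepsilon', 1)^\Delta$. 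The requirement that this be $\leq p^{-n}(1+\delta)$ translates, after taking logs, to $dCm|\log(1-\varepsilon')| \leq (m-n)\log p + \log(1+\delta)$; since $((m-n)\log p + \log(1+\delta))/m$ has a strictly positive infimum over $m \geq n$ (equal to $\log(1+\delta)/n$ at $m=n$ and tending to $\log p$ as $m \to \infty$), the inequality can be satisfied for all $m \geq n$ simultaneously by choosing $\varepsilon'$ sufficiently small.

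The main obstacle is this $p$-saturation statement, since the limsup condition defining $\OED^\dagger$ is not a priori preserved under dividing by $p^n$; what makes the argument go through is precisely the \emph{linear} growth rate of the bound $N_h = O(h)$ on the negative indices of coefficients with $|a_{\mathbf{i}}|_p \geq p^{-h+1}$, as a superlinear growth would fail to absorb the factor $p^{-m}$ for large $m$.
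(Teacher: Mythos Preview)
Your proof is correct and follows the same overall strategy as the paper---show that $\OED^\dagger/(p^n)\cong\OED/(p^n)$ for every $n$ and pass to the inverse limit---but you are more careful on one point. The paper only verifies that for each fixed $n$ the coefficients of $f\in\OED^\dagger$ not divisible by $p^n$ have bounded negative index, and asserts that this yields the surjectivity of $\OED^+[X_\Delta^{-1}]/(p^n)\twoheadrightarrow\OED^\dagger/(p^n)$. Strictly speaking that step needs exactly the $p$-saturation statement you isolate: if $f\in\OED^\dagger$ has all coefficients in $p^n\Zp$ then $f/p^n\in\OED^\dagger$. Your argument supplies this via the quantitative strengthening ``$|a_{\bf i}|_p\geq p^{-h+1}\Rightarrow i_\alpha\geq -Ch$'', and your observation that it is the \emph{linearity} of this bound that makes the saturation go through is exactly right. (Interestingly, the paper does establish this same linear bound later, in the proof of Proposition~\ref{noetheroverconv} where it is phrased as $\OED^\dagger$ being a weakly complete finitely generated algebra over $\OED^+$ in Fulton's sense, but does not invoke it here.) So your approach and the paper's are essentially the same, with yours filling in a step the paper glosses over.
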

\begin{proof}
The $p$-adic separatedness follows directly from Lemma \ref{coeffOEDdag}. Further, it is obvious that $\OED^+=\Zp\bs X_\alpha\mid\alpha\in\Delta\js\subseteq \OED^\dagger$ and $X_\alpha^{-1}\in\OED^\dagger$ for all $\alpha\in\Delta$ whence $\OED^+[X_\Delta^{-1}]\subseteq \OED^\dagger$. Note that we have $\OED/(p^n)\cong \OED^+[X_\Delta^{-1}]/(p^n)$, and $\OED^+[X_\Delta^{-1}]/(p^n)\hookrightarrow \OED^\dagger/(p^n)$ by Lemma \ref{coeffOEDdag}, so it remains to show that $\OED^+[X_\Delta^{-1}]/(p^n)\twoheadrightarrow \OED^\dagger/(p^n)$. Namely, we need to verify that the monomials in an element $f=\sum_{{\bf i}\in\mathbb{Z}^\Delta}a_{\bf i}{\bf X}^{\bf i}\in\OED^\dagger$ with coefficients not divisible by $p^n$ have bounded denominators for any fixed $n$. Assume for contradiction that for some $n>0$ and $\alpha\in\Delta$ there exists a sequence ${\bf i}(j)_{j\geq 1}\subset \mathbb{Z}^\Delta$ of indices such that ${\bf i}(j)_\alpha\to -\infty$ and $| a_{{\bf i}(j)}|_p>p^{-n}$ for all $j\geq 1$. We claim that $f\notin \mathcal{E}_\Delta^\dagger$. Choose real numbers $0<C$ and $0<\varepsilon<1$. For any fixed $r_\alpha$ in the open interval $(1-\varepsilon,1)$, there exists a positive integer $j$ such hat $r_\alpha^{i(j)_\alpha}>Cp^n$ since the sequence $i(j)_\alpha$ tends to $-\infty$. Now for given $r_\alpha$ and this chosen $j\geq 1$ we may choose $r_\beta\in (1-\varepsilon,1)$ close enough to $1$ for all $\beta\in\Delta\setminus\{\alpha\}$ such that we still have $\prod_{\beta\in\Delta}r_\beta^{i(j)_\beta}>Cp^n$ whence we have $|f|_{\bf r}\geq |a_{{\bf i}(j)}|_p\prod_{\beta\in\Delta}r_\beta^{i(j)_\beta}>C$.
\end{proof}

\begin{rem}
We have $\mathcal{R}_\Delta\cap \OED\supsetneq \mathcal{E}_\Delta^\dagger\cap \prod_{{\bf i}\in\mathbb{Z}^\Delta}\Zp{\bf X}^{\bf i}=\mathcal{E}_\Delta^\dagger\cap \OED=\OED^\dagger$ where the intersection is considered in the $\Qp$-vector space $\prod_{{\bf i}\in\mathbb{Z}^\Delta}\Qp{\bf X}^{\bf i}$.
\end{rem}
\begin{proof}
The inclusion $\OED^\dagger\subseteq \mathcal{R}_\Delta\cap\OED$ is proven in Prop.\ \ref{OEDdagpcomp}. To see that the containment is proper note that the element $\sum_{n\geq 1}p^nX_\alpha^{2^{2^n}}X_\beta^{-2^n}$ belongs to $\mathcal{R}_\Delta\cap\OED$ but not to $\OED^\dagger$. Now assume that $f=\sum_{{\bf i}\in\mathbb{Z}^\Delta}a_{\bf i}{\bf X}^{\bf i}$ lies in $\mathcal{E}_\Delta^\dagger$ and $a_{\bf i}\in\Zp$ for all ${\bf i}\in \mathbb{Z}^\Delta$. Then there exist real numbers $0<C$ and $0<\varepsilon<1$ such that $|f|_{\bf r}\leq C$ whenever $1-\varepsilon< r_\alpha<1$ for all $\alpha\in\Delta$. In particular, we have $|a_{\bf i}|_p\prod_{\alpha\in\Delta}r_\alpha^{i_\alpha}\leq C$ which implies $$|a_{\bf i}|_p\prod_{\alpha\in\Delta}r_\alpha^{i_\alpha}\leq |a_{\bf i}|_p\prod_{\alpha\in\Delta}r_\alpha^{\min(0,i_\alpha)}\leq C\ ,$$ too, by letting $r_\beta$ go to $1$ for all $\beta\in\Delta$ with $i_\beta>0$. Now the function 
$$H({\bf r}):= \sup_{\bf i}(|a_{\bf i}|_p\prod_{\alpha\in\Delta}r_\alpha^{\min(0,i_\alpha)})$$ 
is monotone decreasing and bounded by $C$ on the polyannulus ${\bf r}\in (1-\varepsilon,1)^\Delta$. So it suffices to show that $\limsup_{r\to 1}H_0(r)\leq 1$ where $H_0(r):=H((r)_{\alpha\in\Delta})$ as a function on the interval $(1-\varepsilon,1)$. Choose a real number $\delta>0$ and $1-\varepsilon<r'<1$. Now for large enough integer $N$ we have $r'^n>C$ for $n<-N$, therefore there exists a real number $r'<r''<1$ such that we still have $(r'/r'')^n>C$ for $n<-N$. By possibly increasing $r''$ further (for fixed $N$) we may assume that $r''^{-N}\leq 1+\delta$. So for those ${\bf i}\in\mathbb{Z}^\Delta$ with $\sum_{\alpha\in\Delta}\min(0,i_\alpha)<-N$ we have $$|a_{\bf i}|_p\prod_{\alpha\in\Delta}r''^{\min(0,i_\alpha)}<(\frac{r''}{r'})^{-N} |a_{\bf i}|_p\prod_{\alpha\in\Delta}r'^{\min(0,i_\alpha)}<C^{-1}C=1\ .$$ On the other hand for those ${\bf i}\in\mathbb{Z}^\Delta$ with $\sum_{\alpha\in\Delta}\min(0,i_\alpha)\geq-N$ we have  $$|a_{\bf i}|_p\prod_{\alpha\in\Delta}r''^{\min(0,i_\alpha)}\leq |a_{\bf i}|_pr''^{-N}\leq 1+\delta\ .$$ All in all we obtain $H_0(r'')\leq 1+\delta$, so we deduce $f\in \OED^\dagger$ as $\delta$ was arbitrary.
\end{proof}

We now equip the rings $\OED^\dagger$, $\mathcal{E}_\Delta^\dagger$, and $\mathcal{R}_\Delta$ with the action of the operators $\varphi_\alpha$ for all $\alpha\in\Delta$ and by the group $\Gamma_\Delta$ (as in section \ref{secnotations}). Let $p^{-1/(p-1)}<r_\alpha<1$ be a real number. Then by the ultrametric inequality we compute 
\begin{align*}
|\varphi(X_\alpha)-X_\alpha^p|_{r_\alpha}=|\sum_{j=1}^{p-1}\binom{p}{j}X_\alpha^j|_{r_\alpha}\leq\max_{0<j<p}|\binom{p}{j}X_\alpha^j|_{r_\alpha} =\frac{r_\alpha}{p}<r_\alpha^{p}=|X_\alpha^p|_{r_\alpha}\ .
\end{align*}
In particular, we obtain $|\varphi_\alpha(X_\alpha)|_{r_\alpha}=|X_\alpha^p|_{r_\alpha}=r_\alpha^p$. We deduce $$|\varphi(X_\alpha^i)-X_\alpha^{pi}|_{r_\alpha}=|(\varphi_\alpha(X_\alpha)-X_\alpha^p)\sum_{j=0}^{i-1}\varphi_\alpha(X_\alpha)^jX_\alpha^{p(i-1-j)}|_{r_\alpha}<r_\alpha^{p}r_\alpha^{p(i-1)}=|X_\alpha^{pi}|_{r_\alpha}$$ for all $i\in\mathbb{Z}^{>0}$. Further, since $\varphi_\alpha(X_\alpha)$ is invertible in $\mathcal{O}_{\mathcal{E}_\alpha}^\dagger$, it is also invertible in the rings $\OED^\dagger$, $\mathcal{E}_\Delta^\dagger$, and $\mathcal{R}_\Delta$. Moreover, we have  
\begin{equation}
|\varphi(X_\alpha^{-i})-X_\alpha^{-pi}|_{r_\alpha}=|\frac{X_\alpha^{pi}-\varphi_\alpha(X_\alpha)^i}{X_\alpha^{pi}\varphi_\alpha(X_\alpha)^i}|_{r_\alpha}<r_\alpha^{-pi}=|X_\alpha^{-pi}|_{r_\alpha}\label{phiestimate1}
\end{equation}
for all $i>0$. Therefore if $f=\sum_{{\bf i}\in\mathbb{Z}^\Delta}a_{\bf i}{\bf X}^{\bf i}\in\mathcal{R}_\Delta$ is convergent on the polyannulus $B^{({\bf \rho,1})}$ for some ${\bf \rho}\in (0,1)^\Delta$ with $\rho_\alpha>p^{-1/(p-1)}$ then the formal sum
$$\varphi_\alpha(f):= \sum_{{\bf i}\in\mathbb{Z}^\Delta}a_{\bf i}\varphi_\alpha(X_\alpha)^{i_\alpha}\prod_{\beta\in\Delta\setminus\{\alpha\}}X_\beta^{i_\beta}$$
also converges in the ${\bf r}$-norm whenever $r_\alpha>\rho_\alpha^{1/p}$ and $r_\beta>\rho_\beta$ for all $\beta\in\Delta\setminus\{\alpha\}$. This way we obtain an injective ring endomorphism $\varphi_\alpha\colon \mathcal{R}_\Delta\to \mathcal{R}_\Delta$ such that for all $f\in\mathcal{R}$ and ${\bf r}\in(0,1)^\Delta$ with $r_\alpha>p^{-1/(p-1)}$ we have $|\varphi_\alpha(f)|_{\bf r}=|f|_{\bf r'}$ where $r'_\alpha=r_\alpha^p$ and $r'_\beta=r_\beta$ for all $\alpha\neq \beta\in\Delta$. This, in particular, shows that $\varphi_\alpha(f)$ lies in the subring $\OED^\dagger$ (resp.\ in $\mathcal{E}_\Delta^\dagger$) if and only if so does $f$.

\begin{lem}\label{psiestimate}
Fix $\alpha\in\Delta$ and let ${\bf r}\in(0,1)^\Delta$ with $r_\alpha>p^{-1/(p-1)}$, and let ${\bf r'}\in(0,1)^\Delta$ such that $r'_\alpha=r_\alpha^p$ and $r'_\beta=r_\beta$ for all $\alpha\neq \beta\in\Delta$. Let $f_0,\dots,f_{p-1}\in \mathcal{R}_\Delta$.
\begin{enumerate}[$(1)$]
\item If $f_0,\dots,f_{p-1}$ are all convergent on the polyannulus $B^{\bf (r',1)}$, then we have
$$r^{p-1}_\alpha \max_j|f_j|_{\bf r'}\leq|\sum_{j=0}^{p-1}(1+X_\alpha)^j\varphi_\alpha(f_j)|_{\bf r}\leq \max_j|f_j|_{\bf r'}\ .$$ 
\item If $\sum_{j=0}^{p-1}(1+X_\alpha)^j\varphi_\alpha(f_j)$ is convergent on the polyannulus $B^{({\bf r, 1})}$, then each $f_i$ ($i=0,\dots,p-1$) is convergent on the polyannulus $B^{({\bf r', 1})}$.
\end{enumerate}
\end{lem}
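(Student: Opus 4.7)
The upper bound in (1) is a direct application of the ultrametric inequality. Since $0\leq j<p$ we have $|(1+X_\alpha)^j|_{\bf r}\leq 1$ (all binomial coefficients are $p$-adic units and $r_\alpha<1$), and the norm-preservation property $|\varphi_\alpha(f_j)|_{\bf r}=|f_j|_{\bf r'}$ derived just before the lemma gives $|(1+X_\alpha)^j\varphi_\alpha(f_j)|_{\bf r}\leq|f_j|_{\bf r'}$; taking the max over $j$ yields the claim.

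For the lower bound in (1), the plan is to extract each $f_j$ from $g$ using the distinguished left inverse $\psi_\alpha$ of $\varphi_\alpha$. Combining $\psi_\alpha(\varphi_\alpha(a)b)=a\psi_\alpha(b)$ with $\psi_\alpha((1+X_\alpha)^i)=\delta_{i,0}$ for $-(p-1)\leq i\leq p-1$ (which follows from the basis property $\OED=\bigoplus_{i=0}^{p-1}(1+X_\alpha)^i\varphi_\alpha(\OED)$ together with the identity $(1+X_\alpha)^{-j}=(1+X_\alpha)^{p-j}\varphi_\alpha((1+X_\alpha)^{-1})$ for $1\leq j\leq p-1$), one obtains the reconstruction formula $f_j=\psi_\alpha((1+X_\alpha)^{-j}g)$ for $0\leq j\leq p-1$. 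Since $|(1+X_\alpha)^{-j}|_{\bf r}=1$, the lower bound reduces to the sharp norm estimate
\begin{equation*}
|\psi_\alpha(h)|_{\bf r'}\leq r_\alpha^{-(p-1)}|h|_{\bf r}\qquad\text{for all }h\in\mathcal{R}_\Delta^{({\bf \rho, 1})}.
\end{equation*}

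Part (2) follows from the same formula, now read as defining $f_j$ from $g$: for every ${\bf r}_0$ with $r_{0,\alpha}>r_\alpha$ and $r_{0,\beta}>r_\beta$ for $\beta\neq\alpha$, setting ${\bf r}_0'$ by $r_{0,\alpha}'=r_{0,\alpha}^p$ and $r_{0,\beta}'=r_{0,\beta}$, the above estimate gives $|f_j|_{{\bf r}_0'}\leq r_{0,\alpha}^{-(p-1)}|g|_{{\bf r}_0}<\infty$; letting ${\bf r}_0\to{\bf r}$ exhausts $B^{({\bf r'},1)}$, and uniqueness of the decomposition identifies this with the given $f_j$.

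The main obstacle is the sharp $\psi_\alpha$-estimate itself. A direct trace-formula approach using $p\,\varphi_\alpha(\psi_\alpha(h))=\sum_{\zeta\in\mu_p}h(\zeta(1+X_\alpha)-1,X_\beta)$, combined with $|h(\zeta(1+X_\alpha)-1,X_\beta)|_{\bf r}=|h|_{\bf r}$ (valid because $|\zeta-1|=p^{-1/(p-1)}<r_\alpha$ forces the substituted circle to have the same $X_\alpha$-radius), only yields $|\psi_\alpha(h)|_{\bf r'}\leq p|h|_{\bf r}$, which is strictly weaker than the desired bound $r_\alpha^{-(p-1)}|h|_{\bf r}$ in our range. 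Recovering the sharper constant requires a more refined analysis---most cleanly achieved by switching to the alternative $\varphi_\alpha$-basis $\{X_\alpha^k\}_{k=0}^{p-1}$ (related to $\{(1+X_\alpha)^i\}$ by a unipotent integral change of basis preserving all ${\bf r}$-norms), expanding $h=\sum_k X_\alpha^k\varphi_\alpha(\tilde h_k)$, and analysing the $X_\alpha$-Laurent expansion coefficient by coefficient---which reduces the estimate to the classical one-variable statement from \cite{CC} since $\psi_\alpha$ acts trivially on the residual variables.
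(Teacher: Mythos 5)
Your upper bound and the reconstruction formula $f_j=\psi_\alpha((1+X_\alpha)^{-j}g)$ are both correct. However, the $\psi_\alpha$-norm estimate $|\psi_\alpha(h)|_{\bf r'}\leq r_\alpha^{-(p-1)}|h|_{\bf r}$ that you reduce to is not genuinely easier than the lower bound of $(1)$---it is literally equivalent to it: writing $h=\sum_j(1+X_\alpha)^j\varphi_\alpha(h_j)$ and applying the lower bound gives $|\psi_\alpha(h)|_{\bf r'}=|h_0|_{\bf r'}\leq\max_j|h_j|_{\bf r'}\leq r_\alpha^{-(p-1)}|h|_{\bf r}$, and conversely your reconstruction argument recovers the lower bound from it. So the detour through $\psi_\alpha$ is a relabelling, and all the content sits in your final paragraph, which is left as a sketch. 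The sketch is on the right track: the change from the basis $\{(1+X_\alpha)^j\}$ to $\{X_\alpha^k\}$ is given by an upper-triangular unipotent matrix with entries $\binom{j}{k}\in\Zp$ whose inverse is again integral upper-triangular unipotent, so $\max_k|\tilde h_k|_{\bf r'}=\max_j|h_j|_{\bf r'}$; after replacing $\varphi_\alpha(X_\alpha^i)$ by $X_\alpha^{pi}$ via \eqref{phiestimate1} (an error which is uniformly $\leq(r_\alpha^{1-p}/p)<1$ times the main term, so strictly subdominant), the pieces $X_\alpha^k\tilde h_k(X_\alpha^p,\cdot)$ have disjoint $X_\alpha$-degree supports mod $p$, so $|h|_{\bf r}=\max_k r_\alpha^k|\tilde h_k|_{\bf r'}\geq r_\alpha^{p-1}\max_k|\tilde h_k|_{\bf r'}$. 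That does give a clean proof, but you need to carry it out; the appeal to a ``classical one-variable statement from \cite{CC}'' is not how the paper proceeds (the paper proves the one-variable estimate from scratch), and I would not take it for granted that the precise estimate appears there in this exact form.

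The paper's own argument stays in the $(1+X_\alpha)^j$-basis: after reducing to $f_j\in\mathcal{R}_\alpha$ (termwise definition of $|\cdot|_{\bf r}$), it chooses the largest $j_0$ with $|f_{j_0}|_{\bf r'}$ maximal and an $i_0$ realizing the supremum, observes that after the $\eqref{phiestimate1}$-approximation only the indices $(i_0,j)$ with $j\geq j_0$ contribute to the coefficient of $X_\alpha^{j_0+pi_0}$, and that among those the term $a_{i_0,j_0}$ strictly dominates by the maximality conventions, yielding $|g|_{\bf r}\geq r_\alpha^{j_0}|f_{j_0}|_{\bf r'}\geq r_\alpha^{p-1}\max_j|f_j|_{\bf r'}$. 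Your change of basis trades that combinatorial selection of a dominant coefficient for a transparent disjointness-of-supports argument, which is a genuine simplification. For part $(2)$ your proof (apply the estimate and shrink the radius) also differs from the paper's, which truncates the $f_j$ to Laurent polynomials $f_j^{(N)}$, applies part $(1)$ there (unconditionally, since finite sums converge everywhere), and passes to the limit as $N\to\infty$; both arguments are valid.
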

\begin{proof}
The second inequality follows from the from the formula $|\varphi_\alpha(f)|_{\bf r}=|f|_{\bf r'}$ by the ultrametric inequality noting $|(1+X_\alpha)^j|_{\bf r}=1$ for all $j=0,\dots,p-1$. For the other inequality we may assume without loss of generality that $f_j$ lies in $\mathcal{R}_\alpha$ since the functions $|\cdot|_{\bf r}$ are defined termwise. In this case we choose $0\leq j_0\leq p-1$ such that $|f_{j_0}|_{\bf r'}$ is maximal (if the maximum is taken at more than one value of $j_0$ then we take the biggest $j_0$ among them) and choose the integer $i_0$ such that the supremum defining $|f_{j_0}|_{\bf r'}$ is taken on the coefficient of $X_\alpha^{i_0}$. We claim that the term with $X_\alpha^{pi_0+j_0}$ in $\sum_{j=0}^{p-1}(1+X_\alpha)^j\varphi_\alpha(f_j)$ has $|\cdot|_{\bf r}$ at least $r^{p-1}_\alpha \max_j|f_j|_{\bf r'}=r^{p-1}_\alpha |f_{j_0}|_{\bf r'}\leq r^{j_0}_\alpha |f_{j_0}|_{\bf r'}$. To show this write $f_j=\sum_{i=-\infty}^{\infty}a_{i,j}X_\alpha^i$ ($j=0,\dots,p-1$). By \eqref{phiestimate1} and our assumption $r_\alpha>p^{-1/(p-1)}$  we have $|\varphi_\alpha(f_j)-\sum_{i=-\infty}^{\infty}a_{i,j}X_\alpha^{pi}|_{\bf r}<|\varphi_\alpha(f_j)|_{\bf r}=|f_j|_{\bf r'}$. Adding all these estimates and using the ultrametric inequality we obtain $$|\sum_{j=0}^{p-1}(1+X_\alpha)^j\varphi_\alpha(f_j)-\sum_{j=0}^{p-1}\sum_{i=-\infty}^\infty a_{i,j}(1+X_\alpha)^jX_\alpha^{pi}|_{\bf r}<|\sum_{j=0}^{p-1}\sum_{i=-\infty}^\infty a_{i,j}(1+X_\alpha)^jX_\alpha^{pi}|_{\bf r}$$ whence
$$|\sum_{j=0}^{p-1}(1+X_\alpha)^j\varphi_\alpha(f_j)|_{\bf r}=|\sum_{j=0}^{p-1}\sum_{i=-\infty}^\infty a_{i,j}(1+X_\alpha)^jX_\alpha^{pi}|_{\bf r}\ .$$
Now in the infinite sum on the right hand side only the terms $\sum_{j_0\leq j\leq p-1}a_{i_0,j}(1+X_\alpha)^jX_\alpha^{pi_0}$ contribute to the coefficient of $X_\alpha^{j_0+pi_0}$, namely the coefficient is $\sum_{j_0\leq j\leq p-1}a_{i_0,j}\binom{j}{j_0}$. Note that $a_{i_0,j}X_\alpha^{i_0}$ is a term in $f_j$ whence $|a_{i_0,j}X_\alpha^{pi_0}|_{\bf r}=|a_{i_0,j}X_\alpha^{i_0}|_{\bf r'}\leq |f_j|_{\bf r'}\leq |f_{j_0}|_{\bf r'}$ with strict inequality at the end in case $j\neq j_0$ by the choice of $j_0$. By the choice $i_0$ we have $|f_{j_0}|_{\bf r'}=|a_{i_0,j_0}X_\alpha^{i_0}|_{\bf r'}$, so we deduce $|\sum_{j_0\leq j\leq p-1}a_{i_0,j}\binom{j}{j_0}X_\alpha^{j_0+pi_0}|_{\bf r}=|a_{i_0,j_0}X_\alpha^{j_0+pi_0}|_{\bf r}=r_\alpha^{j_0}|f_{j_0}|_{\bf r'}$ as claimed since the sum on the left hand side has $a_{i_0,j_0}X_\alpha^{j_0+pi_0}$ as dominant term with all the others having smaller $|\cdot|_{\bf r}$.

For the second statement we may write each $f_j=\sum_{{\bf k}\in\mathbb{Z}^\Delta}a_{{\bf k},j}\prod_{\alpha\in\Delta}X_\alpha^{k_\alpha}$ ($j=0,\dots,p-1$) as an infinite formal sum and put $f_j^{(N)}:=\sum_{{\bf k}\in(\mathbb{Z}\cap [-N,N])^\Delta}a_{{\bf k},j}\prod_{\alpha\in\Delta}X_\alpha^{k_\alpha}$. We have $$r^{p-1}_\alpha \max_j|f^{(N)}_j|_{\bf r'}\leq|\sum_{j=0}^{p-1}(1+X_\alpha)^j\varphi_\alpha(f^{(N)}_j)|_{\bf r}$$ by the first statement since finite sums converge on any polyannulus. Taking the limit as $N\to\infty$ we deduce that whenever $|\sum_{j=0}^{p-1}(1+X_\alpha)^j\varphi_\alpha(f_j)|_{\bf r}$ is finite for some fixed ${\bf r}\in(0,1)^\Delta$ with $r_\alpha>p^{-1/(p-1)}$, so is $|f_j|_{\bf r'}$ for all $j=0,\dots,p-1$.
\end{proof}

\begin{pro}
For all $\alpha\in \Delta$ we have 
\begin{eqnarray*}
\mathcal{R}_\Delta&=&\bigoplus_{j=0}^{p-1}(1+X_\alpha)^j\varphi_\alpha(\mathcal{R}_\Delta)\\
\mathcal{E}_\Delta^\dagger&=&\bigoplus_{j=0}^{p-1}(1+X_\alpha)^j\varphi_\alpha(\mathcal{E}_\Delta^\dagger)\\
\OED^\dagger&=&\bigoplus_{j=0}^{p-1}(1+X_\alpha)^j\varphi_\alpha(\OED^\dagger)\ .
\end{eqnarray*}
\end{pro}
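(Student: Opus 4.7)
All three decompositions follow a single strategy based on Lemma~\ref{psiestimate}. Directness is immediate from the lower bound in part~(1) of that lemma: if $\sum_{j=0}^{p-1}(1+X_\alpha)^j\varphi_\alpha(f_j) = 0$ with each $f_j$ in any of the three rings, then each $f_j$ is convergent on some polyannulus $B^{({\bf r'},1)}$ with $r_\alpha > p^{-1/(p-1)}$, and the estimate $r_\alpha^{p-1}\max_j |f_j|_{\bf r'} \leq 0$ forces $f_j = 0$. The content is therefore existence.

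The plan is to establish existence for $\mathcal{R}_\Delta$ first and then descend to $\mathcal{E}_\Delta^\dagger$ and $\OED^\dagger$. Fix $f \in \mathcal{R}_\Delta$ and choose ${\bf \rho} \in (p^{-1/(p-1)},1)^\Delta$ with $f \in \mathcal{R}_\Delta^{({\bf \rho},1)}$ (always possible, since $\mathcal{R}_\Delta^{({\bf \rho},1)} \subseteq \mathcal{R}_\Delta^{({\bf \rho}'',1)}$ whenever ${\bf \rho} \leq {\bf \rho}''$). Writing $f = \sum_{{\bf i}\in\mathbb{Z}^\Delta} a_{\bf i}{\bf X}^{\bf i}$, form the Laurent polynomial truncations $f_{(N)}:=\sum_{\max_\alpha|i_\alpha|\leq N}a_{\bf i}{\bf X}^{\bf i} \in \mathcal{E}_\Delta$. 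Each $f_{(N)}$ admits a decomposition $f_{(N)}=\sum_{j=0}^{p-1}(1+X_\alpha)^j\varphi_\alpha(g_{N,j})$ with $g_{N,j}\in\mathcal{E}_\Delta$, obtained from the standard realisation of $\mathcal{E}_\Delta$ as a free $\varphi_\alpha(\mathcal{E}_\Delta)$-module recalled in Section~\ref{dualphigamma}. Applied to differences, Lemma~\ref{psiestimate}(1) gives
\begin{equation*}
r_\alpha^{p-1}\max_j|g_{N,j}-g_{M,j}|_{\bf r'}\leq|f_{(N)}-f_{(M)}|_{\bf r}
\end{equation*}
for every ${\bf r}\in({\bf\rho},1)^\Delta$, where $r'_\alpha=r_\alpha^p$ and $r'_\beta=r_\beta$ for $\beta\neq\alpha$. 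The right-hand side tends to $0$ as $M,N\to\infty$, since $f_{(N)}\to f$ in every $|\cdot|_{\bf r}$-seminorm on $\mathcal{R}_\Delta^{({\bf\rho},1)}$. Hence each sequence $(g_{N,j})_N$ is Cauchy in the Fr\'echet topology of $\mathcal{R}_\Delta^{({\bf\rho'},1)}$, where $\rho'_\alpha=\rho_\alpha^p$ and $\rho'_\beta=\rho_\beta$; by Fr\'echet completeness of this space the limits $f_j:=\lim_N g_{N,j}$ exist in $\mathcal{R}_\Delta^{({\bf\rho'},1)}\subseteq\mathcal{R}_\Delta$. Since $\varphi_\alpha$ is isometric from $|\cdot|_{\bf r'}$ to $|\cdot|_{\bf r}$ and multiplication by $(1+X_\alpha)^j$ is norm-preserving, the identity $f_{(N)}=\sum_j (1+X_\alpha)^j\varphi_\alpha(g_{N,j})$ passes to the limit to yield $f=\sum_{j=0}^{p-1}(1+X_\alpha)^j\varphi_\alpha(f_j)$.

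For $f\in\OED^\dagger$ (resp.\ $\mathcal{E}_\Delta^\dagger$), the $\mathcal{R}_\Delta$ case already produces $f_j\in\mathcal{R}_\Delta$. The upper bound in Lemma~\ref{psiestimate}(1) then gives $|f_j|_{\bf r'}\leq r_\alpha^{-(p-1)}|f|_{\bf r}$, and letting ${\bf r}\to(1,\ldots,1)$, so that ${\bf r'}\to(1,\ldots,1)$ and $r_\alpha^{-(p-1)}\to 1$, one deduces
\begin{equation*}
\limsup_{{\bf r'}\to(1,\ldots,1)}|f_j|_{\bf r'}\leq\limsup_{{\bf r}\to(1,\ldots,1)}|f|_{\bf r},
\end{equation*}
which is $\leq 1$ in the $\OED^\dagger$ case and finite in the $\mathcal{E}_\Delta^\dagger$ case. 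Thus $f_j$ lies in the same ring as $f$. The principal obstacle in this proof is the existence step for $\mathcal{R}_\Delta$ --- concretely, the passage to the limit: one must check that the Fr\'echet topology on $\mathcal{R}_\Delta^{({\bf\rho'},1)}$ is both complete and compatible with the partial Frobenius, so that the approximate decompositions of the Laurent polynomials $f_{(N)}$ assemble into a genuine decomposition of $f$ within $\mathcal{R}_\Delta$.
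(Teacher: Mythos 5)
Your proof is correct and hinges on the same key estimate, Lemma~\ref{psiestimate}(1), as the paper's, but it takes a genuinely different route to existence. The paper groups the expansion of $f$ by the monomials $\prod_{\beta\neq\alpha}X_\beta^{i_\beta}$, so that each coefficient $f_{\bf i}$ is a one-variable element of $\mathcal{R}_\alpha$ (resp.\ $\mathcal{E}_\alpha^\dagger$, resp.\ $\mathcal{O}_{\mathcal{E}_\alpha}^\dagger$); it then applies the classical one-variable decomposition termwise to each $f_{\bf i}$ and invokes Lemma~\ref{psiestimate}(1) only to verify that the recombined formal sum $\sum_{\bf i}f_{{\bf i},j}\prod_{\beta\neq\alpha}X_\beta^{i_\beta}$ converges on the shifted polyannulus. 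You instead truncate $f$ to Laurent polynomials, decompose those purely algebraically, and pass to the limit via a Cauchy/Fr\'echet-completeness argument. The paper's route produces the coefficients $f_j$ by an explicit termwise construction and avoids any appeal to topological completeness; yours is more of a soft limiting argument but has the mild advantage of not quoting the one-variable ($|\Delta|=1$) case of the very proposition being proved. Two small points to clean up: the estimate $|f_j|_{\bf r'}\leq r_\alpha^{-(p-1)}|f|_{\bf r}$ comes from the \emph{left-hand} inequality $r_\alpha^{p-1}\max_j|f_j|_{\bf r'}\leq|f|_{\bf r}$ in Lemma~\ref{psiestimate}(1), not the upper bound on $|f|_{\bf r}$; and the Fr\'echet completeness of $\mathcal{R}_\Delta^{({\bf\rho'},1)}$, which you use without proof, deserves a sentence (a $|\cdot|_{\bf r'}$-Cauchy sequence converges coefficientwise, and the formal limit inherits all the norm bounds, hence converges in every seminorm).
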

\begin{proof}
By collecting the terms with $\prod_{\beta\in\Delta\setminus\{\alpha\}}X_\beta^{i_\beta}$ for each tuple ${\bf i}=(i_\beta)_{\beta\in\Delta\setminus\{\alpha\}}\in\mathbb{Z}^{\Delta\setminus\{\alpha\}}$ in the expansion of any element $f$ in $\mathcal{R}_\Delta$ (resp.\ in $\mathcal{E}_\Delta^\dagger$, resp.\ in $\OED^\dagger$) we may write $$f=\sum_{{\bf i}\in\mathbb{Z}^{\Delta\setminus\{\alpha\}}}f_{\bf i}\prod_{\beta\in\Delta\setminus\{\alpha\}}X_\beta^{i_\beta}$$
for some $f_{\bf i}$ in $\mathcal{R}_\alpha$ (resp.\ in $\mathcal{E}_\alpha^\dagger$, resp.\ in $\mathcal{O}_{\mathcal{E}_\alpha}^\dagger$). Since the operator $\varphi_\alpha$ respects this expansion, we deduce immediately that the sums in the statement are all direct. In order to prove these equalities we may write each $f_{\bf i}$ as a sum $f_{\bf i}=\sum_{j=0}^{p-1}(1+X_\alpha)^j\varphi_\alpha(f_{{\bf i},j})$ for some $f_{{\bf i},j}$ in $\mathcal{R}_\alpha$ (resp.\ in $\mathcal{E}_\alpha^\dagger$, resp.\ in $\mathcal{O}_{\mathcal{E}_\alpha}^\dagger$). Now whenever $f$ is convergent on the polyannulus $B^{({\bf r, 1})}$ for some ${\bf r}\in(0,1)^\Delta$ with $r_\alpha>p^{-1/(p-1)}$ then by Lemma \ref{psiestimate} applied to the sum $\sum_{j=0}^{p-1}(1+X_\alpha)^j\varphi_\alpha(f_{{\bf i},j})$ for each ${\bf i}\in\mathbb{Z}^\Delta$ we deduce that the formal sum $\sum_{{\bf i}\in\mathbb{Z}^{\Delta\setminus\{\alpha\}}}f_{{\bf i},j}\prod_{\beta\in\Delta\setminus\{\alpha\}}X_\beta^{i_\beta}$ converges on the polyannulus $B^{({\bf r', 1})}$ for each $j=0,\dots,p-1$. The statement on the decomposition of $\mathcal{E}_\Delta^\dagger$ and $\OED^\dagger$ also follows from Lemma \ref{psiestimate} noting that $r_\alpha^{p-1}$ tends to $1$ as $r_\alpha\to 1$.
\end{proof}

Now if $\gamma=(\gamma_\alpha)_{\alpha\in\Delta}\in\Gamma_\Delta$ is arbitrary then we put $\gamma(X_\alpha):=(1+X_\alpha)^{\chi_\alpha(\gamma_\alpha)}-1$ for all $\alpha\in\Delta$ and we extend this operator to all monomials ${\bf X}^{\bf i}=\prod_{\alpha\in\Delta}X_\alpha^{i_\alpha}$ multiplicatively (${\bf i}=(i_\alpha)_{\alpha\in\Delta}\in\mathbb{Z}^\Delta$). Now note that for any tuple ${\bf r}\in(0,1)^\Delta$ and ${\bf i}\in\mathbb{Z}^\Delta$ we have $|\gamma({\bf X}^{\bf i})|_{\bf r}=|{\bf X}^{\bf i}|_{\bf r}$, so this defines an action of the group $\Gamma_\Delta$ on each of the rings $\mathcal{R}_\Delta$, $\mathcal{E}_\Delta^\dagger$, and $\OED^\dagger$. This action commutes with the operators $\varphi_\alpha$ (which also commute with each other). Now an \'etale $(\varphi_\Delta,\Gamma_\Delta)$-module over $\OED^\dagger$ is a finitely generated free module $D^\dagger$ over $\OED^\dagger$ with commuting semilinear action of the group $\Gamma_\Delta$ and the operators $\varphi_\alpha$ for each $\alpha\in\Delta$ such that the map
\begin{equation*}
\id\otimes\varphi_\alpha\colon \OED^\dagger\otimes_{\varphi_\alpha,\OED^\dagger}D^\dagger\to D^\dagger
\end{equation*}
is an isomorphism for all $\alpha\in\Delta$. An \'etale $(\varphi_\Delta,\Gamma_\Delta)$-module over $\mathcal{R}_\Delta$ (resp.\ over $\mathcal{E}_\Delta^\dagger$) is a finitely generated free module over $\mathcal{R}_\Delta$ (resp.\ over $\mathcal{E}_\Delta^\dagger$) with commuting semilinear action of the group $\Gamma_\Delta$ and the operators $\varphi_\alpha$ for each $\alpha\in\Delta$ that comes as base extension from an \'etale $(\varphi_\Delta,\Gamma_\Delta)$-module over $\OED^\dagger$. We denote by $\mathcal{D}^{et}(\varphi_{\Delta},\Gamma_\Delta,\OED^\dagger)$, $\mathcal{D}^{et}(\varphi_{\Delta},\Gamma_\Delta,\mathcal{E}_\Delta^\dagger)$, and $\mathcal{D}^{et}(\varphi_{\Delta},\Gamma_\Delta,\mathcal{R}_\Delta)$ the categories of \'etale $(\varphi_\Delta,\Gamma_\Delta)$-modules over the respective rings.

We finish this section by proving certain ring theoretic properties of $\OED^\dagger$ and by deriving from them certain consequences on the structure of \'etale $(\varphi_\Delta,\Gamma_\Delta)$-modules over $\OED^\dagger$ and over $\mathcal{E}_\Delta^\dagger$. 

\begin{lem}\label{modpjacobson}
The Jacobson radical of the ring $E_\Delta=\OED/(p)=\OED^\dagger/(p)$ is zero.
\end{lem}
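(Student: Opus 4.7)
The plan is to show, for each nonzero $f \in E_\Delta$, that some maximal ideal of $E_\Delta$ does not contain $f$. I would construct a ring homomorphism $\phi_\mathbf{c} \colon E_\Delta \to \Fp\bg X\jg$ to the one-variable Laurent series field with $\phi_\mathbf{c}(f)\neq 0$, whose image is the subfield $\Fp\bg X^g\jg$ for $g = \gcd_\alpha c_\alpha$. Since the image is a field, $\ker\phi_\mathbf{c}$ is automatically a maximal ideal of $E_\Delta$; it does not contain $f$, so $f \notin \operatorname{Jac}(E_\Delta)$.

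The homomorphism is the weighted substitution $X_\alpha \mapsto X^{c_\alpha}$ for a tuple $\mathbf{c} = (c_\alpha)_{\alpha\in\Delta}\in\mathbb{Z}_{>0}^\Delta$ of positive integers. This is well-defined: for any $\sum_\mathbf{i} b_\mathbf{i}\mathbf{X}^\mathbf{i} \in \Fp\bs X_\alpha\mid\alpha\in\Delta\js$, the bound $\langle\mathbf{c},\mathbf{i}\rangle\geq \sum_\alpha i_\alpha$ (valid for $\mathbf{i}\in\mathbb{N}^\Delta$ with $c_\alpha\geq 1$) guarantees that only finitely many monomials contribute to each power of $X$, while $X^{c_\alpha}$ is invertible in $\Fp\bg X\jg$. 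A standard numerical-semigroup argument (Sylvester--Frobenius) shows the image equals $\Fp\bg X^g\jg$, which is a field.

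Now write $f = \mathbf{X}^{-\mathbf{m}}\tilde f$ with nonzero $\tilde f = \sum_\mathbf{i} a_\mathbf{i}\mathbf{X}^\mathbf{i}\in\Fp\bs X_\alpha\mid\alpha\in\Delta\js$ and support $S := \{\mathbf{i} : a_\mathbf{i}\neq 0\}$. Since $\mathbf{X}^{-\mathbf{m}}$ maps to a unit, it suffices to arrange $\phi_\mathbf{c}(\tilde f)\neq 0$. If the linear functional $\langle\mathbf{c},\cdot\rangle$ attains its minimum on $S$ at a unique point $\mathbf{i}_0$, then the coefficient of $X^{\langle\mathbf{c},\mathbf{i}_0\rangle}$ in $\phi_\mathbf{c}(\tilde f)$ equals $a_{\mathbf{i}_0}\neq 0$ and we are done. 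To arrange this, let $d := \min_{\mathbf{i}\in S}\sum_\alpha i_\alpha$, and note that $S_0 := \{\mathbf{i}\in S : \sum_\alpha i_\alpha = d\}$ is finite. Pick $\mathbf{d}\in\mathbb{Z}^\Delta$ making $\langle\mathbf{d},\cdot\rangle$ injective on $S_0$ (a generic condition, achievable since $S_0$ is finite), let $\mathbf{i}_0$ be its unique minimizer on $S_0$, and set $\mathbf{c} := (N,\ldots,N)+\mathbf{d}$ for a large positive integer $N$.

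The main obstacle---elementary but requiring a bit of care---is to verify that this $\mathbf{c}$ produces a unique minimizer on all of $S$, not just on the finite piece $S_0$. For $\mathbf{i}\in S_0\setminus\{\mathbf{i}_0\}$ one has $\langle\mathbf{c},\mathbf{i}-\mathbf{i}_0\rangle = \langle\mathbf{d},\mathbf{i}-\mathbf{i}_0\rangle > 0$ immediately. For $\mathbf{i}\in S\setminus S_0$ the total degree $\sum_\alpha i_\alpha\geq d+1$ may be arbitrarily large, but the $(N,\ldots,N)$-component of $\mathbf{c}$ contributes $N(\sum_\alpha i_\alpha - d)\geq N$ to $\langle\mathbf{c},\mathbf{i}-\mathbf{i}_0\rangle$, while the $\mathbf{d}$-perturbation is bounded by $\max_\alpha|d_\alpha|\cdot(\sum_\alpha i_\alpha + d)$; choosing $N > \max_\alpha|d_\alpha|\cdot(2d+1)$ forces $\langle\mathbf{c},\mathbf{i}-\mathbf{i}_0\rangle > 0$ uniformly in $\mathbf{i}$, completing the construction.
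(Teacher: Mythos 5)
Your proof is correct, and it takes a genuinely different route from the paper's. The paper argues directly by contradiction from the definition of the Jacobson radical: if $0\neq\lambda\in\operatorname{Jac}(E_\Delta)$, multiply by a suitable monomial to reduce to $\lambda\in E_\Delta^+:=\Fp\bs X_\alpha\mid\alpha\in\Delta\js$ not divisible by any $X_\alpha$; its constant term must vanish (otherwise $\lambda$ would be a unit, impossible for a nonzero ring with nonzero radical), whence $\lambda+\prod_{\alpha\in\Delta}X_\alpha$ also has zero constant term and is still not divisible by any $X_\alpha$, so it is a non-unit of $E_\Delta^+$ and hence (by unique factorization in the regular local ring $E_\Delta^+$, since the $X_\alpha$ are the only primes that become units in $E_\Delta$) a non-unit of $E_\Delta$ --- contradicting $\lambda\in\operatorname{Jac}(E_\Delta)$. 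You instead exhibit, for each nonzero $f$, an explicit maximal ideal not containing it, as the kernel of a weighted monomial substitution $\phi_{\mathbf{c}}\colon E_\Delta\to\Fp\bg X\jg$, $X_\alpha\mapsto X^{c_\alpha}$; the kernel is maximal because the image is the field $\Fp\bg X^{g}\jg$ (Sylvester--Frobenius), and the tie-breaking weight $\mathbf{c}=(N,\dots,N)+\mathbf{d}$ with $N>\max_\alpha|d_\alpha|\,(2d+1)$ does yield a unique minimizer of $\langle\mathbf{c},\cdot\rangle$ on the full (possibly infinite) support, as your estimate $(N-M)s-(N+M)d>0$ for $s\geq d+1$ confirms. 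The paper's argument is shorter and essentially only uses that $E_\Delta^+$ is a UFD; yours is longer but more constructive, explicitly producing a rich family of maximal ideals of $E_\Delta$ indexed by positive weight vectors, which is extra information of possible independent interest. Both are valid.
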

\begin{proof}
Suppose we have $0\neq \lambda\in \operatorname{Jac}(E_\Delta)$. By multiplying $\lambda$ by a monomial $\prod_{\alpha\in\Delta}X_\alpha^{k_\alpha}$ for some $k_\alpha\in\mathbb{Z}$ we may assume without loss of generality that $\lambda\in E_\Delta^+:=\Fp\bs X_\alpha\mid \alpha\in\Delta\js\subset E_\Delta=E_\Delta^+[X_\Delta^{-1}]$ and $\lambda$ is not divisible by any of the variables $X_\alpha$ ($\alpha\in\Delta$). Note that the constant term of $\lambda$ is zero since otherwise $\lambda$ would be invertible. Therefore $\lambda+\prod_{\alpha\in\Delta}X_\alpha$ is not invertible in $E_\Delta$ either as it is not divisible by any of the $X_\alpha$ and it is not invertible in $E_\Delta^+$ either. This contradicts our assumption that $\lambda\in \operatorname{Jac}(E_\Delta)$.
\end{proof}

\begin{pro}\label{jacoverconv}
We have $\operatorname{Jac}(\OED^\dagger)=(p)$.
\end{pro}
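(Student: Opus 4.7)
The plan is to prove the two inclusions $(p)\subseteq\operatorname{Jac}(\OED^\dagger)$ and $\operatorname{Jac}(\OED^\dagger)\subseteq (p)$ separately.

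For the easy direction $\operatorname{Jac}(\OED^\dagger)\subseteq (p)$, I would reduce modulo $p$. By Proposition \ref{OEDdagpcomp} (combined with its proof), the natural map $\OED^\dagger/(p)\to \OED/(p)=E_\Delta$ is an isomorphism. Since the image of the Jacobson radical under a surjective ring homomorphism is always contained in the Jacobson radical of the target, any $\lambda\in\operatorname{Jac}(\OED^\dagger)$ reduces to an element of $\operatorname{Jac}(E_\Delta)$, which is zero by Lemma \ref{modpjacobson}. Hence $\lambda\in(p)$.

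For the main direction $(p)\subseteq\operatorname{Jac}(\OED^\dagger)$, I need to show that $1+pf$ is a unit of $\OED^\dagger$ for every $f\in\OED^\dagger$. By Proposition \ref{OEDdagpcomp}, $\OED$ is the $p$-adic completion of $\OED^\dagger$ and is $p$-adically complete, so $g:=\sum_{n=0}^\infty(-pf)^n$ exists in $\OED$ and is the inverse of $1+pf$ there. The content is to show $g\in\OED^\dagger$. Fix $\delta\in(0,p-1)$; since $\limsup_{{\bf r}\to(1)_{\alpha\in\Delta}}|f|_{\bf r}\leq 1$, there exists $\varepsilon>0$ such that $|f|_{\bf r}\leq 1+\delta$ for every ${\bf r}\in(1-\varepsilon,1)^\Delta$. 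For such ${\bf r}$ the multiplicativity of $|\cdot|_{\bf r}$ yields $|(-pf)^n|_{\bf r}\leq ((1+\delta)/p)^n$, which tends to zero geometrically. By the ultrametric inequality the partial sums $g_N:=\sum_{n=0}^N(-pf)^n\in\OED^\dagger$ form a Cauchy sequence in the ${\bf r}$-norm; moreover, by ultrametricity the $n=0$ term dominates, so $|g_N|_{\bf r}\leq 1$ uniformly in $N$. The ${\bf r}$-norm limit of the $g_N$ is a Laurent series with ${\bf r}$-norm $\leq 1$, whose Laurent coefficients coincide with those of $g$ (which is their $p$-adic limit). This shows $g\in\mathcal{R}_\Delta$ with $|g|_{\bf r}\leq 1$ for all ${\bf r}$ in a neighbourhood of $(1)_{\alpha\in\Delta}$, hence $g\in\OED^\dagger$.

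The main technical obstacle is the bookkeeping between the two modes of convergence: $g$ is \emph{a priori} defined as a $p$-adic limit in $\OED$, while membership in $\OED^\dagger$ requires convergence and a norm bound on a polyannulus. The reconciliation comes from the fact that both limits produce the same Laurent coefficients, allowing the bound $|g_N|_{\bf r}\leq 1$ obtained via the ultrametric estimate to be transferred to $g$ itself. Apart from this identification the argument is a routine geometric series estimate, and no new input beyond Proposition \ref{OEDdagpcomp} and Lemma \ref{modpjacobson} is needed.
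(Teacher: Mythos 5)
Your proof is correct and follows essentially the same strategy as the paper's: the heart of the argument in both cases is that $\limsup_{{\bf r}\to(1)}|f|_{\bf r}\le 1$ forces $|pf|_{\bf r}<1$ on a polyannulus near the boundary, so the geometric series $\sum(-pf)^n$ converges in the ${\bf r}$-norm with limit of norm $1$. The only cosmetic difference is the organisation of the inclusion $\operatorname{Jac}(\OED^\dagger)\subseteq(p)$: you push the radical forward along the surjection $\OED^\dagger\twoheadrightarrow E_\Delta$ and invoke Lemma~\ref{modpjacobson}, while the paper first proves $(p)\subseteq\operatorname{Jac}(\OED^\dagger)$ and then uses the standard identity $\operatorname{Jac}(R/I)=\operatorname{Jac}(R)/I$ for $I\subseteq\operatorname{Jac}(R)$ to conclude from $\operatorname{Jac}(E_\Delta)=0$; both are routine. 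The ``bookkeeping'' detour through the $p$-adic completion $\OED$ in your write-up is also harmless but unnecessary---once one knows the partial sums are Cauchy in $|\cdot|_{\bf r}$ for all ${\bf r}$ in a fixed polyannulus, the limit is directly an element of $\mathcal{R}_\Delta$ with the required bound, so one never needs to detour through $\OED$ and then reconcile the two notions of limit (as the paper's phrasing makes clear).
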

\begin{proof}
By Lemma \ref{modpjacobson} we are reduced to showing that $1+px$ is invertible in $\OED^\dagger$ for all $x\in\OED^\dagger$. Since $\lim_{{\bf r}\to (1)_{\alpha\in\Delta}}|x|_{\bf r}\leq 1$ there exists a real number $0<\rho=\rho(\varepsilon)<1$ such that for all ${\bf r}\in (\rho,1)^\Delta$ we have $|x|_{\bf r}<1+\varepsilon$ whence $|px|_{\bf r}=p^{-1}|x|_{\bf r}<p^{-1}+p^{-1}\varepsilon<1$ for $0<\varepsilon$ small enough. In particular, the formal inverse $(1+px)^{-1}=\sum_{j=0}^\infty(-px)^j$ converges in $|\cdot|_{\bf r}$ and we have $|(1+px)^{-1}|_{\bf r}=1$ for all ${\bf r}\in(\rho,1)^\Delta$. 
\end{proof}

\begin{rem}
It is also true (and easier to prove) that we also have $\operatorname{Jac}(\OED)=(p)$.
\end{rem}

\begin{pro}\label{noetheroverconv}
The ring $\OED^\dagger$ is noetherian.
\end{pro}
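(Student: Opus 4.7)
The strategy is faithfully flat descent of noetherianity from $\OED$ down to $\OED^\dagger$. We first note that $\OED$ itself is noetherian: each reduction $\OED/(p^n) \cong (\Zp/p^n)\bs X_\alpha\mid\alpha\in\Delta\js[X_\alpha^{-1}\mid\alpha\in\Delta]$ is a localization of a regular noetherian power series ring, and $\OED$ is $p$-adically complete with the finitely generated ideal $(p)$, so the standard lifting criterion for $p$-adically complete rings yields noetherianity. Granted faithful flatness of the inclusion $\OED^\dagger \hookrightarrow \OED$ (a genuine inclusion by Prop.~\ref{OEDdagpcomp}), the classical argument then concludes the proof: any ascending chain of ideals in $\OED^\dagger$ extends to a stabilising chain in the noetherian ring $\OED$, and faithful flatness gives $I\OED \cap \OED^\dagger = I$, so the original chain also stabilises.

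Faithfulness drops out immediately from Prop.~\ref{jacoverconv}: every maximal ideal of $\OED^\dagger$ contains $(p)$ and hence corresponds, via the common reduction $E_\Delta = \OED^\dagger/(p) = \OED/(p)$, bijectively to a maximal ideal of $\OED$, so proper ideals of $\OED^\dagger$ extend to proper ideals of $\OED$.

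The heart of the proof lies in establishing flatness. The crucial inputs are the isomorphisms $\OED^\dagger/(p^n) \cong \OED/(p^n)$ for every $n \geq 1$ from Prop.~\ref{OEDdagpcomp}, together with $p$-adic separatedness of $\OED^\dagger$ and the containment $(p) \subseteq \operatorname{Jac}(\OED^\dagger)$. Concretely, flatness reduces to checking that for every finite family $f_1,\dots,f_k \in \OED^\dagger$, any $\OED$-linear relation $\sum_i a_i f_i = 0$ is already an $\OED$-combination of relations holding over $\OED^\dagger$. The identifications modulo $p^n$ furnish $\OED^\dagger$-approximations $a_i^{(n)}$ with $a_i - a_i^{(n)} \in p^n \OED$, forming $p$-adic Cauchy sequences; since $\OED^\dagger$ is \emph{separated but not complete} in the $p$-adic topology, these need not converge inside $\OED^\dagger$, and this is the chief technical obstacle. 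One overcomes it by controlling the syzygy modules of $(f_1,\dots,f_k)$ over the noetherian rings $\OED/(p^n)$ uniformly in $n$, and patching the compatible family of mod-$p^n$ syzygies into a genuine $\OED^\dagger$-relation by exploiting the convergence behaviour of overconvergent functions on a suitable shrinking family of polyannuli.
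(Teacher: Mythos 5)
The heart of your argument---establishing faithful flatness of $\OED^\dagger \hookrightarrow \OED$---is left as a sketch, and this is exactly where the proof breaks down. You correctly identify the obstacle: $\OED^\dagger$ is $p$-adically separated but not complete, so the obvious approximation argument (lifting an $\OED$-relation $\sum a_i f_i = 0$ to a $p$-adic Cauchy sequence of approximate $\OED^\dagger$-relations) does not converge inside $\OED^\dagger$. The proposed remedy, ``controlling the syzygy modules uniformly in $n$ and patching by exploiting the convergence behaviour,'' is precisely the hard part and you have not carried it out. Worse, the route is circular in spirit: the standard theorem that $I$-adic completion is flat requires the base ring to be noetherian, which is the very thing being proved for $\OED^\dagger$. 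For a non-noetherian ring the $p$-adic completion can fail to be flat, so $\OED^\dagger \to \OED$ being flat is not free; you would need a uniform Artin--Rees type bound on syzygies for $\OED^\dagger$, and establishing that is at least as difficult as proving noetherianity directly. (The subsidiary claims---that $\OED$ is noetherian, that faithfulness follows from $\operatorname{Jac}(\OED^\dagger)=(p)$ and the common reduction mod $p$, and that faithfully flat descent of chain conditions would finish---are all fine.)

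The paper sidesteps the descent machinery entirely. Following Lemma~1.3 of \cite{K2}, it shows that $\OED^\dagger$ is a \emph{weakly complete finitely generated algebra} over $\OED^+ = \Zp\bs X_\alpha\mid\alpha\in\Delta\js$ with respect to the ideal $(p)$ and the generator $X_\Delta^{-1}$, in Fulton's sense \cite{Ful}. Concretely, one writes $f\in\OED^\dagger$ mod $p^n$ with pole order $h_n$ and derives the linear growth bound $h_n \leq C_1 n + C_2$ directly from the overconvergence estimate $|f|_{\bf r} < 1+\varepsilon$, by pinning one radius at a fixed $\rho_1$ and letting the remaining radii tend to $1$. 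Fulton's theorem then gives noetherianity outright; no flatness of $\OED^\dagger\to\OED$, and indeed no appeal to $\OED$ at all, is needed.
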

\begin{proof}
This follows the same way as Lemma 1.3 in \cite{K2}. We are going to show that the ring $\OED^\dagger$ is a weakly complete finitely generated algebra over $\OED^+:=\Zp\bs X_\alpha\mid \alpha\in\Delta\js$ with ideal $(p)$ and generator $X_\Delta^{-1}$ in the sense of Fulton \cite{Ful}, hence $\OED^\dagger$ is noetherian. Pick an element $f=\sum_{{\bf k}\in\mathbb{Z}^{\Delta}}a_{\bf k}\prod_{\alpha\in\Delta}X_\alpha^{k_\alpha}\in \OED^\dagger$ and for all $n>0$ let $h_n$ be the smallest positive integer such that $f$ modulo $p^n$ lies in $X_\Delta^{-h_n}\mathbb{Z}/(p^n)\bs X_\alpha,\alpha\in\Delta\js$. In other words there exists an index ${\bf k_n}=(k_{n,\alpha})_{\alpha\in\Delta}\in\mathbb{Z}^\Delta$ and an $\alpha_n\in\Delta$ such that $k_{n,\alpha_n}=-h_n$ and $p^n\nmid a_{\bf k_n}$ and $h_n$ is maximal with this property. For a fixed real number $\varepsilon>0$ there exists a $\rho=\rho(\varepsilon)\in (0,1)$ such that $|f|_{\bf r}<1+\varepsilon$ for all ${\bf r}\in (\rho,1)^\Delta$. Now we fix a real number $\rho_1\in (\rho,1)$ and pick ${\bf r_n}=(r_{n,\alpha})_{\alpha\in\Delta}\in (\rho,1)^\Delta$ such that $r_{n,\alpha_n}=\rho_1$ and $r_{n,\beta}\in (\rho,1)$ is arbitrary for all $\beta\in \Delta\setminus\{\alpha_n\}$ and $n>0$. We compute
\begin{equation*}
1+\varepsilon>|f|_{\bf r}\geq |a_{\bf k_n}|_{\bf r_n}\rho_1^{-h_n}\prod_{\beta\in\Delta\setminus\{\alpha_n\}}r_{n,\beta}^{k_{n,\beta}}\geq p^{1-n}\rho_1^{-h_n}\prod_{\beta\in\Delta\setminus\{\alpha_n\}}r_{n,\beta}^{k_{n,\beta}}\ .
\end{equation*}
Now for fixed $n$ we let $r_{n,\beta}$ tend to $1$ for all $\beta\in\Delta\setminus\{\alpha_n\}$ and deduce $1+\varepsilon\geq p^{1-n}\rho_1^{-h_n}$. Taking logarithm we obtain $h_n\leq \frac{\log p}{-\log\rho_1}n+\frac{\log(1+\varepsilon)-\log p}{-\log \rho_1}$ showing the weakly completeness as the constants $\varepsilon$ and $\rho_1$ are chosen independently of $n$.
\end{proof}

\begin{cor}
$\mathcal{E}_\Delta^\dagger=\OED^\dagger[p^{-1}]$ is noetherian.
\end{cor}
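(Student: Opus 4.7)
The plan is extremely short: invoke the previous proposition together with the standard fact that any localization of a noetherian ring is noetherian. Concretely, $\mathcal{E}_\Delta^\dagger = \OED^\dagger[p^{-1}]$ is by definition the localization of $\OED^\dagger$ at the multiplicative set $S := \{p^n \mid n \geq 0\} \subset \OED^\dagger$. By Proposition \ref{noetheroverconv}, $\OED^\dagger$ is noetherian, so $S^{-1}\OED^\dagger = \mathcal{E}_\Delta^\dagger$ is noetherian as well.

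The one line I would actually write for the reader is the reminder of why localizations preserve the noetherian property: if $I \subseteq \mathcal{E}_\Delta^\dagger$ is any ideal, then its contraction $J := I \cap \OED^\dagger$ is finitely generated over $\OED^\dagger$, say by $f_1,\dots,f_n$, and these same elements generate $I = J \cdot \mathcal{E}_\Delta^\dagger$ over $\mathcal{E}_\Delta^\dagger$. Since every ideal of $\mathcal{E}_\Delta^\dagger$ arises this way (via $I \mapsto I \cap \OED^\dagger \mapsto (I \cap \OED^\dagger)\mathcal{E}_\Delta^\dagger = I$, using that $p$ is invertible in $\mathcal{E}_\Delta^\dagger$), finite generation transfers.

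There is no real obstacle here; the entire content has already been absorbed into Proposition \ref{noetheroverconv}. The only thing worth double-checking is that $p$ is indeed not a zero-divisor in $\OED^\dagger$ (so that the localization is the expected one and nothing collapses), but this is immediate from the $p$-adic separatedness established in Proposition \ref{OEDdagpcomp} together with the injection $\OED^\dagger \hookrightarrow \OED$ (where $p$ is obviously a non-zero-divisor).
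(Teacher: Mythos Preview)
Your proposal is correct and matches the paper's approach: the paper states this corollary immediately after Proposition~\ref{noetheroverconv} with no proof, implicitly invoking the standard fact that localizations of noetherian rings are noetherian. Your additional remarks (the explicit contraction argument and the observation that $p$ is a non-zero-divisor) are sound but more than the paper felt necessary to say.
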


\begin{thm}\label{overconvfree}
Let $D^\dagger$ be an object in $\mathcal{D}^{et}(\varphi_{\Delta},\Gamma_\Delta,\OED^\dagger)$ that is $p$-torsion free. Then $D^\dagger$ is free as a module over $\OED^\dagger$.
\end{thm}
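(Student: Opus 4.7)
My plan is to combine Nakayama's lemma (applicable thanks to $\operatorname{Jac}(\OED^\dagger)=(p)$ from Proposition \ref{jacoverconv}) with the Tor vanishing that $p$-torsion-freeness provides, reducing the problem to the already-known freeness statement over the residue ring $E_\Delta$.

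First, I would reduce modulo $p$. The quotient $\overline{D}:=D^\dagger/pD^\dagger$ inherits commuting semilinear actions of $\varphi_\Delta$ and $\Gamma_\Delta$, and because each $\id\otimes\varphi_\alpha\colon \OED^\dagger\otimes_{\varphi_\alpha,\OED^\dagger}D^\dagger\to D^\dagger$ is an isomorphism, its reduction modulo $p$ shows $\overline{D}$ is an \'etale $(\varphi_\Delta,\Gamma_\Delta)$-module over $E_\Delta=\OED^\dagger/(p)$. By Cor.\ 3.16 of \cite{MultVarGal}, $\overline{D}$ is free over $E_\Delta$; let $n$ be its rank, and choose $\overline{e}_1,\dots,\overline{e}_n$ a basis.

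Next, I would lift these to elements $e_1,\dots,e_n\in D^\dagger$ and show they generate $D^\dagger$. Since $D^\dagger$ is finitely generated over the noetherian ring $\OED^\dagger$ (Proposition \ref{noetheroverconv}) and $p\in\operatorname{Jac}(\OED^\dagger)$ by Proposition \ref{jacoverconv}, the topological Nakayama lemma applies: the $e_i$, which by construction generate $D^\dagger/pD^\dagger$, in fact generate $D^\dagger$. This yields a surjection $\phi\colon (\OED^\dagger)^n\twoheadrightarrow D^\dagger$, $(a_i)_i\mapsto \sum_i a_ie_i$, and we set $K:=\ker\phi$.

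The heart of the argument is to show $K=0$. Applying $-\otimes_{\OED^\dagger}E_\Delta$ to the short exact sequence $0\to K\to (\OED^\dagger)^n\to D^\dagger\to 0$ gives
\begin{equation*}
\Tor_1^{\OED^\dagger}(D^\dagger,E_\Delta)\longrightarrow K/pK\longrightarrow (E_\Delta)^n\longrightarrow D^\dagger/pD^\dagger\longrightarrow 0.
\end{equation*}
The rightmost map is an isomorphism by the choice of basis, and the leftmost group equals $\ker(p\cdot\colon D^\dagger\to D^\dagger)$, which vanishes by the $p$-torsion-free hypothesis. Hence $K/pK=0$. Since $K$ is a submodule of the finitely generated module $(\OED^\dagger)^n$ over a noetherian ring, $K$ itself is finitely generated; invoking Nakayama a second time (again using $p\in\operatorname{Jac}(\OED^\dagger)$) forces $K=0$, so $D^\dagger\cong(\OED^\dagger)^n$ is free.

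The plan is quite direct; the only point requiring care is the appeal to Nakayama, which relies essentially on the identification $\operatorname{Jac}(\OED^\dagger)=(p)$ proven earlier---this is really the key structural input that makes the multivariable argument no harder than the classical one.
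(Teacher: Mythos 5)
Your proof is correct and uses the same essential inputs as the paper: freeness of $D^\dagger/pD^\dagger$ over $E_\Delta$ (Cor.\ 3.16 of \cite{MultVarGal}), $\operatorname{Jac}(\OED^\dagger)=(p)$ from Prop.\ \ref{jacoverconv}, noetherianness from Prop.\ \ref{noetheroverconv}, and $p$-torsion-freeness. The difference is in how injectivity of the generating set is established. The paper first argues directly that the submodule $M=\sum_i\OED^\dagger e_i$ is free of rank $k$ by exploiting the isomorphisms of graded pieces $p^{n-1}D^\dagger/p^nD^\dagger\overset{\sim}{\to}p^nD^\dagger/p^{n+1}D^\dagger$, and then applies Nakayama once to conclude $M=D^\dagger$; you instead get a surjection $(\OED^\dagger)^n\twoheadrightarrow D^\dagger$ by Nakayama first, then show the kernel vanishes by reading off $K/pK=0$ from the Tor exact sequence (using $\Tor_1^{\OED^\dagger}(D^\dagger,E_\Delta)=D^\dagger[p]=0$) and applying Nakayama a second time. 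Your homological packaging is arguably a bit cleaner and sidesteps the freeness-of-$M$ step, which the paper leaves somewhat terse. One small remark: since $\OED^\dagger$ is noetherian and $p\in\operatorname{Jac}(\OED^\dagger)$, the ordinary (non-topological) Nakayama lemma suffices here; you do not need the compact/topological variant.
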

\begin{proof}
$D^\dagger/pD^\dagger$ is an object in $\mathcal{D}^{et}(\varphi_{\Delta},\Gamma_\Delta,E_\Delta)$ therefore $D^\dagger/pD^\dagger$ is free as a module over $E_\Delta$ by Cor.\ 3.16 in \cite{MultVarGal}. Let $e_1,\dots,e_k\in D$ be an arbitrary lift of a set $\{\overline{e_1},\dots,\overline{e_k}\}$ of free generators of $D^\dagger/pD^\dagger$. We claim that $e_1,\dots,e_k$ freely generate $D^\dagger$. Since $D^\dagger$ has no $p$-torsion, the multiplication-by-$p$ map $p^{n-1}D^\dagger/p^nD^\dagger\to p^nD^\dagger/p^{n+1}D^\dagger$ is an isomorphism for all $n\geq 1$. Therefore the $\OED^\dagger$-submodule $M$ of $D^\dagger$ generated by $e_1,\dots,e_k$ is free of rank $k$ over $\OED^\dagger$. On the other hand, the inclusion $M\hookrightarrow D^\dagger$ induces an isomorphism $M/pM\cong D^\dagger/pD^\dagger$ by construction. We deduce $M=D^\dagger$ by Nakayama's lemma that we may use by Propositions \ref{jacoverconv} and \ref{noetheroverconv}.
\end{proof}

\begin{cor}
Any object in $\mathcal{D}^{et}(\varphi_{\Delta},\Gamma_\Delta,\mathcal{E}_\Delta^\dagger)$ (resp.\ in $\mathcal{D}^{et}(\varphi_{\Delta},\Gamma_\Delta,\mathcal{R}_\Delta)$) is a free module over $\mathcal{E}_\Delta^\dagger$ (resp.\ over $\mathcal{R}_\Delta$).
\end{cor}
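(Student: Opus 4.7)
The plan is to reduce the Corollary directly to Theorem \ref{overconvfree} by quotienting a chosen integral model by its $p$-power torsion. By definition of the étale condition over $\mathcal{E}_\Delta^\dagger$ (resp.\ $\mathcal{R}_\Delta$), any such $D$ arises as a base change $D^\dagger\otimes_{\OED^\dagger}R$ for some étale $(\varphi_\Delta,\Gamma_\Delta)$-module $D^\dagger$ over $\OED^\dagger$ and $R\in\{\mathcal{E}_\Delta^\dagger,\mathcal{R}_\Delta\}$. Since $\OED^\dagger$ is noetherian by Proposition \ref{noetheroverconv}, the increasing chain $D^\dagger[p]\subseteq D^\dagger[p^2]\subseteq\cdots$ stabilises, so $T:=D^\dagger[p^\infty]=D^\dagger[p^N]$ for some $N\geq 0$. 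As $\varphi_\alpha$ and $\Gamma_\Delta$ commute with multiplication by $p$, the submodule $T$ is automatically a $(\varphi_\Delta,\Gamma_\Delta)$-submodule.

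The first step will be to verify that both $T$ and the quotient $\tilde D^\dagger:=D^\dagger/T$ inherit the étale property. Since $\OED^\dagger$ is free as a module over itself via $\varphi_\alpha$, with basis $(1+X_\alpha)^j$ for $0\leq j<p$, the functor $\varphi_\alpha^*(-)=\OED^\dagger\otimes_{\varphi_\alpha,\OED^\dagger}(-)$ is exact and commutes with the formation of $p$-power torsion. Hence the étale isomorphism $\id\otimes\varphi_\alpha\colon\varphi_\alpha^*D^\dagger\xrightarrow{\sim}D^\dagger$ restricts to an isomorphism $\varphi_\alpha^*T\xrightarrow{\sim}T$ and descends to an isomorphism $\varphi_\alpha^*\tilde D^\dagger\xrightarrow{\sim}\tilde D^\dagger$ for every $\alpha\in\Delta$. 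This is the only slightly subtle point of the argument; everything else is formal.

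The second step will be to apply Theorem \ref{overconvfree} to the $p$-torsion-free étale $(\varphi_\Delta,\Gamma_\Delta)$-module $\tilde D^\dagger$, concluding that $\tilde D^\dagger$ is free over $\OED^\dagger$. Finally one base-changes to $R$: since $p$ is invertible in both $\mathcal{E}_\Delta^\dagger=\OED^\dagger[p^{-1}]$ and $\mathcal{R}_\Delta$ (elements of $\mathcal{R}_\Delta$ have $\Qp$-coefficients), we have $T\otimes_{\OED^\dagger}R=0$, and therefore $D=D^\dagger\otimes_{\OED^\dagger}R=\tilde D^\dagger\otimes_{\OED^\dagger}R$ is free over $R$ because freeness is preserved under base change.
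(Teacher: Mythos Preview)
Your proof is correct and is exactly the argument the paper has in mind: the corollary is stated without proof immediately after Theorem~\ref{overconvfree}, and your write-up simply makes explicit the obvious step of replacing the integral model $D^\dagger$ by $D^\dagger/D^\dagger[p^\infty]$ before invoking that theorem (compare the identical reduction in the proof of Proposition~\ref{h0phioverconv}). The verification that the quotient remains \'etale via exactness of $\varphi_\alpha^*$ is the right justification and is the only point requiring comment.
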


\subsection{Overconvergence of $\GQpD$-representations}

Recall \cite{CC} that there exists a subfield $\widehat{\mathcal{E}^{ur}}^\dagger$ (denoted by ${\bf B}^\dagger$ in op.\ cit.) of the field $\widehat{\mathcal{E}^{ur}}$ (denoted by ${\bf B}$ in op.\ cit.) with ring of integers $\mathcal{O}_{\widehat{\mathcal{E}^{ur}}}^\dagger=\widehat{\mathcal{E}^{ur}}^\dagger\cap \mathcal{O}_{\widehat{\mathcal{E}^{ur}}}$ (denoted by ${\bf A}^\dagger$ in op.\ cit.) satisfying certain convergence conditions such that $(\widehat{\mathcal{E}^{ur}}^\dagger)^{H_{\Qp}}=\mathcal{E}^\dagger$ (denoted by ${\bf B}_{\Qp}^\dagger$ in op.\ cit.) and $(\mathcal{O}_{\widehat{\mathcal{E}^{ur}}}^\dagger)^{H_{\Qp}}=\mathcal{O}_{\mathcal{E}}^\dagger$ (denoted by ${\bf A}_{\Qp}^\dagger$ in op.\ cit.). The overconvergent $(\varphi,\Gamma)$-module corresponding to a continuous $p$-adic representation $V$ of $G_{\Qp}$ is defined as
\begin{equation*}
\mathbb{D}^\dagger(V):=\left(\mathcal{O}_{\widehat{\mathcal{E}^{ur}}}^\dagger\otimes_{\Zp}V\right)^{H_{\Qp}} \ .
\end{equation*}
The main result of Cherbonnier and Colmez (in case $K=\Qp$) states that any such $V$ is \emph{overconvergent}, ie.\ we have $\dim_{\mathcal{E}^\dagger}\mathbb{D}^\dagger(V)=\dim_{\Qp}V$. In particular, we have $\mathbb{D}(V)\cong \mathcal{E}\otimes_{\mathcal{E}^\dagger}\mathbb{D}^\dagger(V)$. Now we consider a copy $\widehat{\mathcal{E}_\alpha^{ur}}^\dagger$ (resp.\ $\mathcal{O}_{\widehat{\mathcal{E}_\alpha^{ur}}}^\dagger$) of the ring $\widehat{\mathcal{E}^{ur}}^\dagger$ for each $\alpha\in\Delta$ and put 
\begin{align*}
\mathcal{O}_{\mathcal{E}_{\Delta,\circ}}^\dagger:=\bigotimes_{\alpha\in\Delta,\Zp}\mathcal{O}_{\mathcal{E}_{\alpha}}^\dagger\ ,\qquad \mathcal{O}_{\widehat{\mathcal{E}^{ur}_{\Delta,\circ}}}^\dagger:=\bigotimes_{\alpha\in\Delta,\Zp}\mathcal{O}_{\widehat{\mathcal{E}^{ur}_{\alpha}}}^\dagger\ ,\qquad \mathcal{O}_{\widehat{\mathcal{E}^{ur}_{\Delta}}}^\dagger:=\OED^\dagger\otimes_{\mathcal{O}_{\mathcal{E}_{\Delta,\circ}}^\dagger}\mathcal{O}_{\widehat{\mathcal{E}^{ur}_{\Delta,\circ}}}^\dagger\ ,\\
\mathcal{E}_{\Delta,\circ}^\dagger:=\bigotimes_{\alpha\in\Delta,\Qp}\mathcal{E}_{\alpha}^\dagger=\mathcal{O}_{\mathcal{E}_{\Delta,\circ}}^\dagger[p^{-1}]\ ,\qquad \widehat{\mathcal{E}^{ur}_{\Delta,\circ}}^\dagger:=\bigotimes_{\alpha\in\Delta,\Qp}\widehat{\mathcal{E}^{ur}_{\alpha}}^\dagger=\mathcal{O}_{\widehat{\mathcal{E}^{ur}_{\Delta,\circ}}}^\dagger[p^{-1}]\ ,\ \text{and}\\
\widehat{\mathcal{E}^{ur}_{\Delta}}^\dagger:=\mathcal{E}_\Delta^\dagger\otimes_{\mathcal{E}_{\Delta,\circ}^\dagger}\widehat{\mathcal{E}^{ur}_{\Delta,\circ}}^\dagger=\mathcal{O}_{\widehat{\mathcal{E}^{ur}_{\Delta}}}^\dagger[p^{-1}]\ .
\end{align*}
The rings $\mathcal{O}_{\widehat{\mathcal{E}^{ur}_{\Delta,\circ}}}^\dagger$, $\mathcal{O}_{\widehat{\mathcal{E}^{ur}_{\Delta}}}^\dagger$,  $\widehat{\mathcal{E}^{ur}_{\Delta,\circ}}^\dagger$, and $\widehat{\mathcal{E}^{ur}_{\Delta}}^\dagger$ admit an action of the group $\GQpD$ and the operators $\varphi_\alpha$ for all $\alpha\in\Delta$ the following way: Both $\GQpa$ and $\varphi_\alpha$ act on the term $\mathcal{O}_{\mathcal{E}_{\alpha}}^\dagger$ (resp.\ $\widehat{\mathcal{E}_\alpha^{ur}}^\dagger$) in the tensor product defining $\mathcal{O}_{\widehat{\mathcal{E}^{ur}_{\Delta,\circ}}}^\dagger$ (resp.\ $\widehat{\mathcal{E}^{ur}_{\Delta,\circ}}^\dagger$) and leaves the other terms inert. On the other hand, the group $\GQpD$ acts on $\mathcal{O}_{\widehat{\mathcal{E}^{ur}_{\Delta}}}^\dagger$ (resp.\ on $\widehat{\mathcal{E}^{ur}_{\Delta}}^\dagger$) via the second term and all the operators $\varphi_\alpha$ ($\alpha\in\Delta$) via acting on both terms.

\begin{lem}\label{convergence}
We have $(\widehat{\mathcal{E}^{ur}_{\Delta,\circ}}^\dagger)^{\HQpD}=\mathcal{E}_{\Delta,\circ}^\dagger$, $(\widehat{\mathcal{E}^{ur}_{\Delta}}^\dagger)^{\HQpD}=\mathcal{E}_\Delta^\dagger$, $(\mathcal{O}_{\widehat{\mathcal{E}^{ur}_{\Delta,\circ}}}^\dagger)^{\HQpD}=\mathcal{O}_{\mathcal{E}_{\Delta,\circ}}^\dagger$, and $(\mathcal{O}_{\widehat{\mathcal{E}^{ur}_{\Delta}}}^\dagger)^{\HQpD}=\OED^\dagger$.
\end{lem}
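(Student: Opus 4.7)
The four claims group into two pairs: the statements about $\mathcal{E}_{\Delta,\circ}^\dagger$ and $\mathcal{E}_\Delta^\dagger$ follow from the corresponding integral statements by inverting $p$, since the $\HQpD$-action is $\Zp$-linear and $\HQpD$-invariants commute with the flat localization $\Zp\hookrightarrow\Qp$. So I only need to establish
$(\mathcal{O}_{\widehat{\mathcal{E}^{ur}_{\Delta,\circ}}}^\dagger)^{\HQpD}=\mathcal{O}_{\mathcal{E}_{\Delta,\circ}}^\dagger$
and
$(\mathcal{O}_{\widehat{\mathcal{E}^{ur}_{\Delta}}}^\dagger)^{\HQpD}=\OED^\dagger$.

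For the first integral statement, I would proceed by induction on $|\Delta|$. The base case $|\Delta|=1$ is precisely the classical Cherbonnier--Colmez identity $(\mathcal{O}_{\widehat{\mathcal{E}_\alpha^{ur}}}^\dagger)^{\HQpa}=\mathcal{O}_{\mathcal{E}_\alpha}^\dagger$ recalled at the start of this section. For the inductive step, fix $\alpha\in\Delta$, decompose $\HQpD=\HQpa\times\HQpDa$, and use the definition to write
\[
\mathcal{O}_{\widehat{\mathcal{E}^{ur}_{\Delta,\circ}}}^\dagger \;=\; \mathcal{O}_{\widehat{\mathcal{E}_\alpha^{ur}}}^\dagger \otimes_{\Zp} \mathcal{O}_{\widehat{\mathcal{E}^{ur}_{\Delta\setminus\{\alpha\},\circ}}}^\dagger,
\]
where $\HQpa$ acts on the first factor and trivially on the second, and vice versa for $\HQpDa$. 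Taking $\HQpa$-invariants first: the second tensor factor is $\Zp$-flat (being $\Zp$-torsion-free as a tensor product of $\Zp$-torsion-free rings) and carries trivial $\HQpa$-action, so $\HQpa$-invariants commute with $\otimes_{\Zp}$ against it (see the obstacle paragraph below), giving $\mathcal{O}_{\mathcal{E}_\alpha}^\dagger \otimes_{\Zp}\mathcal{O}_{\widehat{\mathcal{E}^{ur}_{\Delta\setminus\{\alpha\},\circ}}}^\dagger$. The induction hypothesis, applied to the $\HQpDa$-action on the second factor, then completes the computation.

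For the second integral statement I would use the built-in definition $\mathcal{O}_{\widehat{\mathcal{E}^{ur}_{\Delta}}}^\dagger = \OED^\dagger\otimes_{\mathcal{O}_{\mathcal{E}_{\Delta,\circ}}^\dagger}\mathcal{O}_{\widehat{\mathcal{E}^{ur}_{\Delta,\circ}}}^\dagger$, in which $\HQpD$ acts trivially on $\OED^\dagger$ and via the second factor. Provided $\OED^\dagger$ is flat over $\mathcal{O}_{\mathcal{E}_{\Delta,\circ}}^\dagger$---a technical point I would verify separately, exploiting the noetherianness (Prop.~\ref{noetheroverconv}), the fact that $\mathcal{O}_{\mathcal{E}_{\Delta,\circ}}^\dagger\hookrightarrow\OED^\dagger$ is a natural inclusion of rings sharing the same $p$-adic completion $\OED$, and the multiplicativity of the norms $|\cdot|_{\bf r}$---taking $\HQpD$-invariants commutes with this base change and the first integral statement immediately yields $\OED^\dagger\otimes_{\mathcal{O}_{\mathcal{E}_{\Delta,\circ}}^\dagger}\mathcal{O}_{\mathcal{E}_{\Delta,\circ}}^\dagger=\OED^\dagger$.

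The main obstacle is justifying that $\HQpa$-invariants commute with $\otimes_{\Zp}$ against a $\Zp$-flat module $N$ with trivial $\HQpa$-action. For \emph{finite} $G$ this is trivial: $M^G=\ker(M\to\bigoplus_{g\in G}M)$ is the kernel of a map to a finite direct sum, which flatness preserves. For the profinite group $\HQpa$ we cannot write $M^{\HQpa}$ as the kernel of a finite map, so we must invoke continuity. One route is to write $M^{\HQpa}=\varinjlim_U (M^U)^{\HQpa/U}$ over open normal subgroups $U\trianglelefteq\HQpa$ and combine the commutation of $\otimes$ with filtered colimits with the finite-group case. A cleaner alternative is faithfully flat Galois descent: the extension $\mathcal{O}_{\mathcal{E}_\alpha}^\dagger\to \mathcal{O}_{\widehat{\mathcal{E}_\alpha^{ur}}}^\dagger$ should be faithfully flat with an $\HQpa$-torsor structure (as in the non-overconvergent setting), and descent then produces the invariants formula for any base change. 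Either way, verifying the appropriate flatness/continuity carefully is where the real work lies.
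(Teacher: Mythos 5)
Your proposal reverses the logical order used by the paper, and the reversal is exactly where the difficulty you flag becomes real. The paper first proves the two \emph{rational} identities $(\widehat{\mathcal{E}^{ur}_{\Delta,\circ}}^\dagger)^{\HQpD}=\mathcal{E}_{\Delta,\circ}^\dagger$ and $(\widehat{\mathcal{E}^{ur}_{\Delta}}^\dagger)^{\HQpD}=\mathcal{E}_\Delta^\dagger$ by the same kind of induction on $|\Delta|$ you describe, but over the base fields $\Qp$ (for $\circ$) and $\mathcal{E}_\alpha^\dagger$ (using the alternative presentation $\widehat{\mathcal{E}^{ur}_{\Delta}}^\dagger\cong \mathcal{E}_\Delta^\dagger\otimes_{\mathcal{E}_{\alpha}^\dagger}\widehat{\mathcal{E}^{ur}_{\alpha}}^\dagger\otimes\cdots$). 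Over a field every module is free, so ``invariants commute with $\otimes$ against a trivial-action factor'' reduces to the elementary fact that invariants commute with direct sums, and there is no flatness or continuity issue at all. The integral identities are then obtained by intersection: since $\mathcal{O}_{\widehat{\mathcal{E}^{ur}_{\Delta,\circ}}}^\dagger\subset\widehat{\mathcal{E}^{ur}_{\Delta,\circ}}^\dagger$, one has $(\mathcal{O}_{\widehat{\mathcal{E}^{ur}_{\Delta,\circ}}}^\dagger)^{\HQpD}=\mathcal{O}_{\widehat{\mathcal{E}^{ur}_{\Delta,\circ}}}^\dagger\cap\mathcal{E}_{\Delta,\circ}^\dagger$, and that this intersection is $\mathcal{O}_{\mathcal{E}_{\Delta,\circ}}^\dagger$ is an elementary lattice statement (injectivity mod $p$, coming from $E_\alpha\hookrightarrow E_\alpha^{\mathrm{sep}}$ and tensoring over the field $\Fp$). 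So the rational result is the primitive one and the integral result is derived, which is the opposite of your plan.

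Running the induction at the integral level, as you propose, leaves a genuine gap that the paper's argument never has to face. You would need $(\mathcal{O}_{\widehat{\mathcal{E}_\alpha^{ur}}}^\dagger\otimes_{\Zp}N)^{\HQpa}=\mathcal{O}_{\mathcal{E}_\alpha}^\dagger\otimes_{\Zp}N$ for $N$ a $\Zp$-flat but non-free module with trivial $\HQpa$-action, and this does not follow from flatness alone: over $\Zp$, $(\cdot)^G$ is a kernel of a map into an infinite product, and $\otimes_{\Zp}N$ neither commutes with infinite products nor preserves their injectivity without extra hypotheses (Mittag--Leffler type conditions) that you have not verified. Your first proposed fix does not help: the formula $M^{\HQpa}=\varinjlim_U(M^U)^{\HQpa/U}$ is a tautology (each term is already $M^{\HQpa}$), and the substantive version $M=\varinjlim_U M^U$ requires $M$ to be a \emph{smooth} $\HQpa$-module, which $\mathcal{O}_{\widehat{\mathcal{E}_\alpha^{ur}}}^\dagger$ is not --- the action is continuous for a finer topology, not discrete. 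Your second proposed fix, faithfully flat descent along $\mathcal{O}_{\mathcal{E}_\alpha}^\dagger\to\mathcal{O}_{\widehat{\mathcal{E}_\alpha^{ur}}}^\dagger$, would require establishing that this overconvergent extension carries a genuine torsor structure, which is itself a nontrivial claim beyond what Cherbonnier--Colmez provide and is certainly not ``a cleaner alternative.'' The short way out is to adopt the paper's order: prove the field-coefficient statements first (where your inductive tensor decomposition works verbatim and without obstruction) and then take the integral lattice by intersection.
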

\begin{proof}
The first statement follows by induction noting that the tensor product is taken over a field $\Qp$ and the action is componentwise. The second statement is also proven by the same inductional argument using the identification $\widehat{\mathcal{E}^{ur}_{\Delta}}^\dagger\cong \mathcal{E}_\Delta^\dagger\prod_{\alpha\in\Delta}\otimes_{\mathcal{E}_{\alpha}^\dagger}\widehat{\mathcal{E}^{ur}_{\alpha}}^\dagger$ since in the latter expression the tensor products are again taken over the fields $\mathcal{E}_{\alpha}^\dagger$. The integral versions follow by taking intersections with $\mathcal{O}_{\widehat{\mathcal{E}^{ur}_{\Delta,\circ}}}^\dagger$, resp.\ with $\mathcal{O}_{\widehat{\mathcal{E}^{ur}_{\Delta}}}^\dagger$. 
\end{proof}

We say that an \'etale $(\varphi_\Delta,\Gamma_\Delta)$-module over $\OED$ (resp.\ over $\mathcal{E}_\Delta$) is \emph{overconvergent} if it comes as base extension from an object in $\mathcal{D}^{et}(\varphi_{\Delta},\Gamma_\Delta,\OED^\dagger)$ (resp.\ in $\mathcal{D}^{et}(\varphi_{\Delta},\Gamma_\Delta,\mathcal{E}_\Delta^\dagger)$). An object $T$ in $\operatorname{Rep}_{\Zp}(\GQpD)$ (resp.\ $V$ in $\operatorname{Rep}_{\Qp}(\GQpD)$) is said to be \emph{overconvergent} if
\begin{eqnarray*}
\mathbb{D}^\dagger(T/T[p^\infty]):=\left(\mathcal{O}_{\widehat{\mathcal{E}^{ur}_{\Delta}}}^\dagger\otimes_{\Zp}T/T[p^\infty]\right)^{\HQpD}\text{ , resp. }
\mathbb{D}^\dagger(V):=\left(\widehat{\mathcal{E}^{ur}_{\Delta}}^\dagger\otimes_{\Qp}V\right)^{\HQpD}
\end{eqnarray*}
is a free \'etale $(\varphi_\Delta,\Gamma_\Delta)$-module of rank $\rk_{\Zp}T$ over $\OED^\dagger$ (resp.\ of rank $\dim_{\Qp}V$ over $\mathcal{E}_\Delta^\dagger$). The overconvergence of $p$-adic representations of $\GQpD$ is proven in the following multivariable analogue of the grounbreaking result of Cherbonnier and Colmez \cite{CC}. 

\begin{pro}\label{overconverge}
Any object $T$ in $\operatorname{Rep}_{\Zp}(\GQpD)$ (resp.\ $V$ in $\operatorname{Rep}_{\Qp}(\GQpD)$) is overconvergent.
\end{pro}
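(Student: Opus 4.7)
I would proceed by induction on $d := |\Delta|$. The base case $d = 1$ is the classical theorem of Cherbonnier--Colmez \cite{CC}. Some preliminary reductions: since $\OED^\dagger/(p^n) = \OED/(p^n)$ by the proof of Proposition~\ref{OEDdagpcomp}, overconvergence is automatic for $p^n$-torsion representations; inverting $p$ reduces the $V$-statement to the $T$-statement; and the long exact sequence applied to $0 \to T[p^\infty] \to T \to T/T[p^\infty] \to 0$ together with the torsion case reduces us to $T$ free of finite rank over $\Zp$.

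For the inductive step, fix $\alpha \in \Delta$ and set $\Delta' := \Delta \setminus \{\alpha\}$. Regarding $T$ as a $\GQpa$-representation via the projection $\GQpD \twoheadrightarrow \GQpa$, the one-variable Cherbonnier--Colmez theorem produces
\[
\mathbb{D}^\dagger_\alpha(T) := \bigl(\mathcal{O}_{\widehat{\mathcal{E}^{ur}_\alpha}}^\dagger \otimes_{\Zp} T\bigr)^{\HQpa},
\]
which is a free $\mathcal{O}_{\mathcal{E}_\alpha}^\dagger$-module of rank $\rk_{\Zp} T$ carrying commuting semilinear actions of $\varphi_\alpha, \Gamma_\alpha$ together with a continuous $\mathcal{O}_{\mathcal{E}_\alpha}^\dagger$-linear action of $\GQpDa$ (the latter inherited from the $\GQpDa$-action on $T$, since $\GQpDa$ acts trivially on $\mathcal{O}_{\widehat{\mathcal{E}^{ur}_\alpha}}^\dagger$). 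Using the tensor-product description $\mathcal{O}_{\widehat{\mathcal{E}^{ur}_{\Delta,\circ}}}^\dagger = \mathcal{O}_{\widehat{\mathcal{E}^{ur}_\alpha}}^\dagger \otimes_{\Zp} \mathcal{O}_{\widehat{\mathcal{E}^{ur}_{\Delta',\circ}}}^\dagger$, the fact that $\HQpa$ acts trivially on the $\Delta'$-factor, and Lemma~\ref{convergence} (to bring $\OED^\dagger$ outside the invariants), iterating $\HQpD$-invariants in the two stages $\HQpa$ then $\HQpDa$ gives an identification
\[
\mathbb{D}^\dagger(T) \;\cong\; \OED^\dagger \otimes_{\mathcal{O}_{\mathcal{E}_{\Delta,\circ}}^\dagger} \bigl(\mathcal{O}_{\widehat{\mathcal{E}^{ur}_{\Delta',\circ}}}^\dagger \otimes_{\Zp} \mathbb{D}^\dagger_\alpha(T)\bigr)^{\HQpDa}.
\]
The inductive hypothesis, applied to the $\GQpDa$-representation $\mathbb{D}^\dagger_\alpha(T)$ in place of $T$, produces a free overconvergent structure of the expected rank; together with Theorem~\ref{overconvfree} (which guarantees that any $p$-torsion free étale $(\varphi_\Delta,\Gamma_\Delta)$-module over $\OED^\dagger$ of the correct generic rank is actually free), this yields the desired statement.

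The main obstacle is precisely the application of the induction hypothesis with the coefficient ring enlarged from $\Zp$ to $\mathcal{O}_{\mathcal{E}_\alpha}^\dagger$: as stated, the proposition concerns $\Zp$-representations of $\GQpD$, not $\mathcal{O}_{\mathcal{E}_\alpha}^\dagger$-linear ones. Bridging this requires a relative version of Cherbonnier--Colmez for $\GQpDa$-actions over $\mathcal{O}_{\mathcal{E}_\alpha}^\dagger$. The key observation making this available is that $\GQpDa$ acts trivially on $\mathcal{O}_{\mathcal{E}_\alpha}^\dagger$, so the valuation-theoretic content of the Cherbonnier--Colmez proof (the Tate--Sen axioms, or equivalently the uniform convergence estimates on cocycles) transports verbatim to the enlarged base; the delicate point is checking that the resulting overconvergent basis lies in $\OED^\dagger$, i.e., converges on a genuine polyannulus $B^{({\bf \rho},{\bf 1})}$ with independent radii in every direction, rather than on some proper subdomain with constrained relations between the $|X_\beta|$. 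This uniformity in the radii is what distinguishes $\OED^\dagger$ from the intermediate ring considered in \cite{K2}, and will ultimately be supplied by combining the estimates arising from the one-variable argument in $X_\alpha$ with those produced by the inductive hypothesis in the remaining variables.
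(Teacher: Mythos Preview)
Your overall inductive strategy matches the paper's: split off one variable $\alpha$, apply the classical Cherbonnier--Colmez result to obtain $\mathbb{D}^\dagger_\alpha(T)$, and then push the remaining $\GQpDa$-action through the inductive hypothesis. You also correctly isolate the genuine difficulty: $\mathbb{D}^\dagger_\alpha(T)$ is a $\GQpDa$-representation on a free $\mathcal{O}_{\mathcal{E}_\alpha}^\dagger$-module, not on a free $\Zp$-module, so the inductive hypothesis does not apply directly.

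Where your proposal falls short is in the resolution of this obstacle. You propose to invoke a ``relative Cherbonnier--Colmez'' over the base $\mathcal{O}_{\mathcal{E}_\alpha}^\dagger$, arguing that since $\GQpDa$ acts trivially on that base the Tate--Sen machinery should transport. This is plausible but is not a proof; it would require reworking the entire one-variable argument over a new base ring and, as you yourself note, controlling convergence on a genuine polyannulus. The paper avoids all of this with a single elementary descent lemma (Lemma~\ref{descentrep}): because $\mathcal{O}_{\mathcal{E}_\alpha}^\dagger \subset \mathcal{O}_{\widehat{\mathcal{E}^{ur}_\alpha}}^\dagger$ is an extension of DVRs with infinite residue fields, and because after base change to $\mathcal{O}_{\widehat{\mathcal{E}^{ur}_\alpha}}^\dagger$ the $\GQpDa$-representation $\mathbb{D}^\dagger_\alpha(T)$ becomes isomorphic to $\mathcal{O}_{\widehat{\mathcal{E}^{ur}_\alpha}}^\dagger \otimes_{\Zp} T$, one descends a (non-canonical) isomorphism $\mathbb{D}^\dagger_\alpha(T) \cong \mathcal{O}_{\mathcal{E}_\alpha}^\dagger \otimes_{\Zp} T$ of $\GQpDa$-representations. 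This produces a $\Zp$-lattice $T_{\Delta\setminus\{\alpha\}} \subset \mathbb{D}^\dagger_\alpha(T)$, isomorphic to $T$ as a $\GQpDa$-module, to which the inductive hypothesis applies verbatim over $\Zp$. No relative overconvergence theorem is needed, and the polyannulus-convergence issue you flag never arises.

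In short: same skeleton, but you are missing the key trick (the DVR descent lemma) that makes the induction close. Your proposed substitute is substantially harder and is left as an assertion rather than an argument.
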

\begin{proof}
By compactness of $\GQpD$ there is a $\Zp$-lattice $T$ in any object $V$ in $\operatorname{Rep}_{\Qp}(\GQpD)$ that is stable under the action of $\GQpD$. In particular, $\mathbb{D}^\dagger(V)\cong \mathbb{D}^\dagger(T)[p^{-1}]$, so it suffices to show the integral statement. Further, we may assume without loss of generality that $T$ is $p$-torsion free. We start the proof by a general Lemma of independent interest in group representation theory that will be important in the sequel.
\begin{lem}\label{descentrep}
Let $R\leq S$ be two discrete valuation rings with maximal ideals $\mfp\lhd R$ and $P\lhd S$ such that $R\cap P=\mfp$ and the residue field $R/\mfp$ is infinite. Assume that $V$ and $W$ are two finite free modules over $R$ with linear actions of a group $G$ such that $S\otimes_R V\cong S\otimes_R W$ as representations of $G$. Then $V\cong W$.
\end{lem}
\begin{proof}
This is classical, but for the convenience of the reader (and the lack of reference treating this generality) we give a proof. Pick a basis $v_1,\dots,v_n$ (resp.\ $w_1,\dots,w_n$) in the $R$-module $V$ (resp.\ $W$) and denote by $\rho(g)\in \GL_n(R)$ (resp.\ by $\tau(g)\in \GL_n(R)$) the matrix of the action of $g\in G$ on $V$ (resp.\ on $W$) in this basis. The isomorphism $S\otimes_R V\cong S\otimes_R W$ provides us with a matrix $B\in \GL_n(S)$ such that we have $B\rho(g)=\tau(g)B$ for all $g\in G$. Now the entries of $B$ together with $R=R\cdot 1\leq S$ generate an $R$-submodule of $S$ which is free since $R$ is a DVR and $S$ has no $\mfp$-torsion. We pick a basis $u_0=1,u_1,\dots,u_r\in S$ of this free $R$-module, so we may write $B=\sum_{i=0}^r B_iu_i$ with matrices $B_i\in M_n(R)$, $i=0,\dots,r$. Since $u_0,u_1,\dots,u_r$ are linearly independent over $R$, we deduce $B_i\rho(g)=\tau(g)B_i$ for all $i=0,\dots,r$. Moreover, since $B\in \GL_n(S)$, we have $\det(\sum_{i=0}^rB_iu_i)\in S^\times=S\setminus P$. Therefore the polynomial $\det(B_0+\sum_{i=1}^rB_iX_i)\in R[X_1,\dots,X_r]$ is not identically $0$ even modulo $\mfp$ as it has an evaluation over $S$ with value nonzero modulo $\mfp S\subseteq P$. By our assumption that $R/\mfp$ is infinite, there exists elements $a_1,\dots,a_r\in R$ such that $\det(B_0+\sum_{i=1}^rB_ia_i)\in R^\times=R\setminus\mfp$. Hence $B':=B_0+\sum_{i=1}^rB_ia_i\in \GL_n(R)$ gives an ismorphism between $V\cong 1\otimes V\subset S\otimes_R V$ and $W\cong 1\otimes W\subset S\otimes_R W$ since we have $B'\rho(g)=\tau(g)B'$ for all $g\in G$.
\end{proof}
Now we prove the proposition by induction on $|\Delta|$. The case $|\Delta|=1$ is the main result in \cite{CC}. Let $\alpha\in \Delta$ be fixed for some set $|\Delta|>1$ and pick a continuous representation $T$ of $\GQpD$, free of rank $n$ over $\Zp$. We put
\begin{equation*}
\mathbb{D}^\dagger_\circ(T):=\left(\mathcal{O}_{\widehat{\mathcal{E}^{ur}_{\Delta,\circ}}}^\dagger\otimes_{\Zp}T\right)^{\HQpD}\ .
\end{equation*}
\begin{lem}\label{circextend}
We have $\mathbb{D}^\dagger(T)\cong \OED^\dagger\otimes_{\mathcal{O}_{\mathcal{E}_{\Delta,\circ}}^\dagger}\mathbb{D}^\dagger_\circ(T)$.
\end{lem}
\begin{proof}
Since $\HQpD$ acts trivially on $\OED^\dagger$, we compute
\begin{align*}
\mathbb{D}^\dagger(T)\cong \left(\mathcal{O}_{\widehat{\mathcal{E}^{ur}_{\Delta}}}^\dagger\otimes_{\Zp}T\right)^{\HQpD}\cong \left(\left(\OED^\dagger\otimes_{\mathcal{O}_{\mathcal{E}_{\Delta,\circ}}^\dagger}\mathcal{O}_{\widehat{\mathcal{E}^{ur}_{\Delta,\circ}}}^\dagger\right)\otimes_{\Zp}T\right)^{\HQpD} \cong\\
\cong \OED^\dagger\otimes_{\mathcal{O}_{\mathcal{E}_{\Delta,\circ}}^\dagger}\left(\mathcal{O}_{\widehat{\mathcal{E}^{ur}_{\Delta,\circ}}}^\dagger\otimes_{\Zp}T\right)^{\HQpD}\cong \OED^\dagger\otimes_{\mathcal{O}_{\mathcal{E}_{\Delta,\circ}}^\dagger}\mathbb{D}^\dagger_\circ(T)
\end{align*}
as claimed.
\end{proof}

By the case $|\Delta|=1$ the rank of the $\mathcal{O}_{\mathcal{E}_\alpha}^\dagger$-module $\mathbb{D}_\alpha^\dagger(T):=(\mathcal{O}_{\widehat{\mathcal{E}^{ur}_{\alpha}}}^\dagger\otimes_{\Zp}T)^{\HQpa}$ equals $n$. In particular, we have
\begin{equation*}
\mathcal{O}_{\widehat{\mathcal{E}^{ur}_{\alpha}}}^\dagger\otimes_{\Zp}T\cong \mathcal{O}_{\widehat{\mathcal{E}^{ur}_{\alpha}}}^\dagger\otimes_{\mathcal{O}_{\mathcal{E}_\alpha}^\dagger}\mathbb{D}_\alpha^\dagger(T)
\end{equation*}
as representations of $\GQpD$. Here $\mathbb{D}_\alpha^\dagger(T)$ is stable under the action of $\GQpD$ as a subspace of $\mathcal{O}_{\widehat{\mathcal{E}^{ur}_{\alpha}}}^\dagger\otimes_{\Zp}T$ since $\HQpa$ is a normal subgroup in $\GQpD$. While the action of $\GQpa$ is semilinear, that of $\GQpDa$ is linear. Note that both $\mathcal{O}_{\mathcal{E}_\alpha}^\dagger$ and $\mathcal{O}_{\widehat{\mathcal{E}^{ur}_{\alpha}}}^\dagger$ are discrete valuation rings (see also Prop.\ \ref{jacoverconv}) with uniformizer $p$ and infinite residue fields $E_\alpha=\Fp\bg X_\alpha\jg$ (resp.\ $E_\alpha^{sep}$). So we may apply Lemma \ref{descentrep} to deduce the---non-canonical---isomorphism
\begin{equation*}
\mathcal{O}_{\mathcal{E}_\alpha}^\dagger\otimes_{\Zp}T\cong \mathbb{D}_\alpha^\dagger(T)
\end{equation*}
as representations of $\GQpDa$. In particular, we find a $\Zp$-submodule $T_{\Delta\setminus\{\alpha\}}\subset \mathbb{D}_\alpha^\dagger(T)$ of rank $n$ over $\Zp$ such that we have $T_{\Delta\setminus\{\alpha\}}\cong T$ as representations of $\GQpDa$ and $T_{\Delta\setminus\{\alpha\}}$ contains a basis of the $\mathcal{O}_{\mathcal{E}_\alpha}^\dagger$-module $\mathbb{D}_\alpha^\dagger(T)$. Hence we compute
\begin{align*}
\mathbb{D}^\dagger_\circ(T)=\left(\mathcal{O}_{\widehat{\mathcal{E}^{ur}_{\Delta,\circ}}}^\dagger\otimes_{\Zp}T\right)^{\HQpD}\cong \left(\mathcal{O}_{\widehat{\mathcal{E}^{ur}_{\Delta\setminus\{\alpha\},\circ}}}^\dagger\otimes_{\Zp}(\mathcal{O}_{\widehat{\mathcal{E}^{ur}_{\alpha}}}^\dagger\otimes_{\Zp}T)^{\HQpa}\right)^{\HQpDa}\cong\\
\cong\left(\mathcal{O}_{\widehat{\mathcal{E}^{ur}_{\Delta\setminus\{\alpha\},\circ}}}^\dagger\otimes_{\Zp}(\mathcal{O}_{\mathcal{E}_\alpha}^\dagger\otimes_{\Zp}T_{\Delta\setminus\{\alpha\}})\right)^{\HQpDa}\cong
\mathcal{O}_{\mathcal{E}_\alpha}^\dagger\otimes_{\Zp}\left(\mathcal{O}_{\widehat{\mathcal{E}^{ur}_{\Delta\setminus\{\alpha\},\circ}}}^\dagger\otimes_{\Zp}T_{\Delta\setminus\{\alpha\}}\right)^{\HQpDa}
\end{align*}
as mere $\mathcal{O}_{\mathcal{E}_{\Delta,\circ}}^\dagger$-modules. By induction $\left(\mathcal{O}_{\widehat{\mathcal{E}^{ur}_{\Delta\setminus\{\alpha\},\circ}}}^\dagger\otimes_{\Zp}T_{\Delta\setminus\{\alpha\}}\right)^{\HQpDa}$ is a free module of rank $n$ over $\mathcal{O}_{\mathcal{E}_{\Delta\setminus\{\alpha\},\circ}}^\dagger$, so $\mathbb{D}^\dagger_\circ(T)$ is a free module of rank $n$ over $\mathcal{O}_{\mathcal{E}_{\Delta,\circ}}^\dagger\cong \mathcal{O}_{\mathcal{E}_\alpha}^\dagger\otimes_{\Zp}\mathcal{O}_{\mathcal{E}_{\Delta\setminus\{\alpha\},\circ}}^\dagger$. Hence $\mathbb{D}^\dagger(T)\cong \OED^\dagger\otimes_{\mathcal{O}_{\mathcal{E}_{\Delta,\circ}}^\dagger}\mathbb{D}^\dagger_\circ(T)$ is free of rank $n$ over $\OED^\dagger$ as claimed.

Finally, the map $$\id\otimes\varphi_\alpha\colon \OED^\dagger\otimes_{\varphi_\alpha,\OED^\dagger}\mathbb{D}^\dagger(T)\to \mathbb{D}^\dagger(T)$$ is an isomorphism for all $\alpha\in\Delta$ by Nakayama's Lemma and Prop.\ \ref{jacoverconv}, since it is an isomorphism modulo $p$ (we have $\OED^\dagger/(p)\cong E_\Delta$ and $\mathbb{D}^\dagger(T)/p\mathbb{D}^\dagger(T)\cong \mathbb{D}(T/pT)$). We deduce $\mathbb{D}^\dagger(V)=\mathbb{D}^\dagger(T)[p^{-1}]$ is an object in $\mathcal{D}^{et}(\varphi_{\Delta},\Gamma_\Delta,\mathcal{E}_\Delta^\dagger)$.
\end{proof}

We end this section by proving a result that will be needed in the last section, but might be of independent interest, too.

\begin{pro}\label{separatevariablebasis}
Let $T$ be a $p$-torsion free object in $\operatorname{Rep}_{\Zp}(\GQpD)$ and put $D^\dagger:=\mathbb{D}^\dagger(T)$. Then for all $\alpha\in\Delta$, we have $\OED^\dagger\otimes_{\mathcal{O}_{\mathcal{E}_\alpha}^\dagger}\mathbb{D}^\dagger_\alpha(T_{\mid \GQpa})\cong D^\dagger$ as $(\varphi_\alpha,\Gamma_\alpha)$-modules. In particular, there exists a basis (system of free generators) of $D^\dagger$---depending on $\alpha\in \Delta$---such that the matrix of $\varphi_\alpha$ and $\gamma_\alpha\in\Gamma_\alpha$ lie in $\mathcal{O}_{\mathcal{E}_\alpha}^\dagger$.
\end{pro}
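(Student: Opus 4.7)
The plan is to refine the inductive argument used in the proof of Proposition \ref{overconverge} so as to track the $(\varphi_\alpha,\Gamma_\alpha)$-module structure through each step. First I would compute the $\HQpD=\HQpa\times\HQpDa$-invariants of $\mathcal{O}_{\widehat{\mathcal{E}^{ur}_\Delta}}^\dagger\otimes_{\Zp}T$ in two stages. Writing $\mathcal{O}_{\widehat{\mathcal{E}^{ur}_\Delta}}^\dagger\cong \mathcal{O}_{\widehat{\mathcal{E}^{ur}_\alpha}}^\dagger\otimes_{\mathcal{O}_{\mathcal{E}_\alpha}^\dagger}S_\alpha$, where $S_\alpha:=(\mathcal{O}_{\widehat{\mathcal{E}^{ur}_\Delta}}^\dagger)^{\HQpa}$ is $\HQpa$-trivial and flat, and pulling the $\HQpa$-invariants through the flat factor $S_\alpha$ yields a $(\varphi_\alpha,\Gamma_\alpha)$-equivariant isomorphism
$$(\mathcal{O}_{\widehat{\mathcal{E}^{ur}_\Delta}}^\dagger\otimes_{\Zp}T)^{\HQpa}\cong S_\alpha\otimes_{\mathcal{O}_{\mathcal{E}_\alpha}^\dagger}\mathbb{D}^\dagger_\alpha(T_{\mid \GQpa}).$$
Taking further $\HQpDa$-invariants then gives
$$D^\dagger\cong \bigl(S_\alpha\otimes_{\mathcal{O}_{\mathcal{E}_\alpha}^\dagger}\mathbb{D}^\dagger_\alpha(T_{\mid \GQpa})\bigr)^{\HQpDa}$$
as $(\varphi_\alpha,\Gamma_\alpha)$-modules over $\OED^\dagger=S_\alpha^{\HQpDa}$.

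Next, I would substitute the $\GQpDa$-equivariant, $\mathcal{O}_{\mathcal{E}_\alpha}^\dagger$-linear identification $\mathbb{D}^\dagger_\alpha(T_{\mid \GQpa})\cong \mathcal{O}_{\mathcal{E}_\alpha}^\dagger\otimes_{\Zp}T_{\Delta\setminus\{\alpha\}}$ supplied by the Lemma \ref{descentrep}-argument in the proof of Proposition \ref{overconverge}, where $T_{\Delta\setminus\{\alpha\}}\cong T$ as $\GQpDa$-representations. This converts the formula for $D^\dagger$ into $(S_\alpha\otimes_{\Zp}T_{\Delta\setminus\{\alpha\}})^{\HQpDa}$, which (by the same kind of computation as in the proof of Proposition \ref{overconverge}, or via Lemma \ref{circextend} applied to $T_{\Delta\setminus\{\alpha\}}$) equals $\OED^\dagger\otimes_{\mathcal{O}_{\mathcal{E}_{\Delta\setminus\{\alpha\},\circ}}^\dagger}\mathbb{D}^\dagger_{\Delta\setminus\{\alpha\},\circ}(T_{\Delta\setminus\{\alpha\}})$. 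Induction on $|\Delta|$ applied to the $\GQpDa$-representation $T_{\Delta\setminus\{\alpha\}}$ then supplies an explicit $\OED^\dagger$-basis $f_1,\dots,f_n$ of $D^\dagger$ coming from an $\mathcal{O}_{\mathcal{E}_{\Delta\setminus\{\alpha\},\circ}}^\dagger$-basis of $\mathbb{D}^\dagger_{\Delta\setminus\{\alpha\},\circ}(T_{\Delta\setminus\{\alpha\}})$.

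Finally, I would verify that in the basis $f_1,\dots,f_n$ the matrices of $\varphi_\alpha$ and of each $\gamma_\alpha\in\Gamma_\alpha$ lie in $\GL_n(\mathcal{O}_{\mathcal{E}_\alpha}^\dagger)$. Via the chain of identifications the $f_j$ correspond, up to multiplication by an invertible matrix with entries in $\mathcal{O}_{\widehat{\mathcal{E}^{ur}_{\Delta\setminus\{\alpha\},\circ}}}^\dagger$, to the elements $e_j:=1\otimes t_j\in \mathbb{D}^\dagger_\alpha(T_{\mid \GQpa})$ associated to a $\Zp$-basis $t_1,\dots,t_n$ of $T_{\Delta\setminus\{\alpha\}}$; as $\varphi_\alpha$ and $\Gamma_\alpha$ commute with $\GQpDa$ and act trivially on the $\mathcal{O}_{\widehat{\mathcal{E}^{ur}_{\Delta\setminus\{\alpha\},\circ}}}^\dagger$-factor at every intermediate step, their matrices on the $f_j$ coincide with those of $\varphi_\alpha,\gamma_\alpha$ on the $e_j$. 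Since the latter already have entries in $\mathcal{O}_{\mathcal{E}_\alpha}^\dagger$ by the one-variable theory, the same holds on $D^\dagger$. The $\mathcal{O}_{\mathcal{E}_\alpha}^\dagger$-span of $f_1,\dots,f_n$ inside $D^\dagger$ is therefore a $(\varphi_\alpha,\Gamma_\alpha)$-stable submodule isomorphic to $\mathbb{D}^\dagger_\alpha(T_{\mid \GQpa})$, and extension of scalars to $\OED^\dagger$ recovers $D^\dagger$, proving the proposition. The main technical obstacle is this last verification: one must carefully check that the intertwining between the $S_\alpha$- and $\OED^\dagger$-module structures genuinely conjugates the $\alpha$-matrices into the subring $\mathcal{O}_{\mathcal{E}_\alpha}^\dagger$ rather than only into the larger ring $\OED^\dagger$. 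The decisive inputs are the commutation of $\varphi_\alpha,\Gamma_\alpha$ with $\GQpDa$ and the triviality of $\varphi_\alpha,\Gamma_\alpha$ on the non-$\alpha$ factors of the relevant rings.
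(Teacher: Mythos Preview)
Your overall strategy---compute $\HQpD$-invariants in two stages and track the $(\varphi_\alpha,\Gamma_\alpha)$-structure---is sound, but the order in which you take invariants creates a genuine gap in the final verification step. You apply Lemma~\ref{descentrep} with $G=\GQpDa$ to obtain the identification $\mathbb{D}^\dagger_\alpha(T_{\mid\GQpa})\cong \mathcal{O}_{\mathcal{E}_\alpha}^\dagger\otimes_{\Zp}T_{\Delta\setminus\{\alpha\}}$; this isomorphism is $\GQpDa$-equivariant but \emph{not} $(\varphi_\alpha,\Gamma_\alpha)$-equivariant, and you then try to recover the $(\varphi_\alpha,\Gamma_\alpha)$-structure by hand. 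The assertion that ``their matrices on the $f_j$ coincide with those of $\varphi_\alpha,\gamma_\alpha$ on the $e_j$'' is not correct. If $A\in\GL_n(\mathcal{O}_{\mathcal{E}_\alpha}^\dagger)$ is the matrix of $\varphi_\alpha$ in the basis $(e_j)$ and $N\in\GL_n(\mathcal{O}_{\widehat{\mathcal{E}^{ur}_{\Delta\setminus\{\alpha\},\circ}}}^\dagger)$ is the change of basis to $(f_j)$, then the matrix of $\varphi_\alpha$ in the basis $(f_j)$ is $N^{-1}A\,\varphi_\alpha(N)=N^{-1}AN$. Although each \emph{entry} of $N$ commutes with each entry of $A$ in the ambient tensor product ring, the \emph{matrices} $A$ and $N$ have no reason to commute, so $N^{-1}AN$ need not equal $A$ and need not have entries in $\mathcal{O}_{\mathcal{E}_\alpha}^\dagger$. (A $2\times 2$ example over $k[x]\otimes k[y]$ already shows this fails.) All you can conclude from $f_j\in D^\dagger$ is that $N^{-1}AN\in\GL_n(\OED^\dagger)$, which is too weak.

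The paper avoids this by reversing the order of the two stages. It first takes $\HQpDa$-invariants, running the Prop.~\ref{overconverge} argument over the indices $\beta\in\Delta\setminus\{\alpha\}$; in each application of Lemma~\ref{descentrep} the group $G$ is the \emph{remaining} Galois factor, which always contains $\GQpa$, so every intermediate isomorphism is automatically $\GQpa$-equivariant. This yields a $\GQpa$-isomorphism $\bigl(\mathcal{O}_{\widehat{\mathcal{E}^{ur}_{\Delta\setminus\{\alpha\},\circ}}}^\dagger\otimes_{\Zp}T\bigr)^{\HQpDa}\cong \mathcal{O}_{\mathcal{E}_{\Delta\setminus\{\alpha\},\circ}}^\dagger\otimes_{\Zp}T$. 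Only then does one apply the functor $\mathbb{D}^\dagger_\alpha=(\mathcal{O}_{\widehat{\mathcal{E}^{ur}_\alpha}}^\dagger\otimes_{\Zp}\cdot)^{\HQpa}$, which takes $\GQpa$-isomorphisms to $(\varphi_\alpha,\Gamma_\alpha)$-isomorphisms by functoriality, giving $\mathcal{O}_{\mathcal{E}_{\Delta,\circ}}^\dagger\otimes_{\mathcal{O}_{\mathcal{E}_\alpha}^\dagger}\mathbb{D}^\dagger_\alpha(T_{\mid\GQpa})\cong \mathbb{D}^\dagger_\circ(T)$; Lemma~\ref{circextend} finishes. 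The point is that Lemma~\ref{descentrep} only guarantees equivariance for the group $G$ you feed it, so you must arrange for $\GQpa$ to be inside $G$ rather than hoping to restore $(\varphi_\alpha,\Gamma_\alpha)$-equivariance afterwards.
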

\begin{proof}
By induction on $|\Delta|$ and the main argument in the proof of Prop.\ \ref{overconverge}, the $\GQpa$-representation $\left(\mathcal{O}_{\widehat{\mathcal{E}^{ur}_{\Delta\setminus\{\alpha\},\circ}}}^\dagger\otimes_{\Zp}T\right)^{\HQpDa}$ is isomorphic to $\mathcal{O}_{\mathcal{E}_{\Delta\setminus\{\alpha\},\circ}}^\dagger\otimes_{\Zp}T$. Applying $\mathbb{D}^\dagger_\alpha=(\mathcal{O}_{\widehat{\mathcal{E}^{ur}_{\alpha}}}^\dagger\otimes_{\Zp}\cdot)^{\HQpa}$ on this isomorphism yields $\mathcal{O}_{\mathcal{E}_{\Delta,\circ}}^\dagger\otimes_{\mathcal{O}_{\mathcal{E}_\alpha}^\dagger}\mathbb{D}^\dagger_\alpha(T_{\mid \GQpa})\cong \mathbb{D}_\circ^\dagger(T)$ as $(\varphi_\alpha,\Gamma_\alpha)$-modules. The statement follows from Lemma \ref{circextend}.
\end{proof}
\begin{rem}
The statement of Prop.\ \ref{separatevariablebasis} is true for \'etale $(\varphi_\Delta,\Gamma_\Delta)$-modules over $\OED$, too.
\end{rem}

\subsection{Extended multivariable Robba rings}

Now our main goal is to show that the basechange functor from $\mathcal{D}^{et}(\varphi_{\Delta},\Gamma_\Delta,\mathcal{E}_\Delta^\dagger)$ to $\mathcal{D}^{et}(\varphi_{\Delta},\Gamma_\Delta,\mathcal{E}_\Delta)$ is an equivalence of categories. We proceed along the lines of the proof by Kedlaya (see Remark 1.7.4, Prop.\ 1.2.7, and 2.5.8 in \cite{K}) in the one variable case. For this recall that the extended Robba ring $\widetilde{\mathcal{R}}$ over $\Qp$ is the ring of formal generalized Laurent series $\sum_{i\in\mathbb{Q}}a_iu^i$ with $a_i\in\Qp$ satisfying the following conditions:
\begin{enumerate}[$(i)$]
\item For each $c>0$ the set of $i\in\mathbb{Q}$ such that $|a_i|_p\geq c$ is well-ordered.
\item There exists a real number $0<\rho<1$ such that for all $\rho<r<1$ we have $\sup_{i\in\mathbb{Q}}|a_i|_pr^i<\infty$.
\end{enumerate}
\begin{rem}
Let $\sum_{i\in\mathbb{Q}}a_iu^i$ as above. Then the finiteness of the supremum $\sup_{i\in\mathbb{Q}}|a_i|_p(r-\varepsilon)^i$ for some $\varepsilon\in (0,r-\rho)$ implies $|a_i|_pr^i\to 0$ as $i\to -\infty$, and the finiteness of $\sup_{i\in\mathbb{Q}}|a_i|_p(r_1+\varepsilon)^i$ with $\max(\rho-r_1,0)<\varepsilon<1-r_1$ implies $|a_i|_pr_1^i\to 0$ as $i\to +\infty$ for all $0<r_1<1$. This shows that our definition of $\widetilde{\mathcal{R}}$ is equivalent to that given in Def.\ 2.2.4 of \cite{K}.
\end{rem}
Further we denote by $\widetilde{\mathcal{R}}^{bd}$ (resp.\ $\widetilde{\mathcal{R}}^{int}$) the subring of $\widetilde{\mathcal{R}}$ with bounded (resp.\ integral, ie.\ bounded by $1$) coefficients. The Frobenius $\varphi$ is defined on these rings by sending $\sum_{i\in\mathbb{Q}}a_iu^i$ to $\sum_{i\in\mathbb{Q}}a_iu^{pi}$ and is therefore bijective. By Prop.\ 2.2.6 in \cite{K} there exists a $\varphi$-equivariant embedding $\iota\colon\mathcal{R}\hookrightarrow \widetilde{\mathcal{R}}$ that preserves the $r$-norm of each element that is convergent on an annulus $(\rho,1)$ with $p^{-p/(p-1)}\leq\rho<r<1$. More concretely, the variable $X$ is sent to $\lim_l\iota_l(X)\in \widetilde{\mathcal{R}}^{int}$ where for $l\geq 1$ the sequence $\iota_l(X)$ is defined inductively by putting $\iota_1(X):=u$ and
\begin{equation}\label{Xtoextended}
\iota_{l+1}(X):=\iota_l(X)+u\sum_{j=0}^\infty p^{j}\varphi^{-j-1}\left(\frac{(\iota_l(X)+1)^p-1-\varphi(\iota_l(X))}{u^p}\right)\ .
\end{equation}
As before, we consider a copy $\widetilde{\mathcal{R}}^{int}_\alpha$, $\widetilde{\mathcal{R}}^{bd}_\alpha$, and $\widetilde{\mathcal{R}}_\alpha$ of these rings (with variable $u_\alpha$) for each $\alpha\in\Delta$. Now we define a multivariable analogue of these rings as follows. We consider the set $\widetilde{\mathcal{R}}^{int}_\Delta$ of multivariable generalized Laurent series with coefficients in $\Qp$ of the form 
\begin{equation}\label{expansionQ}
a:=\sum_{{\bf i}=(i_\alpha)_{\alpha\in\Delta}\in\mathbb{Q}^\Delta}a_{\bf i}\prod_{\alpha\in\Delta}u_\alpha^{i_\alpha}\in \Qp^{\mathbb{Q}^\Delta}
\end{equation}
satisfying the following conditions:
\begin{enumerate}[$(i)$]
\item For each fixed $c>0$ and $\alpha\in\Delta$ the set of $i_\alpha\in\mathbb{Q}$ such that there exists an ${\bf i}\in\mathbb{Q}^\Delta$ having $i_\alpha$ in coordinate $\alpha$ with $|a_{\bf i}|\geq c$ is well-ordered.
\item There exists a real number $0<\rho<1$ such that for any tuple ${\bf r}=(r_\alpha)_{\alpha\in\Delta}\in (\rho,1)^\Delta$ we have $|a|_{\bf r}:=\sup_{i\in\mathbb{Q}}|a_i|\prod_{\alpha\in\Delta}r_\alpha^{i_\alpha}<\infty$.
\item $\limsup_{{\bf r}\to (1)_{\alpha\in\Delta}}|a|_{\bf r}\leq 1$.
\end{enumerate}

Note that for any formal sum $a\in \Qp^{\mathbb{Q}^\Delta}$ of the form \eqref{expansionQ} the supremum $$|a|_{\bf r}:=\sup_{i\in\mathbb{Q}}|a_i|\prod_{\alpha\in\Delta}r_\alpha^{i_\alpha}\in\mathbb{R}^{\geq 0}\cup\{\infty\}$$ makes sense. We say that $a \in \Qp^{\mathbb{Q}^\Delta}$ converges on the polyannulus $(\rho,1)^\Delta$ for some $0<\rho<1$ if $|a|_{\bf r}<\infty$ for all tuples ${\bf r}\in (\rho,1)^\Delta$. Further, a sequence $(a_n)_{n\geq 1}$ of formal expressions in $\Qp^{\mathbb{Q}^\Delta}$ is said to be Cauchy in the ${\bf r}$-norm if $|a_n|_{\bf r}<\infty$ for all $n\geq 1$ and for all $\varepsilon>0$ there exists an integer $N\geq 1$ such that for all $n,m\geq N$ we have $|a_n-a_m|_{\bf r}<\varepsilon$ (or by the ultramtric inequality it suffices to assume this for $m=n+1$ only). Note that if a sequence $(a_n)$ is Cauchy in $|\cdot |_{\bf r}$ then so are the coordinates $(a_{{\bf i},n})_{n\geq 1}$ for all ${\bf i}\in\mathbb{Q}^\Delta$. In particular, any Cauchy sequence has a unique limit in $|\cdot|_{\bf r}$ and this limit does not depend on ${\bf r}$ in the sense that whenever $(a_n)_{n\geq 1}$ is also Cauchy in $|\cdot|_{\bf r'}$ for some tuple ${\bf r'}\in (0,1)^\Delta$ then the limit is the same in $|\cdot|_{\bf r'}$ as in $|\cdot|_{\bf r}$. However, a priori it is unclear whether this limit also satisfies conditions $(i)-(iii)$ even if it exists and each formal expression $a_n$ ($n\geq 1$) satisfies these conditions.

\begin{lem}\label{integralandzeroseq}
Assume the above conditions $(i)-(iii)$. Then $a_{\bf i}$ lies in $\Zp$ for all ${\bf i}\in\mathbb{Q}^\Delta$ and we have $|a_{\bf i}|\prod_{\alpha\in\Delta}r_\alpha^{i_\alpha}\to 0$ as $\max_\alpha(|i_\alpha|)\to \infty$.
\end{lem}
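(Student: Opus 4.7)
The plan is to handle the integrality and the decay assertions separately; both exploit the strict inequalities in condition $(ii)$, which allow us to perturb ${\bf r}$ inside the convergence polyannulus.

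For the integrality claim $a_{\bf i}\in\Zp$, I would adapt the argument of Lemma \ref{coeffOEDdag}. Suppose for contradiction that $|a_{\bf j}|_p>1$ for some fixed ${\bf j}\in\mathbb{Q}^\Delta$, and fix $\varepsilon$ with $0<\varepsilon<|a_{\bf j}|_p-1$. By condition $(iii)$ choose $\rho_1\in(\rho,1)$ such that $|a|_{\bf r}<1+\varepsilon/2$ for every ${\bf r}\in(\rho_1,1)^\Delta$. Since ${\bf j}$ is fixed, the function ${\bf r}\mapsto\prod_\alpha r_\alpha^{j_\alpha}$ is continuous and tends to $1$ as ${\bf r}\to(1)_{\alpha\in\Delta}$, so one can pick such an ${\bf r}$ with $|a_{\bf j}|_p\prod_\alpha r_\alpha^{j_\alpha}>1+\varepsilon/2$, giving $|a|_{\bf r}>1+\varepsilon/2$, a contradiction.

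For the decay statement, I would fix any ${\bf r}\in(\rho,1)^\Delta$ and choose auxiliary tuples ${\bf r}^+,{\bf r}^-\in(\rho,1)^\Delta$ with $\rho<r_\alpha^-<r_\alpha<r_\alpha^+<1$ for every $\alpha\in\Delta$. For each subset $S\subseteq\Delta$ let ${\bf r}(S)$ be the tuple defined by $r(S)_\alpha:=r_\alpha^+$ if $\alpha\in S$ and $r(S)_\alpha:=r_\alpha^-$ otherwise. The constant
\[
C:=\max_{S\subseteq\Delta}|a|_{{\bf r}(S)}
\]
is finite by $(ii)$, being the maximum over $2^{|\Delta|}$ finite quantities. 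Given any ${\bf i}\in\mathbb{Q}^\Delta$, set $S_{\bf i}:=\{\alpha\in\Delta : i_\alpha\geq 0\}$ and factor
\[
|a_{\bf i}|\prod_\alpha r_\alpha^{i_\alpha} = \Bigl(|a_{\bf i}|\prod_\alpha r(S_{\bf i})_\alpha^{i_\alpha}\Bigr)\cdot\prod_\alpha\bigl(r_\alpha/r(S_{\bf i})_\alpha\bigr)^{i_\alpha}\leq C\cdot\prod_\alpha q_\alpha^{|i_\alpha|},
\]
where $q_\alpha:=\max(r_\alpha/r_\alpha^+,\,r_\alpha^-/r_\alpha)<1$: for $\alpha\in S_{\bf i}$ we have $i_\alpha\geq 0$ and $(r_\alpha/r_\alpha^+)^{i_\alpha}\leq q_\alpha^{|i_\alpha|}$, while for $\alpha\notin S_{\bf i}$ we have $i_\alpha<0$ and $(r_\alpha/r_\alpha^-)^{i_\alpha}=(r_\alpha^-/r_\alpha)^{|i_\alpha|}\leq q_\alpha^{|i_\alpha|}$. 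Setting $q:=\max_\alpha q_\alpha<1$ yields $|a_{\bf i}|\prod_\alpha r_\alpha^{i_\alpha}\leq C\,q^{\max_\alpha|i_\alpha|}$, which tends to $0$ as $\max_\alpha|i_\alpha|\to\infty$.

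The main subtlety is that tameness in the positive- and negative-exponent directions requires shifting ${\bf r}$ in opposite senses along different coordinates; the key observation is that this dependence on the sign-pattern of ${\bf i}$ factors through the finite set of subsets of $\Delta$, so all of the auxiliary norms can be absorbed into a single constant $C$. Note that condition $(i)$ of the definition is not used in this proof: integrality rests only on $(iii)$, and the decay on $(ii)$.
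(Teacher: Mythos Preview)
Your proof is correct and follows essentially the same idea as the paper: integrality comes from $(iii)$ exactly as in Lemma~\ref{coeffOEDdag}, and decay comes from perturbing ${\bf r}$ inside the open polyannulus so that the resulting norm controls the original one times a factor that shrinks in the relevant direction. The paper phrases this more tersely, perturbing one coordinate at a time via $r'_\beta:=r_\beta\pm\varepsilon$ (so $2|\Delta|$ auxiliary norms), whereas you perturb all coordinates simultaneously according to the sign pattern of ${\bf i}$ (so $2^{|\Delta|}$ auxiliary norms); your version has the advantage of yielding the explicit quantitative bound $|a_{\bf i}|\prod_\alpha r_\alpha^{i_\alpha}\leq C\,q^{\max_\alpha|i_\alpha|}$, but the underlying mechanism is the same.
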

\begin{proof}
The statement on the integrality of the coefficients $a_{\bf i}$ follows from $(iii)$. The second statement follows from applying the finiteness of the supremum $\sup_{i\in\mathbb{Q}}|a_i|\prod_{\alpha\in\Delta}r'^{i_\alpha}_\alpha$ with $r'_\beta:=r_\beta\pm \varepsilon$ and $r'_\alpha:=r_\alpha$ ($\alpha\in\Delta\setminus\{\beta\}$) for all choices of $\beta\in\Delta$.
\end{proof}

We are going to show that $\widetilde{\mathcal{R}}^{int}_\Delta$ is in fact a ring with respect to formal addition and multiplication. Moreover, it is a subring of the ring $W(\widetilde{k}_\Delta)$ of $p$-typical Witt vectors of the perfect ring 
\begin{align*}
\widetilde{k}_\Delta=\{b:=\sum_{{\bf i}=(i_\alpha)_{\alpha\in\Delta}\in\mathbb{Q}^\Delta}b_{\bf i}\prod_{\alpha\in\Delta}\overline{u_\alpha}^{i_\alpha} \mid b_{\bf i}\in\Fp\text{ and for all }\alpha\in\Delta\text{ the set of }i_\alpha\\
\text{s.\ t.\ there exists  an }{\bf i}\in\mathbb{Q}^\Delta\text{ having }i_\alpha\text{ in coordinate }\alpha\text{ with }b_{\bf i}\neq 0\text{ is well-ordered}\} 
\end{align*}
of characteristic $p$.

\begin{lem}\label{modpQseries}
$\widetilde{k}_\Delta$ forms a perfect ring of characteristic $p$ with respect to formal addition and multiplication.
\end{lem}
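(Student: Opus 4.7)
The plan is to verify the ring axioms, the characteristic $p$ condition, and perfection in turn, treating closure under multiplication as the main technical step. Throughout, for $b \in \widetilde{k}_\Delta$ and $\alpha \in \Delta$ I write $S_\alpha(b) \subseteq \mathbb{Q}$ for the projection to the $\alpha$-coordinate of the support $\operatorname{supp}(b) = \{{\bf i} \in \mathbb{Q}^\Delta \mid b_{\bf i} \neq 0\}$; by definition each $S_\alpha(b)$ is well-ordered.

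First I would dispose of addition and characteristic: if $b, b' \in \widetilde{k}_\Delta$ then the support of the formal sum is contained in $\operatorname{supp}(b) \cup \operatorname{supp}(b')$, whose $\alpha$-projection is $S_\alpha(b) \cup S_\alpha(b')$, the union of two well-ordered subsets of $\mathbb{Q}$, hence well-ordered. The formal sum clearly satisfies $p \cdot b = 0$ since coefficients lie in $\Fp$, so $\widetilde{k}_\Delta$ will have characteristic $p$ as soon as it is a ring.

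The main step is closure under the formal multiplication $b \cdot b' = \sum_{\bf i} (\sum_{{\bf j}+{\bf k}={\bf i}} b_{\bf j} b'_{\bf k}) \prod_\alpha \overline{u}_\alpha^{i_\alpha}$. I need (a) that for each ${\bf i} \in \mathbb{Q}^\Delta$ the inner sum is finite, and (b) that each $S_\alpha(b \cdot b')$ is well-ordered. Both reduce to Neumann's classical fact that if $A, B \subseteq \mathbb{Q}$ are well-ordered then $A + B$ is well-ordered and for every $s \in \mathbb{Q}$ the set $\{(a,b) \in A \times B \mid a+b = s\}$ is finite. For (a), fix any $\alpha_0 \in \Delta$; the number of pairs $({\bf j},{\bf k}) \in \operatorname{supp}(b) \times \operatorname{supp}(b')$ summing to ${\bf i}$ is bounded by the (finite) number of pairs $(j,k) \in S_{\alpha_0}(b) \times S_{\alpha_0}(b')$ with $j+k = i_{\alpha_0}$, times the (finite) maximum over such $(j,k)$ of the number of pairs in $\operatorname{supp}(b) \cap \{j_{\alpha_0} = j\}$, $\operatorname{supp}(b') \cap \{k_{\alpha_0} = k\}$ with coordinate sum ${\bf i}$ in the remaining indices---applying the same argument coordinate by coordinate (an easy induction on $|\Delta|$) makes this precise. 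For (b), $S_\alpha(b \cdot b') \subseteq S_\alpha(b) + S_\alpha(b')$, which is well-ordered by Neumann. Associativity, commutativity, distributivity, and the fact that $1 \in \widetilde{k}_\Delta$ acts as the identity are then formal.

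Finally, for perfection: the Frobenius $\phi \colon b \mapsto b^p$ is a ring endomorphism because $\widetilde{k}_\Delta$ has characteristic $p$. Injectivity and surjectivity follow from exhibiting an explicit two-sided inverse: since $\Fp$ is perfect, the formal series $\phi^{-1}(b) := \sum_{\bf i} b_{\bf i} \prod_\alpha \overline{u}_\alpha^{i_\alpha / p}$ lies in $\widetilde{k}_\Delta$ because its support is $p^{-1} \operatorname{supp}(b)$, whose $\alpha$-projection $p^{-1} S_\alpha(b)$ is again well-ordered (multiplication by $p^{-1} > 0$ is an order-preserving bijection of $\mathbb{Q}$), and $(\phi^{-1}(b))^p = b$ by direct computation. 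Thus $\phi$ is bijective and $\widetilde{k}_\Delta$ is perfect.

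The main obstacle is step (a) of the multiplication argument: in one variable the finiteness of the convolution sum is standard, but with several coordinates one must iterate the one-variable statement, exploiting that each projection $S_\alpha$ is individually well-ordered (we do not have---and do not need---joint well-ordering of the supports in $\mathbb{Q}^\Delta$). Once this coordinatewise Neumann argument is in place the remaining verifications are bookkeeping.
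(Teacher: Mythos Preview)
Your proof is correct and follows the same approach as the paper's: closure under addition via unions of well-ordered sets, closure under multiplication via Neumann's lemma on Minkowski sums, and perfection via scaling by $p^{-1}$. Your treatment is in fact more careful than the paper's, since you explicitly verify finiteness of the convolution sums (your step~(a)), a point the paper leaves implicit in the phrase ``multiplication is well-defined.''
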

\begin{proof}
The fact that $\widetilde{k}_\Delta$ is an $\Fp$-vector space with respect to formal addition follows from noting that the set of well-ordered subsets of $\mathbb{Q}$ is closed under finite union and under taking subsets. The multiplication is well-defined since for two well-ordered subsets $A,B\subset \mathbb{Q}$ the Minkowski sum $A+B=\{a+b\in\mathbb{Q}\mid a\in A, b\in B\}$ is also well-ordered. Finally, perfectness follows from the fact that for a well-ordered subset $A\subset \mathbb{Q}$ the set $p^{-1}A=\{p^{-1}a\in \mathbb{Q}\mid a\in A\}$ is well-ordered, too.
\end{proof}

Now the ring $W(\widetilde{k}_\Delta)$ of $p$-typical Witt vectors of $\widetilde{k}_\Delta$ is a strict $p$-ring. For any $\alpha\in \Delta$ and positive integer $n$ we denote by $u_\alpha^{1/n}$ the multiplicative (Teichm\"uller) representative of $\overline{u_\alpha}^{1/n}$. This is consistent with the notations as we have $(u_\alpha^{1/nm})^n=u_\alpha^{1/m}$ for all $n,m\geq 1$ by multiplicativity. Further, for any tuple ${\bf i}=(i_\alpha)_\alpha\in\mathbb{Q}^\Delta$ the product $\prod_{\alpha\in\Delta}u_\alpha^{i_\alpha}\in W(\widetilde{k}_\Delta)$ makes sense and is the multiplicative representative of $\prod_{\alpha\in\Delta}\overline{u_\alpha}^{i_\alpha}$. In particular, the ring $A$ of formal expressions \eqref{expansionQ} with coefficients $a_{\bf i}\in\Zp$ satisfying condition $(i)$ above is a strict $p$-ring with $A/pA\cong \widetilde{k}_\Delta$ whence we have $A\cong W(\widetilde{k}_\Delta)$ by Thm.\ 1.1.8 in \cite{K3}. In particular, $\widetilde{\mathcal{R}}^{int}_\Delta$ can be viewed as a subset of $W(\widetilde{k}_\Delta)$ by the first statement in Lemma \ref{integralandzeroseq}.

\begin{lem}\label{RintDsubring}
The subset $\widetilde{\mathcal{R}}^{int}_\Delta\subset W(\widetilde{k}_\Delta)$ is a dense subring in the $p$-adic topology. In particular, we have $W(\widetilde{k}_\Delta)\cong \varprojlim_h \widetilde{\mathcal{R}}^{int}_\Delta/(p^h)$.
\end{lem}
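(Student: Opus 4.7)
The proof breaks into three parts: verifying the subring structure, establishing $p$-adic density, and then deducing the inverse limit identification. For the subring property, addition is termwise and the ultrametric inequality $|a+b|_{\bf r}\leq\max(|a|_{\bf r},|b|_{\bf r})$ preserves conditions (ii) and (iii), while a finite union of well-ordered subsets of $\mathbb{Q}$ is well-ordered, preserving (i). For multiplication, a direct computation yields $|ab|_{\bf r}\leq|a|_{\bf r}|b|_{\bf r}$, and if $|(ab)_{\bf j}|\geq c$ then the ultrametric inequality combined with $|a_{\bf i}|,|b_{\bf k}|\leq 1$ (which itself follows from (iii) as in Lemma~\ref{coeffOEDdag}) yields ${\bf j}={\bf i}+{\bf k}$ with $|a_{\bf i}|,|b_{\bf k}|\geq c$, placing the $\alpha$-projection of the $c$-support of $ab$ inside the Minkowski sum of two well-ordered subsets of $\mathbb{Q}$. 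I would verify that such a Minkowski sum is well-ordered by contradiction: from a strictly decreasing sequence in the sum one extracts a non-decreasing subsequence in one summand (which exists in any well-ordered subset of $\mathbb{Q}$ since infinite strictly decreasing subsequences are forbidden), forcing a strictly decreasing subsequence in the other and contradicting its well-ordering.

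For density, given $w=\sum_{\bf i}a_{\bf i}u^{\bf i}\in W(\widetilde{k}_\Delta)$ and $h\geq 1$, I would take the truncation $a:=\sum_{|a_{\bf i}|\geq p^{-(h-1)}}a_{\bf i}u^{\bf i}$. The remainder has all coefficients in $p^h\Zp$, hence lies in $p^hW(\widetilde{k}_\Delta)$. To check $a\in\widetilde{\mathcal{R}}^{int}_\Delta$, condition (i) is inherited directly; the $\alpha$-projection of $\operatorname{supp}(a)\subseteq\{{\bf i}:|a_{\bf i}|\geq p^{-(h-1)}\}$ is well-ordered by (i) for $w$ at level $c=p^{-(h-1)}$, hence bounded below by some $m_\alpha\in\mathbb{Q}$. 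This forces $|a|_{\bf r}\leq\prod_\alpha r_\alpha^{m_\alpha}<\infty$ for all ${\bf r}\in(0,1)^\Delta$, giving (ii), and the same upper bound tends to $1$ as ${\bf r}\to(1)_{\alpha\in\Delta}$, giving (iii).

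The inverse limit identification requires the additional intersection property $\widetilde{\mathcal{R}}^{int}_\Delta\cap p^hW(\widetilde{k}_\Delta)=p^h\widetilde{\mathcal{R}}^{int}_\Delta$. Given $x\in\widetilde{\mathcal{R}}^{int}_\Delta$ with $x=p^hy$ for $y\in W(\widetilde{k}_\Delta)$, conditions (i) and (ii) for $y$ are immediate, so the remaining task is (iii) for $y$, equivalent to $\limsup_{{\bf r}\to(1)}|x|_{\bf r}\leq p^{-h}$. I would stratify $\operatorname{supp}(x)=\bigsqcup_{k\geq h}S_k$ with $S_k=\{{\bf i}:|x_{\bf i}|=p^{-k}\}$, each having $\alpha$-projection bounded below by some $m_\alpha^{(k)}\in\mathbb{Q}$ by condition (i) at level $p^{-k}$, giving the estimate $|x|_{\bf r}\leq\sup_{k\geq h}p^{-k}\prod_\alpha r_\alpha^{m_\alpha^{(k)}}$. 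I expect this final step to be the main obstacle: each individual term tends to $p^{-k}\leq p^{-h}$ as ${\bf r}\to(1)$, but interchanging this pointwise limit with the supremum over $k$ requires care, exploiting the given finiteness of $|x|_{\bf r}$ on a polyannulus near $(1)_{\alpha\in\Delta}$ together with the decay factor $p^{-k}$ to ensure that contributions from large $k$ remain uniformly controlled. Granted all three ingredients, the induced map $\widetilde{\mathcal{R}}^{int}_\Delta/p^h\widetilde{\mathcal{R}}^{int}_\Delta\to W(\widetilde{k}_\Delta)/p^hW(\widetilde{k}_\Delta)$ is an isomorphism for every $h$, and the $p$-adic completeness of $W(\widetilde{k}_\Delta)$ then yields the stated inverse limit isomorphism.
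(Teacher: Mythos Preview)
Your proof is correct, and in one respect goes further than the paper's.

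For closure under multiplication the approaches differ. The paper introduces the auxiliary subring $\widetilde{\mathcal{R}}^{int}_{\circ,\Delta}$ of elements whose full support projections $I_\alpha(a)$ (not just the level-$c$ supports) are well-ordered, shows this already lies in $\widetilde{\mathcal{R}}^{int}_\Delta$, and then handles general $a,b$ by approximating with elements of $\widetilde{\mathcal{R}}^{int}_{\circ,\Delta}$ having bounded support in each coordinate; this route even yields the multiplicative identity $|ab|_{\bf r}=|a|_{\bf r}|b|_{\bf r}$. Your direct termwise estimate $|ab|_{\bf r}\leq|a|_{\bf r}|b|_{\bf r}$ is more elementary and perfectly sufficient here, though it only gives submultiplicativity. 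Your separate verification of condition (i) for the product via Minkowski sums is correct but redundant: since $\widetilde{\mathcal{R}}^{int}_\Delta\subset W(\widetilde{k}_\Delta)$ and the latter is already known to be a ring, condition (i) for $ab$ is automatic. (The reference for integrality of coefficients is Lemma~\ref{integralandzeroseq} in this paper rather than Lemma~\ref{coeffOEDdag}.)

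For density both arguments are essentially the same: your truncations are exactly elements of $\widetilde{\mathcal{R}}^{int}_{\circ,\Delta}$.

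Where your treatment is genuinely more careful is the ``In particular''. The paper records only that $\widetilde{\mathcal{R}}^{int}_{\circ,\Delta}/(p^h)\cong W(\widetilde{k}_\Delta)/(p^h)$ and that density follows; the identification $\varprojlim_h\widetilde{\mathcal{R}}^{int}_\Delta/(p^h)\cong W(\widetilde{k}_\Delta)$ is then asserted without verifying the intersection property $\widetilde{\mathcal{R}}^{int}_\Delta\cap p^hW(\widetilde{k}_\Delta)=p^h\widetilde{\mathcal{R}}^{int}_\Delta$. You correctly isolate this as the nontrivial step, and your outlined strategy does work. A clean way to finish it is log-convexity: writing ${\bf s}=-\log{\bf r}$, the function $f({\bf s})=\log|x|_{\bf r}$ is a supremum of affine functions of ${\bf s}$, hence convex, and one has the interpolation $f({\bf s})\leq\theta\log M+(1-\theta)f({\bf s}/(1-\theta))$ for $\theta\in(0,1)$, where $M=\sup_{\bf i}|x_{\bf i}|\leq p^{-h}$. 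Using condition (iii) to bound $f({\bf s}/(1-\theta))$ by $\log(1+\delta)$ for small ${\bf s}$ and then letting $\theta\to 1$ gives $\limsup_{{\bf s}\to 0}f({\bf s})\leq\log M$, i.e.\ $\limsup_{{\bf r}\to{\bf 1}}|x|_{\bf r}\leq p^{-h}$, which is exactly condition (iii) for $y=p^{-h}x$.
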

\begin{proof}
$\widetilde{\mathcal{R}}^{int}_\Delta$ is clearly closed under addition. For a generalized formal Laurent series $a$ of the form \eqref{expansionQ} and $\alpha\in\Delta$ we define the sets
\begin{eqnarray*}
I_\alpha(a)&:=&\{i_\alpha\in\mathbb{Q}\mid \text{ there exists  an }{\bf i}\in\mathbb{Q}^\Delta\text{ having }i_\alpha\text{ in coordinate }\alpha\text{ with }a_{\bf i}\neq 0\}\ ;\\
I_\alpha(a,c)&:=&\{i_\alpha\in\mathbb{Q}\mid \text{ there exists  an }{\bf i}\in\mathbb{Q}^\Delta\text{ having }i_\alpha\text{ in coordinate }\alpha\text{ with }|a_{\bf i}|\geq c\}
\end{eqnarray*}
for all real number $c>0$. Assume that $I_\alpha(a)$ is well-ordered for all $\alpha\in\Delta$ and some fixed $a\in W(\widetilde{k}_\Delta)$. In particular, there exists a rational number $s$ such that $s\leq i_\alpha$ for all $i_\alpha\in I_\alpha(a)$ and for all $\alpha\in\Delta$. Hence $|a_{\bf i}|\prod_{\alpha\in\Delta}r_\alpha^{i_\alpha}\leq \prod_\alpha r_\alpha^{s}$ for all ${\bf i}\in\mathbb{Q}^\Delta$ and ${\bf r}\in (0,1)^\Delta$ as $a_{\bf i}\in\Zp$. In particular, $|a|_{\bf r}<\infty$ and $\limsup_{{\bf r}\to (1)_{\alpha\in\Delta}}|a|_{\bf r}\leq 1$. We deduce that $a$ belongs to $\widetilde{\mathcal{R}}^{int}_\Delta$. In particular, $\widetilde{\mathcal{R}}^{int}_{\circ,\Delta}:=\{a\in W(\widetilde{k}_\Delta)\mid I_\alpha(a)\text{ is well-ordered for all }\alpha\in\Delta\}$ is a subring in $W(\widetilde{k}_\Delta)$ (by the same argument as in the proof of Lemma \ref{modpQseries}) contained in $\widetilde{\mathcal{R}}^{int}_\Delta$ on which each function $|\cdot|_{\bf r}$ (${\bf r}\in (0,1)^\Delta$) is finite and is a multiplicative norm. By construction we have $\widetilde{\mathcal{R}}^{int}_{\circ,\Delta}/(p^n)\cong W(\widetilde{k}_\Delta)/(p^n)$, ie.\ $\widetilde{\mathcal{R}}^{int}_{\circ,\Delta}$ is dense $p$-adically in $W(\widetilde{k}_\Delta)$. Therefore $\widetilde{\mathcal{R}}^{int}_\Delta$ is also dense $p$-adically in $W(\widetilde{k}_\Delta)$.

Finally, let $a,b\in \widetilde{\mathcal{R}}^{int}_\Delta$ be two elements both converging on the polyannulus $(\rho,1)^\Delta$ for some $0<\rho<1$. Then for any tuple ${\bf r}=(r_\alpha)_{\alpha\in\Delta}$ with $\rho<r_\alpha<1$ for all $\alpha\in\Delta$ we may write $a=\lim_{n\to\infty} a_n$ and $b=\lim_{n\to\infty} b_n$ convergent in the ${\bf r}$-norm with elements $a_n,b_n\in \widetilde{\mathcal{R}}^{int}_{\circ,\Delta}$ such that the sets $I_\alpha(a_n)$ and $I_\alpha(b_n)$ are bounded (below and above) for all $\alpha\in\Delta$ and any fixed $n\geq 0$. Indeed, the boundedness can be achieved using a combination of $(i)$ and the second statement in Lemma \ref{integralandzeroseq} applied to both $a$ and $b$. Now the sequence $(a_nb_n)_n$ tends to $ab$ coefficientwise (ie.\ for each fixed ${\bf i}\in\mathbb{Q}^\Delta$ in the coefficient of $\prod_{\alpha\in\Delta}u_\alpha^{i_\alpha}$) and is a Cauchy sequence in the ${\bf r}$-norm. We deduce that $|ab|_{\bf r}=\lim_n |a_nb_n|_{\bf r}=\lim_n |a_n|_{\bf r}|b_n|_{\bf r}=|a|_{\bf r}|b|_{\bf r}<\infty$ whence $ab$ satisfies both $(ii)$ and $(iii)$.
\end{proof}

Note that the absolute $p$-Frobenius $\varphi$ lifts to the Witt ring $W(\widetilde{k}_\Delta)$ (by the formula $\varphi(u_\alpha)=u_\alpha^p$ for all $\alpha\in\Delta$) and is bijective. Moreover, it is also bijective on the subring $\widetilde{\mathcal{R}}^{int}_\Delta$. Further, we have the partial Frobenii $\varphi_\alpha$ ($\alpha\in\Delta$) acting on both these rings by the rule $\varphi_\alpha(u_\alpha):=u_\alpha^p$ and $\varphi_\alpha(u_\beta)=u_\beta$ for $\beta\in\Delta\setminus\{\alpha\}$.

\begin{lem}\label{uniquelimit}
Assume that a sequence $(a_n)_n$ of elements in $\widetilde{\mathcal{R}}^{int}_\Delta$ is Cauchy in the ${\bf r}$-norm. Then $(a_n)_n$ converges coefficientwise to an element in $\prod_{{\bf i}\in\mathbb{Q}^\Delta}\Zp \prod_{\alpha\in\Delta}u_\alpha^{i_\alpha}$. In particular, if it converges to an element $a\in W(\widetilde{k}_\Delta)$ in the ${\bf r}$-norm then $a$ does not depend on ${\bf r}$.
\end{lem}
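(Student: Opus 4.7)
The plan is to analyse the Cauchy condition $|\cdot|_{\bf r}$-coefficientwise and reduce everything to completeness of $\Zp$. Since $\widetilde{\mathcal{R}}^{int}_\Delta\subset W(\widetilde{k}_\Delta)$ by the discussion preceding Lemma \ref{RintDsubring}, every $a_n$ can be written uniquely as a formal series $a_n=\sum_{{\bf i}\in\mathbb{Q}^\Delta}a_{n,{\bf i}}\prod_{\alpha\in\Delta}u_\alpha^{i_\alpha}$ with $a_{n,{\bf i}}\in\Zp$ (here Lemma \ref{integralandzeroseq} guarantees the integrality of the coefficients). The main claim, from which both assertions of the lemma follow at once, is that for each fixed ${\bf i}\in\mathbb{Q}^\Delta$ the sequence $(a_{n,{\bf i}})_n$ is $p$-adically Cauchy in $\Zp$. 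There is really no serious obstacle here; the only thing to check carefully is that the $|\cdot|_{\bf r}$-norm on $W(\widetilde{k}_\Delta)$ (possibly taking the value $+\infty$) relates to the individual coefficients as a supremum, exactly as in the definition on $\widetilde{\mathcal{R}}^{int}_\Delta$.

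Concretely, fix ${\bf i}\in\mathbb{Q}^\Delta$ and $\varepsilon>0$. Write $c:=\prod_{\alpha\in\Delta}r_\alpha^{i_\alpha}>0$, a positive real constant independent of $n$. By the Cauchy hypothesis there is $N$ such that $|a_n-a_m|_{\bf r}<\varepsilon c$ for all $n,m\geq N$. Since $|\cdot|_{\bf r}$ is defined as a supremum of the terms $|a_{n,{\bf j}}-a_{m,{\bf j}}|_p\prod_{\alpha\in\Delta}r_\alpha^{j_\alpha}$ over ${\bf j}\in\mathbb{Q}^\Delta$, picking the single index ${\bf j}={\bf i}$ yields $|a_{n,{\bf i}}-a_{m,{\bf i}}|_p\cdot c\leq |a_n-a_m|_{\bf r}<\varepsilon c$, hence $|a_{n,{\bf i}}-a_{m,{\bf i}}|_p<\varepsilon$. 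By completeness of $\Zp$ the sequence $(a_{n,{\bf i}})_n$ has a $p$-adic limit $a^\infty_{\bf i}\in\Zp$. Setting $a^\infty:=\sum_{{\bf i}\in\mathbb{Q}^\Delta}a^\infty_{\bf i}\prod_{\alpha\in\Delta}u_\alpha^{i_\alpha}$ produces the desired coefficientwise limit in $\prod_{{\bf i}\in\mathbb{Q}^\Delta}\Zp\prod_{\alpha\in\Delta}u_\alpha^{i_\alpha}$, proving the first statement.

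For the second statement, suppose in addition that $(a_n)_n$ converges to some $a\in W(\widetilde{k}_\Delta)$ in the $|\cdot|_{\bf r}$-norm, and write $a=\sum_{{\bf i}\in\mathbb{Q}^\Delta}a_{\bf i}\prod_{\alpha\in\Delta}u_\alpha^{i_\alpha}$ with $a_{\bf i}\in\Zp$ via the identification $W(\widetilde{k}_\Delta)\cong\varprojlim_h\widetilde{\mathcal{R}}^{int}_\Delta/(p^h)$ of Lemma \ref{RintDsubring}. The same single-index bound $|a_{n,{\bf i}}-a_{\bf i}|_p\cdot c\leq |a_n-a|_{\bf r}\to 0$ shows $a_{n,{\bf i}}\to a_{\bf i}$ $p$-adically, forcing $a_{\bf i}=a^\infty_{\bf i}$ for every ${\bf i}\in\mathbb{Q}^\Delta$. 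Hence $a=a^\infty$ as formal series, and since the element $a^\infty$ was built purely from the coefficientwise $p$-adic limits without any reference to the choice of ${\bf r}$, the limit $a\in W(\widetilde{k}_\Delta)$ is independent of ${\bf r}$. The only mildly delicate point in writing this out is making precise that $|\cdot|_{\bf r}$ on $W(\widetilde{k}_\Delta)$ is the extension-by-supremum of the norm on $\widetilde{\mathcal{R}}^{int}_\Delta$, so that the inequality $|a_{n,{\bf i}}-a_{\bf i}|_p\prod_\alpha r_\alpha^{i_\alpha}\leq |a_n-a|_{\bf r}$ is available even though $a$ need not lie in $\widetilde{\mathcal{R}}^{int}_\Delta$.
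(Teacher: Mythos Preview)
Your proof is correct and follows exactly the same idea as the paper: the paper's proof is the single sentence ``This follows noting that whenever $(a_n)_n$ is Cauchy in the ${\bf r}$-norm for some ${\bf r}$ then so is the coefficients of any fixed $\prod_{\alpha\in\Delta}u_\alpha^{i_\alpha}$ in $a_n$,'' and you have simply written this out in full detail. Your ``mildly delicate point'' is not actually an issue, since the paper has already defined $|\cdot|_{\bf r}$ on the entire space of formal sums $\Qp^{\mathbb{Q}^\Delta}$ (possibly $+\infty$-valued) before introducing $\widetilde{\mathcal{R}}^{int}_\Delta$, so the inequality $|a_{n,{\bf i}}-a_{\bf i}|_p\prod_\alpha r_\alpha^{i_\alpha}\leq |a_n-a|_{\bf r}$ is immediate from the definition.
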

\begin{proof}
This follows noting that whenever $(a_n)_n$ is Cauchy in the {\bf r}-norm for some ${\bf r}$ then so is the coefficients of any fixed $\prod_{\alpha\in\Delta}u_\alpha^{i_\alpha}$ in  $a_n$.
\end{proof}

\begin{pro}
There exists an embedding $\iota\colon \OED^\dagger\hookrightarrow \widetilde{\mathcal{R}}^{int}_\Delta$ that is norm-preserving and $\varphi_\alpha$-equivariant for all $\alpha\in\Delta$.
\end{pro}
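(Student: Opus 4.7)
The plan is to imitate Kedlaya's one-variable construction componentwise, using the identification of $\widetilde{\mathcal{R}}^{int}_\Delta$ with a subring of $W(\widetilde{k}_\Delta)$ established in Lemma \ref{RintDsubring}. For each $\alpha\in\Delta$, apply the one-variable procedure \eqref{Xtoextended} (with the variable $u$ replaced by $u_\alpha$ and $\varphi$ replaced by the absolute Frobenius on $W(\widetilde{k}_\Delta)$) to define an element $\iota(X_\alpha)\in \widetilde{\mathcal{R}}^{int}_\alpha\subset \widetilde{\mathcal{R}}^{int}_\Delta$ as the $p$-adic limit of the sequence $\iota_l(X_\alpha)$. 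Since the construction only uses the variable $u_\alpha$ and the absolute Frobenius, the element $\iota(X_\alpha)$ lies in $\widetilde{\mathcal{R}}^{int}_\alpha$, where Kedlaya's result (Prop.\ 2.2.6 in \cite{K}) guarantees that $|\iota(X_\alpha)|_{r_\alpha}=r_\alpha=|X_\alpha|_{r_\alpha}$ whenever $p^{-p/(p-1)}\leq r_\alpha<1$.

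Next I would extend $\iota$ to all of $\OED^\dagger$. On the Laurent polynomial subring $\Zp[X_\alpha^{\pm 1}\mid \alpha\in\Delta]\subset \OED^\dagger$ the extension is forced by multiplicativity: send $\prod_{\alpha\in\Delta}X_\alpha^{i_\alpha}\mapsto \prod_{\alpha\in\Delta}\iota(X_\alpha)^{i_\alpha}$, which is meaningful in $W(\widetilde{k}_\Delta)$ since each $\iota(X_\alpha)$ is a unit there (its reduction modulo $p$ is $\overline{u_\alpha}$, which is invertible in $\widetilde{k}_\Delta$). Multiplicativity of the $\bf r$-norm on $\widetilde{\mathcal{R}}^{int}_{\circ,\Delta}$ (as observed in the proof of Lemma \ref{RintDsubring}) together with the fact that each $\iota(X_\alpha)$ only involves the variable $u_\alpha$ gives
\begin{equation*}
\Bigl|\prod_{\alpha\in\Delta}\iota(X_\alpha)^{i_\alpha}\Bigr|_{\bf r}=\prod_{\alpha\in\Delta}|\iota(X_\alpha)|_{r_\alpha}^{i_\alpha}=\prod_{\alpha\in\Delta}r_\alpha^{i_\alpha}
\end{equation*}
for ${\bf r}$ sufficiently close to $(1)_{\alpha\in\Delta}$. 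For a general element $f=\sum_{{\bf i}\in\mathbb{Z}^\Delta}a_{\bf i}{\bf X}^{\bf i}\in\OED^\dagger$, set $\iota(f):=\sum_{{\bf i}\in\mathbb{Z}^\Delta}a_{\bf i}\prod_{\alpha\in\Delta}\iota(X_\alpha)^{i_\alpha}$ and verify that this formal sum converges coefficientwise (in $W(\widetilde{k}_\Delta)$) and, at the same time, in every ${\bf r}$-norm with ${\bf r}$ close to $(1)_{\alpha\in\Delta}$.

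The main technical obstacle will be checking that $\iota(f)$ actually lies in the smaller ring $\widetilde{\mathcal{R}}^{int}_\Delta$, i.e.\ that its expansion in $\Zp^{\mathbb{Q}^\Delta}$ satisfies the well-ordering condition (i) of the definition together with the convergence conditions (ii) and (iii). For this I would argue as follows: because $\iota(X_\alpha)-u_\alpha\in u_\alpha\cdot p\widetilde{\mathcal{R}}^{int}_\alpha$ by the recursion \eqref{Xtoextended}, the partial truncations $\iota(f)\bmod p^h$ involve only finitely many monomials $\prod_\alpha u_\alpha^{j_\alpha}$ with $j_\alpha$ in a shifted copy of $\mathbb{Z}$ (rather than in all of $\mathbb{Q}$), so condition (i) holds modulo each $p^h$. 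Since $\widetilde{\mathcal{R}}^{int}_\Delta$ is $p$-adically closed in $W(\widetilde{k}_\Delta)$ by Lemma \ref{RintDsubring} and the ${\bf r}$-norms control the $p$-adic valuation of each coefficient via Lemma \ref{integralandzeroseq}, a Cauchy argument for $\iota$ applied to the truncations $\sum_{\max|i_\alpha|\leq N}a_{\bf i}{\bf X}^{\bf i}$ produces the desired element satisfying (i)--(iii), and simultaneously shows $|\iota(f)|_{\bf r}=|f|_{\bf r}$.

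Finally, injectivity of $\iota$ is immediate from the norm-preservation (no nonzero element can have arbitrarily small $\bf r$-norm as ${\bf r}\to (1)_{\alpha\in\Delta}$ unless it was zero). The $\varphi_\alpha$-equivariance follows because the one-variable construction \eqref{Xtoextended} is $\varphi$-equivariant in the sense that $\iota(\varphi(X))=\varphi(\iota(X))$ in the one-variable setting, and the partial Frobenius $\varphi_\alpha$ on $\widetilde{\mathcal{R}}^{int}_\Delta$ acts on $u_\alpha$ as the absolute Frobenius while fixing the other $u_\beta$; hence $\iota(\varphi_\alpha(X_\alpha))=\varphi_\alpha(\iota(X_\alpha))$ and this propagates through the multiplicative/additive extension by continuity. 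This gives the required embedding.
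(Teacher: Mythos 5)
Your overall architecture agrees with the paper's: define $\iota(X_\alpha)$ by the one-variable recursion \eqref{Xtoextended}, extend multiplicatively to $\Zp[X_\alpha^{\pm 1}\mid\alpha\in\Delta]$, and then pass to general elements of $\OED^\dagger$ by a Cauchy argument in the ${\bf r}$-norms. The norm-preservation, injectivity, and $\varphi_\alpha$-equivariance parts of your write-up are fine.

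The problematic step is your verification of the well-ordering condition $(i)$. You claim that the reductions $\iota(f)\bmod p^h$ "involve only finitely many monomials $\prod_\alpha u_\alpha^{j_\alpha}$." That is false for general $f\in\OED^\dagger$: although each $\iota(X_\alpha)^{i_\alpha}\bmod p^h$ is indeed a finite sum, a Laurent series $f=\sum_{\bf i}a_{\bf i}{\bf X}^{\bf i}$ may have infinitely many coefficients that are units, and hence infinitely many indices ${\bf i}$ contribute to $\iota(f)\bmod p^h$. You also say the exponents lie in "a shifted copy of $\mathbb{Z}$," but they genuinely lie in $\mathbb{Q}$; the recursion \eqref{Xtoextended} applies $\varphi^{-j-1}$, so modulo $p^h$ the exponents lie in some $p^{-J}\mathbb{Z}$ for a $J$ depending on $h$, which is a \emph{scaled} (not shifted) copy of $\mathbb{Z}$ and in particular not well-ordered by itself. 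What saves the argument is the combination of this discreteness with boundedness from below, the latter coming from the bounded-denominators property of $f\bmod p^h$ and the fact that the $\alpha$-support of $\iota(X_\alpha)^{n}$ is bounded below by $n$. You never state or use this boundedness-from-below, and the claims you do state are incorrect, so as written your argument does not establish $(i)$.

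For comparison, the paper does not invoke the discreteness of the exponents at all. It instead shows directly that $I_\alpha(\iota_\alpha(X_\alpha))\subset[1,2)$, hence $I_\alpha(\iota_\alpha(X_\alpha^{n}))\subset[n,2n)$ for $n>0$, so for $m\geq 2n$ every element of $I_\alpha(\iota_\alpha(X_\alpha^{m}),c)$ dominates every element of $I_\alpha(\iota_\alpha(X_\alpha^{n}),c)$; combined with the fact that only finitely many negative $n$ contribute for a fixed $c$ (bounded denominators), one gets well-ordering because any strictly decreasing sequence would have to stay in a finite union of well-ordered sets. Both routes to $(i)$ can be made to work, but yours needs the boundedness and discreteness spelled out correctly; as stated it has a gap.
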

\begin{rem}
By `norm-preserving' we mean that for any tuple ${\bf r}\in (0,1)^\Delta$ and $\lambda\in \OED^\dagger$ we have $|\iota(\lambda)|_{\bf r}=|\lambda|_{\bf r}$ including that one side is $+\infty$ if and only if so is the other.
\end{rem}
\begin{proof}
Note that the construction of $\widetilde{\mathcal{R}}^{int}_\Delta$ is functorial in the finite set $\Delta$. In particular, we have a ring embedding $\widetilde{\mathcal{R}}^{int}_\alpha\hookrightarrow \widetilde{\mathcal{R}}^{int}_\Delta$ sending $u_\alpha\in \widetilde{\mathcal{R}}^{int}_\alpha$ to $u_\alpha\in \widetilde{\mathcal{R}}^{int}_\Delta$. This is norm-preserving and $\varphi_\alpha$-equivariant. Precomposed by the $\varphi_\alpha$-equivariant and norm-preserving embedding $\mathcal{O}_{\mathcal{E}_\alpha}^\dagger\hookrightarrow \widetilde{\mathcal{R}}^{int}_\alpha$ defined by the sequence \eqref{Xtoextended} we obtain an embedding $\iota_\alpha\colon \mathcal{O}_{\mathcal{E}_\alpha}^\dagger\hookrightarrow\widetilde{\mathcal{R}}^{int}_\Delta$. On monomials of the form $\prod_{\alpha\in\Delta}X_\alpha^{n_\alpha}$ we define $\iota$ by putting $\iota(\prod_{\alpha\in\Delta}X_\alpha^{n_\alpha}):=\prod_{\alpha\in\Delta}\iota_\alpha(X_\alpha)^{n_\alpha}$. This extends to a ring homomorphism $\iota\colon \Zp[X_\alpha,X_\alpha^{-1}\mid \alpha\in\Delta]=\bigotimes_{\Zp,\alpha\in\Delta}\Zp[X_\alpha,X_\alpha^{-1}]$ that is norm-preserving in each ${\bf r}$-norm (${\bf r}\in(0,1)^\Delta$) and $\varphi_\alpha$-equivariant for all $\alpha\in\Delta$. For any element $\lambda\in \OED^\dagger$ there exists a tuple ${\bf r}$ such that $\lambda$ is convergent in the ${\bf r}$-norm whence it is the limit of a sequence $(\lambda_n)_n\subset \Zp[X_\alpha^{\pm 1}\mid \alpha\in\Delta]$ in the ${\bf r}$-norm. Since $\iota$ preserves the ${\bf r}$-norm, the sequence $\iota(\lambda_n)$ is Cauchy in the ${\bf r}$-norm, so it converges to an element in $\prod_{{\bf i}\in\mathbb{Q}^\Delta}\Zp \prod_{u_\alpha^{i_\alpha}}$ that we denote by $\iota(\lambda)$. We need to show that $\iota(\lambda)$ satisfies $(i),(ii),(iii)$. 

By construction $I_\alpha(\iota_\alpha(X_\alpha))\subset [1,2)$ is bounded. Therefore we have $I_\alpha(\iota_\alpha(X_\alpha^n))\subset [n,2n)$ for all positive integers $n$. In particular, any element of $I_\alpha(\iota_\alpha(X_\alpha^m))$ is bigger than any element of $I_\alpha(\iota_\alpha(X_\alpha^n))$ if $m\geq 2n$. This shows that for any integer $N>0$ and real number $c$ the set $\bigcup_{n\geq N}I_\alpha(\iota_\alpha(X_\alpha^n),c)$ is well ordered as any strictly decreasing infinite sequence would be contained in $\bigcup_{M\geq n\geq N}I_\alpha(\iota_\alpha(X_\alpha^n),c)$ for some integer $M$ which is well-ordered being a finite union of well-ordered subsets of $\mathbb{Q}$. Now note that any element $\lambda \in \OED^\dagger$ has bounded denominators modulo $p^h$ for any $h$ showing that $I_\alpha(\lambda,c)$ is contained in $\bigcup_{n\geq N}I_\alpha(\iota_\alpha(X_\alpha^n),c)$ for some $N$ (depending on $c$). This shows $(i)$ for $\iota(\lambda)$.

Since $\iota$ preserves $|\cdot|_{\bf r}$ for all ${\bf r}$ we deduce that $|\iota(\lambda)|_{\bf r}=|\lambda|_{\bf r}<\infty$ for all ${\bf r}\in(\rho,1)^\Delta$ for some $0<\rho<1$ and $$\limsup_{{\bf r}\to (1)_{\alpha\in\Delta}}|\iota(\lambda)|_{\bf r}\leq \limsup_{{\bf r}\to (1)_{\alpha\in\Delta}}|\lambda|_{\bf r}\leq 1\ .$$ 
\end{proof}

Passing to the $p$-adic completion we obtain an embedding $\OED=\varprojlim_h \OED^\dagger/(p^h)\hookrightarrow \varprojlim_h \widetilde{\mathcal{R}}^{int}_\Delta/(p^h)=W(\widetilde{k}_\Delta)$ that we still denote by $\iota$.

\begin{pro}\label{extendedoverconvintersect}
We have $\iota(\OED^\dagger)=\iota(\OED)\cap \widetilde{\mathcal{R}}^{int}_\Delta$ as subrings in $W(\widetilde{k}_\Delta)$.
\end{pro}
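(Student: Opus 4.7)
The inclusion $\iota(\OED^\dagger) \subseteq \iota(\OED) \cap \widetilde{\mathcal{R}}^{int}_\Delta$ is immediate from the construction of the preceding proposition, so the substance lies in the reverse containment. My plan is to reduce this to the fundamental inequality
\begin{equation*}
|\lambda|_{\bf r} \leq |\iota(\lambda)|_{\bf r}
\end{equation*}
for every $\lambda \in \OED$ and every ${\bf r} \in (0,1)^\Delta$, where $|\lambda|_{\bf r}$ is defined formally from the coordinate expansion $\lambda = \sum_{{\bf i} \in \mathbb{Z}^\Delta} a_{\bf i} {\bf X}^{\bf i}$ (with $a_{\bf i} \in \Zp$) by $|\lambda|_{\bf r} := \sup_{\bf i} |a_{\bf i}|_p \prod_\alpha r_\alpha^{i_\alpha} \in [0, +\infty]$. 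Granted this, if $\iota(\lambda) \in \widetilde{\mathcal{R}}^{int}_\Delta$ then $\limsup_{{\bf r} \to (1)_{\alpha \in \Delta}} |\iota(\lambda)|_{\bf r} \leq 1$ forces $\limsup_{{\bf r} \to (1)} |\lambda|_{\bf r} \leq 1$, giving $\lambda \in \OED^\dagger$.

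The structural input is the mod-$p$ behavior of $\iota$. Unwinding the recursion \eqref{Xtoextended} via the identity $(u+1)^p \equiv 1 + u^p \pmod{p}$, each increment $\iota_{l+1}(X_\alpha) - \iota_l(X_\alpha)$ is divisible by $p u_\alpha$ in $W(\widetilde{k}_\Delta)$, so $\iota(X_\alpha) = u_\alpha(1 + p \eta_\alpha)$ for some $\eta_\alpha \in W(\widetilde{k}_\alpha)$. In particular $\iota(X_\alpha)^{\pm 1} \equiv u_\alpha^{\pm 1} \pmod{p}$, and expanding $\iota(\lambda) = \sum_{{\bf q} \in \mathbb{Q}^\Delta} c_{\bf q} \prod_\alpha u_\alpha^{q_\alpha}$ yields $c_{\bf i} \equiv a_{\bf i} \pmod{p}$ for ${\bf i} \in \mathbb{Z}^\Delta$ and $c_{\bf q} \equiv 0 \pmod{p}$ otherwise. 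Writing $\lambda$ as the $p$-adically convergent sum $\lambda = \sum_{k \geq 0} p^k \mu_{k+1}$ with $\mu_k \in \OED^+[X_\Delta^{-1}] \subseteq \OED^\dagger$ a Teichm\"uller-type lift of the $k$-th $p$-adic digit, the partial sum $\widetilde{\lambda}_h := \sum_{k=0}^h p^k \mu_{k+1}$ lies in $\OED^\dagger$ and agrees with $\lambda$ modulo $p^{h+1}$. Fixing ${\bf i}_0$ with $h := v_p(a_{{\bf i}_0})$, the coefficient of ${\bf X}^{{\bf i}_0}$ in $\widetilde{\lambda}_h$ still has $p$-adic valuation $h$, so the preceding proposition gives $|a_{{\bf i}_0}|_p \prod_\alpha r_\alpha^{{i_0}_\alpha} \leq |\widetilde{\lambda}_h|_{\bf r} = |\iota(\widetilde{\lambda}_h)|_{\bf r}$.

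The fundamental inequality then reduces to showing $|\iota(\widetilde{\lambda}_h)|_{\bf r} \leq |\iota(\lambda)|_{\bf r}$ for each $h$. Writing $\iota(\widetilde{\lambda}_h) = \iota(\lambda) - p^{h+1} \iota(\lambda'_h)$ with $\lambda'_h := p^{-h-1}(\lambda - \widetilde{\lambda}_h) \in \OED$, the ultrametric inequality in $\widetilde{\mathcal{R}}^{int}_\Delta$ yields the bound provided $|p^{h+1} \iota(\lambda'_h)|_{\bf r} = p^{-h-1} |\iota(\lambda'_h)|_{\bf r}$ is dominated by $|\iota(\lambda)|_{\bf r}$. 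The principal obstacle is obtaining this uniform (in $h$) bound on $|\iota(\lambda'_h)|_{\bf r}$, since $\lambda'_h$ a priori lies only in $\OED$ rather than in $\OED^\dagger$; here I expect to exploit the non-integer-indexed coefficients $c_{\bf q}$ of $\iota(\lambda)$, which arise from the expansion of $\iota(X_\alpha)^{i_\alpha} = u_\alpha^{i_\alpha}(1 + p \eta_\alpha)^{i_\alpha}$ (convergent in $W(\widetilde{k}_\alpha)$ since $1 + p \eta_\alpha$ is a unit there), to control the error inductively by tracking the layerwise interaction between the $\mu_k$ and these higher-order corrections. This layerwise bookkeeping is the technical heart of the argument.
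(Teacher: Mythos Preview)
Your reduction to the fundamental inequality $|\lambda|_{\bf r} \leq |\iota(\lambda)|_{\bf r}$ is exactly right, and this is precisely what the paper proves. However, your proposed route to that inequality has a genuine gap that you yourself flag: controlling $|p^{h+1}\iota(\lambda'_h)|_{\bf r}$. The element $\lambda'_h$ lies only in $\OED$, so a priori $|\iota(\lambda'_h)|_{\bf r}$ could be $+\infty$ for a fixed ${\bf r}$ bounded away from $1$; the factor $p^{-h-1}$ does not help because there is no uniform bound on how badly $\iota$ can behave on general elements of $\OED$. Your suggestion to track the non-integer-indexed coefficients $c_{\bf q}$ layer by layer amounts to analyzing the full expansion of $\iota(\lambda)$ directly, at which point the detour through the partial sums $\widetilde{\lambda}_h$ buys nothing. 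The bookkeeping you anticipate is essentially a restatement of the original problem.

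The paper avoids this entirely by a short leading-term argument. Assume for contradiction that some monomial of $\lambda$ has $|\lambda_{\bf k}\prod X_\alpha^{k_\alpha}|_{\bf r} > |\iota(\lambda)|_{\bf r}$; among such ${\bf k}$ pick one with $|\lambda_{\bf k}|_p$ maximal, and then lexicographically minimal. The key structural fact (which you already isolated) is $\iota(X_\alpha^{k_\alpha}) = u_\alpha^{k_\alpha} + \text{higher-degree terms}$. This forces the coefficient $a_{{\bf k}_0}$ of $\prod u_\alpha^{k_{0,\alpha}}$ in $\iota(\lambda)$ to satisfy $|a_{{\bf k}_0}|_p = |\lambda_{{\bf k}_0}|_p$: the contribution from $\lambda_{{\bf k}_0}{\bf X}^{{\bf k}_0}$ is exactly $\lambda_{{\bf k}_0}$, and every other monomial $\lambda_{\bf k}{\bf X}^{\bf k}$ that could contribute has ${\bf k}$ lexicographically smaller, hence $|\lambda_{\bf k}|_p < |\lambda_{{\bf k}_0}|_p$ by the extremal choice. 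Then $|a_{{\bf k}_0}\prod u_\alpha^{k_{0,\alpha}}|_{\bf r} = |\lambda_{{\bf k}_0}\prod X_\alpha^{k_{0,\alpha}}|_{\bf r} > |\iota(\lambda)|_{\bf r}$, contradicting the definition of $|\iota(\lambda)|_{\bf r}$ as a supremum. This one-step minimality argument replaces all of your layerwise induction.
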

\begin{proof}
The containment $\iota(\OED^\dagger)\subseteq\iota(\OED)\cap \widetilde{\mathcal{R}}^{int}_\Delta$ is clear, so let $a:=\sum_{{\bf i}=(i_\alpha)_{\alpha\in\Delta}\in\mathbb{Q}^\Delta}a_{\bf i}\prod_{\alpha\in\Delta}u_\alpha^{i_\alpha}$ be both in $\iota(\OED)$ and $\widetilde{\mathcal{R}}^{int}_\Delta$. In particular, we have a $\lambda=\sum_{{\bf k}\in\mathbb{Z}^\Delta}\lambda_{\bf k}\prod_\alpha X_\alpha^{k_\alpha}$ in $\OED$ with $a=\iota(\lambda)$ and we are bound to show that $\lambda$ lies in $\OED^\dagger$. Now let $0<\rho<1$ be such that $a=\iota(\lambda)$ converges on the polyannulus $(\rho,1)^\Delta$ and let ${\bf r}\in (\rho,1)^\Delta$ be arbitrary. Assume there exists an index ${\bf k}\in\mathbb{Z}^\Delta$ such that $|\lambda_{\bf k}\prod_\alpha X_\alpha^{k_\alpha}|_{\bf r}>|a|_{\bf r}$ and choose ${\bf k_0}=(k_{0,\alpha})_{\alpha\in\Delta}$ such that $|\lambda_{\bf k_0}|$ is maximal among these. We may further assume that ${\bf k_0}$ is minimal for the lexicographical ordering in some fixed ordering $\Delta=\{\alpha_1<\dots<\alpha_{|\Delta|}\}$ among the indices ${\bf k}$ with $|\lambda_{\bf k}\prod_\alpha X_\alpha^{k_\alpha}|_{\bf r}>|a|_{\bf r}$ and $|\lambda_{\bf k}|$ maximal (there exists such since for fixed absolute value of the coefficient the indices are bounded below for elements in $\OED$). Now note that by construction we have $$\iota(X_\alpha^{k_{\alpha}})=u_\alpha^{k_{\alpha}}+\text{terms of higher degree}$$ for any $\alpha\in\Delta$ and integer $k_\alpha\in\mathbb{Z}$. By the choice of ${\bf k_0}$ we deduce $|a_{\bf k_0}|=|\lambda_{\bf k_0}|$ hence $|a|_{\bf r}<|\lambda_{\bf k_0}\prod_\alpha X_\alpha^{k_{0,\alpha}}|_{\bf r}=|a_{\bf k_0}\prod_{\alpha\in\Delta}u_\alpha^{k_{0,\alpha}}|_{\bf r}\leq |a|_{\bf r}$, contradiction. We deduce $|\lambda|_{\bf r}\leq |a|_{\bf r}<\infty$ (a posteriori we have equality) and $\limsup_{{\bf r}\to (1)_{\alpha\in\Delta}}|\lambda|_{\bf r}\leq \limsup_{{\bf r}\to (1)_{\alpha\in\Delta}}|a|_{\bf r}\leq 1$ showing that $\lambda$ belongs to $\OED^\dagger$.
\end{proof}

\subsection{The equivalence of categories}

\begin{pro}\label{h0phioverconv}
Let $D^\dagger$ be an object in $\mathcal{D}^{et}(\varphi_{\Delta},\Gamma_\Delta,\OED^\dagger)$ and put $D:=\OED\otimes_{\OED^\dagger}D^\dagger$. Then the natural map
\begin{equation*}
h^0\Phi^\bullet(1\otimes\id)\colon h^0\Phi^\bullet(D^\dagger)\to h^0\Phi^\bullet(D)
\end{equation*}
is bijective.
\end{pro}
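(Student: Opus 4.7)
Injectivity of the natural map $h^0\Phi^\bullet(D^\dagger) \to h^0\Phi^\bullet(D)$ follows immediately from the injection $D^\dagger \hookrightarrow D$, which in turn comes from the injectivity of $\OED^\dagger \hookrightarrow \OED$ (Prop.~\ref{OEDdagpcomp}) together with the $\OED^\dagger$-freeness of $D^\dagger$ (Thm.~\ref{overconvfree}, after the usual reduction to the $p$-torsion free case via the five-lemma on the short exact sequence of $p^\infty$-torsion). The induced map on joint $\varphi_\alpha$-kernels is then automatically injective.

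For surjectivity the plan is to leverage the extended multivariable Robba ring. Setting
\[
\widetilde{D}^{int} := \widetilde{\mathcal{R}}^{int}_\Delta \otimes_{\OED^\dagger,\iota} D^\dagger, \qquad \widetilde{D} := W(\widetilde{k}_\Delta) \otimes_{\OED,\iota} D,
\]
a free basis of $D^\dagger$ over $\OED^\dagger$ serves simultaneously as a basis for each of $D$, $\widetilde{D}^{int}$, and $\widetilde{D}$ over the corresponding coefficient rings, and Prop.~\ref{extendedoverconvintersect} applied coordinate-wise yields the key intersection identity
\[
D \cap \widetilde{D}^{int} = D^\dagger \quad \text{inside } \widetilde{D}.
\]
It therefore suffices to prove that every $x \in h^0\Phi^\bullet(D)$, viewed inside $\widetilde{D}$, actually lies in $\widetilde{D}^{int}$.

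Lemma~\ref{RintDsubring} together with $\OED^\dagger/p^h \cong \OED/p^h$ (Prop.~\ref{OEDdagpcomp}) gives the isomorphisms $\widetilde{D}^{int}/p^h\widetilde{D}^{int} \cong \widetilde{D}/p^h\widetilde{D}$ for every $h \geq 1$, so $\widetilde{D}$ is the $p$-adic completion of $\widetilde{D}^{int}$ and the cokernel $\widetilde{D}/\widetilde{D}^{int}$ is uniquely $p$-divisible. Given $x \in h^0\Phi^\bullet(D)$ with image $\tilde{x} \in \widetilde{D}$, one may therefore $p$-adically approximate $\tilde{x}$ by elements $x_h \in \widetilde{D}^{int}$ with $x_h \equiv \tilde{x} \pmod{p^h\widetilde{D}}$; the joint $\varphi_\alpha$-fixedness of $\tilde{x}$ then forces $(\varphi_\alpha - 1)(x_h) \in p^h\widetilde{D} \cap \widetilde{D}^{int} = p^h\widetilde{D}^{int}$ for every $\alpha \in \Delta$. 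The plan is then to apply a multivariable Frobenius-descent argument in the spirit of Cherbonnier--Colmez, exploiting the \'etaleness of $D^\dagger$ and the uniform continuity of $\varphi_\alpha - 1$ on the weak topology of $\widetilde{D}^{int}$, to solve the system $(\varphi_\alpha - 1)(y) = -(\varphi_\alpha - 1)(x_h)$ with correction $y \in p^h\widetilde{D}^{int}$ simultaneously in all $\alpha$; this upgrades the approximate $\varphi_\alpha$-fixedness to an exact identification $\tilde{x} \in \widetilde{D}^{int}$, whence $x \in D \cap \widetilde{D}^{int} = D^\dagger$.

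The hard part is precisely this multivariable Frobenius descent: in a single variable this is standard, but ensuring that the iterative corrections preserve $\varphi_\alpha$-fixedness simultaneously for \emph{all} $\alpha \in \Delta$ requires careful bookkeeping, which I would handle using the variable-by-variable basis refinement furnished by Prop.~\ref{separatevariablebasis} together with the ring-theoretic finiteness properties (Prop.~\ref{jacoverconv} and \ref{noetheroverconv}) of $\OED^\dagger$.
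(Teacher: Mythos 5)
Your setup agrees with the paper's through the reduction: the injectivity argument, the passage to $\widetilde{D}^{int}$ and $\widetilde{D}$ over the extended rings, and the use of Prop.~\ref{extendedoverconvintersect} coordinate-wise to reduce to showing that a joint $\varphi_\Delta$-fixed vector $\tilde{x}\in\widetilde{D}$ already lies in $\widetilde{D}^{int}$, are all exactly what the paper does in its Step~1. The divergence --- and the gap --- comes afterwards.

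The step you label ``the hard part'' is in fact the entire content of the proposition, and your sketch of it does not go through. First, there is a circularity: Prop.~\ref{separatevariablebasis} is stated only for $D^\dagger=\mathbb{D}^\dagger(T)$, i.e.\ for objects in the essential image of $\mathbb{D}^\dagger$. To extend it to an arbitrary object of $\mathcal{D}^{et}(\varphi_\Delta,\Gamma_\Delta,\OED^\dagger)$ (which is the generality of Prop.~\ref{h0phioverconv}) one needs Cor.~\ref{basisforalpharestrict}, whose proof invokes the equivalence of categories Cor.~\ref{ddaggerequiv}, which rests on Thm.~\ref{basechange_OED}, which in turn needs Prop.~\ref{h0phioverconv} for full faithfulness. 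The paper's final Remark explicitly flags this obstruction and says one \emph{cannot} ``at least trivially'' replace the extended-Robba-ring argument by arguments using Prop.~\ref{separatevariablebasis}. Second, even ignoring the circularity, the claim that one can solve $(\varphi_\alpha-1)(y)=-(\varphi_\alpha-1)(x_h)$ with $y\in p^h\widetilde{D}^{int}$ simultaneously in all $\alpha$ is not substantiated; $\varphi_\alpha-1$ is far from surjective on $\widetilde{D}^{int}$, and the existence of a solution in $\widetilde{D}$ (namely $\tilde{x}-x_h$) gives no purchase on the existence of one in $\widetilde{D}^{int}$ --- that bounded-denominator statement is precisely what one is trying to prove. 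The paper does something genuinely different here: writing $A_\beta$ for the matrix of $\varphi_\beta^{-1}$, it proves \emph{directly} that any $v\in\widetilde{D}$ with $A_\beta v=\varphi_\beta(v)$ for all $\beta$ satisfies $|v|_{\bf r}\le 1$ once the matrices are normalized to have $|A_\beta|_{\bf r}\le 1$ (Step~2, a contradiction argument on the extremal term of $v$), and then removes the normalization by a basis rescaling whose effect on $|v|_{\bf r}$ tends to $1$ as ${\bf r}\to(1)_{\alpha}$ (Step~3). No iterative correction, no Frobenius descent, no invocation of Prop.~\ref{separatevariablebasis}. To repair your proposal you would need to supply the norm estimate of Steps~2--3, or an argument of comparable strength that does not presuppose the equivalence of categories.
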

\begin{proof}
We proceed in three steps as follows. The proof is inspired by the proof of the $1$-variable case (Prop.\ 2.5.8 in \cite{K}).

\emph{Step 1. We pass to the extended rings and reduce to the case when $D^\dagger$ is free.} Since $\OED/(p^h)\cong \OED^\dagger/(p^h)$ for all $h\geq 1$, we have $D^\dagger[p^\infty]\cong D[p^\infty]$. In particular, $h^i\Phi^\bullet(D^\dagger[p^\infty])\cong h^i\Phi^\bullet(D[p^\infty])$ for all $i\geq 0$. Therefore by the long exact sequence of $h^i\Phi^\bullet(\cdot)$ we may assume without loss of generality that $D^\dagger$ has no $p$-torsion whence $D^\dagger$ is free as a module over $\OED^\dagger$ by Thm.\ \ref{overconvfree}. Denote its rank by $n$ and put $\widetilde{D}^\dagger:=\widetilde{\mathcal{R}}^{int}_\Delta\otimes_{\OED^\dagger}D^\dagger$ and $\widetilde{D}:=W(\widetilde{k}_\Delta)\otimes_{\widetilde{\mathcal{R}}^{int}_\Delta}\widetilde{D}^\dagger$. Hence $\widetilde{D}^\dagger$ (resp.\ $\widetilde{D}$) is free of rank $n$ as a module over $\widetilde{\mathcal{R}}^{int}_\Delta$ (resp.\ over $W(\widetilde{k}_\Delta)$). The injectivity of $h^0\Phi^\bullet(1\otimes\id)$ is clear. Choosing a basis of the free module $D^\dagger$, we need to verify that the coordinates of elements in $h^0\Phi^\bullet(D)$ belong to $\OED^\dagger$. Therefore by Prop.\ \ref{extendedoverconvintersect} it suffices to show that the natural map
\begin{equation*}
h^0\Phi^\bullet(1\otimes\id)\colon h^0\Phi^\bullet(\widetilde{D}^\dagger)\to h^0\Phi^\bullet(\widetilde{D})
\end{equation*}
is bijective.

Pick a basis of $\widetilde{D}^\dagger$ and for each $\beta\in\Delta$ we put $B_\beta\in \GL_n(\widetilde{\mathcal{R}}^{int}_\Delta)$ for the matrix of $\varphi_\beta$ in the chosen basis and $A_\beta:=B_\beta^{-1}\in \GL_n(\widetilde{\mathcal{R}}^{int}_\Delta)$. Assume that $v\in W(\widetilde{k}_\Delta)^n$ is the coordinate vector of an element in $h^0\Phi^\bullet(\widetilde{D})$, ie.\ we have $A_\beta v=\varphi_\beta(v)$ for all $\beta\in\Delta$. We are going to show that $v\in (\widetilde{\mathcal{R}}^{int}_\Delta)^n$, ie.\ it satisfies conditions $(i)-(iii)$. Since $v\in W(\widetilde{k}_\Delta)^n$, condition $(i)$ is clear. For a matrix $A=((a_{ij}))_{1\leq i,j\leq n}\in \GL_n(\widetilde{\mathcal{R}}^{int}_\Delta)$ (resp.\ column vector $v=(v_1,\dots,v_n)\in W(\widetilde{k}_\Delta)^n$) and ${\bf r}\in (0,1)^\Delta$ we put $|A|_{\bf r}:=\max_{1\leq i,j\leq n}|a_{ij}|_{\bf r}$ (resp.\ $|v|_{\bf r}:=\max_{1\leq i\leq n}|v_i|_{\bf r}$). We write $A_\beta=\sum_{{\bf j}\in\mathbb{Q}^\Delta}A_{\beta,{\bf j}}\prod_{\alpha\in\Delta}u_\alpha^{j_\alpha}$ and $v_i=\sum_{{\bf j_i}\in \mathbb{Z}^\Delta}a_{\bf j_i}\prod_{\alpha\in\Delta}u_\alpha^{j_{i,\alpha}}$ with $A_{\beta,{\bf j}}\in \Zp^{n\times n}$, $a_{\bf j_i}\in \Zp$, and $i=1,\dots,n$. 

Let $\varepsilon>0$ be a real number. Since $A_\beta\in \GL_n(\widetilde{\mathcal{R}}^{int}_\Delta)$ for all $\beta\in\Delta$, there exists a radius $\rho=\rho(\varepsilon)\in (0,1)$ such that we have  $|A_\beta|_{\bf r}\leq 1+\varepsilon$ (apply $(iii)$) for any tuple ${\bf r}=(r_\alpha)_{\alpha\in\Delta}\in(\rho,1)^\Delta$ and for all $\beta\in\Delta$. Pick a tuple ${\bf r}\in (\rho,1)^\Delta$.

\emph{Step 2. We suppose $|A_\beta|_{\bf r}\leq 1$ for all $\beta\in\Delta$ and show $|v|_{\bf r}\leq 1$.}  Assume for contradiction that $v=(v_1,\dots,v_n)^T$ has $|v|_{\bf r}>1$. We define 
\begin{equation*}
c:=\sup_{1\leq i\leq n}\sup_{{\bf j_i}\in\mathbb{Q}^\Delta,\ |a_{\bf j_i}\prod_{\alpha\in\Delta}u_\alpha^{j_{i,\alpha}}|_{\bf r}>1}|a_{\bf j_i}|\in [0,1]\ .
\end{equation*}
By $(i)$ the set $U_{i,\alpha}:=\{j_{i,\alpha}\in \mathbb{Q}\mid |a_{\bf j_i}|=c\}$ is well-ordered for all $1\leq i\leq n$, $\alpha\in\Delta$. Hence $\{\prod_{\alpha\in\Delta}r_\alpha^{-j_{i,\alpha}}\mid |a_{\bf j_i}|=c\}\subset\mathbb{R}$ is also well-ordered since $r_\alpha^{-1}>1$ for all $\alpha\in\Delta$. So the supremum $$\sup_{1\leq i\leq n}\sup_{{\bf j_i}\in\mathbb{Q}^\Delta,\ |a_{\bf j_i}|=c}|a_{\bf j_i}\prod_{\alpha\in\Delta}u_\alpha^{j_{i,\alpha}}|_{\bf r}=\sup_{1\leq i\leq n}\sup_{{\bf j_i}\in\mathbb{Q}^\Delta,\ |a_{\bf j_i}|=c}c\prod_{\alpha\in\Delta}r_\alpha^{j_{i,\alpha}}$$ is taken at an index ${\bf j_i}^{(0)}=(j_{i,\beta}^{(0)})_{\beta\in\Delta}\in\mathbb{Q}^\Delta$ for some $1\leq i\leq n$. Since $|a_{{\bf j_i}^{(0)}}\prod_{\alpha\in\Delta}u_\alpha^{j_{i,\alpha}^{(0)}}|_{\bf r}>1$, we have $j_{i,\beta}^{(0)}<0$ for some $\beta\in\Delta$. We claim that the coefficient of the monomial $$\varphi_\beta(\prod_{\alpha\in\Delta}u_\alpha^{j_{i,\alpha}^{(0)}})=u_\alpha^{pj_{i,\beta}^{(0)}}\prod_{\alpha\in\Delta\setminus\{\beta\}}u_\alpha^{j_{i,\alpha}^{(0)}}$$ in the $i$th coordinate of $A_\beta v$ cannot have absolute value as big as $|a_{{\bf j_i}^{(0)}}|$ contradicting to the assumption that $A_\beta v=\varphi_\beta(v)$. At first note that $j_{i,\beta}^{(0)}<0$ implies $|a_{{\bf j_i}^{(0)}}\varphi_\beta(\prod_{\alpha\in\Delta}u_\alpha^{j_{i,\alpha}^{(0)}})|_{\bf r}>|a_{{\bf j_i}^{(0)}}\prod_{\alpha\in\Delta}u_\alpha^{j_{i,\alpha}^{(0)}}|_{\bf r}>1$, so the terms in $v$ with $|\cdot|_{\bf r}\leq 1$ can only produce terms with smaller $|\cdot|_{\bf r}$ since $|A_\beta|_{\bf r}\leq 1$. On the other hand, the coefficients $a_{\bf j_i}$ of the terms in the coordinates of $v$ with $|a_{\bf j_i}\prod_{\alpha\in\Delta}u_\alpha^{j_{i,\alpha}}|_{\bf r}>1$ have absolute value at most $c=|a_{{\bf j_i}^{(0)}}|$, so the terms with $|a_{\bf j_i}|<c$ cannot contribute either as we have $A_{\beta,{\bf j}}\in \Zp^{n\times n}$. Finally, the terms with $|a_{\bf j_i}|=c$ all have $|\cdot|_{\bf r}$ at most $|a_{{\bf j_i}^{(0)}}\prod_{\alpha\in\Delta}u_\alpha^{j_{i,\alpha}^{(0)}}|_{\bf r}$ therefore cannot add up to a term $|a_{{\bf j_i}^{(0)}}\varphi_\beta(\prod_{\alpha\in\Delta}u_\alpha^{j_{i,\alpha}^{(0)}})|_{\bf r}>|a_{{\bf j_i}^{(0)}}\prod_{\alpha\in\Delta}u_\alpha^{j_{i,\alpha}^{(0)}}|_{\bf r}$ (using again $|A_\beta|_{\bf r}\leq 1$).

\emph{Step 3. The general case.} For each $\alpha\in\Delta$ put $n_\alpha:=\left[\frac{\log(1+\varepsilon)}{(1-p)\log r_\alpha}\right]+1\in\mathbb{Z}^{>0}$ and divide the basis of $\widetilde{D}^\dagger$ by $\prod_{\alpha\in\Delta}u_\alpha^{n_\alpha}$. Put $A'_\beta$ ($\beta\in\Delta$) for the matrix of $\varphi_\beta^{-1}$ in the new basis. Since dividing the basis by $u_\beta$ changes $A_\beta$ to $A_\beta u_\beta^{p-1}$ and does not change $A_{\beta'}$ ($\beta'\neq\beta\in\Delta$), we deduce $$|A'_\beta|_{\bf r}=|A_\beta|_{\bf r}r_\alpha^{(p-1)n_\beta}\leq (1+\varepsilon)r_\beta^{\frac{\log(1+\varepsilon)}{-\log r_\beta}}= 1$$ for all $\beta\in\Delta$. On the other hand, the coordinate vector $v$ changes to $v\prod_{\alpha\in\Delta}u_\alpha^{n_\alpha}$ in the new basis. By Step 2 we obtain
\begin{align*}
|v|_{\bf r}=\prod_{\alpha\in\Delta}r_\alpha^{-n_\alpha}|v\prod_{\alpha\in\Delta}u_\alpha^{n_\alpha}|_{\bf r}\leq \prod_{\alpha\in\Delta}r_\alpha^{-n_\alpha}\leq \prod_{\alpha\in\Delta}r_\alpha^{-\frac{\log(1+\varepsilon)}{(1-p)\log r_\alpha}-1}=\\
=(1+\varepsilon)^{\frac{|\Delta|}{p-1}}\prod_{\alpha\in\Delta}r_\alpha^{-1}\leq (1+\varepsilon)^{\frac{|\Delta|}{p-1}}\rho^{-|\Delta|}\to 1
\end{align*}
as $\varepsilon\to 0$ and $\rho\to 1$.
\end{proof}

\begin{thm}\label{basechange_OED}
The basechange functor from $\mathcal{D}^{et}(\varphi_{\Delta},\Gamma_\Delta,\OED^\dagger)$ to $\mathcal{D}^{et}(\varphi_{\Delta},\Gamma_\Delta,\OED)$ is an equivalence of categories.
\end{thm}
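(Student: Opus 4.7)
The plan is to verify essential surjectivity and full faithfulness separately, leveraging Prop.\ \ref{overconverge} for the former and Prop.\ \ref{h0phioverconv} for the latter.

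\emph{Essential surjectivity.} Given $D \in \mathcal{D}^{et}(\varphi_\Delta,\Gamma_\Delta,\OED)$, I would attach to it the Galois representation $T := \mathbb{T}(D)$ via the equivalence of \cite{MultVarGal}, and then define $D^\dagger := \mathbb{D}^\dagger(T) = (\mathcal{O}_{\widehat{\mathcal{E}^{ur}_\Delta}}^\dagger \otimes_{\Zp} T)^{\HQpD}$. Prop.\ \ref{overconverge} (combined with Thm.\ \ref{overconvfree}) guarantees that $D^\dagger$ is an object of $\mathcal{D}^{et}(\varphi_\Delta,\Gamma_\Delta,\OED^\dagger)$: when $T$ is $p$-torsion-free, $D^\dagger$ is free of rank $\rk_{\Zp} T$, whereas for $p$-power torsion $T$ the identification $\mathcal{O}_{\widehat{\mathcal{E}^{ur}_\Delta}}^\dagger/p^n \cong \mathcal{O}_{\widehat{\mathcal{E}^{ur}_\Delta}}/p^n$ forces $D^\dagger = \mathbb{D}(T)$; the general case then reduces to these via the short exact sequence $0 \to T[p^\infty] \to T \to T/T[p^\infty] \to 0$. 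The natural comparison map $c \colon \OED \otimes_{\OED^\dagger} D^\dagger \to \mathbb{D}(T) = D$ is a morphism of $(\varphi_\Delta,\Gamma_\Delta)$-modules over $\OED$; in the $p$-torsion-free case, both sides are free of the same rank over $\OED$ and $c$ reduces mod $p$ to the identity on $\mathbb{D}(T/pT)$, so Nakayama's lemma (applicable via Prop.\ \ref{jacoverconv}) together with $p$-adic completeness implies $c$ is an isomorphism.

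\emph{Full faithfulness.} Given $D_1^\dagger, D_2^\dagger$ and their basechanges $D_i := \OED \otimes_{\OED^\dagger} D_i^\dagger$, I would consider the internal Hom $H^\dagger := \operatorname{Hom}_{\OED^\dagger}(D_1^\dagger, D_2^\dagger)$, which is itself an \'etale $(\varphi_\Delta,\Gamma_\Delta)$-module over $\OED^\dagger$ (the partial Frobenii acting as in section \ref{dualphigamma}, with \'etale-ness inherited from the factors via the isomorphism $H^\dagger \cong (D_1^\dagger)^* \otimes_{\OED^\dagger} D_2^\dagger$). Its basechange to $\OED$ is $H := \operatorname{Hom}_{\OED}(D_1, D_2)$. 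Morphisms of $(\varphi_\Delta,\Gamma_\Delta)$-modules are precisely the elements of $H^\dagger$ (resp.\ $H$) fixed by every $\varphi_\alpha$ and by $\Gamma_\Delta$, i.e.\ $(h^0\Phi^\bullet(H^\dagger))^{\Gamma_\Delta}$ (resp.\ $(h^0\Phi^\bullet(H))^{\Gamma_\Delta}$). The isomorphism furnished by Prop.\ \ref{h0phioverconv} applied to $H^\dagger$ is manifestly $\Gamma_\Delta$-equivariant, so taking $\Gamma_\Delta$-invariants yields the desired bijection of Hom sets.

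\emph{The hard part.} Most of the technical content has already been invested in Prop.\ \ref{h0phioverconv}, whose proof crossed to the extended Robba ring $W(\widetilde{k}_\Delta)$ and exploited the delicate ${\bf r}$-norm estimates together with the intersection Prop.\ \ref{extendedoverconvintersect}. What remains to check here is comparatively soft: namely, the compatibility of $\Gamma_\Delta$-actions and the smooth handling of $p$-torsion in the essential surjectivity argument, where the overconvergent and completed pictures coincide modulo $p^n$ but must be glued in a way compatible with all partial Frobenii and $\Gamma_\Delta$.
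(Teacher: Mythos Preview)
Your proposal is correct and follows essentially the same strategy as the paper: essential surjectivity via Prop.\ \ref{overconverge} together with the equivalence of \cite{MultVarGal}, and full faithfulness by applying Prop.\ \ref{h0phioverconv} to the internal Hom and then passing to $\Gamma_\Delta$-invariants. You are simply more explicit than the paper about the torsion bookkeeping in the essential surjectivity step and about the $\Gamma_\Delta$-equivariance of the isomorphism in Prop.\ \ref{h0phioverconv}; the paper compresses these into one-line remarks.
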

\begin{proof}
The essential surjectivity follows from Prop.\ \ref{overconverge} combined with the equivalence of categories between $\Zp$-representations of $\GQpD$ and $\mathcal{D}^{et}(\varphi_{\Delta},\Gamma_\Delta,\OED)$ (Thm.\ 4.11 in \cite{MultVarGal}). The faithfulness is clear from Thm.\ \ref{overconvfree} noting that the basechange functor is the identity on the $p$-power torsion part. Finally, for objects $D_1^\dagger,D_2^\dagger$ in $\mathcal{D}^{et}(\varphi_{\Delta},\Gamma_\Delta,\OED^\dagger)$ the $\OED^\dagger$-module $\Hom_{\OED^\dagger}(D_1^\dagger,D_2^\dagger)$ is also an object in $\mathcal{D}^{et}(\varphi_{\Delta},\Gamma_\Delta,\OED^\dagger)$ via the operators $\varphi_\alpha(f)(\varphi_\alpha(x)):=\varphi_\alpha(f(x))$ and $\gamma_\alpha(f)(\gamma_\alpha(x)):=\gamma_\alpha(f(x))$ ($\alpha\in\Delta$ and $\gamma_\alpha\in\Gamma_\alpha$). Moreover, the morphisms as $(\varphi_\Delta,\Gamma_\Delta)$-modules are exactly those elements of $\Hom_{\OED^\dagger}(D_1^\dagger,D_2^\dagger)$ that are $\varphi_\alpha$ and $\Gamma_\alpha$-invariant for all $\alpha\in\Delta$. The statement follows from applying Prop.\ \ref{h0phioverconv} to $D^\dagger:=\Hom_{\OED^\dagger}(D_1^\dagger,D_2^\dagger)$.
\end{proof}

Inverting $p$ we obtain

\begin{cor}
The basechange functor from $\mathcal{D}^{et}(\varphi_{\Delta},\Gamma_\Delta,\mathcal{E}_\Delta^\dagger)$ to $\mathcal{D}^{et}(\varphi_{\Delta},\Gamma_\Delta,\mathcal{E}_\Delta)$ is an equivalence of categories.
\end{cor}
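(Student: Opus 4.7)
The plan is to deduce this corollary from Theorem \ref{basechange_OED} by a formal $p$-inversion argument, separating essential surjectivity from full faithfulness. The key observation is that objects of $\mathcal{D}^{et}(\varphi_{\Delta},\Gamma_\Delta,\mathcal{E}_\Delta)$ (resp.\ of $\mathcal{D}^{et}(\varphi_{\Delta},\Gamma_\Delta,\mathcal{E}_\Delta^\dagger)$) are by definition $p$-inversions of \'etale $(\varphi_\Delta,\Gamma_\Delta)$-modules over $\OED$ (resp.\ over $\OED^\dagger$), and Hom-groups between the $p$-inversions are controlled by Hom-groups of their integral models.

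For essential surjectivity, given $D$ in $\mathcal{D}^{et}(\varphi_{\Delta},\Gamma_\Delta,\mathcal{E}_\Delta)$ I would choose an \'etale $(\varphi_\Delta,\Gamma_\Delta)$-module $D^\circ$ over $\OED$ with $D^\circ[p^{-1}]=D$; replacing $D^\circ$ by $D^\circ/D^\circ[p^\infty]$ we may assume $D^\circ$ is $p$-torsion free. By Theorem \ref{basechange_OED} there exists $D^{\dagger,\circ}$ in $\mathcal{D}^{et}(\varphi_\Delta,\Gamma_\Delta,\OED^\dagger)$ with $\OED\otimes_{\OED^\dagger}D^{\dagger,\circ}\cong D^\circ$, and by Theorem \ref{overconvfree} this $D^{\dagger,\circ}$ is free (hence $p$-torsion free) over $\OED^\dagger$. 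Inverting $p$ yields an object $D^\dagger:=D^{\dagger,\circ}[p^{-1}]$ in $\mathcal{D}^{et}(\varphi_\Delta,\Gamma_\Delta,\mathcal{E}_\Delta^\dagger)$ whose basechange to $\mathcal{E}_\Delta$ recovers $D$.

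For full faithfulness, let $D_1^\dagger, D_2^\dagger$ be two objects over $\mathcal{E}_\Delta^\dagger$ with $p$-torsion free integral models $D_1^{\dagger,\circ}, D_2^{\dagger,\circ}$ over $\OED^\dagger$, and set $D_i:=\mathcal{E}_\Delta\otimes_{\mathcal{E}_\Delta^\dagger}D_i^\dagger$ and $D_i^\circ:=\OED\otimes_{\OED^\dagger}D_i^{\dagger,\circ}$. Given a morphism $f\colon D_1\to D_2$ of $(\varphi_\Delta,\Gamma_\Delta)$-modules over $\mathcal{E}_\Delta$, since $D_1^{\dagger,\circ}$ is finitely generated there exists $N\geq 0$ such that $p^Nf$ sends $D_1^\circ$ into $D_2^\circ$. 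By Theorem \ref{basechange_OED} the restriction $p^Nf_{\mid D_1^\circ}$ comes from a unique morphism $\widetilde{g}\colon D_1^{\dagger,\circ}\to D_2^{\dagger,\circ}$ in $\mathcal{D}^{et}(\varphi_\Delta,\Gamma_\Delta,\OED^\dagger)$; the morphism $p^{-N}\widetilde{g}[p^{-1}]\colon D_1^\dagger\to D_2^\dagger$ then has basechange $f$, proving surjectivity on Hom. Injectivity is equally easy: if $h\colon D_1^\dagger\to D_2^\dagger$ has zero basechange to $\mathcal{E}_\Delta$ then, choosing $N$ with $p^Nh(D_1^{\dagger,\circ})\subseteq D_2^{\dagger,\circ}$, the morphism $p^Nh_{\mid D_1^{\dagger,\circ}}$ basechanges to $0$ over $\OED$, hence vanishes by the faithfulness half of Theorem \ref{basechange_OED}, and $h=0$ follows since $D_2^{\dagger,\circ}$ is $p$-torsion free.

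There is no genuine obstacle here: all the substantive content of the equivalence is encoded in Theorem \ref{basechange_OED}, and the only work in the $p$-inverted setting is the book-keeping of the powers of $p$ that arise when one chooses integral models of $D_i^\dagger$ and compares them with those of $D_i$. Consequently the proof occupies only a few lines and amounts to applying Theorem \ref{basechange_OED} after clearing denominators on both sides of the Hom-bijection, together with invoking Theorem \ref{overconvfree} to guarantee that integral models may be taken $p$-torsion free.
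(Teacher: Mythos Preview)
Your proof is correct and is exactly the argument the paper has in mind: the paper's own proof consists solely of the phrase ``Inverting $p$ we obtain'' preceding the corollary, and what you have written is precisely the standard unpacking of that phrase via choosing $p$-torsion free integral models and clearing denominators. There is nothing to add.
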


\begin{cor}\label{ddaggerequiv}
$\mathcal{D}^{et}(\varphi_{\Delta},\Gamma_\Delta,\OED^\dagger)$ (resp.\ $\mathcal{D}^{et}(\varphi_{\Delta},\Gamma_\Delta,\mathcal{E}_\Delta^\dagger)$) is equivalent to the category of continuous representations of $\GQpD$ on finitely generated $\Zp$-modules (resp.\ on finite dimensional $\Qp$-vectorspaces). The equivalence is realized by the functor $\mathbb{D}^\dagger$.
\end{cor}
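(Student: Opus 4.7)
The plan is to combine Theorem \ref{basechange_OED} with the known equivalence from \cite{MultVarGal} and verify that the resulting composite quasi-inverse is realized by $\mathbb{D}^\dagger$. Concretely, by Thm.\ 4.11 in \cite{MultVarGal} the functor $\mathbb{D}$ is an equivalence of categories between $\operatorname{Rep}_{\Zp}(\GQpD)$ and $\mathcal{D}^{et}(\varphi_\Delta,\Gamma_\Delta,\OED)$. By Thm.\ \ref{basechange_OED} the basechange functor $\OED\otimes_{\OED^\dagger}\cdot$ from $\mathcal{D}^{et}(\varphi_\Delta,\Gamma_\Delta,\OED^\dagger)$ to $\mathcal{D}^{et}(\varphi_\Delta,\Gamma_\Delta,\OED)$ is also an equivalence. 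Therefore the composite $\mathcal{D}^{et}(\varphi_\Delta,\Gamma_\Delta,\OED^\dagger)\to \operatorname{Rep}_{\Zp}(\GQpD)$ obtained by first extending scalars to $\OED$ and then applying the quasi-inverse $\mathbb{V}$ of $\mathbb{D}$ is an equivalence, with quasi-inverse canonically determined.

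To identify this quasi-inverse with $\mathbb{D}^\dagger$, I would check that for every $T\in\operatorname{Rep}_{\Zp}(\GQpD)$ there is a natural $(\varphi_\Delta,\Gamma_\Delta)$-equivariant isomorphism
\begin{equation*}
\OED\otimes_{\OED^\dagger}\mathbb{D}^\dagger(T)\xrightarrow{\sim}\mathbb{D}(T)\ .
\end{equation*}
The map is induced by the inclusion $\mathcal{O}_{\widehat{\mathcal{E}^{ur}_{\Delta}}}^\dagger\hookrightarrow \mathcal{O}_{\widehat{\mathcal{E}^{ur}_{\Delta}}}$ combined with the Galois-invariance computation $(\mathcal{O}_{\widehat{\mathcal{E}^{ur}_{\Delta}}}^\dagger)^{\HQpD}=\OED^\dagger$ of Lemma \ref{convergence}. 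Both sides are free $\OED$-modules of rank $\rk_{\Zp}T$ by Prop.\ \ref{overconverge}, so that it suffices to show that the map is surjective modulo $p$; but modulo $p$ the map becomes an isomorphism of $E_\Delta$-modules of the same dimension (again by Prop.\ \ref{overconverge}) and is easily verified to be injective, so Nakayama's lemma (applicable by Prop.\ \ref{jacoverconv} and Prop.\ \ref{noetheroverconv}) concludes.

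Granting this natural isomorphism, the composition $\operatorname{Rep}_{\Zp}(\GQpD)\xrightarrow{\mathbb{D}^\dagger}\mathcal{D}^{et}(\varphi_\Delta,\Gamma_\Delta,\OED^\dagger)\xrightarrow{\OED\otimes_{\OED^\dagger}\cdot}\mathcal{D}^{et}(\varphi_\Delta,\Gamma_\Delta,\OED)$ equals $\mathbb{D}$; since both the target equivalence and $\mathbb{D}$ are equivalences, the first arrow $\mathbb{D}^\dagger$ must be an equivalence as well, by a standard two-out-of-three argument for equivalences of categories. The $\Qp$-linear statement follows by inverting $p$, noting that every object of $\operatorname{Rep}_{\Qp}(\GQpD)$ contains a $\GQpD$-stable $\Zp$-lattice by compactness and that $\mathcal{E}_\Delta^\dagger=\OED^\dagger[p^{-1}]$, $\mathcal{E}_\Delta=\OED[p^{-1}]$.

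The only real work is the natural isomorphism $\OED\otimes_{\OED^\dagger}\mathbb{D}^\dagger(T)\cong \mathbb{D}(T)$; but this is essentially built into the definitions in view of Lemma \ref{convergence} and Prop.\ \ref{overconverge}, so no genuine obstacle remains. Everything else is formal manipulation of the three equivalences already established.
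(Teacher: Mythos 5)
Your proposal is correct and takes essentially the same approach that the paper leaves implicit: the corollary is simply the composition of the equivalence $\mathbb{D}\colon\operatorname{Rep}_{\Zp}(\GQpD)\to\mathcal{D}^{et}(\varphi_\Delta,\Gamma_\Delta,\OED)$ from \cite{MultVarGal} with the base-change equivalence of Theorem \ref{basechange_OED}, together with the observation that $\OED\otimes_{\OED^\dagger}\mathbb{D}^\dagger(T)\cong\mathbb{D}(T)$ naturally in $T$ (which you rightly reduce to a Nakayama argument over $\OED^\dagger$, where in fact the mod-$p$ identification $\mathbb{D}^\dagger(T)/p\mathbb{D}^\dagger(T)\cong\mathbb{D}(T/pT)$ is already noted at the end of the proof of Proposition \ref{overconverge}). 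The remaining formal steps (two-out-of-three and inverting $p$ after choosing a Galois-stable lattice) are exactly what the paper intends.
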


\begin{cor}\label{basisforalpharestrict}
Let $D^\dagger$ be an object in $\mathcal{D}^{et}(\varphi_{\Delta},\Gamma_\Delta,\OED^\dagger)$ and $\alpha\in\Delta$. There exists a basis $(e_1,\dots,e_d)$ of $D^\dagger$ (depending on $\alpha$) in which the matrices of both $\varphi_\alpha$ and $\gamma_\alpha\in \Gamma_\alpha$ lie in the subring $\mathcal{O}_{\mathcal{E}_\alpha}^\dagger\subset\OED^\dagger$. In particular, $D^\dagger(\alpha):=\sum_{i=1}^d\mathcal{O}_{\mathcal{E}_\alpha}^\dagger e_i$ is an \'etale $(\varphi_\alpha,\Gamma_\alpha)$-module over $\mathcal{O}_{\mathcal{E}_\alpha}^\dagger$ that corresponds to the restriction of $\mathbb{V}(D^\dagger)$ to the component $\GQpa$. 
\end{cor}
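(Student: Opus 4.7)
The plan is to deduce this corollary as a direct translation of Proposition \ref{separatevariablebasis} from the language of representations back to the language of $(\varphi_\Delta,\Gamma_\Delta)$-modules. First I would invoke the equivalence of Corollary \ref{ddaggerequiv} to produce an object $T \in \operatorname{Rep}_{\Zp}(\GQpD)$ with $\mathbb{D}^\dagger(T) \cong D^\dagger$. Since $D^\dagger$ is free over $\OED^\dagger$ by definition and since $\mathbb{D}^\dagger$ factors through the quotient $T \mapsto T/T[p^\infty]$, the representation $T$ may (and will) be chosen $p$-torsion free of the same rank $d$ as $D^\dagger$, so that Proposition \ref{separatevariablebasis} applies.

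Applying that proposition to the chosen $\alpha \in \Delta$ furnishes an isomorphism
\begin{equation*}
\OED^\dagger \otimes_{\mathcal{O}_{\mathcal{E}_\alpha}^\dagger} \mathbb{D}^\dagger_\alpha(T_{\mid \GQpa}) \overset{\sim}{\longrightarrow} D^\dagger
\end{equation*}
of $(\varphi_\alpha,\Gamma_\alpha)$-modules. By the classical one-variable theorem of Cherbonnier--Colmez, $\mathbb{D}^\dagger_\alpha(T_{\mid \GQpa})$ is free of rank $d$ over $\mathcal{O}_{\mathcal{E}_\alpha}^\dagger$. Fixing any $\mathcal{O}_{\mathcal{E}_\alpha}^\dagger$-basis $(e_1,\dots,e_d)$ there and transporting it across the isomorphism yields a basis of $D^\dagger$ over $\OED^\dagger$. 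Since the matrices of $\varphi_\alpha$ and of $\gamma_\alpha \in \Gamma_\alpha$ on the left-hand side are computed inside the one-variable module and therefore have entries in $\mathcal{O}_{\mathcal{E}_\alpha}^\dagger$, the same is true in the transported basis of $D^\dagger$.

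For the ``In particular'' clause, observe that the submodule $D^\dagger(\alpha) := \sum_{i=1}^d \mathcal{O}_{\mathcal{E}_\alpha}^\dagger e_i \subset D^\dagger$ is by construction the isomorphic image of $\mathbb{D}^\dagger_\alpha(T_{\mid \GQpa})$; hence it is an \'etale $(\varphi_\alpha,\Gamma_\alpha)$-module over $\mathcal{O}_{\mathcal{E}_\alpha}^\dagger$, and under the classical one-variable equivalence it corresponds to $T_{\mid \GQpa} = \mathbb{V}(D^\dagger)_{\mid \GQpa}$. There is essentially no obstacle left: all the real content lies in Proposition \ref{separatevariablebasis}, and the only bookkeeping needed is the observation that the inverse functor from Corollary \ref{ddaggerequiv} lands in the $p$-torsion free representations (which is automatic, as free $(\varphi_\Delta,\Gamma_\Delta)$-modules over $\OED^\dagger$ correspond to $p$-torsion free $T$'s).
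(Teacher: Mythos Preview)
Your proof is correct and follows exactly the same approach as the paper, which simply states that the corollary ``follows combining Prop.\ \ref{separatevariablebasis} and Cor.\ \ref{ddaggerequiv}''. You have merely spelled out in detail what the paper leaves implicit, including the bookkeeping about choosing $T$ to be $p$-torsion free.
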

\begin{proof}
This follows combining Prop.\ \ref{separatevariablebasis} and Cor.\ \ref{ddaggerequiv}.
\end{proof}

\subsection{Overconvergent Herr complex}
In this section, we extend the definition of Herr complex from Section \ref{secherrcomplexmodpn} to objects in $\mathcal{D}^{et}(\varphi_{\Delta},\Gamma_\Delta,\mathcal{E}_\Delta^\dagger)$ and show it computes the Galois cohomology.  In the overconvergent case, we first deal with Iwasawa complex and use it to deduce the results for overconvergent Herr complex.

We first show $\Psi^\bullet(D^{\dagger})$ (defined below) calculates the Iwasawa cohomology. 
We have an injective ring endomorphism $\varphi_\alpha\colon \OED^\dagger\to \OED^\dagger$ and define $\psi_\alpha:=\varphi_\alpha^{-1}\circ \frac{1}{p}\Tr_{\OED^\dagger/\varphi(\OED^\dagger)}$ as a distinguished left-inverse of $\varphi_\alpha$ for any $\alpha \in \Delta$. In more concrete terms $\psi_\alpha$ is the unique left inverse of $\varphi_\alpha$ that vanishes on $(1+X_\alpha)^j\varphi_\alpha(\OED^\dagger)$ for all $j$ not divisible by $p$.

For a $(\varphi_\Delta,\Gamma_\Delta)$-module $D^{\dagger}$ over $\OED^{\dagger}$ and $x\in D^\dagger$ we may write $x=\sum_{i=0}^{p-1}(1+X_\alpha)^i\varphi_\alpha(x_i)$ for some elements $x_i\in D^\dagger$ ($i=0,\dots,p-1$) and put $\psi_\alpha(x):=x_0$. We define the cochain complex
\begin{align*}
\Psi^\bullet(D^{\dagger})\colon 0\to D^{\dagger}\to \bigoplus_{\alpha\in\Delta}D^{\dagger}\to \dots\to \bigoplus_{\{\alpha_1,\dots,\alpha_r\}\in \binom{\Delta}{r}}D^{\dagger}\to\dots \to D^{\dagger}\to 0
\end{align*}
where for all $0\leq r\leq |\Delta|-1$ the map $d_{\alpha_1,\dots,\alpha_r}^{\beta_1,\dots,\beta_{r+1}}\colon D^{\dagger}\to D^{\dagger}$ from the component in the $r$th term corresponding to $\{\alpha_1,\dots,\alpha_r\}\subseteq \Delta$ to the component corresponding to the $(r+1)$-tuple $\{\beta_1,\dots,\beta_{r+1}\}\subseteq\Delta$ is given by
\begin{equation*}
d_{\alpha_1,\dots,\alpha_r}^{\beta_1,\dots,\beta_{r+1}}=\begin{cases}0&\text{if }\{\alpha_1,\dots,\alpha_r\}\not\subseteq\{\beta_1,\dots,\beta_{r+1}\}\\ (-1)^{\eta}(\id-\psi_\beta)&\text{if }\{\beta_1,\dots,\beta_{r+1}\}=\{\alpha_1,\dots,\alpha_r\}\cup\{\beta\}\ ,\end{cases}
\end{equation*}
where $\eta=\eta(\alpha_1,\dots,\alpha_r,\beta)$ is the number of elements in the set $\Delta\setminus\{\alpha_1,\dots,\alpha_r\}$ smaller than $\beta$.

In order to compute the cohomology of the above complex $\Psi^\bullet(D^{\dagger})$ we need mixed rings that behave like ``overconvergent'' for a subset $S\subseteq \Delta$ of variables and like $p$-adically completed rings for the other variables. Recall that $\OED$ consists of Laurent series of the form
$$f=\sum_{{\bf k}\in \mathbb{Z}^\Delta} c_{\bf k} \prod_{\alpha\in\Delta}X_\alpha^{k_\alpha}$$
with $c_{\bf k}\in \Zp$ such that $|c_{\bf k}|_p\to 0$ as long as $\min_{\alpha}k_\alpha\to-\infty$. For a subset $S\subseteq \Delta$ we define the mixed ring $\mathcal{O}_{\mathcal{E}_{\Delta,S}}^\dagger$ as the subset of those $f$ as above with the following two convergence properties:
\begin{enumerate}[$(i)$]
\item There exist real numbers $0<\rho_\beta<1$ for all $\beta\in S$ such that
$$|f|_{{\bf r}_S}=\sup_{{\bf k}\in \mathbb{Z}^\Delta} |c_{\bf k}|_p \prod_{\beta\in S}r_\beta^{k_\beta}$$
is finite for any sequence ${\bf r}_S=(r_\beta)_{\beta\in S}$ with $\rho_\beta<r_\beta<1$. 
\item We have $\limsup_{{\bf r}_S\to (1)_{\beta\in S}}|f|_{{\bf r}_S}\leq 1$.
\end{enumerate}
Note that we have $\mathcal{O}_{\mathcal{E}_{\Delta,\Delta}}^\dagger=\OED^\dagger$ and $\mathcal{O}_{\mathcal{E}_{\Delta,\emptyset}}^\dagger=\OED$. We put $D_S^\dagger:=\mathcal{O}_{\mathcal{E}_{\Delta,S}}^\dagger\otimes_{\OED^\dagger}D^\dagger$.

\begin{lem}\label{mixedringcontain}
For subsets $S\subseteq S'\subseteq\Delta$ we have $\mathcal{O}_{\mathcal{E}_{\Delta,S'}}^\dagger\subseteq \mathcal{O}_{\mathcal{E}_{\Delta,S}}^\dagger$.
\end{lem}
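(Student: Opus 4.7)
The plan is to verify directly that every $f = \sum_{\bf k} c_{\bf k} \prod_\alpha X_\alpha^{k_\alpha} \in \mathcal{O}_{\mathcal{E}_{\Delta,S'}}^\dagger$ satisfies the two convergence conditions defining $\mathcal{O}_{\mathcal{E}_{\Delta,S}}^\dagger$; membership in $\OED$ is common to both rings and requires no work. The key observation will be that for each fixed multi-index ${\bf k}$ and each tuple ${\bf r}_S = (r_\beta)_{\beta \in S}$, the quantity $|c_{\bf k}|_p \prod_{\beta \in S} r_\beta^{k_\beta}$ is the pointwise limit of $|c_{\bf k}|_p \prod_{\beta \in S'} r_\beta^{k_\beta}$ as the extra coordinates $r_\beta \to 1^-$ for $\beta \in S' \setminus S$, since $\lim_{r \to 1^-} r^k = 1$ for any $k \in \mathbb{Z}$.

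From this I will apply the elementary inequality ``supremum of pointwise limits $\leq$ liminf of suprema'' to obtain
\begin{equation*}
|f|_{{\bf r}_S} \;\leq\; \liminf_{r_\beta \to 1^-,\; \beta \in S' \setminus S} |f|_{{\bf r}_{S'}},
\end{equation*}
and both defining properties of $\mathcal{O}_{\mathcal{E}_{\Delta,S}}^\dagger$ will follow at once from the corresponding assumptions on $f$ as an element of $\mathcal{O}_{\mathcal{E}_{\Delta,S'}}^\dagger$: given $\varepsilon > 0$, the $S'$-limsup condition supplies a $\delta > 0$ with $|f|_{{\bf r}_{S'}} < 1 + \varepsilon$ whenever $1 - \delta < r_\beta < 1$ for all $\beta \in S'$; choosing $\rho_\beta := \max(\rho'_\beta, 1 - \delta)$ for $\beta \in S$ (where $\rho'_\beta$ are the radii furnished by the $S'$-convergence of $f$) and extending any ${\bf r}_S \in \prod_{\beta \in S}(\rho_\beta, 1)$ to ${\bf r}_{S'}$ via coordinates $r_\beta \in (1-\delta, 1)$ for $\beta \in S' \setminus S$, the displayed inequality yields $|f|_{{\bf r}_S} \leq 1 + \varepsilon < \infty$. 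Letting ${\bf r}_S \to (1)_{\beta \in S}$ and then $\varepsilon \to 0$ will produce the $\limsup \leq 1$ bound.

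There is no serious obstacle: the whole argument reduces to recovering $|\cdot|_{{\bf r}_S}$ as a boundary limit of $|\cdot|_{{\bf r}_{S'}}$, combined with lower semicontinuity of the supremum. The only subtlety is choosing the base radii $\rho_\beta$ for $\beta \in S$ close enough to $1$ so that the extension to $S'$ falls inside the region where the $S'$-limsup bound is already effective.
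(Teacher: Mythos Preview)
Your proof is correct and follows essentially the same approach as the paper: both use the $S'$-limsup bound $|f|_{{\bf r}_{S'}}\leq 1+\varepsilon$ near the boundary and then let the extra coordinates $r_\beta\to 1^-$ for $\beta\in S'\setminus S$ to recover $|f|_{{\bf r}_S}\leq 1+\varepsilon$, yielding both conditions $(i)$ and $(ii)$. The only cosmetic difference is that the paper first reduces to $S'=S\cup\{\alpha\}$ and handles one extra variable at a time, whereas you treat all of $S'\setminus S$ simultaneously via the ``$\sup$ of pointwise limits $\leq \liminf$ of $\sup$'' inequality.
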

\begin{proof}
We may assume without loss of generality that $S'=S\cup \{\alpha\}$ for some $\alpha\in\Delta$. Pick an element $$f=\sum_{{\bf k}\in \mathbb{Z}^\Delta} c_{\bf k} \prod_{\alpha\in\Delta}X_\alpha^{k_\alpha}\in \mathcal{O}_{\mathcal{E}_{\Delta,S\cup\{\alpha\}}}^\dagger \ .$$
By condition $(ii)$ above there exists a real number $\rho=\rho(\varepsilon)\in (0,1)$ such that $|f|_{{\bf r}_{S\cup\{\alpha\}}}\leq 1+\varepsilon$ for any ${\bf r}_{S\cup\{\alpha\}}\in (\rho,1)^{S\cup\{\alpha\}}$. Fixing $\rho<r_\beta<1$ for $\beta\in S$ and letting $r_\alpha\to 1$ we deduce $|f|_{{\bf r}_S}\leq 1+\varepsilon$. This implies both $(i)$ and $(ii)$ (the latter by letting $\varepsilon\to 0$).
\end{proof}

\begin{lem}\label{mixedringintersect}
For any two subsets $S,S'\subseteq \Delta$ we have $\mathcal{O}_{\mathcal{E}_{\Delta,S'}}^\dagger\cap \mathcal{O}_{\mathcal{E}_{\Delta,S}}^\dagger=\mathcal{O}_{\mathcal{E}_{\Delta,S\cup S'}}^\dagger$.
\end{lem}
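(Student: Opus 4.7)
The inclusion $\mathcal{O}_{\mathcal{E}_{\Delta,S\cup S'}}^\dagger\subseteq \mathcal{O}_{\mathcal{E}_{\Delta,S'}}^\dagger\cap \mathcal{O}_{\mathcal{E}_{\Delta,S}}^\dagger$ is immediate from Lemma \ref{mixedringcontain} applied to $S\subseteq S\cup S'$ and $S'\subseteq S\cup S'$, so the content is in the reverse inclusion. The plan is to take $f=\sum_{{\bf k}\in\mathbb{Z}^\Delta}c_{\bf k}\prod_{\alpha\in\Delta}X_\alpha^{k_\alpha}$ in the intersection and verify the two defining convergence conditions of $\mathcal{O}_{\mathcal{E}_{\Delta,S\cup S'}}^\dagger$ via a pointwise (coefficientwise) estimate obtained by \emph{simultaneously} exploiting the two given overconvergence bounds.

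Concretely, fix $\varepsilon>0$ and, using the membership of $f$ in both rings, pick a single $\rho_0=\rho_0(\varepsilon)\in(0,1)$ such that $|f|_{{\bf r}_S}\leq 1+\varepsilon$ for every ${\bf r}_S\in(\rho_0,1)^S$ and $|f|_{{\bf r}_{S'}}\leq 1+\varepsilon$ for every ${\bf r}_{S'}\in(\rho_0,1)^{S'}$. Each such inequality, applied termwise, gives the coefficientwise bounds
\begin{equation*}
|c_{\bf k}|_p\leq (1+\varepsilon)\prod_{\beta\in S}r_{S,\beta}^{-k_\beta}\qquad\text{and}\qquad |c_{\bf k}|_p\leq (1+\varepsilon)\prod_{\beta\in S'}r_{S',\beta}^{-k_\beta}
\end{equation*}
for all ${\bf k}\in\mathbb{Z}^\Delta$ and all admissible choices of radii. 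Setting $T:=S\cap S'$, $U:=S\setminus S'$, $V:=S'\setminus S$, and given any target tuple ${\bf r}_{S\cup S'}\in(\sqrt{\rho_0},1)^{S\cup S'}$, the plan is to make the deliberate choice $r_{S,\beta}:=r_\beta$ for $\beta\in T$ and $r_{S,\beta}:=r_\beta^2$ for $\beta\in U$ (and symmetrically $r_{S',\beta}:=r_\beta^2$ for $\beta\in V$), so that multiplying the two estimates above yields
\begin{equation*}
|c_{\bf k}|_p^2\leq (1+\varepsilon)^2\prod_{\beta\in S\cup S'}r_\beta^{-2k_\beta}.
\end{equation*}
Taking the square root and multiplying by $\prod_{\beta\in S\cup S'}r_\beta^{k_\beta}$ gives $|c_{\bf k}|_p\prod_{\beta\in S\cup S'}r_\beta^{k_\beta}\leq 1+\varepsilon$ for every ${\bf k}$, hence $|f|_{{\bf r}_{S\cup S'}}\leq 1+\varepsilon$. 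Applying this with $\varepsilon=1$ produces a polyannulus of convergence (verifying condition $(i)$ of the definition), and letting $\varepsilon\to 0$ yields $\limsup_{{\bf r}_{S\cup S'}\to(1)}|f|_{{\bf r}_{S\cup S'}}\leq 1$ (verifying condition $(ii)$).

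The only genuinely non-trivial step is the ``doubling trick'' that allocates the radii asymmetrically: positive powers of $r_\beta$ with $\beta\in T$ would be counted twice in a naive combination, while the variables in $U$ and $V$ are each controlled by only one of the two hypotheses. Assigning $r_\beta^2$ exactly to the indices in $U\cup V$ and $r_\beta$ to those in $T$ is what makes the exponents line up perfectly so that the arithmetic-mean-style combination of the two bounds reproduces the norm $|f|_{{\bf r}_{S\cup S'}}$. Once this bookkeeping is set up the estimate is forced; the anticipated subtlety is only making sure that the shrunk radii $r_\beta^2$ still lie in the polyannuli of validity of the two hypotheses, which is precisely why one passes from $(\rho_0,1)$ to $(\sqrt{\rho_0},1)$ at the end.
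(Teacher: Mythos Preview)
Your proof is correct and is essentially the same as the paper's argument. Both establish the termwise identity
\[
|c_{\bf k}|_p\prod_{\beta\in S\cup S'} r_\beta^{k_\beta}=\sqrt{\Bigl(|c_{\bf k}|_p\prod_{\beta\in S} r_{S,\beta}^{k_\beta}\Bigr)\Bigl(|c_{\bf k}|_p\prod_{\beta\in S'} r_{S',\beta}^{k_\beta}\Bigr)}
\]
for suitably matched radii, yielding $|f|_{{\bf r}_{S\cup S'}}\leq\sqrt{|f|_{{\bf r}_S}\,|f|_{{\bf r}_{S'}}}$; the only cosmetic difference is that the paper starts from $({\bf r}_S,{\bf r'}_{S'})$ and forms the $S\cup S'$-tuple via geometric means $\sqrt{r_\alpha r'_\alpha}$ (with the convention $r_\alpha=1$ off $S$, $r'_\alpha=1$ off $S'$), whereas you start from ${\bf r}_{S\cup S'}$ and square the radii outside the overlap---these are inverse parametrisations of the same identity.
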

\begin{proof}
The containment $\mathcal{O}_{\mathcal{E}_{\Delta,S'}}^\dagger\cap \mathcal{O}_{\mathcal{E}_{\Delta,S}}^\dagger\supseteq\mathcal{O}_{\mathcal{E}_{\Delta,S\cup S'}}^\dagger$ is covered by Lemma \ref{mixedringcontain}. For sequences ${\bf r}_S\in (0,1)^S$ and ${\bf r'}_{S'}\in (0,1)^{S'}$ we have
\begin{equation*}
|c_{\bf k}\prod_{\beta\in\Delta}X_\beta^{k_\beta}|_{{\bf \sqrt{rr'}}_{S\cup S'}}= \sqrt{|c_{\bf k}\prod_{\beta\in\Delta}X_\beta^{k_\beta}|_{{\bf r}_{S}}|c_{\bf k}\prod_{\beta\in\Delta}X_\beta^{k_\beta}|_{{\bf r'}_{S'}}}
\end{equation*} 
where the sequence ${\bf \sqrt{rr'}}_{S\cup S'}$ is defined in coordinate $\alpha\in S\cup S'$ by the formula $\sqrt{r_\alpha r'_\alpha}$ where $r_\alpha$ (resp.\ $r'_\alpha$) is defined as $1$ for all $\alpha\in S'\setminus S$ (resp.\ for all $\alpha\in S\setminus S'$). This yields the estimate $|f|_{{\bf \sqrt{rr'}}_{S\cup S'}}\leq \sqrt{|f|_{{\bf r}_{S}}|f|_{{\bf r'}_{S'}}}$ for all $f\in \mathcal{O}_{\mathcal{E}_{\Delta,S'}}^\dagger\cap \mathcal{O}_{\mathcal{E}_{\Delta,S}}^\dagger$ and we are done.
\end{proof}

Assume $f\in \mathcal{R}$ is an element in the one variable Robba ring converging on the annulus $[\rho,1)$ and $\rho<\rho_1<\rho_2<1$. Then we have $|f|_{\rho_1}\leq \max (|f|_\rho,|f|_{\rho_2})$. Indeed, this is clear for any monomial and also for any Laurent-polynomial, therefore it is also true for any $f$ converging on the annulus $[\rho,1)$ by continuity. We call this the maximum principle for elements of $\mathcal{R}$ which is crucial in the proof of the following Lemma.

\begin{lem}\label{alphaexpansion}
For any $\alpha\in\Delta$ the ring $\mathcal{O}_{\mathcal{E}_{\Delta,\{\alpha\}}}^\dagger$ consists of all Laurent series of the form
$$ f=\sum_{{\bf k}\in \mathbb{Z}^{\Delta\setminus\{\alpha\}}} f_{\bf k} \prod_{\beta\in\Delta\setminus\{\alpha\}}X_\beta^{k_\beta}$$
where $f_{\bf k}\in \mathcal{O}_{\mathcal{E}_\alpha}^\dagger$ for all ${\bf k}\in \mathbb{Z}^{\Delta\setminus\{\alpha\}}$ satisfying the following properties:
\begin{enumerate}[$(a)$]
\item There exist real numbers $0<\rho<1$ and $C>0$ independent of ${\bf k}$ such that $f_{\bf k}$ converges on the annulus $\rho\leq |X_\alpha|_p < 1$ and we have $|f_{\bf k}|_{\rho}\leq C$.
\item We have $\limsup_{r\to 1}|f_{\bf k}|_r\to 0$ as long as $\min_{\beta\in\Delta\setminus\{\alpha\}}k_\beta\to -\infty$.
\end{enumerate}
\end{lem}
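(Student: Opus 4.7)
The plan is to prove both inclusions by reducing everything to one-variable estimates on the $X_\alpha$-slices of $f$. Setting $f_{\bf k}:=\sum_{j_\alpha\in\mathbb{Z}}c_{(j_\alpha,{\bf k})}X_\alpha^{j_\alpha}$ for each ${\bf k}\in\mathbb{Z}^{\Delta\setminus\{\alpha\}}$, the definition of the mixed $r_\alpha$-norm yields the identity
$$|f|_{r_\alpha}\;=\;\sup_{\bf k}|f_{\bf k}|_{r_\alpha},$$
so every quantitative statement about $|f|_{r_\alpha}$ becomes a statement about the family $(f_{\bf k})_{\bf k}$ uniform in ${\bf k}$. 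The key analytic ingredient I would establish first and use repeatedly is a Hadamard three-circles estimate $|g|_r\leq\max(1,|g|_\rho)^{\log r/\log\rho}$ for any $g\in\mathcal{O}_{\mathcal{E}_\alpha}$ with $\Zp$-coefficients converging at $\rho$; writing $s=\log r/\log\rho\in(0,1)$ and using $|a_n|_pr^n=(|a_n|_p\rho^n)^s\cdot|a_n|_p^{1-s}\leq|g|_\rho^s$ term by term (since $|a_n|_p\leq 1$) gives this bound, and in particular shows $\limsup_{r\to 1}|g|_r\leq 1$, so that such a $g$ automatically lies in $\mathcal{O}_{\mathcal{E}_\alpha}^\dagger$.

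For the forward direction, if $f\in\mathcal{O}_{\mathcal{E}_{\Delta,\{\alpha\}}}^\dagger$, I would choose $\rho$ close enough to $1$ that $C:=|f|_\rho$ is finite; then $|f_{\bf k}|_\rho\leq C$ uniformly, and since $f\in\OED$ forces $\Zp$-coefficients, the three-circles estimate places every $f_{\bf k}$ in $\mathcal{O}_{\mathcal{E}_\alpha}^\dagger$, which gives (a). Condition (b) is extracted from the $\OED$-condition: for each $h\geq 1$ there is $N(h)$ such that $p^h\mid c_{\bf j}$ whenever $\min_\beta j_\beta<-N(h)$, so if $\min_{\beta\neq\alpha}k_\beta<-N(h)$ every coefficient of $f_{\bf k}$ is divisible by $p^h$; applying the three-circles inequality to $p^{-h}f_{\bf k}$ then gives $\limsup_{r\to 1}|f_{\bf k}|_r\leq p^{-h}$, and letting $h$ vary establishes the required decay.

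Conversely, given a family $(f_{\bf k})$ satisfying (a) and (b), I would form $f:=\sum_{\bf k}f_{\bf k}\prod_{\beta\neq\alpha}X_\beta^{k_\beta}$ and verify first that $f\in\OED$, then that $f$ lies in the mixed overconvergent ring. For $f\in\OED$, the tail estimate splits into two regimes: if $j_\alpha$ is very negative the bound $|c_{(j_\alpha,{\bf k})}|_p\leq C\rho^{|j_\alpha|}$ from (a) gives uniform decay in ${\bf k}$; if $\min_{\beta\neq\alpha}k_\beta$ is very negative then $|c_{(j_\alpha,{\bf k})}|_p\leq\limsup_{r\to 1}|f_{\bf k}|_r$ is uniformly small by (b). For the overconvergence, the uniform bound $|f_{\bf k}|_\rho\leq C$ combined with the three-circles estimate gives $|f|_r=\sup_{\bf k}|f_{\bf k}|_r\leq\max(1,C)^{\log r/\log\rho}$, which is finite for $r\in(\rho,1)$ and tends to $1$ as $r\to 1$. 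The only nontrivial step is the three-circles estimate itself; with it in hand everything else is a bookkeeping argument with the defining conditions of $\OED$ and $\mathcal{O}_{\mathcal{E}_{\Delta,\{\alpha\}}}^\dagger$.
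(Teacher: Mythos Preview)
Your proof is correct and follows the same overall architecture as the paper's: both directions are reduced to uniform one-variable estimates via the identity $|f|_{r_\alpha}=\sup_{\bf k}|f_{\bf k}|_{r_\alpha}$, condition $(b)$ is tied to the $p$-adic tail condition defining $\OED$, and the converse checks membership in $\OED$ and then the two overconvergence conditions. The genuine difference is in the analytic lemma used to control $|f_{\bf k}|_r$ for $r$ near $1$. The paper invokes the maximum principle $|g|_{\rho_1}\leq\max(|g|_\rho,|g|_{\rho_2})$ together with the hypothesis $f_{\bf k}\in\mathcal{O}_{\mathcal{E}_\alpha}^\dagger$ to obtain $|f_{\bf k}|_r\leq\max(C,1)$, and then runs a separate ``quantitative'' argument (choosing an integer $N$ with $C\rho^{-N}<1$, etc.) to push this down to $1+\varepsilon$. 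Your Hadamard-type bound $|g|_r\leq\max(1,|g|_\rho)^{\log r/\log\rho}$, proved termwise using integrality of the coefficients, accomplishes both steps at once: it gives finiteness of $|f|_r$ on $(\rho,1)$ and the limit $\leq 1$ in a single stroke. This is a modest but real simplification; the price is that your inequality is specific to series with $\Zp$-coefficients, whereas the maximum principle holds in the full Robba ring, but since integrality is available throughout the lemma this costs nothing here. One incidental benefit of your route is that it makes explicit why each $f_{\bf k}$ lands in $\mathcal{O}_{\mathcal{E}_\alpha}^\dagger$ in the forward direction, a point the paper passes over.
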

\begin{proof}
At first note that for any real number $0<r_\alpha<1$ the $r_\alpha$-norm of $f$ (for the subset $S=\{\alpha\}$) equals $|f|_{r_\alpha}=\sup_{{\bf k}\in\mathbb{Z}^{\Delta\setminus\{\alpha\}}}|f_{\bf k}|_{r_\alpha}$ by definition. Therefore condition $(a)$ follows for any $f\in \mathcal{O}_{\mathcal{E}_{\Delta,\{\alpha\}}}^\dagger$. Since $f$ lies in $\OED$, the denominators in $f$ modulo $p^n$ must be bounded, ie.\ for any $n\geq 1$ there exists an integer $k=k(n)\in\mathbb{Z}$ such that $f_{\bf k}$ is divisible by $p^n$ in $\mathcal{O}_{\mathcal{E}_\alpha}^\dagger$ whenever $\min_{\beta\in\Delta\setminus\{\alpha\}}k_\beta\leq k$. However, $f_{\bf k}$ is divisible by $p^n$ if and only if $\limsup_{r\to 1}|f_{\bf k}|_r\leq p^{-n}$ therefore $(b)$ follows for any $f\in \mathcal{O}_{\mathcal{E}_{\Delta,\{\alpha\}}}^\dagger$.

Conversely, let $f$ be a Laurent series as above satisfying $(a)$ and $(b)$. Combining the maximum principle with $(a)$ we deduce $|f_{\bf k}|_r\leq\max(C,1)$ for all ${\bf k}\in\mathbb{Z}^{\Delta\setminus\{\alpha\}}$ and $\rho\leq r<1$. In particular, $|f|_{r_\alpha}\leq\max(C,1)$ for all $\rho\leq r_\alpha<1$ since all the coefficients of the $X_\alpha$-expansion of $f_{\bf k}$ lie in $\Zp$. This is condition $(i)$ in the definition of $\mathcal{O}_{\mathcal{E}_{\Delta,\{\alpha\}}}^\dagger$. In order to show $(ii)$ we need a quantitative version of the above cited maximum principle: There exists an integer $N=N(C,\rho)<0$ such that $C\rho^{-N}<1$. So for any $\varepsilon>0$ there is a real number $0<\rho_1=\rho_1(\varepsilon,N)<1$ such that $r_\alpha^n\leq 1+\varepsilon$ for all $\rho_1\leq r_\alpha<1$ and $N\leq n$. On the other hand, there is another real number $0<\rho_2=\rho_2(C,\rho,N)$ such that we have $C\rho_2^N\rho^{-N}<1$. So if we have a monomial $aX_\alpha^n$ with $a\in\Zp$ such that $|aX_\alpha^n|_\rho\leq C$ then for any real number $\max(\rho_1,\rho_2)<r_\alpha<1$ we compute 
$$|aX_\alpha^n|_{r_\alpha}=|a|r_\alpha^n\leq \begin{cases}1+\varepsilon&\text{ for }n\geq N\\ Cr_\alpha^n\rho^{-n}<C\rho_2^n\rho^{-n}< C\rho_2^N\rho^{-N}<1&\text{ for }n<N\ . \end{cases}$$
We deduce $|f_{\bf k}|_{r_\alpha}\leq 1+\varepsilon$ for all ${\bf k}$ which yields $|f|_{r_\alpha}\leq 1+\varepsilon$ showing $(ii)$. Finally, the coefficient of $X_\alpha^{-n}$ in $f_{\bf k}$ tends $p$-adically to $0$ uniformly in ${\bf k}$. Combining this with $(b)$ we deduce that $f$ lies in $\OED$ therefore by the above discussion it also lies in $ \mathcal{O}_{\mathcal{E}_{\Delta,\{\alpha\}}}^\dagger$.
\end{proof}

For an inductional proof of the comparison between cohomologies of the overconvergent and completed Herr complexes our key is the following

\begin{pro}\label{keyovherr}
For any subset $S\subset \Delta$ and $\alpha\in\Delta\setminus S$ the natural inclusion $D_{S\cup\{\alpha\}}^\dagger\hookrightarrow D_S^\dagger$ induces a quasi-isomorphism between the cochain complexes $0\to D_{S\cup\{\alpha\}}^\dagger\overset{\psi_\alpha-1}{\to} D_{S\cup\{\alpha\}}^\dagger\to 0$ and $0\to D_{S}^\dagger\overset{\psi_\alpha-1}{\to} D_{S}^\dagger\to 0$.
\end{pro}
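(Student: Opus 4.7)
The plan is to reduce to the one-variable Cherbonnier--Colmez theorem. By Corollary \ref{basisforalpharestrict} I may choose a basis $(e_1,\dots,e_d)$ of $D^\dagger$ in which the matrix $B$ of $\varphi_\alpha$—and consequently also the matrix $A:=B^{-1}$ governing $\psi_\alpha$—lies in $\operatorname{Mat}_d(\mathcal{O}_{\mathcal{E}_\alpha}^\dagger)$. Setting $D^\dagger(\alpha):=\bigoplus_i\mathcal{O}_{\mathcal{E}_\alpha}^\dagger e_i$, which is a free $(\varphi_\alpha,\Gamma_\alpha)$-module over $\mathcal{O}_{\mathcal{E}_\alpha}^\dagger$, one then has natural identifications
$$D_S^\dagger\cong\mathcal{O}_{\mathcal{E}_{\Delta,S}}^\dagger\otimes_{\mathcal{O}_{\mathcal{E}_\alpha}^\dagger}D^\dagger(\alpha),\qquad D_{S\cup\{\alpha\}}^\dagger\cong\mathcal{O}_{\mathcal{E}_{\Delta,S\cup\{\alpha\}}}^\dagger\otimes_{\mathcal{O}_{\mathcal{E}_\alpha}^\dagger}D^\dagger(\alpha),$$
with $\psi_\alpha$ acting coordinate-wise through the matrix $A$ together with the one-variable $\psi_\alpha$ on entries in $\mathcal{O}_{\mathcal{E}_{\Delta,S}}^\dagger$ (which operates only in the $X_\alpha$ direction). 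Thus $\psi_\alpha$ preserves both mixed rings, and showing that the inclusion of complexes is a quasi-isomorphism reduces to showing that $\psi_\alpha-1$ is bijective on the quotient module $D_S^\dagger/D_{S\cup\{\alpha\}}^\dagger$.

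For injectivity, suppose $y\in D_S^\dagger$ satisfies $(\psi_\alpha-1)y\in D_{S\cup\{\alpha\}}^\dagger$; then $y\equiv\psi_\alpha(y)\pmod{D_{S\cup\{\alpha\}}^\dagger}$, and iterating gives $y\equiv\psi_\alpha^n(y)\pmod{D_{S\cup\{\alpha\}}^\dagger}$ for every $n\ge 1$. Provided we can show that $\psi_\alpha^n(y)\in D_{S\cup\{\alpha\}}^\dagger$ for some $n$, this forces $y\in D_{S\cup\{\alpha\}}^\dagger$. For surjectivity, the same eventual-membership property allows us, given $x\in D_S^\dagger$ with $\psi_\alpha^n(x)\in D_{S\cup\{\alpha\}}^\dagger$, to form the telescoping sum $y:=-\sum_{k=0}^{n-1}\psi_\alpha^k(x)$, which satisfies $(\psi_\alpha-1)y\equiv x\pmod{D_{S\cup\{\alpha\}}^\dagger}$.

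The main obstacle is therefore the assertion that iterated applications of $\psi_\alpha$ eventually land in $D_{S\cup\{\alpha\}}^\dagger$. Each application of $\psi_\alpha$ improves the $X_\alpha$-convergence radius from $r_\alpha$ to $r_\alpha^p$ by Lemma \ref{psiestimate}(2) while leaving the $S$-convergence in the remaining variables untouched; combined with the classical Cherbonnier--Colmez fact that some iterate $\psi_\alpha^n(h)$ lies in $D^\dagger(\alpha)$ for every $h$ in the ambient one-variable $(\varphi_\alpha,\Gamma_\alpha)$-module $\mathcal{O}_{\mathcal{E}_\alpha}\otimes_{\mathcal{O}_{\mathcal{E}_\alpha}^\dagger}D^\dagger(\alpha)$, this should yield the required membership after finitely many steps. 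The subtlety is that an appropriate analog of Lemma \ref{alphaexpansion} expresses an element of $\mathcal{O}_{\mathcal{E}_{\Delta,S}}^\dagger$ as a Laurent series in the variables outside $S\cup\{\alpha\}$ with coefficients in a one-variable overconvergent-in-$S$ ring, and one needs the CC iteration bound to apply \emph{uniformly} across all these coefficients. This is precisely the content of the uniform continuity of $\psi_\alpha-1$ recorded in \cite{CC,CC2}, which permits simultaneous inversion over the coefficients and reassembly of the solution as an element of the correct mixed ring $\mathcal{O}_{\mathcal{E}_{\Delta,S\cup\{\alpha\}}}^\dagger$ rather than of some larger $p$-adic completion.
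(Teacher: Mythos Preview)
Your reduction to the cokernel $D_S^\dagger/D_{S\cup\{\alpha\}}^\dagger$ and the choice of basis via Corollary~\ref{basisforalpharestrict} match the paper's setup, but the central claim that iterates $\psi_\alpha^n(y)$ of an arbitrary $y\in D_S^\dagger$ eventually land in $D_{S\cup\{\alpha\}}^\dagger$ is false, and this breaks both your injectivity and surjectivity arguments. Concretely, take $\Delta=\{\alpha,\beta\}$, $S=\emptyset$, $D^\dagger=\OED^\dagger$, and $y=\sum_{n\ge1}p^nX_\alpha^{-p^{2n}}X_\beta^{-n}\in\OED=D_\emptyset^\dagger$. Applying $\psi_\alpha^N$ acts only on the $X_\alpha$-part, producing coefficients whose leading $X_\alpha$-pole has order roughly $p^{2n-N}$; for every fixed $N$ this still grows super-linearly in $n$, so condition~(a) of Lemma~\ref{alphaexpansion} fails and $\psi_\alpha^N(y)\notin\mathcal{O}_{\mathcal{E}_{\Delta,\{\alpha\}}}^\dagger$. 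Your invocation of Lemma~\ref{psiestimate}(2) does not help here: that lemma improves an \emph{existing} $X_\alpha$-radius of convergence, whereas elements of $D_S^\dagger$ with $\alpha\notin S$ carry no $X_\alpha$-overconvergence to begin with. The classical Cherbonnier--Colmez statement that $\psi_\alpha^{n(x)}(x)\in D^\#$ is likewise coordinate-wise with $n(x)$ depending on $x$, and ``uniform continuity of $\psi_\alpha-1$'' does not upgrade this to a single $n$ valid across infinitely many coefficients with unbounded $X_\alpha$-denominators.

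The paper avoids iteration entirely. For $h^0$ it uses that the one-variable $\psi_\alpha$-fixed locus $D^\dagger(\alpha)^{\psi_\alpha=\id}$ is already overconvergent on a \emph{uniform} annulus (Prop.~III.3.2(ii) of \cite{CC2}) and compact (Prop.~I.5.6(i) of \cite{CC2}), yielding the uniform bound~(a) of Lemma~\ref{alphaexpansion} directly. For injectivity on $h^1$ it uses the explicit estimate $w_n(y_{\bf k})\le\max\bigl(w_n(x_{\bf k}),\tfrac{p}{p-1}w_n(A)+1\bigr)$ of Lemma~I.6.4 in \cite{CC2}, which controls $y$ by $x$ and the fixed matrix $A$ uniformly in ${\bf k}$. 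For surjectivity on $h^1$ it picks representatives $z_{\bf k}$ of the finitely generated cokernel $D(\alpha)/(\psi_\alpha-1)D(\alpha)$ from a single finitely generated $\Zp$-submodule (Prop.~I.5.6 of \cite{CC2}), again giving uniformity across ${\bf k}$. What you need is not an iteration count but precisely these quantitative, $x$-independent or $A$-controlled bounds.
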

\begin{proof}
By Cor.\ \ref{basisforalpharestrict} we may choose a basis $(e_1,\dots,e_d)$ of $D^\dagger$ in which the matrices of both $\psi_\alpha$ and $\gamma_\alpha$ lie in the subring $\mathcal{O}_{\mathcal{E}_\alpha}^\dagger\subset\OED^\dagger$ and put $D^\dagger(\alpha):=\sum_{i=1}^d\mathcal{O}_{\mathcal{E}_\alpha}^\dagger e_i$. 

We prove the isomorphism on $h^0$ first. Pick an element $x=\sum_{i=1}^df^{(i)}e_i\in \sum_{i=1}^d \mathcal{O}_{\mathcal{E}_{\Delta,S}}^\dagger e_i=D_S^\dagger$ that is a fixed point of $\psi_\alpha$. Further, we write 
$$ f^{(i)}=\sum_{{\bf k}\in \mathbb{Z}^{\Delta\setminus\{\alpha\}}} f^{(i)}_{\bf k} \prod_{\beta\in\Delta\setminus\{\alpha\}}X_\beta^{k_\beta}\ .$$
By Lemma \ref{mixedringintersect} it suffices to show that $x$ lies in $D^\dagger_{\{\alpha\}}$. Note that for any ${\bf k}\in \mathbb{Z}^{\Delta\setminus\{\alpha\}}$, $x_{\bf k}:=\sum_{i=1}^df^{(i)}_{\bf k}e_i\in D^\dagger(\alpha)$ is also a fixed point of $\psi_\alpha$. Hence by Prop.\ III.3.2(ii) in \cite{CC2}, $f^{(i)}_{\bf k}$ lies in $\mathcal{O}_{\mathcal{E}_\alpha}^\dagger$ and converges on an annulus $(\rho_\alpha(D^\dagger),1)$ (independent of ${\bf k}$) for all ${\bf k}\in \mathbb{Z}^{\Delta\setminus\{\alpha\}}$. Moreover, $D^\dagger(\alpha)^{\psi_\alpha=\id}$ is compact (Prop.\ I.5.6(i) in \cite{CC2}), so we have a uniform bound for $|f^{(i)}_{\bf k}|_{r_\alpha}$ for any real number $r_\alpha\in (\rho_\alpha(D^\dagger),1)$ showing condition $(a)$ in Lemma \ref{alphaexpansion}. Condition $(b)$ is automatic since $f^{(i)}$ lies in $\OED$ (see the proof of Lemma \ref{alphaexpansion}).

For the injectivity on $h^1$ pick an element $x=\sum_{i=1}^df^{(i)}e_i\in D_{S\cup\{\alpha\}}^\dagger$ such that $x=\psi_\alpha(y)-y$ for some $y=\sum_{i=1}^dg^{(i)}e_i\in D_{S}^\dagger$ and let $A=(a_{i,j})_{i,j}\in (\mathcal{O}_{\mathcal{E}_\alpha}^\dagger)^{d\times d}$ be the matrix of $\psi_\alpha$. As before, we write
$$ f^{(i)}=\sum_{{\bf k}\in \mathbb{Z}^{\Delta\setminus\{\alpha\}}} f^{(i)}_{\bf k} \prod_{\beta\in\Delta\setminus\{\alpha\}}X_\beta^{k_\beta}\ ,\quad g^{(i)}=\sum_{{\bf k}\in \mathbb{Z}^{\Delta\setminus\{\alpha\}}} g^{(i)}_{\bf k} \prod_{\beta\in\Delta\setminus\{\alpha\}}X_\beta^{k_\beta}\ ,$$
and put $x_{\bf k}:=\sum_{i=1}^df_{\bf k}^{(i)}e_i$, $y_{\bf k}:=\sum_{i=1}^dg_{\bf k}^{(i)}e_i$ as elements in $D^\dagger(\alpha)$ such that $\psi_\alpha(y_{\bf k})-y_{\bf k}=x_{\bf k}$ for all ${\bf k}\in \mathbb{Z}^{\Delta\setminus\{\alpha\}}$. Following \cite{CC2}, we define $w_n(f)$ for an element $f\in \mathcal{O}_{\mathcal{E}_\alpha}$ as the smallest integer $k$ such that $f$ lies in $X_\alpha^{-k}\Zp\bs X_\alpha,\frac{p}{X_\alpha^{p-1}}\js +p^{n+1} \mathcal{O}_{\mathcal{E}_\alpha}$. By definition we have $|f|_\rho=\sup_{n\geq 0}\rho^{-w_n(f)-n(p-1)}p^{-n}$ for any $0<\rho<1$ (cf.\ Prop.\ III.2.1 in \cite{CC2}). Further, we put $w_n(A):=\max_{i,j}w_n(a_{i,j})$ and $w_n(z)$ for the maximum of the coordinates of $z\in D^\dagger(\alpha)$ in the basis $e_1,\dots,e_d$. We similarly extend all the functions $|\cdot|_\rho\colon \mathcal{O}_{\mathcal{E}_\alpha}^\dagger\to \mathbb{R}^{\geq 0}\cup\{+\infty\}$ to elements of $D^\dagger(\alpha)$ as the maximum of the values on the coordinates in the basis $e_1,\dots,e_d$. Lemma I.6.4 in \cite{CC2} yields $w_n(y_{\bf k})\leq \max(w_n(x_{\bf k}),\frac{p}{p-1}w_n(A)+1)$ for all $n\geq 0$. By Lemma \ref{alphaexpansion} there exist real numbers $p^{-\frac{1}{p-1}}<\rho<1$ and $C>0$ such that and $|f_{\bf k}^{(i)}|_\rho<C$ for all ${\bf k}\in\mathbb{Z}^{\Delta\setminus\{\alpha\}}$ and $1\leq i\leq d$. By possibly enlarging $\rho$ and $C$ further, we may assume that all the entries of the matrix $A$ satisfy $|a_{i,j}|_{\rho^{\frac{p}{p-1}}}<C\rho$. In particular, we have $\rho^{-w_n(x_{\bf k})-n(p-1)}p^{-n}<C$ and $\rho^{-\frac{p}{p-1}w_n(A)-np-1}p^{-n}<C$ for all $n\geq 0$. Putting these together we compute
\begin{align*}
|y_{\bf k}|_\rho=\sup_{n\geq 0}\rho^{-w_n(y_{\bf k})-n(p-1)}p^{-n}\leq \sup_{n\geq 0}\rho^{-\max(w_n(x_{\bf k}),\frac{p}{p-1}w_n(A)+1)-n(p-1)}p^{-n}\leq\\
\leq \max(\sup_{n\geq 0}\rho^{-w_n(x_{\bf k})-n(p-1)}p^{-n},\sup_{n\geq 0}\rho^{-\frac{p}{p-1}w_n(A)-1-n(p-1)}p^{-n})<C\ .
\end{align*}
We deduce using Lemma \ref{alphaexpansion} that $y$ in fact lies in $D^\dagger_{\{\alpha\}}$ whence also in $D^\dagger_{S\cup\{\alpha\}}=D_{S}^\dagger\cap D^\dagger_{\{\alpha\}}$ by Lemma \ref{mixedringintersect}.

For the surjectivity on $h^1$ pick an arbitrary $x\in D_S^\dagger$ and write, as before, $$x=\sum_{{\bf k}\in\mathbb{Z}^\Delta}(\prod_{\beta\in\Delta\setminus\{\alpha\}}X_\beta^{k_\beta})x_{\bf k}$$ with $x_{\bf k}$ in $D(\alpha)$ for any ${\bf k}\in\mathbb{Z}^{\Delta\setminus\{\alpha\}}$ where $D(\alpha):=\mathcal{O}_{\mathcal{E}_\alpha}\otimes_{\mathcal{O}_{\mathcal{E}_\alpha}^\dagger}D^\dagger(\alpha)$. By Lemma 3.6 in \cite{Liu} for each ${\bf k}$ there are $z_{\bf k}\in D^\dagger(\alpha)$ and $y_{\bf k}\in D(\alpha)$ such that $x_{\bf k}=z_{\bf k}+\psi_\alpha(y_{\bf k})-y_{\bf k}$. Further, by Prop.\ I.5.6 in \cite{CC2} we may choose all the $z_{\bf k}$ from a finitely generated $\Zp$-submodule of $D^\dagger(\alpha)$. Hence by a compactness argument the coordinates of $z_{\bf k}$ in the basis $e_1,\dots,e_d$ satisfy condition $(a)$ in Lemma \ref{alphaexpansion}. Moreover, whenever $x_{\bf k}$ lies in $p^nD(\alpha)$ for some integer $n\geq 0$ then $z_{\bf k}$ also belongs to $p^n D^\dagger(\alpha)$. This, on one hand, shows that $(b)$ in Lemma \ref{alphaexpansion} is also satisfied, so $z:=\sum_{{\bf k}\in\mathbb{Z}^{\Delta\setminus\{\alpha\}}}(\prod_{\beta\in\Delta\setminus\{\alpha\}}X_\beta^{k_\beta})z_{\bf k}$ makes sense and lies in $D^\dagger_{\{\alpha\}}$. On the other hand, it also follows that $z$ belongs to $D^\dagger_{\{\beta\}}$ for any $\beta\in S$:  For any real number $0<\rho_\beta<1$, we have $$|(\prod_{\beta\in\Delta\setminus\{\alpha\}}X_\beta^{k_\beta})z_{\bf k}|_{\rho_\beta}=\rho_\beta^{k_\beta}p^{-n}\leq |(\prod_{\beta\in\Delta\setminus\{\alpha\}}X_\beta^{k_\beta})x_{\bf k}|_{\rho_\beta}$$ where $n$ is the largest integer such that $z_{\bf k}\in p^nD^\dagger(\alpha)$. Using Lemma \ref{mixedringintersect} we conclude $z\in D^\dagger_{S\cup\{\alpha\}}$. Now $x-z$ is coordinatewise in the image of $\psi_\alpha-1$, so it remains to show that $y_{\bf k}$ glue together to an element of $D^\dagger_S$. Using again Lemma I.6.4 in \cite{CC2} we find that $w_n(y_{\bf k})$ is bounded for any fixed $n\geq 0$. Moreover, since $D(\alpha)/(\psi_\alpha-1)(D(\alpha))$ is finitely generated over $\Zp$, there is an integer $r\geq 0$ such that the $\Zp$-torsion part of $D(\alpha)/(\psi_\alpha-1)(D(\alpha))$ is killed by $p^r$. In particular, whenever $x_{\bf k}-z_{\bf k}$ is divisible by $pn$ then $y_{\bf k}$ can be chosen so that it is divisible by $p^{n-r}$. We deduce that $y:=\sum_{{\bf k}\in\mathbb{Z}^{\Delta\setminus\{\alpha\}}}(\prod_{\beta\in\Delta\setminus\{\alpha\}}X_\beta^{k_\beta})y_{\bf k}$ makes sense in $D$. Finally, the above discussion also yields the estimate
  $$|(\prod_{\beta\in\Delta\setminus\{\alpha\}}X_\beta^{k_\beta})y_{\bf k}|_{\rho_\beta}\leq p^r |(\prod_{\beta\in\Delta\setminus\{\alpha\}}X_\beta^{k_\beta})x_{\bf k}|_{\rho_\beta}$$
for any real number $0<r_\beta<1$ and $\beta \in S$, so we obtain $y\in D^\dagger_S$ by Lemmata \ref{alphaexpansion} and \ref{mixedringintersect}.
\end{proof}

\begin{pro}\label{psiovquasi}
Let $D^\dagger$ be an object in $\mathcal{D}^{et}(\varphi_{\Delta},\Gamma_\Delta,\OED^\dagger)$. The natural morphism $\Psi^\bullet(D^{\dagger}) \rightarrow \Psi^\bullet(D)$ is a quasi-isomorphism.
\end{pro}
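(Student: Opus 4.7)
The strategy is to factor the morphism $\Psi^\bullet(D^\dagger) \to \Psi^\bullet(D)$ through a chain of ``mixed'' complexes built out of the rings $\mathcal{O}_{\mathcal{E}_{\Delta,S}}^\dagger$ introduced before Proposition \ref{keyovherr}. Fix an ordering $\Delta = \{\alpha_1, \dots, \alpha_d\}$ and set $S_j := \{\alpha_{j+1}, \dots, \alpha_d\}$, so that $D^\dagger_{S_0} = D^\dagger$ and $D^\dagger_{S_d} = D$. The inclusion $D^\dagger_{S_{j-1}} \hookrightarrow D^\dagger_{S_j}$ is compatible with all operators $\psi_\beta$ and $\gamma_\beta$ for $\beta \in \Delta$, and the composition of the induced morphisms is precisely the map whose quasi-isomorphy we seek. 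A preliminary verification is in order: one checks that $\psi_\beta$ preserves $\mathcal{O}_{\mathcal{E}_{\Delta,S}}^\dagger$ for every $S$ and every $\beta$. For $\beta \in S$ this is a one-variable overconvergent fact; for $\beta \notin S$, it follows because $\psi_\beta$ commutes with multiplication by $X_\gamma^{\pm 1}$ for all $\gamma \neq \beta$, so one can expand an element of $\mathcal{O}_{\mathcal{E}_{\Delta,S}}^\dagger$ in the $X_\gamma$ ($\gamma \in S$) variables with $\mathcal{O}_{\mathcal{E}_{\Delta \setminus S}}$-valued coefficients and apply $\psi_\beta$ coefficientwise. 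Thus each $\Psi^\bullet(D^\dagger_{S_j})$ is a well-defined cochain complex.

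The main step is to show that for any subset $S \subset \Delta$ and any $\alpha \in \Delta \setminus S$, the inclusion $\Psi^\bullet(D^\dagger_{S \cup \{\alpha\}}) \to \Psi^\bullet(D^\dagger_S)$ is a quasi-isomorphism. Denote by $\Psi_{T}^\bullet(-)$ the Koszul complex of the operators $(\psi_\beta - \id)_{\beta \in T}$. Then $\Psi^\bullet(D^\dagger_S)$ is isomorphic to the total complex of the bicomplex
\[
\Psi_{\Delta \setminus \{\alpha\}}^\bullet(D^\dagger_S) \xrightarrow{\ \psi_\alpha - \id\ } \Psi_{\Delta \setminus \{\alpha\}}^\bullet(D^\dagger_S),
\]
placed in horizontal degrees $0$ and $1$ (with signs adjusted so that the Koszul sign convention is respected), and similarly for $D^\dagger_{S \cup \{\alpha\}}$. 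The inclusion induces a morphism of bicomplexes. Apply the spectral sequence that computes horizontal cohomology first: in each bidegree $(p,q)$, the column in question is a direct sum of $\binom{d-1}{q}$ copies of the two-term complex $0 \to D^\dagger_{?} \xrightarrow{\psi_\alpha - \id} D^\dagger_{?} \to 0$, and Proposition \ref{keyovherr} says precisely that the inclusion $D^\dagger_{S \cup \{\alpha\}} \hookrightarrow D^\dagger_S$ induces an isomorphism on the cohomology of this two-term complex. Hence the morphism of $E_1$-pages is an isomorphism in every bidegree. Since both bicomplexes are supported in a bounded strip, the spectral sequence comparison theorem gives the desired quasi-isomorphism on total complexes. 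Composing the $d$ quasi-isomorphisms obtained by iterating this construction along the chain $S_0 \supset S_1 \supset \dots \supset S_d$ yields the statement.

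The main technical obstacle is not the spectral sequence formalism itself but the bookkeeping needed to justify that the intermediate complexes $\Psi^\bullet(D^\dagger_{S_j})$ are genuinely bicomplexes in the required sense: one must know that the $\psi_\beta$ for $\beta \in \Delta \setminus \{\alpha\}$ preserve $D^\dagger_S$ (so that $\Psi_{\Delta \setminus \{\alpha\}}^\bullet(D^\dagger_S)$ exists) and that they commute on $D^\dagger_S$ with each other and with $\psi_\alpha$. Both facts are essentially formal, inherited from the commutation of the partial Frobenii $\varphi_\beta$ on $\OED^\dagger$ and $\OED$, once the stability of the mixed rings under $\psi_\beta$ has been established as sketched above. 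Given these routine verifications, the entire reduction rests squarely on Proposition \ref{keyovherr}, which is the only genuinely analytic input.
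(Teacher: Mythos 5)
Your proof follows the paper's argument exactly: reduce along the chain of mixed rings $\mathcal{O}_{\mathcal{E}_{\Delta,S}}^\dagger$, write each $\Psi^\bullet(D^\dagger_S)$ as the total complex of the bicomplex $\Psi^\bullet_{\Delta\setminus\{\alpha\}}(D^\dagger_S)\xrightarrow{\psi_\alpha-1}\Psi^\bullet_{\Delta\setminus\{\alpha\}}(D^\dagger_S)$, and invoke Proposition \ref{keyovherr}. The extra remarks you make---checking $\psi_\beta$-stability of the mixed rings and spelling out the spectral-sequence comparison---are correct elaborations of steps the paper leaves implicit.
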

\begin{proof}
Since we have $\mathcal{O}_{\mathcal{E}_{\Delta,\Delta}}^\dagger=\OED^\dagger$ and $\mathcal{O}_{\mathcal{E}_{\Delta,\emptyset}}^\dagger=\OED$, we are reduced to showing that the natural inclusion $\Psi^\bullet(D_{S\cup\{\alpha\}}^{\dagger}) \hookrightarrow \Psi^\bullet(D_S^\dagger)$ is a quasi-isomorphism for all $S\subset \Delta$ and $\alpha\in\Delta\setminus S$. However, this follows from Prop.\ \ref{keyovherr} noting that $\Psi^\bullet(D_{S\cup\{\alpha\}}^{\dagger})$ (resp.\ $\Psi^\bullet(D_S^\dagger)$) is the total complex of the double complex $0\to \Psi^\bullet_{\Delta\setminus\{\alpha\}}(D_{S\cup\{\alpha\}}^\dagger)\overset{\psi_\alpha-1}{\to} \Psi^\bullet_{\Delta\setminus\{\alpha\}}(D_{S\cup\{\alpha\}}^\dagger)\to 0$ (resp.\ $0\to \Psi^\bullet_{\Delta\setminus\{\alpha\}}(D_{S}^\dagger)\overset{\psi_\alpha-1}{\to} \Psi^\bullet_{\Delta\setminus\{\alpha\}}(D_{S}^\dagger)\to 0$) where $\Psi^\bullet_{\Delta\setminus\{\alpha\}}(D_{S\cup\{\alpha\}}^\dagger)$ (resp.\ $\Psi^\bullet_{\Delta\setminus\{\alpha\}}(D_{S}^\dagger)$) denotes the Koszul complex of the operators $\psi_\beta-1$ ($\beta\in\Delta\setminus\{\alpha\}$) on $D_{S\cup\{\alpha\}}^\dagger$ (resp.\ on $D_{S}^\dagger$), ie.\ it is the subcomplex of $\Psi^\bullet(D_{S\cup\{\alpha\}}^\dagger)$ (resp.\ of $\Psi^\bullet(D_{S}^\dagger)$) consisting of the direct summands $D_{S\cup\{\alpha\}}^\dagger$ (resp.\ $D_{S}^\dagger$) in each degree $r$ corresponding to subsets $\alpha\notin\{\alpha_1,\dots,\alpha_r\}\subset\Delta$.
\end{proof}

\begin{thm}
We have an isomorphism $$H^i_{Iw}(\GQpD,\cdot)\cong  h^{i-d}\Psi^\bullet(\mathbb{D}(\cdot)) \cong h^{i-d}\Psi^\bullet(\mathbb{D^{\dagger}}(\cdot))$$
of cohomological $\delta$-functors.
\end{thm}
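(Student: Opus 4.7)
The theorem to prove is a composite of two statements, each of which reduces directly to results already established in the paper. My plan is to assemble them in two short steps, then handle the passage from $\Zp$- to $\Qp$-coefficients.

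First, I would observe that the left isomorphism
\[
H^i_{Iw}(\GQpD,T)\cong h^{i-d}\Psi^\bullet(\mathbb{D}(T))
\]
for $T\in\operatorname{Rep}_{\Zp}(\GQpD)$ is exactly the content of Theorem \ref{herrcomplex+iwasawa}, so nothing remains to be proved for that half. For the right isomorphism, set $D^\dagger := \mathbb{D}^\dagger(T)$, which is an object of $\mathcal{D}^{et}(\varphi_\Delta,\Gamma_\Delta,\OED^\dagger)$ by Proposition \ref{overconverge} and the equivalence of Corollary \ref{ddaggerequiv}. Then by Theorem \ref{basechange_OED}, the natural basechange $\OED\otimes_{\OED^\dagger}D^\dagger$ is canonically isomorphic as a $(\varphi_\Delta,\Gamma_\Delta)$-module over $\OED$ to $\mathbb{D}(T)$. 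The comparison morphism of cochain complexes
\[
\Psi^\bullet(\mathbb{D}^\dagger(T))\longrightarrow \Psi^\bullet(\mathbb{D}(T))
\]
induced by the inclusion $D^\dagger\hookrightarrow D$ is therefore a quasi-isomorphism by Proposition \ref{psiovquasi}, giving the desired isomorphism on cohomology in each degree.

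Second, to handle $V\in\operatorname{Rep}_{\Qp}(\GQpD)$, I would choose a $\GQpD$-stable $\Zp$-lattice $T\subset V$ (which exists by the usual compactness argument recalled in the proof of Proposition \ref{overconverge}), so that $\mathbb{D}(V)\cong \mathbb{D}(T)[p^{-1}]$ and $\mathbb{D}^\dagger(V)\cong \mathbb{D}^\dagger(T)[p^{-1}]$. Since inverting $p$ is exact, it commutes with formation of the Koszul complex $\Psi^\bullet$ and with taking cohomology, and the $\Qp$-version of Theorem \ref{herrcomplex+iwasawa} for Iwasawa cohomology is already recorded there as well. Thus both isomorphisms for $V$ are obtained from the $\Zp$-case for $T$ by applying $\cdot\otimes_{\Zp}\Qp$.

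There is no substantive obstacle remaining at this stage: the entire technical content was carried by Proposition \ref{psiovquasi} (and its engine, Proposition \ref{keyovherr}), which established that the overconvergent $\Psi^\bullet$ complex is quasi-isomorphic to the $p$-completed one by an inductive argument over subsets $S\subseteq\Delta$ using the mixed rings $\mathcal{O}_{\mathcal{E}_{\Delta,S}}^\dagger$ and the classical one-variable overconvergence result of Cherbonnier--Colmez. The only bookkeeping to be careful about is the naturality in $T$ (resp.\ $V$) of each arrow in the two isomorphisms, which is immediate from the functoriality of $\mathbb{D}$, $\mathbb{D}^\dagger$, and of the basechange morphism $D^\dagger\hookrightarrow D$.
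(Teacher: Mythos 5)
Your argument is correct and, in substance, is the same one the paper gives: the composite of an already-established identification of $H^i_{Iw}$ with the cohomology of the $p$-completed $\Psi^\bullet$-complex, followed by the quasi-isomorphism of Proposition \ref{psiovquasi} between the overconvergent and $p$-completed $\Psi^\bullet$-complexes. The only difference is cosmetic: the paper cites Theorem \ref{iwasawamodp} for the left isomorphism, whereas you cite Theorem \ref{herrcomplex+iwasawa}; the latter is the $\Zp$/$\Qp$ version deduced from the former, so your reference is equally valid (and arguably more directly adapted to the case at hand). Your remark on inverting $p$ and on naturality is correct, with one small point to keep in mind: being an isomorphism of cohomological $\delta$-functors requires compatibility not only with morphisms of objects but also with the connecting maps $\delta^n$ arising from short exact sequences, and this is what the induced morphisms of long exact sequences (coming from the morphisms of complexes $\Psi^\bullet(\mathbb{D}^\dagger(\cdot))\to\Psi^\bullet(\mathbb{D}(\cdot))$) furnish automatically.
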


\begin{proof}
The left isomorphism follows from Theorem \ref{iwasawamodp} and the right from Prop.\ \ref{psiovquasi}.
\end{proof}

Let $D^\dagger$ be an object in $\mathcal{D}^{et}(\varphi_{\Delta},\Gamma_\Delta,\OED^\dagger)$.
we denote by $\Psi\Gamma^\bullet(D^{\dagger})$ the total complex of the double complex $\Gamma^\bullet(\Psi^\bullet(D^{\dagger})^{C_\Delta})$.

\begin{pro}\label{psigammaov=psigamma}
The complex  $\Psi\Gamma^\bullet(D^{\dagger})$ is quasi-isomorphic to $\Psi\Gamma^\bullet(D)$. In particular, both compute the Galois cohomology groups $H^\bullet(\GQpD,\mathbb{V}(D))$. 
\end{pro}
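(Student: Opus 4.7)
The strategy is to deduce the quasi-isomorphism from Proposition \ref{psiovquasi} by transferring it across $C_\Delta$-invariants and the $\Gamma$-Koszul complex, and then to derive the cohomology computation from Theorems \ref{psigamma=phigamma} and \ref{herrcomplex+iwasawa}. By definition both sides are the total complexes of $\Gamma^\bullet(\Psi^\bullet(-)^{C_\Delta})$; applying the first hypercohomology spectral sequence
\begin{equation*}
E_1^{p,q} = \Gamma^p(h^q \Psi^\bullet(-)^{C_\Delta}) \Rightarrow h^{p+q} \Psi\Gamma^\bullet(-)
\end{equation*}
to each, the comparison reduces to showing that $h^q \Psi^\bullet(D^\dagger)^{C_\Delta} \to h^q \Psi^\bullet(D)^{C_\Delta}$ is an isomorphism for every $q$.

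The key step is to upgrade Proposition \ref{psiovquasi} to a quasi-isomorphism on $C_\Delta$-invariants. When $p$ is odd this is immediate, since $|C_\Delta|$ is prime to $p$ and so $(-)^{C_\Delta}$ is exact on $\Zp[C_\Delta]$-modules, preserving the quasi-isomorphism. For $p=2$ one adapts the machinery of Section \ref{p=2}: first check the overconvergent analogue of Lemma \ref{EDfingenp2}, namely that $\OED^\dagger$ is finite free of rank $2^{|\Delta|}$ over $(\OED^\dagger)^{C_\Delta}$ (this lifts from the characteristic-$p$ statement by Nakayama, using Propositions \ref{jacoverconv} and \ref{noetheroverconv}); then obtain the overconvergent analogue of Proposition \ref{inducep2}, namely $D^\dagger \cong \OED^\dagger \otimes_{(\OED^\dagger)^{C_\Delta}} (D^\dagger)^{C_\Delta}$, and similarly over each mixed ring $\mathcal{O}_{\mathcal{E}_{\Delta,S}}^\dagger$. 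This decouples the $C_\Delta$-action and reduces the quasi-isomorphism of Proposition \ref{psiovquasi} to its $C_\Delta$-fixed counterpart, exactly as in the proof of Corollary \ref{HQpD*cohom} for $p=2$.

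Once the $C_\Delta$-invariant quasi-isomorphism is established, applying the levelwise exact functor $\Gamma^r(-) = \bigoplus_{S \in \binom{\Delta}{r}}(-)$ and invoking the above spectral sequence yields $\Psi\Gamma^\bullet(D^\dagger) \xrightarrow{\sim} \Psi\Gamma^\bullet(D)$. For the second statement, I first upgrade Theorem \ref{psigamma=phigamma} from $E_\Delta$- to $\OED$- and $\mathcal{E}_\Delta$-coefficients via the projective-limit and inversion-of-$p$ arguments of Theorem \ref{herrcomplex+iwasawa} (using Lemma \ref{projlim10}), obtaining $\Psi\Gamma^\bullet(D) \simeq \Phi\Gamma^\bullet(D)$. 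Since by Theorem \ref{herrcomplex+iwasawa} the right-hand side computes $H^\bullet(\GQpD, \mathbb{V}(D))$, the quasi-isomorphism of the first half of this proposition gives the same for $\Psi\Gamma^\bullet(D^\dagger)$.

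The main obstacle is the $p=2$ case of the $C_\Delta$-invariance step: since the exactness of $(-)^{C_\Delta}$ fails for $2$-groups, one really does need the structural decomposition provided by the overconvergent analogue of Proposition \ref{inducep2}. Transporting the Artin-theoretic arguments of Section \ref{p=2} to the ring $\OED^\dagger$ is technically delicate because $\OED^\dagger$ is not a localized field of formal Laurent series; the crucial input is the noetherianness and the Jacobson radical computation of Propositions \ref{jacoverconv} and \ref{noetheroverconv}, which allow one to descend freeness from characteristic $p$ by Nakayama.
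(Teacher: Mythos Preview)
Your approach is correct and is essentially the one the paper intends; the paper's own proof is a two-line remark (``follows from the quasi-isomorphism in Prop.~\ref{psiovquasi} and definition of the complex; the second statement follows from Theorem~\ref{psigamma=phigamma}''), and you have unpacked exactly what is hidden inside that remark.

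You are right to flag both subtleties. The passage from Theorem~\ref{psigamma=phigamma} (stated only over $E_\Delta$) to $\OED$-coefficients is genuinely needed and is handled, as you say, by the projective-limit argument of Theorem~\ref{herrcomplex+iwasawa}; the paper leaves this implicit. More substantively, you are right that transferring the quasi-isomorphism of Prop.~\ref{psiovquasi} through $(\cdot)^{C_\Delta}$ is immediate only when $p$ is odd. For $p=2$ the paper does not spell out the justification, and your proposal to establish an overconvergent analogue of Prop.~\ref{inducep2} is the natural route. One small caution: the decomposition $D^\dagger \cong \OED^\dagger \otimes_{(\OED^\dagger)^{C_\Delta}} (D^\dagger)^{C_\Delta}$ is not, by itself, a decomposition of $\psi_\alpha$-modules (the basis elements $\prod_{\alpha\in S}X_\alpha$ are not $\psi_\alpha$-eigenvectors), so what one actually uses is that the entire machinery of Prop.~\ref{keyovherr} and Cor.~\ref{basisforalpharestrict} can be rerun over the ring $(\OED^\dagger)^{C_\Delta}$ once one knows, via the abstract isomorphism $E_\alpha^* \cong E_\alpha$ of Section~\ref{p=2}, that $(\OED^\dagger)^{C_\Delta}$ inherits the same ring-theoretic properties (noetherianity, Jacobson radical $(p)$, and an overconvergent one-variable subring for each $\alpha$). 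This is what your final paragraph gestures at, and it is indeed the crux.
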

\begin{proof}
This follows from the quasi-isomorphism in Prop.\ \ref{psiovquasi}   and definition of the complex. The second statement follows from Theorem \ref{psigamma=phigamma}.
\end{proof}

Let $D^\dagger$ be an object in $\mathcal{D}^{et}(\varphi_{\Delta},\Gamma_\Delta,\OED^\dagger)$. Analogus to Section \ref{secherrcomplex}, we define the cochain complex
\begin{align*}
\Phi^\bullet(D^{\dagger})\colon 0\to D^{\dagger}\to \bigoplus_{\alpha\in\Delta}D^{\dagger}\to \dots\to \bigoplus_{\{\alpha_1,\dots,\alpha_r\}\in \binom{\Delta}{r}}D^{\dagger}\to\dots \to D^{\dagger}\to 0
\end{align*}
where for all $0\leq r\leq |\Delta|-1$ the map $d_{\alpha_1,\dots,\alpha_r}^{\beta_1,\dots,\beta_{r+1}}\colon D^{\dagger}\to D^{\dagger}$ from the component in the $r$th term corresponding to $\{\alpha_1,\dots,\alpha_r\}\subseteq \Delta$ to the component corresponding to the $(r+1)$-tuple $\{\beta_1,\dots,\beta_{r+1}\}\subseteq\Delta$ is given by
\begin{equation*}
d_{\alpha_1,\dots,\alpha_r}^{\beta_1,\dots,\beta_{r+1}}=\begin{cases}0&\text{if }\{\alpha_1,\dots,\alpha_r\}\not\subseteq\{\beta_1,\dots,\beta_{r+1}\}\\ (-1)^{\varepsilon}(\id-\varphi_\beta)&\text{if }\{\beta_1,\dots,\beta_{r+1}\}=\{\alpha_1,\dots,\alpha_r\}\cup\{\beta\}\ ,\end{cases}
\end{equation*}
where $\varepsilon=\varepsilon(\alpha_1,\dots,\alpha_r,\beta)$ is the number of elements in the set $\{\alpha_1,\dots,\alpha_r\}$ smaller than $\beta$. 

Further, the cochain complex $\Phi\Gamma_\Delta^\bullet(D^{\dagger})$ is defined as the total complex of the double complex $\Gamma_\Delta^\bullet(\Phi^\bullet(D^{\dagger, C_\Delta}))$ and is called the \emph{Herr-complex} of $D^{\dagger}$, where $\Gamma_\Delta^\bullet$ is defined in Section \ref{secherrcomplex}.

\begin{lem}\label{tauinverseoverconv}
For each $\alpha\in\Delta$ the map $(\gamma_\alpha-1)$ on $ \bigcap_{\beta\in\Delta}\Ker(\psi_\beta\colon D^\dagger\to D^\dagger)$ is bijective.
\end{lem}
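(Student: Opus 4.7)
The strategy is to combine the completed-case bijectivity already proved in Lemma \ref{kerpsigammaacyclic} with the one-variable overconvergent theory of Cherbonnier--Colmez \cite{CC2}, via the decomposition of $D^\dagger$ along $\alpha$ provided by Cor.\ \ref{basisforalpharestrict}. Write $D:=\OED\otimes_{\OED^\dagger}D^\dagger$ for the $p$-adic completion, so that $\bigcap_{\beta\in\Delta}\Ker(\psi_\beta\colon D^\dagger\to D^\dagger)$ embeds naturally into $\bigcap_{\beta\in\Delta}\Ker(\psi_\beta\colon D\to D)$.

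Injectivity is immediate: if $y\in\bigcap_\beta\Ker(\psi_\beta\colon D^\dagger\to D^\dagger)$ satisfies $(\gamma_\alpha-1)(y)=0$, then its image in $\bigcap_\beta\Ker(\psi_\beta\colon D\to D)$ vanishes by Lemma \ref{kerpsigammaacyclic}, and the inclusion $D^\dagger\hookrightarrow D$ forces $y=0$.

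For surjectivity, given $x\in\bigcap_\beta\Ker(\psi_\beta\colon D^\dagger\to D^\dagger)$, Lemma \ref{kerpsigammaacyclic} produces a unique $y\in\bigcap_\beta\Ker(\psi_\beta\colon D\to D)$ with $(\gamma_\alpha-1)(y)=x$; the task reduces to showing $y\in D^\dagger$. Fix a basis $(e_1,\dots,e_d)$ of $D^\dagger$ as in Cor.\ \ref{basisforalpharestrict}, so that the matrices of $\varphi_\alpha$ and $\gamma_\alpha$ have entries in $\mathcal{O}_{\mathcal{E}_\alpha}^\dagger$; the same is then true for $\psi_\alpha$, since $\psi_\alpha$ stabilises $D^\dagger(\alpha):=\sum_i\mathcal{O}_{\mathcal{E}_\alpha}^\dagger e_i$. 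Because $\varphi_\alpha$ and $\gamma_\alpha$ fix every $X_\beta$ with $\beta\ne\alpha$, both $\psi_\alpha$ and $\gamma_\alpha$ are $\mathcal{O}_{\mathcal{E}_{\Delta\setminus\{\alpha\}}}^\dagger$-linear on $D^\dagger$. Expanding in the remaining variables, analogously to Lemma \ref{alphaexpansion}, we may write $x=\sum_{\mathbf{k}\in\mathbb{Z}^{\Delta\setminus\{\alpha\}}}x_{\mathbf{k}}\prod_{\beta\ne\alpha}X_\beta^{k_\beta}$ with $x_{\mathbf{k}}\in D^\dagger(\alpha)$, and similarly $y=\sum_{\mathbf{k}}y_{\mathbf{k}}\prod_{\beta\ne\alpha}X_\beta^{k_\beta}$ with $y_{\mathbf{k}}\in D(\alpha):=\mathcal{O}_{\mathcal{E}_\alpha}\otimes_{\mathcal{O}_{\mathcal{E}_\alpha}^\dagger}D^\dagger(\alpha)$. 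The coordinatewise action of $\psi_\alpha$ and $\gamma_\alpha$ in this expansion forces each $x_{\mathbf{k}}$ to lie in $D^\dagger(\alpha)^{\psi_\alpha=0}$ and each $y_{\mathbf{k}}$ to satisfy $(\gamma_\alpha-1)(y_{\mathbf{k}})=x_{\mathbf{k}}$ inside $D(\alpha)$.

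The main obstacle is then purely one-variable and will be the critical step: one must show that each $y_{\mathbf{k}}$ already lies in $D^\dagger(\alpha)^{\psi_\alpha=0}$ with bounds uniform in $\mathbf{k}$. That each $y_{\mathbf{k}}$ is individually overconvergent is the classical theorem of Cherbonnier--Colmez on the bijectivity of $\gamma_\alpha-1$ on $D^\dagger(\alpha)^{\psi_\alpha=0}$ (cf.\ Prop.\ III.3.2 of \cite{CC2}). The uniformity—which is what is actually needed to conclude via the characterization in Lemma \ref{alphaexpansion}—comes from the quantitative $w_n$-estimates in Lemma I.6.4 of \cite{CC2}, exactly the tool already exploited in the proof of Prop.\ \ref{keyovherr}: they provide a radius $\rho\in(p^{-1/(p-1)},1)$ and a constant $C>0$ independent of $\mathbf{k}$ with $|y_{\mathbf{k}}|_\rho\le C|x_{\mathbf{k}}|_\rho$ together with the analogous mod-$p^n$ control. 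Combined with the overconvergence conditions on $(x_{\mathbf{k}})$ inherited from $x\in D^\dagger$, these yield both conditions $(a)$ and $(b)$ of Lemma \ref{alphaexpansion} for $(y_{\mathbf{k}})$, so $y\in D^\dagger_{\{\alpha\}}\cap D$. By Lemma \ref{mixedringintersect} (applied termwise in the chosen basis) this intersection equals $D^\dagger$, which finishes the proof.
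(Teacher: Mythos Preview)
Your overall strategy matches the paper's: reduce to a one-variable problem via Cor.~\ref{basisforalpharestrict}, expand in the variables $X_\beta$ with $\beta\neq\alpha$, and invoke a uniform Cherbonnier--Colmez bound on the inverse of $\gamma_\alpha-1$ acting on $D^\dagger(\alpha)^{\psi_\alpha=0}$. However, there are two genuine problems in the execution.

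First, the citation is wrong. Lemma~I.6.4 of \cite{CC2} is the $w_n$-estimate for solving $\psi_\alpha(y)-y=x$; it says nothing about inverting $\gamma_\alpha-1$. The estimate you actually need is the operator-norm bound on $(\gamma_\alpha^{p^n}-1)^{-1}$ restricted to $D^\dagger(\alpha)^{\psi_\alpha=0}$, which is Prop.~II.6.1 of \cite{CC}: it supplies $\rho_\alpha(D^\dagger)\in(0,1)$ and $c(D^\dagger)>0$, independent of ${\bf k}$, with $|y_{\bf k}|_{r_\alpha}\le r_\alpha^{-c(D^\dagger)}|x_{\bf k}|_{r_\alpha}$ for \emph{every} $r_\alpha\in(\rho_\alpha(D^\dagger),1)$.

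Second, and more seriously, your final step does not conclude. You land in $D^\dagger_{\{\alpha\}}\cap D$ and appeal to Lemma~\ref{mixedringintersect}, but that lemma gives $\mathcal{O}_{\mathcal{E}_{\Delta,\{\alpha\}}}^\dagger\cap\mathcal{O}_{\mathcal{E}_{\Delta,\emptyset}}^\dagger=\mathcal{O}_{\mathcal{E}_{\Delta,\{\alpha\}}}^\dagger$, i.e.\ $D^\dagger_{\{\alpha\}}\cap D=D^\dagger_{\{\alpha\}}$, \emph{not} $D^\dagger$. Having verified conditions $(a)$ and $(b)$ of Lemma~\ref{alphaexpansion} only gives overconvergence in the single variable $X_\alpha$; you have not shown overconvergence in the remaining variables $X_\beta$. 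The fix is to bypass Lemma~\ref{alphaexpansion} entirely: once you have the uniform bound $|y_{\bf k}|_{r_\alpha}\le r_\alpha^{-c(D^\dagger)}|x_{\bf k}|_{r_\alpha}$ valid for all $r_\alpha$ in a range, multiply by $\prod_{\beta\neq\alpha}r_\beta^{k_\beta}$ and take the supremum over ${\bf k}$ to obtain $|y|_{\bf r}\le r_\alpha^{-c(D^\dagger)}|x|_{\bf r}$ for the full tuple ${\bf r}\in(\rho,1)^\Delta$. Since $x\in D^\dagger$ this immediately forces $y\in D^\dagger$. This is exactly how the paper argues, and it is the step your detour through the mixed rings obscures.
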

\begin{proof}
Since $(\gamma_\alpha-1)$ divides $(\gamma_\alpha^{p^n}-1)$, it suffices to check the bijectivity of the latter for some large enough $n\in\mathbb{Z}$. Further, $ \bigcap_{\beta\in\Delta}\Ker(\psi_\beta\colon D^\dagger\to D^\dagger)\subseteq \Ker(\psi_\alpha\colon D^\dagger\to D^\dagger)$ is a direct summand (with projection $\prod_{\beta\in\Delta\setminus\{\alpha\}}(1-\varphi_\beta\circ\psi_\beta)\colon \Ker(\psi_\alpha\colon D^\dagger\to D^\dagger)\to \bigcap_{\beta\in\Delta}\Ker(\psi_\beta\colon D^\dagger\to D^\dagger)$ commuting with $(\gamma_\alpha^{p^n}-1)$), so we are reduced to showing the bijectivity of $(\gamma_\alpha^{p^n}-1)$ as a map on $\Ker(\psi_\alpha\colon D^\dagger\to D^\dagger)$. By Prop.\ \ref{separatevariablebasis} and Cor.\ \ref{ddaggerequiv} there exists a basis $(e_1,\dots,e_d)$ of $D^\dagger$ in which the matrices of both $\psi_\alpha$ and $\gamma_\alpha$ lie in the subring $\mathcal{O}_{\mathcal{E}_\alpha}^\dagger\subset\OED^\dagger$. In particular, $D^\dagger(\alpha):=\sum_{i=1}^d\mathcal{O}_{\mathcal{E}_\alpha}^\dagger e_i$ is an \'etale $(\varphi_\alpha,\Gamma_\alpha)$-module over $\mathcal{O}_{\mathcal{E}_\alpha}^\dagger$ that corresponds to the restriction of $\mathbb{V}(D^\dagger)$ to the component $\GQpa$. Therefore the injectivity of $(\gamma_\alpha^{p^n}-1)$ follows directly from the one variable case (Prop.\ II.6.1 in \cite{CC}) as $D^\dagger\subset \prod_{{\bf k}\in \mathbb{Z}^{\Delta\setminus\{\alpha\}}}((\prod_{\beta\in\Delta\setminus\{\alpha\}}X_\beta^{k_\beta})D^\dagger(\alpha))$. For the surjectivity, we are going to use the same principle, but we need to show that the obtained preimage under $(\gamma_\alpha^{p^n}-1)$ indeed belongs to the subset $D^\dagger$ (ie.\ it has the required convergence properties). So we pick an element $x=\sum_{i=1}^d f^{(i)}e_i\in \Ker(\psi_\alpha\colon D^\dagger\to D^\dagger)$ where $f^{(i)}\in\OED^\dagger$ have expansion
$$ f^{(i)}=\sum_{{\bf k}\in \mathbb{Z}^{\Delta\setminus\{\alpha\}}} f_{\bf k}^{(i)} \prod_{\beta\in\Delta\setminus\{\alpha\}}X_\beta^{k_\beta}$$
for all $i=1,\dots,d$ converging on a polyannulus $(\rho,1)^\Delta$. By Lemma \ref{alphaexpansion} we have $x_{\bf k}=\sum_{i=1}^d f^{(i)}_{\bf k}e_i\in D^\dagger(\alpha)$ for all ${\bf k}\in \mathbb{Z}^{\Delta\setminus\{\alpha\}}$. Using again Prop.\ II.6.1 in \cite{CC} there exist real numbers $0<\rho_\alpha(D^\dagger)<1$ and $0<c(D^\dagger)$ not depending on $x$ and an element $y_{\bf k}=\sum_{i=1}^d g^{(i)}_{\bf k}e_i\in D^\dagger(\alpha)$ with $(\gamma_\alpha^{p^n}-1)(y_{\bf k})=x_{\bf k}$ for all ${\bf k}\in \mathbb{Z}^{\Delta\setminus\{\alpha\}}$ such that $$\max_i |g^{(i)}_{\bf k}|_{r_\alpha}\leq r_\alpha^{-c(D^\dagger)}\max_i |f^{(i)}_{\bf k}|_{r_\alpha}$$ for any real number $r_\alpha\in (\max(\rho,\rho_\alpha(D^\dagger)),1)$. In particular,
$$ g^{(i)}:=\sum_{{\bf k}\in \mathbb{Z}^{\Delta\setminus\{\alpha\}}} g_{\bf k}^{(i)} \prod_{\beta\in\Delta\setminus\{\alpha\}}X_\beta^{k_\beta}$$
lies in $\OED^\dagger$ and satisfies $\max_i |g^{(i)}|_{\bf r}\leq r_\alpha^{-c(D^\dagger)}\max_i |f^{(i)}_{\bf k}|_{\bf r}$ for any ${\bf r}=(r_\beta)_{\beta\in\Delta}$ satisfying $r_\beta\in (\rho,1)$ for all $\beta\in\Delta$ and, in addition, $r_\alpha\in (\rho_\alpha(D^\dagger),1)$. Putting $y:=\sum_{i=1}^d g^{(i)}e_i\in D^\dagger$ we find $(\gamma_\alpha^{p^n}-1)(y)=x$ as desired.
\end{proof}

\begin{thm}\label{phigammaov=phigamma}
Let $D^\dagger$ be an object in $\mathcal{D}^{et}(\varphi_{\Delta},\Gamma_\Delta,\OED^\dagger)$. Then the complex $\Psi\Gamma_\Delta^\bullet(D^{\dagger})$ is quasi-isomorphic to $\Phi\Gamma_\Delta^\bullet(D^{\dagger})$.  In particular, both compute the Galois cohomology groups $H^\bullet(\GQpD,\mathbb{V}(D))$.
\end{thm}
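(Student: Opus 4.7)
The plan is to follow essentially the same strategy as in Theorem \ref{psigamma=phigamma}, replacing appeals to Lemma \ref{kerpsigammaacyclic} with its overconvergent analogue, whose proof rests crucially on Lemma \ref{tauinverseoverconv}. Namely, I would construct the morphism of cochain complexes
\begin{equation*}
\psi^\bullet\colon \Phi^\bullet(D^\dagger)^{C_\Delta}\to \Psi^\bullet(D^\dagger)^{C_\Delta}
\end{equation*}
given, in the component corresponding to a subset $S\subseteq\Delta$ of cardinality $r$, by the map $(-1)^{\varepsilon(S)}\prod_{\alpha\in S}\psi_\alpha$ (up to the sign conventions matching the differentials of $\Phi^\bullet$ and $\Psi^\bullet$). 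This map is surjective in each degree since each $\psi_\alpha$ admits $\varphi_\alpha$ as a section on $D^\dagger$ (the \'etale decomposition $D^\dagger=\bigoplus_{j=0}^{p-1}(1+X_\alpha)^j\varphi_\alpha(D^\dagger)$ survives taking $C_\Delta$-invariants since the $\varphi_\alpha$ and the $C_\Delta$-action commute). Applying $\Gamma_\Delta^\bullet$ to the short exact sequence
\begin{equation*}
0\to \Ker(\psi^\bullet)\to \Phi^\bullet(D^\dagger)^{C_\Delta}\to \Psi^\bullet(D^\dagger)^{C_\Delta}\to 0
\end{equation*}
and passing to total complexes reduces the claim to showing that the total complex of $\Gamma_\Delta^\bullet(\Ker(\psi^\bullet))$ is acyclic, which via the second spectral sequence of the double complex amounts to acyclicity of each column.

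The key step, which I expect to be the main obstacle, is therefore the overconvergent version of Lemma \ref{kerpsigammaacyclic}: for any non-empty $S\subseteq\Delta$, the complex $\Gamma^\bullet(\Ker(\prod_{\alpha\in S}\psi_\alpha\colon (D^\dagger)^{C_\Delta}\to (D^\dagger)^{C_\Delta}))$ is acyclic. I would follow verbatim the inductive reduction of the proof of Lemma \ref{kerpsigammaacyclic}: for any chosen $\alpha\in S$, there is a split short exact sequence
\begin{equation*}
0\to (D^\dagger)^{C_\Delta,\psi_\alpha=0}\to (D^\dagger)^{C_\Delta,\psi_S=0}\overset{\psi_\alpha}{\to} (D^\dagger)^{C_\Delta,\psi_{S\setminus\{\alpha\}}=0}\to 0
\end{equation*}
split by $\varphi_\alpha$ (still valid over $\OED^\dagger$), reducing to the case $S=\{\alpha\}$. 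For this base case, I would exhibit $(D^\dagger)^{C_\Delta,\psi_\alpha=0}$ as a direct summand, as a $\Zp\bs\gamma_\alpha\js$-module, of $\bigcap_{\beta\in\Delta}\Ker(\psi_\beta)^{C_\Delta}$ via the same retract as in the classical proof,
\begin{equation*}
\prod_{\beta\in\Delta\setminus\{\alpha\}}\Bigl(\sum_{c_\beta\in C_\beta}c_\beta(1+X_\beta)\varphi_\beta\Bigr)\colon (D^\dagger)^{C_\Delta,\psi_\alpha=0}\hookrightarrow \bigcap_{\beta\in\Delta}\Ker(\psi_\beta)^{C_\Delta}\ ,
\end{equation*}
with left inverse $\prod_{\beta\in\Delta\setminus\{\alpha\}}\psi_\beta\circ(1+X_\beta)^{-1}$; all the operators appearing here (including $(1+X_\beta)^{-1}\in\OED^+\subset\OED^\dagger$) are defined on $D^\dagger$. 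Since the maps are $\gamma_\alpha$-equivariant, bijectivity of $\gamma_\alpha-1$ on $\bigcap_{\beta\in\Delta}\Ker(\psi_\beta\colon D^\dagger\to D^\dagger)$---precisely the content of Lemma \ref{tauinverseoverconv}---transfers to bijectivity on $(D^\dagger)^{C_\Delta,\psi_\alpha=0}$, and hence $\Gamma^\bullet((D^\dagger)^{C_\Delta,\psi_\alpha=0})$ is acyclic.

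Once this is established, the quasi-isomorphism $\Psi\Gamma_\Delta^\bullet(D^\dagger)\simeq \Phi\Gamma_\Delta^\bullet(D^\dagger)$ follows by the long exact sequence in cohomology applied to the short exact sequence above. The final "In particular" assertion about Galois cohomology is then immediate from Proposition \ref{psigammaov=psigamma}, which already identifies $\Psi\Gamma_\Delta^\bullet(D^\dagger)$ with $\Psi\Gamma_\Delta^\bullet(D)$ and hence (via Theorem \ref{psigamma=phigamma}) with $H^\bullet(\GQpD,\mathbb{V}(D))$. The only real technical subtlety beyond the classical proof is the invocation of Lemma \ref{tauinverseoverconv} in place of the formal appeal to Proposition \ref{freegammamod}; the rest of the argument is a purely formal translation.
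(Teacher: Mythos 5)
Your proposal matches the paper's proof essentially step for step: construct the termwise surjection $\psi^\bullet\colon \Phi^\bullet(D^\dagger)^{C_\Delta}\to\Psi^\bullet(D^\dagger)^{C_\Delta}$, reduce to acyclicity of the total complex of $\Gamma_\Delta^\bullet(\Ker(\psi^\bullet))$, prove the overconvergent analogue of Lemma \ref{kerpsigammaacyclic} by the same splitting and induction on $|S|$ with the embedding into $\bigcap_\beta\Ker(\psi_\beta)^{C_\Delta}$, substituting Lemma \ref{tauinverseoverconv} for Prop.\ \ref{freegammamod}, and deduce the Galois-cohomology statement from Prop.\ \ref{psigammaov=psigamma}. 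You have merely spelled out the details of the key reduction more explicitly than the paper does; the approach and the invoked lemmas are identical.
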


\begin{proof}
The proof follows closely the proof of Theorem \ref{psigamma=phigamma}. Consider the morphism
\begin{align*}
\psi^\bullet\colon \Phi^\bullet(D^\dagger)^{C_\Delta}\to \Psi^\bullet(D^\dagger)^{C_\Delta}
\end{align*}
of cochain complexes that is given by $(-1)^{\varepsilon(S)}\prod_{\alpha\in S}\psi_\alpha$ on the copy of $D^\dagger$ corresponding to a subset $S\subseteq \Delta$ with $|S|=r$ in $\Phi^r(D)^{C_\Delta}$ mapping onto the copy of $D^\dagger$ corresponding to $S$ in $\Psi^\bullet(D^\dagger)^{C_\Delta}$. As this is surjective in each degree via similar argument to Theorem \ref{psigamma=phigamma}, we are reduced to showing that the total complex of the double complex $\Gamma^\bullet(\Ker(\psi^\bullet))$ is acyclic. This follows the same way as Lemma \ref{kerpsigammaacyclic} using Lemma \ref{tauinverseoverconv} instead of Prop.\ \ref{freegammamod}. Finally, the second statement is a consequence of Prop.\ \ref{psigammaov=psigamma}.
\end{proof}

Combining all the previous discussion, we can summarize in the following
\begin{cor}
1) Let $T$ be an object in $\operatorname{Rep}_{\Zp}(\GQpD)$. We have isomorphisms 
\begin{eqnarray*}
H^i(\GQpD,T)&\cong& h^i\Phi\Gamma_\Delta^\bullet(\mathbb{D}(T)) \cong h^i\Phi\Gamma_\Delta^\bullet(\mathbb{D}^{\dagger}(T))\ \cong h^i\Psi\Gamma_\Delta^\bullet(\mathbb{D}^{\dagger}(T)) \ ;\\
H^i_{Iw}(\GQpD,T)&\cong& h^{i-d}\Psi^\bullet(\mathbb{D}(T)) \cong h^{i-d}\Psi^\bullet(\mathbb{D}^{\dagger}(T))
\end{eqnarray*}
natural in $T$ for all $i\geq 0$.\\
2) Let $V$ be an object in $\operatorname{Rep}_{\Qp}(\GQpD)$. We have isomorphisms 
\begin{eqnarray*}
H^i(\GQpD,V)&\cong& h^i\Phi\Gamma_\Delta^\bullet(\mathbb{D}(V)) \cong h^i\Phi\Gamma_\Delta^\bullet(\mathbb{D}^{\dagger}(V)) \ \cong h^i\Psi\Gamma_\Delta^\bullet(\mathbb{D}^{\dagger}(V)) \ ;\\
H^i_{Iw}(\GQpD,V)&\cong& h^{i-d}\Psi^\bullet(\mathbb{D}(V)) \cong h^{i-d}\Psi^\bullet(\mathbb{D}^{\dagger}(V))
\end{eqnarray*}
natural in $V$ for all $i\geq 0$.
\end{cor}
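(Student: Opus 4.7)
The corollary packages the comparisons established throughout sections \ref{secherrcomplex} and \ref{secoverconverge}, so the plan is simply to chain the relevant results. For part (1), I would first cite Theorem \ref{herrcomplex+iwasawa} to obtain the two outer isomorphisms $H^i(\GQpD, T) \cong h^i\Phi\Gamma_\Delta^\bullet(\mathbb{D}(T))$ and $H^i_{Iw}(\GQpD, T) \cong h^{i-d}\Psi^\bullet(\mathbb{D}(T))$. On the Iwasawa side, the remaining isomorphism $h^{i-d}\Psi^\bullet(\mathbb{D}(T)) \cong h^{i-d}\Psi^\bullet(\mathbb{D}^\dagger(T))$ is immediate from the quasi-isomorphism $\Psi^\bullet(\mathbb{D}^\dagger(T)) \to \Psi^\bullet(\mathbb{D}(T))$ of Proposition \ref{psiovquasi}, applied to the overconvergent module $\mathbb{D}^\dagger(T)$ produced by Proposition \ref{overconverge}.

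For the Herr-complex side, Theorem \ref{phigammaov=phigamma} provides both $h^i\Phi\Gamma_\Delta^\bullet(\mathbb{D}^\dagger(T)) \cong h^i\Psi\Gamma_\Delta^\bullet(\mathbb{D}^\dagger(T))$ and the fact that both compute $H^i(\GQpD, \mathbb{V}(\mathbb{D}(T))) = H^i(\GQpD, T)$, the latter identification coming from the equivalence of categories in Corollary \ref{ddaggerequiv} (combined with the fact that $\OED \otimes_{\OED^\dagger} \mathbb{D}^\dagger(T) \cong \mathbb{D}(T)$, so the associated Galois representation is $T$ in both cases). Composing these with the first isomorphism of Theorem \ref{herrcomplex+iwasawa} yields the remaining link $h^i\Phi\Gamma_\Delta^\bullet(\mathbb{D}(T)) \cong h^i\Phi\Gamma_\Delta^\bullet(\mathbb{D}^\dagger(T))$ by transitivity, completing part (1).

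Part (2) would be deduced from (1) by inverting $p$. Given $V \in \operatorname{Rep}_{\Qp}(\GQpD)$, compactness of $\GQpD$ yields a stable $\Zp$-lattice $T \subset V$, and by construction $\mathbb{D}(V) \cong \mathbb{D}(T)[p^{-1}]$ and $\mathbb{D}^\dagger(V) \cong \mathbb{D}^\dagger(T)[p^{-1}]$. Since the differentials in the $\Phi\Gamma^\bullet$, $\Psi\Gamma^\bullet$, and $\Psi^\bullet$ complexes are all $\Zp$-linear, taking cohomology commutes with the flat base change $\cdot \otimes_{\Zp} \Qp$, so the isomorphisms of (1) tensor up to the required isomorphisms of (2). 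The only small point to check is that $H^i(\GQpD, V) \cong H^i(\GQpD, T) \otimes_{\Zp} \Qp$, which is standard from the description via continuous cochains once one has the finite-generation results underlying Theorem \ref{herrcomplex+iwasawa}.

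There is no genuine obstacle in this corollary; it is purely a book-keeping step that unifies the earlier theorems. All the serious work—the equivalence of categories of Corollary \ref{ddaggerequiv}, the overconvergence result of Proposition \ref{overconverge}, the quasi-isomorphism of Proposition \ref{psiovquasi}, and the $\Phi\Gamma^\bullet$–$\Psi\Gamma^\bullet$ comparison of Theorem \ref{phigammaov=phigamma}—has already been carried out in the main body of the paper.
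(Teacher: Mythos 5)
Your proposal is correct and takes essentially the same approach as the paper, which presents this corollary as a summary with no explicit proof beyond the phrase ``combining all the previous discussion.'' You correctly identify the relevant ingredients---Theorem \ref{herrcomplex+iwasawa} for the non-overconvergent comparison, Propositions \ref{overconverge} and \ref{psiovquasi} for the $\Psi^\bullet$-comparison, Theorem \ref{phigammaov=phigamma} for the overconvergent $\Phi\Gamma$/$\Psi\Gamma$ comparison, and inversion of $p$ for part (2)---and chain them together in the intended way; the naturality of the resulting isomorphisms is inherited from the naturality of each link.
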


\begin{rem}
The arguments in this section relied heavily on Prop.\ \ref{separatevariablebasis} which is only valid a priori for objects in the essential image of the functor $\mathbb{D}^\dagger$. So one cannot, at least trivially, replace the use of extended Robba rings with the arguments in this section in order to show Prop.\ \ref{h0phioverconv} (and hence Thm.\ \ref{basechange_OED}). 
\end{rem}

\subsection{Acknowledgements}

The second named author was supported by a Hungarian NKFIH Research grants K-100291 and FK-127906, by the J\'anos Bolyai Scholarship of the Hungarian Academy of Sciences, and by the MTA Alfr\'ed R\'enyi Institute of Mathematics Lend\"ulet Automorphic Research Group. He would like to thank the Arithmetic Geometry and Number Theory group of the University of Duisburg--Essen, campus Essen, for its hospitality where parts of this paper was written. Both authors acknowledge financial support from SFB TR45. We would like to thank Jan Kohlhaase and Kiran Kedlaya for valuable comments and feedback. We thank the referee for their careful reading of the manuscript.


\begin{thebibliography}{10}

\bibitem{BH}
P.~Balister and S.~Howson.
\newblock Note on {N}akayama's lemma for compact ${\Lambda}$-modules.
\newblock {\em Asian J.\ Math.}, \textbf{1}(2):224--229, 1997.

\bibitem{Ben}
D.~Benois.
\newblock On {I}wasawa theory of crystalline representations.
\newblock {\em Duke Math.\ J.}, \textbf{104}:211--267, 2000.

\bibitem{BB}
D.~Benois and L.~Berger.
\newblock Th\'eorie d'{I}wasawa des repr\'esentations cristallines. {II}.
\newblock {\em Comment.\ Math.\ Helv.}, \textbf{83}:603--677, 2008.

\bibitem{Ber}
L.~Berger.
\newblock Multivariable {L}ubin-{T}ate $(\varphi,{\Gamma})$-modules and
  filtered $\varphi$-modules.
\newblock {\em Math.\ Res.\ Lett.}, \textbf{20}(3):409--428, 2013.


\bibitem{Ber2}
L.~Berger.
\newblock Multivariable {L}ubin-{T}ate $(\varphi,{\Gamma})$-modules and locally
  analytic vectors.
\newblock {\em Duke Math.\ J.}, \textbf{165}(18):3567--3595, 2016.

\bibitem{BF}
L.~Berger and L.~Fourquaux.
\newblock Iwasawa theory and ${F}$-analytic {L}ubin-{T}ate $(\varphi,{\Gamma})$-modules.
\newblock {\em Doc.\ Math.}, \textbf{22}:999--1030, 2017.

\bibitem{BSX}
L.~Berger, P.~Schneider, and B.~Xie.
\newblock Rigid character groups, {L}ubin--{T}ate theory, and
  $(\varphi,{\Gamma})$-modules.
\newblock {\em Memoirs of the AMS}, 2015.
\newblock to appear,
  \href{http://perso.ens-lyon.fr/laurent.berger/articles/BSXPGM.pdf}{preprint
  version}.

\bibitem{BC}
W.~Bley and A.~Cobbe.
\newblock Equivariant epsilon constant conjectures for weakly ramified
  extensions.
\newblock {\em Math. Z.}, \textbf{283}:1217--1244, 2016.

\bibitem{B}
Ch. Breuil.
\newblock Induction parabolique et $(\varphi,{\Gamma})$-modules.
\newblock {\em Algebra and Number Theory}, \textbf{9}(10):2241--2291, 2015.

\bibitem{BrH}
Ch. Breuil and F.~Herzig.
\newblock Ordinary representations of ${G}(\mathbb{Q}_p)$ and fundamental
  algebraic representations.
\newblock {\em Duke Math.\ Journal}, \textbf{164}:1271--1352, 2015.

\bibitem{BHS}
Ch. Breuil, F.~Herzig, and B.~Schraen, 2016.
\newblock paper in preparation.

\bibitem{CKZ}
A.\ Carter, K.\ S.\ Kedlaya, G.\ Z\'abr\'adi.
\newblock Drinfeld's lemma for perfectoid spaces and overconvergence of multivariate $(\varphi,\Gamma)$-modules.
\newblock \href{https://arxiv.org/abs/1808.03964}{arXiv:1808.03964}, 2018.
\newblock preprint.

\bibitem{CC}
F.~Cherbonnier and P.~Colmez.
\newblock Repr\'esentations $p$-adiques surconvergentes.
\newblock {\em Invent.\ Math.}, \textbf{133}:581--611, 1998.

\bibitem{CC2}
F.~Cherbonnier and P.~Colmez.
\newblock Th\'eorie d'{I}wasawa des repr\'esentations $p$-adiques d'un corps
  local.
\newblock {\em Journal of the A.M.S.}, \textbf{12}(1):241--268, 1999.

\bibitem{Tsing}
P.~Colmez.
\newblock Fontaine's rings and $p$-adic ${L}$-functions.
\newblock
  \href{https://webusers.imj-prg.fr/~pierre.colmez/tsinghua.pdf}{https://webusers.imj-prg.fr/~pierre.colmez/tsinghua.pdf},
  2004.
\newblock lecture notes from Tsinghua.

\bibitem{Mira}
P.~Colmez.
\newblock $(\varphi,{\Gamma})$-modules et repr\'esentations du mirabolique de
  ${GL}_{2}(\mathbb{Q}_{p})$.
\newblock {\em Ast\'erisque}, \textbf{330}:61--153, 2010.

\bibitem{Col15}
P.~Colmez.
\newblock Repr\'esentations localement analytiques de ${GL}_2(\mathbb{Q}_p)$ et
  $(\varphi,{\Gamma})$-modules.
\newblock {\em Representation Theory}, \textbf{20}:187--248, 2016.

\bibitem{Ful}
W.~Fulton.
\newblock A note on weakly complete algebras.
\newblock {\em Bull.\ Amer.\ Math.\ Soc.}, \textbf{75}:591--593, 1969.

\bibitem{G}
R.~Greenberg.
\newblock Introduction to {I}wasawa theory for elliptic curves.
\newblock In {\em Arithmetic Algebraic Geometry}, pages 407--464. American
  Mathematical Society, Providence, RI, 2009.
\newblock Park City Lecture Notes.

\bibitem{H1}
L.~Herr.
\newblock Sur la cohomologie galoisienne des corps $p$-adiques.
\newblock {\em Bull.\ Soc.\ Math.\ France}, \textbf{126}:563--600, 1998.

\bibitem{K}
K.~Kedlaya.
\newblock Slope filtrations for relative {F}robenius.
\newblock {\em Ast\'erisque}, \textbf{319}:259--301, 2008.

\bibitem{K2}
K.~Kedlaya.
\newblock Some slope theory for multivariate {R}obba rings.
\newblock \href{https://arxiv.org/abs/1311.7468}{arxiv:1311.7468}, 2013.
\newblock preprint.

\bibitem{K3}
K.~Kedlaya.
\newblock New methods for $(\varphi,{\Gamma})$-modules.
\newblock {\em Research in the Mathematical Sciences}, \textbf{2}(20) (Robert
  Coleman memorial issue), 2015.

\bibitem{KedArizona}
K.~Kedlaya.
\newblock Sheaves, stacks, and shtukas, 2018.
\newblock
  \href{http://swc.math.arizona.edu/aws/2017/2017KedlayaNotes.pdf}{lecture
  notes} for the 2017 Arizona Winter School on perfectoid spaces.

\bibitem{Liu}
R.~Liu.
\newblock Cohomology and {D}uality for $(\varphi,{\Gamma})$-modules over the
  {R}obba ring.
\newblock {\em Int.\ Math.\ Res.\ Notices}, page rnm150, 2007.

\bibitem{MP}
E.~Martin-Peinador.
\newblock A reflexible admissible topological group must be locally compact.
\newblock {\em Proc.\ Amer.\ Math.\ Soc.}, \textbf{123}:3563--3566, 1995.

\bibitem{Na14}
K.~Nakamura.
\newblock Iwasawa theory of de {R}ham {$(\varphi,{\Gamma})$}-modules over
  {R}obba ring.
\newblock {\em J.\ Inst.\ Math.\ Jussieu}, \textbf{13}:65--118, 2014.

\bibitem{Na17a}
K.~Nakamura.
\newblock A generalization of {K}ato's local {$\varepsilon$}-conjecture for
  {$(\varphi,{\Gamma})$}-modules over the {R}obba ring.
\newblock {\em Algebra \& Number Theory}, \textbf{11}:319--404, 2017.

\bibitem{Na17b}
K.~Nakamura.
\newblock Local {$\varepsilon$}-isomorphisms for rank two {$p$}-adic
  representations of {${\rm Gal}(\overline{\Bbb Q}_p/\Bbb Q_p)$} and a
  functional equation of {K}ato's {E}uler system.
\newblock {\em Cambridge Journal of Mathematics}, \textbf{5}:281--368, 2017.

\bibitem{N}
J.~Neukirch, A.~Schmidt, and K.~Wingberg.
\newblock {\em Cohomology of {N}umber {F}ields}.
\newblock Springer, Heidelberg, 2007.
\newblock 2nd ed.

\bibitem{SchpadicLie}
P.~Schneider.
\newblock {\em $p$-Adic Lie Groups}, volume \textbf{344} of {\em Grundlehren der mathematischen Wissenschaften}.
\newblock Springer, 2011.

\bibitem{SchVen}
P.~Schneider and O.~Venjakob.
\newblock Coates-{W}iles homomorphisms and {I}wasawa cohomology for
  {L}ubin-{T}ate extensions.
\newblock In D.~Loeffler and S.~Zerbes, editors, {\em Elliptic Curves, Modular
  Forms and Iwasawa Theory}, volume \textbf{188} of {\em Proc.\ Math.\ \&
  Stat.}, pages 401--468. Springer, 2016.
\newblock in honour of J.\ Coates' 70th birthday.

\bibitem{SchW}
P.~Scholze and J.~Weinstein.
\newblock Berkeley lectures on $p$-adic geometry.
\newblock
  \href{http://www.math.uni-bonn.de/people/scholze/Berkeley.pdf}{http://www.math.uni-bonn.de/people/scholze/Berkeley.pdf},
  2018.

\bibitem{W}
Ch. Weibel.
\newblock {\em An introduction to homological algebra}, volume \textbf{38} of
  {\em Cambridge studies in advanced mathematics}.
\newblock Cambridge University Press, 1994.

\bibitem{ZSch}
G.~Z\'abr\'adi.
\newblock Generalized {R}obba rings (with an appendix by {P}.\ {S}chneider).
\newblock {\em Isr.\ J.\ Math.}, \textbf{191}(2):817--887, 2012.

\bibitem{MultVar}
G.~Z\'abr\'adi.
\newblock Multivariable $(\varphi,{\Gamma})$-modules and smooth $o$-torsion
  representations.
\newblock {\em Selecta Math.}, \textbf{24}(2):935--995, 2018.

\bibitem{MultVarGal}
G.~Z\'abr\'adi.
\newblock Multivariable $(\varphi,{\Gamma})$-modules and products of {G}alois
  groups.
\newblock {\em Math.\ Research Letters}, \textbf{25}(2):687--721, 2018.

\end{thebibliography}
\end{document}